\DeclareSymbolFont{timesoperators}{T1}{ptm}{m}{n}
\newcommand{\eqdef}{\stackrel{\mbox{\rm\tiny def}}{=}}
\renewcommand{\operator@font}{\mathgroup\symtimesoperators}
\newtheorem{theorem}{\bf  {Theorem}}
\newtheorem{proposition}[theorem]{\bf {Proposition}}
\newtheorem{lemma}[theorem]{\bf Lemma}
\newtheorem{definition}[theorem]{\bf Definition}
\newtheorem{corollary}[theorem]{\bf Corollary}
\newtheorem{remark}[theorem]{\bf Remark}
\newtheorem{assumption}[theorem]{\bf Assumption}
\numberwithin{theorem}{section}
\def\bN{\mathbb{N}}
\def\bR{\mathbb{R}}
\def\bZ{\mathbb{Z}}
\def\bE{\mathbb{E}}
\def\fe{\mathfrak{e}}
\def\fn{\mathfrak{n}}
\def\fo{\mathfrak{o}}
\def\ft{\mathfrak{t}}
\def\fl{\mathfrak{l}}
\def\fs{\mathfrak{s}}
\def\Lab{\mathfrak{L}}
\def\fH{\mathfrak{H}}
\def\fM{\mathfrak{M}}
\def\fR{\mathfrak{R}}
\def\Tpoly{\CT_{\mathrm{poly}}}
\def\bBPHZ{\overline{\text{BPHZ}}}
\def\ck{\mathcal{k}}
\def\CA{\mathcal{A}}
\def\CB{\mathcal{B}}
\def\CC{\mathcal{C}}
\def\CT{\mathcal{T}}
\def\CG{\mathcal{G}}
\def\CD{\mathcal{D}}
\def\CF{\mathcal{F}}
\def\CH{\mathcal{H}}
\def\CI{\mathcal{I}}
\def\CK{\mathcal{K}}
\def\cA{\mathcal{A}}
\def\cG{\mathcal{G}}
\def\cD{\mathcal{D}}
\def\cT{\mathcal{T}}
\def\cM{\mathcal{M}}
\def\CR{\mathcal{R}}
\def\cR{\mathcal{R}}
\def\cQ{\mathcal{Q}}
\def\cB{\mathcal{B}}
\def\CO{\mathcal{O}}
\def\CQ{\mathcal{Q}}
\def\CN{\mathcal{N}}
\def\CJ{\mathcal{J}}
\def\CM{\mathcal{M}}
\def\CY{\mathcal{Y}}
\def\CL{\mathcal{L}}
\def\pH#1{H_{#1;n}^{x, \eta}}
\def\pHa#1{H_{#1;n_1}^{x, \eta}}
\def\pHb#1{H_{#1;n_2}^{x, \eta}}
\def\pHao#1{H_{#1;n_1}^{0, \eta}}
\def\pHbo#1{H_{#1;n_2}^{0, \eta}}
\def\pG#1{G_{#1;n}^{x, \eta}}
\def\R{\mathbf{R}}
\let\phi\varphi
\def\scal#1{\langle #1\rangle}
\def\${|\!|\!|}
\let\eps\varepsilon
\def\ex{\mathrm{ex}}
\def\PPi{\boldsymbol{\Pi}}
\def\nice{\CC_w^\star}
\def\reg{\operatorname{scal}}
\def\supp{\operatorname{supp}}
\def\one{\mathbf{1}}
\def\gap{\operatorname{gap}}
\def\rand{\mathrm{rand}} 
\def\Mrand{\CM_{\rand}(\CT)}
\def\defeq{\eqdef}
\def\N{\mathbf{N}}
\let\f\frac
\def\dash{\leavevmode\unskip\kern0.18em--\penalty\exhyphenpenalty\kern0.18em}
\def\slash{\leavevmode\unskip\kern0.15em/\penalty\exhyphenpenalty\kern0.15em}
\DeclareRobustCommand{\TitleEquation}[2]{\texorpdfstring{\StrLeft{\f@series}{1}[\@firstchar]$\if%
		b\@firstchar\boldsymbol{#1}\else#1\fi$}{#2}}
\begin{document}	
	\title{The BPHZ Theorem for Regularity Structures\\ via the Spectral Gap Inequality}
	\author{Martin Hairer$^{1,2}$ \orcidlink{0000-0002-2141-6561} and Rhys Steele$^1$ \orcidlink{0000-0001-6546-5800}}
	\institute{Imperial College London, London SW7 2AZ, United Kingdom \and École Polytechnique Fédérale de Lausanne, 1015 Lausanne, Switzerland \\
		\email{m.hairer@imperial.ac.uk, r.steele18@imperial.ac.uk}}
	
	\maketitle
	
	\begin{abstract}
		We provide a relatively compact proof of the BPHZ theorem for regularity structures of decorated trees 
		in the case where the driving noise satisfies a suitable spectral gap property, as
		in the Gaussian case. This is inspired by the recent work \cite{Felix} in the multi-index setting, but our proof
		relies crucially on a novel version of the reconstruction theorem for a space of ``pointed 
		Besov modelled distributions''. As a consequence, the analytical core of the proof is
		quite short and self-contained, which should make it easier to adapt the proof to 
		different contexts (such as the setting of discrete models).
	\end{abstract}
	
	\setcounter{tocdepth}{2}
	\tableofcontents
	
	\section{Introduction}
	
	Over the past decade, there has been substantial progress in the application of
	pathwise techniques to solution theories for singular stochastic PDEs. Besides
	the original theory of rough paths \cite{Terry,TerryBook,Book}, these include paracontrolled
	calculus \cite{GIP15} and regularity structures \cite{Hai14}. A common thread 
	is that the probabilistic
	aspects of a problem are encoded in the construction of a random element of
	some nonlinear space of distributions \slash functions (a ``model'' in the language 
	of \cite{Hai14}), while the solution theory itself
	is then purely deterministic, once a realisation of the model is fixed. 
	
	The BPHZ theorem obtained in \cite{ChHa16} is a crucial ingredient in this programme
	since it allows for the construction of random models in a wide variety of contexts.
	Unfortunately, while this result is extremely flexible, its proof is rather long and 
	difficult to follow. Recently, Otto and coauthors obtained a version of the BPHZ theorem
	in a specific context that, instead of the Feynman diagram techniques used in \cite{ChHa16},
	makes use of Malliavin calculus and spectral gap estimates \cite{Felix}. 
	(See also \cite{FelixAlg} for a description of the algebraic structure underpinning their approach.) 
	We refer the reader also to \cite{FG19} where a spectral gap based approach was used to obtain similar estimates in a more limited setting.
	
	In the present article, building on some of the analytic results from \cite{Cyril}, we extend and 
	simplify this technique to obtain
	a relatively short and self-contained proof of the BPHZ theorem for arbitrary regularity 
	structures of the type constructed in \cite{BHZ} and naturally appearing in the study of semilinear singular SPDEs. While it is restricted to noises satisfying a spectral gap condition
	(therefore mainly Gaussian noises), our conditions appear to be optimal in that context.
	
	Comparing to \cite{Felix}, the main advantages of our approach are:
	\begin{enumerate}
		\item We make no particular structural assumption on the class of stochastic PDEs that can in principle be covered by our result, except for local subcriticality. In particular, we cover 
		all the situations covered by the general machinery developed in \cite{BHZ,Ilya}.
		\item The analytic ``core'' of our proof is very short. This is especially the case when 
		all the driving noises are more regular than space-time white noise, in which case
		Section~\ref{section: Pointed Modelled Distributions} on pointed modelled distributions contains pretty much all of 
		the required analytic ingredients. This core generalises naturally to a variety of 
		slightly different setups, although the technicalities arising from the rest of the proof may require
		non-trivial adaptation.
		\item We obtain not only uniform bounds on suitable smoothened models but their convergence
		to a unique limit, which is a crucial ingredient when using these techniques to prove
		``weak universality'' type results. 
	\end{enumerate}
	
As this article was nearing completion, the preprint \cite{YvainIsmael} was released. There, the authors
recover the convergence of the model associated to the geometric stochastic
heat equation studied in \cite{BGHZ22}. This is achieved by combining the spectral gap inequality with the
algebraic structure described in \cite{Nadeem} and the analytic techniques developed in \cite{HQ18}.
These bounds are obtained by treating the specific trees appearing in this problem ``by hand'',
so it is not clear whether their approach could treat the level of generality considered here.
	
\subsection{Structure of proof}

We now give a short description of the overall structure of our proof, which is 
heavily inspired by \cite{Felix,FelixAlg}. This subsection assumes that the reader is somewhat familiar with 
the context of \cite{Hai14}. For notational simplicity, we assume here that there is only one
driving noise.

Since we are interested in a result applicable to singular stochastic PDEs, we consider regularity
structures with basis vectors given by certain labelled trees as in \cite{Hai14,BHZ}. We are then
interested in obtaining uniform in $\eps$ bounds on $\hat \Pi_x^\eps \tau$, where $\hat \Pi_x^\eps$
denotes the BPHZ lift (see \cite{ChHa16,BHZ}) of the driving noise $\xi$ to the ambient regularity
structure and $\tau$ denotes one of its basis vectors. 

Write then $T_k$ for the regularity structure generated by the set of basis vectors which 
include at most $k$ copies of the noise. The proof then goes by estimating the action
of $\hat \Pi_x^\eps$ on $T_k$ by induction over $k$. 

For $k=1$, one uses the spectral gap inequality and a Kolmogorov-type criterion to conclude that the 
driving noises $\xi_\ft$ do almost surely belong to a suitable Besov--Hölder space as desired.
For larger values of $k$, we remark that the derivative $D_h \hat \Pi_x^\eps \tau$ of $\hat \Pi_x^\eps \tau$
with respect to the noise $\xi$ in the direction $h$ is a sum of multilinear terms in $\xi$ 
of degree at most $k-1$. One can therefore hope to be able to find a modelled distribution
$H_x^{\tau,h}$ with values in $T_{k-1}$ (where we assume that we already have good bounds on
the renormalised model $\hat \Pi$) such that $D_h \hat \Pi_x^\eps \tau = \CR H_x^{\tau,h}$
with $\CR$ the reconstruction operator.
The spectral gap inequality then states that bounds on $\|(\CR H_x^{\tau,h})(\phi_x^\lambda)\|_{L^p(\Omega)}$ and
on $\bE (\hat \Pi_x^\eps \tau)(\phi_x^\lambda)$ yield bounds on $\|(\hat \Pi_x^\eps \tau)(\phi_x^\lambda)\|_{L^p(\Omega)}$.
Since one can hope to control $\bE \hat \Pi_x^\eps \tau$ as a consequence of the centring condition
defining the BPHZ lift, this then provides the desired induction step.

Here are some of the hurdles that need to be overcome along the way:
\begin{enumerate}
\item At the analytic level, one needs to find a suitable space of modelled distributions which on the 
one hand is large enough to contain the desired $H_x^{\tau,h}$ (which is itself relatively easy
to ``guess'' by induction) in a way that's uniform over $h$'s belong to the relevant space appearing
in the spectral gap condition (the Cameron--Martin space in the case of Gaussian noises), but on
the other hand is small enough so that the bounds on $\CR H_x^{\tau,h}$ match the bounds
appearing in the definition of a model. It turns out that this can only be achieved by looking
at a suitable family of $x$-dependent ``pointed'' Besov-type spaces of modelled distributions.
These are similar to the spaces appearing in \cite{Cyril,Josef}, but the required optimal bounds on their
reconstruction are much tighter than the bounds obtained there.
\item While it is straightforward to ``guess'' an inductive expression for $H_x^{\tau,h}$
such that the identity $D_h \Pi_x^\eps \tau = \CR H_x^{\tau,h}$ holds
for the canonical lift $\Pi$ (i.e.\ without any renormalisation), it is not obvious a priori
that the same expression also holds for renormalised models.
\item While the BPHZ model specifies that $\PPi \tau$ has vanishing expectation for basis vectors
$\tau$ of negative degree, it is not obvious in general how to use this to control 
$\bE \big(\hat \Pi_x^\eps \tau\big)(\phi_x^\lambda)$. Instead, we introduce a different
centring condition (which we call the $\bBPHZ$ model) which \textit{does}
allow to obtain such a control, and we then show a posteriori that control on the 
$\bBPHZ$ model implies control on the BPHZ model.
\end{enumerate}

\subsection{Article Structure}
	
	We begin this paper by gathering the relevant algebraic framework that forms the setting of our main result. In particular, Section~\ref{sec: set-up} is devoted to a high level description of the regularity structures and models appearing in \cite{BHZ}, alongside the renormalisation groups that act on them. Additionally, in that section we fix our probabilistic assumptions on the driving noise and state the main result of this paper, which is the content of Theorem~\ref{theo: BPHZ}.
	
	Section~\ref{sec: Modelled Distributions} is then devoted to the study of spaces of modelled distributions; especially the pointed modelled distributions described in the previous subsection. The tools developed in this section are not restricted to the regularity structures appearing in \cite{BHZ}, however their application in this setting forms the analytic core of our proof.
	In a final preparatory section, Section~\ref{sec: Frechet}, we identify the Fr\'echet 
derivative (with respect to the driving noises) 
of a renormalised model as the reconstruction of a particular family of modelled distributions of the type 
studied in Section~\ref{sec: Modelled Distributions}.

	With these tools in hand, the core of our proof is then contained in the remaining sections. In Section~\ref{sec: uniform bounds}, we derive uniform bounds on the $\bBPHZ$ model. In Section~\ref{sec: convergence}, we then show that this family converges as mollification is removed. Finally, in Section~\ref{sec: moving between models}, we then transfer our results 
to the context of the BPHZ model of \cite{BHZ}.
	
	In the appendices, for the reader's convenience, we gather some relatively elementary technical tools that are used throughout the paper.

	\subsection{Notation and Conventions}\label{s: notation}
	Throughout this article, we will consider an integer dimension $d \ge 1$ of space-time to be fixed. In addition, we will fix a d-dimensional space-time scaling $\fs$ which is nothing but a multi-index $\fs = (\fs_i)_{i=1}^d$ whose entries are positive real numbers. We write $|\fs| \defeq \sum_{i=1}^d \fs_i$. 
	
	Given such a scaling and $z = (z_i)_{i = 1}^d \in \bR^d$, we define 
	\begin{equs}
		|z|_\fs \defeq \sum_{i = 1}^d |z_i|^{1/\fs_i}.
	\end{equs}
	We note that $|\cdot|_\fs$ does not define a true norm on $\bR^d$ but it does induce a translation invariant metric. We will nonetheless often refer to this quantity as the ($\fs$-scaled) norm and will write $B_\fs(x,r)$ for the corresponding ball around $x$ of radius $r$.
	
	A choice of scaling also induces a scaled degree for multi-indices $k \in \bZ^d$ defined by setting
	\begin{equ}
		|k|_\fs \defeq \sum_{i=1}^d k_i \fs_i.
	\end{equ}
	When talking about the degree of a polynomial, we shall always mean the notion of degree for our scaling $\fs$ that is induced by the above definition.
	
	In addition, a choice of scaling defines an appropriate way of rescaling test functions that respects that scaling. Concretely, given a function $\psi: \bR^d \to \bR$, $\lambda \in (0,1]$ and $x \in \bR^d$, we define $\psi_x^\lambda(y) \defeq \lambda^{-|\fs|} \psi(\lambda^{- \fs} (y-x))$ where $\lambda^{-\fs} z \defeq (\lambda^{-\fs_i}z_i)_{i=1}^d$. In the case where $\lambda = 2^{-k}$, we will often write as a shorthand $\psi_x^k \defeq \psi_x^\lambda$ and it should always be clear from context which of these is meant.
	
	The spaces of distributions of interest to us will then be a scale of local Besov spaces that respect our given scaling. For $\alpha < 0$, $p, q \in [1, \infty]$ and $r > |\alpha|$, we let $\CB_{p,q}^\alpha$ be the space of distributions $\zeta$ on $\bR^d$ such that for each compact set $\ck \subseteq \bR^d$
	\begin{equs}
		\|\zeta\|_{\CB_{p,q}^\alpha; \ck} \eqdef \bigg \| \Big \| \sup_{\eta \in \CB_r} \lambda^{-\alpha} |\scal{\zeta, \eta_x^\lambda}| \Big \|_{L^p(\ck; dx)} \bigg \|_{L_\lambda^q}<\infty\;,
	\end{equs}
	where $L_\lambda^q = L^q((0,1); \lambda^{-1} d\lambda)$ and $\CB_r$ is the same distinguished set of test functions as appearing in \cite[Definition 3.7]{Hai14}. 
	For $\alpha \ge 0$, we let $\CB_{p,q}^\alpha$ be the space of distributions $\zeta$ on $\bR^d$ such that,
	for every compact set $\ck\subseteq \bR^d$, 
	\begin{equs}
		\|\zeta\|_{\CB_{p,q}^\alpha; \ck} \eqdef \Big \| \sup_{\eta \in \CB_r} |\scal{\zeta, \eta_x}| \Big \|_{L^p(\ck; dx)} + \bigg \| \Big \| \sup_{\eta \in \CB^{\lfloor \alpha \rfloor}_r} \frac{|\scal{\zeta, \eta_x^\lambda}|}{\lambda^\alpha} \Big \|_{L^p(\ck; dx)} \bigg \|_{L_\lambda^q}<\infty\;.
	\end{equs}	
	For the particular case where $p = q = \infty$, we write $\CC^\alpha \eqdef \CB_{\infty, \infty}^\alpha$.
	For an alternative characterisation of these spaces, we refer the reader to Appendix~\ref{appendix: Besov Spaces}.
	
	Finally, we let $H^{s} = H^s(\bR^d)$ be the $(L^2)$ Sobolev space of regularity $s$ which is nothing but the space of distributions $\zeta$ such that $\|\zeta\|_{\CB_{2,2}^s; \bR^d} < \infty$ with the topology induced by this choice of norm.
	
\subsection*{Acknowledgements}

{\small
MH gratefully acknowledges financial support from the Royal Society via a research professorship,
RP\textbackslash R1\textbackslash 191065. Likewise, RS gratefully acknowledges financial support from the EPSRC via the grant 	EP \textbackslash W522673 \textbackslash 1.
}

\subsection*{Data Availability Statement}
{ \small	Data sharing is not applicable to this article as no datasets were generated or analysed during the current study.}

	\section{Setting and Main Result}\label{sec: set-up}
	
Since the aim of this article is to obtain stochastic estimates for the BPHZ model defined in \cite{BHZ}, we
begin this section by briefly recalling the relevant algebraic set-up from that paper and providing references for the reader who is interested in more detail.
	
	\subsection{Regularity Structures of Decorated Trees}
	
	We begin by recalling the definition of a regularity structure given in \cite[Definition 2.1]{Hai14}.
	
	\begin{definition}\label{def: regularity structure}
		A regularity structure is a triple $(\CT, \CA, \CG)$ where
		\begin{enumerate}
			\item The index set $\CA \subset \bR$ is locally finite and bounded below.
			\item The model space $\CT = \bigoplus_{\alpha \in \CA} \CT_\alpha$ is an $\CA$-graded vector space
			such that each $\CT_\alpha$ is a Banach space. 
			\item The structure group $\CG$ acts linearly on $\CT$ such that for every $\Gamma \in \CG$ and $\tau \in \CT_\alpha$, $\Gamma \tau - \tau \in \CT_{< \alpha} \eqdef \bigoplus_{\beta < \alpha} \CT_\beta $$.
			$			\end{enumerate}

	\end{definition}
	
	\begin{assumption}\label{ass:poly_structure}
		In addition to the above definition, we will assume throughout that our regularity structures contain the polynomial structure in the sense that for $k \in \bN$, $\CT_k$ is isomorphic to $(\Tpoly)_k$ and that there exists a group morphism $\CG \to \cG_\mathrm{poly}$ such that the action of $\cG$ on $\bigoplus_{k \in \bN} \CT_k$ coincides with the pullback of the action of $\cG_\mathrm{poly}$ under this group morphism.
	\end{assumption}

	For our purposes, the main family of regularity structures of interest will be the reduced regularity structures described in \cite[Section 6.4]{BHZ} with model spaces consisting of linear combinations of decorated trees. 
	
	\subsubsection{Typed and Decorated Trees}\label{ss: typed forests}
	
	\begin{definition} \label{def: trees}
		A rooted tree is a connected acyclic graph $T = (N_T, E_T)$ with a distinguished vertex $\rho_T$ called the root.		
		A rooted forest is a graph in which each connected component is a rooted tree.
	\end{definition}
	
	We remark that a rooted tree has a natural partial order on vertices where $u \le v$ if and only if $u$ lies on the unique path from $v$ to the root. This extends in the obvious way to a partial order on rooted forests. This partial order induces maps $e \mapsto e_p$ and $e \mapsto e_c$ where $(e_p, e_c)$ are the pair of vertices incident to $e$ and $e_p < e_c$. In turn, this induces a partial order on edges by writing $e \le \tilde{e}$ if and only if $e_p \le \tilde{e}_p$.
	
	We then fix a finite set $\Lab$ of types which we assume comes equipped with a partition $\Lab = \Lab_+ \sqcup \Lab_-$ where we call elements of $\Lab_+$ kernel types and elements of $\Lab_-$ noise types. The purpose of this set is to allow us to associate to each edge in the tree an integral kernel or driving noise.

	\begin{definition}
		A degree assignment is a map $|\cdot|_\fs : \Lab \to \bR$ such that $|\ft|_\fs > 0$ for all $\ft \in \Lab_+$.
	\end{definition}
	Throughout this paper, we assume that a choice of degree assignment is fixed. One should think of a degree assignment as quantifying for $\ft \in \Lab_+$ the degree of regularisation of the corresponding integral kernel and for $\ft \in \Lab_-$ as quantifying the regularity (or lack thereof) of the corresponding noise.
	
	\begin{definition} \label{def: typed trees}
		A rooted tree $T$ is said to be typed if it is equipped with a map $\ft: E_T \to \Lab$ such that if $\ft(e) \in \Lab_-$ then $e$ is maximal in the partial order on $E_T$ and for distinct edges $e, e'$ with $\ft(e), \ft(e') \in \Lab_-$ one has that $e_p \neq e'_p$.
		Typed (rooted) forests are defined analogously.
	\end{definition}
	
	We will henceforth implicitly assume that all trees and forests appearing are typed and rooted, so we drop these qualifiers. We will call edges with labels in $\Lab_+$ kernel edges and edges with labels in $\Lab_-$ noise edges.
	
	\begin{definition}\label{def: decorated trees}
		A decorated forest is a forest $F$ equipped with a node label $\fn : N_F \to \bN^d$ and an edge label $\fe : E_F \to \bN^d$. Given a forest $F$ and decorations $\fn, \fe$ on $F$, we denote the corresponding decorated forest by $F_\fe^\fn$.
	\end{definition}
	
	A degree assignment on the type set $\Lab$ induces a corresponding notion for semi-decorated forests by setting
	\begin{equ}[e:defDeg]
		|F_\fe^\fn|_\fs \eqdef \sum_{e \in E_F} (|\ft(e)|_\fs - |\fe(e)|_\fs) + \sum_{u \in N(F)} |\fn(u)|_\fs\;,
	\end{equ}
	where $N(F) \eqdef N_F \setminus e_c\left [\{e \in E_F : \ft(e) \in \Lab_-\} \right ]$ is the set of nodes that will later correspond to integration variables when we interpret trees as functions.

	\begin{remark}
		In \cite{BHZ}, an additional decoration $\fo$ appears which tracks suitable information from contracted subtrees during the recursive renormalisation process. For the majority of this paper, we will work exclusively on the reduced regularity structure of \cite[Section 6.4]{BHZ} where this decoration vanishes and hence we will not include it. 
	\end{remark}

	\subsubsection{The Reduced and Extended Structures of \cite{BHZ}}\label{section: The Reduced and Extended Regularity Structures}
	
	It is unfortunately not the case that the vector space generated by the set $\fH$ of all decorated trees for a given set of types $\Lab$ forms a regularity structure. The reason is that whilst $\fH$ is naturally graded 
	by \eqref{e:defDeg}, the homogeneities arising in this way are typically neither locally finite, nor bounded from below.
	
	The usual solution to this problem is to restrict the set of trees that are allowed in the basis of the model space via a prescription for which combinations of edges can appear at any given vertex known as a rule \cite[Definition 5.8]{BHZ}.
	
	Throughout this paper we will assume that a so-called complete, normal, subcritical rule $R$ has been fixed. For precise definitions we refer the reader to \cite[Definitions 5.7, 5.14, 5.22]{BHZ}. For our purpose, the precise definition will not matter except insofar as it is required for the constructions of \cite{BHZ} to yield a regularity structure. Since the details of that construction are not important to us, we will satisfy ourselves here with a high-level description of their output which will be sufficient for the remainder of the paper. 
	
	The upshot of the constructions of \cite{BHZ} is that the regularity structures obtained in this way always come with a distinguished basis (elements of which are represented by decorated trees) as well as a partial product (defined by joining trees at their roots and setting the node decoration of the new root to be the sum of the
	node decorations of the original roots) and a finite number of abstract integration operators of the form $\CI_k^\fl$ for $k \in \bN^d$ and $\fl \in \Lab_+$ (obtained by grafting a tree onto a ``trunk'' consisting of
	an edge of type $\fl$ and edge label $k$).
	
	Every basis vector can then be obtained from the elementary ones (corresponding to trees consisting only of a single noise edge or of just the root) by repeated applications of the product and operations of the form $\CI_k^\ft$. The only purpose of the rule $R$ is to restrict the trees considered for inclusion as basis vectors of the regularity structure to avoid the problem mentioned for the naive approach.
	
	We will assume that the choice of degree on $\Lab$ and of the rule $R$ is such that the resulting regularity structure has finite dimensional components of each degree and for convenience we will assume that the choice of norm on each homogeneous component is such that the basis trees are orthonormal (so that in particular, the norm on each homogenous component comes from an inner product).
	
	For $k \in \bN^d$ we will often write $X^k$ for the tree consisting of a single vertex $\rho$ with label $\fn(\rho) = k$ and for $\ft \in \Lab_-, k \in \bN^d$ we will write $\Xi_\ft^k$ for the tree consisting of a single edge of type $\ft$ with with edge decoration $k$ and vanishing node label. We will often write $\Xi_\ft \eqdef \Xi_\ft^0$. We will also assume that for $\ft \in \Lab_+$ we have that $\CI_k^\ft X^k = 0$. Following the notation of Definition~\ref{def: regularity structure}, we write $\CT$ for the resulting model space and $B$ for the corresponding basis of trees.
	
	It remains to the describe the structure group associated with this regularity structure, since the details of the construction are important to ensure that various operations involved in our main argument respect the structure of our induction. We will give a description closer in spirit to that of \cite[Section 8]{Hai14}, however we note that the object constructed is ultimately the same as the one in \cite{BHZ} (see Section~6.4 there) and so only coincides with the precise description of \cite[Section 8]{Hai14} up to a change of basis.
	
	We write $\CT_+$ for the linear span of formal expressions of the type 
\begin{equ}[e:setTplus]
	\Big\{X^k \prod_i \CJ_{k_i}^{\ft_i} \tau_i : \tau_i \in B, \tau_i \neq X^k \text{ and }|\CJ^{\ft_i}_{k_i} \tau_i|_\fs > 0\Big\}\;,
\end{equ}	
where the product ranges over a finite index set and where $|\CJ^{\ft}_k \tau|_\fs \eqdef |\tau|_\fs + |\ft|_\fs - |k|_\fs$. We endow $\CT_+$ with its natural structure as a commutative algebra which is 
	suggested by the notation \eqref{e:setTplus}. 
	
	We then have a natural family of maps $\bar{\CJ}^\ft_k: \CT \to \CT_+$ given by setting for a semi-decorated tree $\tau$
	\begin{equ}
		\bar{\CJ}^\ft_k \tau =
		\begin{cases}
			\CJ^\ft_k \tau \text{ if } |\CJ^\ft_k \tau|_\fs > 0, \\ 0 \text{ otherwise}
		\end{cases}
	\end{equ}
	and extending linearly and multiplicatively.
	
	Since the meaning will be clear from context, we will usually denote this map also by $\CJ_k^\ft$.
	Additionally, we define a map $\Delta: \CT \to \CT \otimes \CT_+$ by setting
	\begin{equ}\label{eq: Delta def base}
		\Delta 1 = 1 \otimes 1, \quad \Delta X_i = X_i \otimes 1 + 1 \otimes X_i \;,
	\end{equ}
	as well as requiring that $\Delta$ is multiplicative and satisfies 
	\begin{equ}\label{eq: Delta def int}
		\Delta \CI_k^\ft \tau = (\CI_k^\ft \otimes 1) \Delta \tau + \sum_{l,m} \frac{X^l}{l!} \otimes \frac{X^m}{m!} \tilde{\CJ}_{k+l+m}^\ft \tau\;,
	\end{equ}
	where $\tilde{\CJ}_{k}^\ft \tau \eqdef \sum_{|m|_\fs < |\CJ_k^\ft \tau|_\fs} \frac{(-X)^m}{m!} \CJ_{k+m}^\ft \tau$. The structure group is then the character group of $\CT_+$ acting on $\CT$ via $\Gamma_f \tau = (1 \otimes f) \Delta \tau$.
	
\begin{remark}
The reason for introducing the operators $\tilde{\CJ}_{k}^\ft$ is to simplify the expressions for the 
characters in \eqref{eq: f_x neg hom int} 
without having to introduce the twisted antipode from \cite{BHZ}. This is also
closer to the original setup of \cite{Hai14}, see \cite[Section~6.4]{BHZ}.
\end{remark}
	
	We note that unlike in \cite[Section 8]{Hai14}, we have not provided a definition of $\Delta \Xi_\ft$ for $\ft \in \Lab_-$ as part of the base case given in \eqref{eq: Delta def base} of the recursive construction for $\Delta$. The reason for this is that we write $\Xi_\ft = \CI^\ft 1$ so that this case is covered by \eqref{eq: Delta def int}. In \cite{Hai14}, only the case of noises of negative degree is considered 
	in which case \eqref{eq: Delta def int} yields $\Delta \Xi_\ft = \Xi_\ft\otimes 1$ which coincides with the one given there. However, in the case where $|\ft|_\fs > 0$, additional terms appear which correspond to the need for ``positive renormalisation'' that will appear in the treatment of such symbols.
	
	As an addition to the construction of \cite{BHZ}, we will assume that the reduced regularity structure is truncated to include only symbols of degree at most $R$ for some $R > 0$ chosen to be sufficiently large. This is sufficient for all current analytic applications of the results of that paper and will be a key simplifying assumption since it implies that $\CA$ is finite.
	
	At times, we will refer also to the extended regularity structure of \cite{BHZ}. The main difference between this structure and the reduced structure is that the trees forming a basis of the model space $\CT^\mathrm{ex}$ of the extended structure have an additional decoration $\fo: N_T \to \bZ^d \oplus \bZ(\Lab)$ which is used to keep track of the information on components of the tree that get contracted during the recursive renormalisation procedure that will eventually be performed. The model space and structure group then admit an analogous description to the one given for the reduced structure. 
	
	Since a full description of these details is lengthy and unnecessary for the core of our argument, we refer the interested reader to \cite[Section 5]{BHZ}.

	\begin{assumption}\label{ass:rule}
		For the remainder of the paper, we will assume that each regularity structure appearing is an instance of (a truncation of) either the reduced or extended regularity structure for some complete, normal, subcritical rule.
	\end{assumption}

	For the reader not familiar with \cite{BHZ}, this should be treated as a technical assumption 
	required to use the constructions of that paper which is satisfied by all the rules arising
	naturally from subcritical stochastic PDEs. It guarantees on one hand that the structure is finite-dimensional
	at homogeneities below each level and on the other hand that the renormalisation group 
	acting on the structure is sufficiently rich.

	\subsection{Models and Renormalisation}
	
		The key analytic objects that we wish to study are models on the given regularity structure. Here, we write our definition in a form that is amenable to the treatment of renormalised models given in the case of the reduced\slash extended structures since it will later make introduction of notation more convenient.
	
	\begin{definition}\label{def: model}
		A smooth model on $\mathbb{R}^d$ with scaling $\fs$ on the reduced\slash extended structure $(\CT, \CA, \CG)$ consists of a pair of maps $(\Pi, \Gamma)$ where
		\begin{enumerate}
			\item There exist characters $(f_x)_{x \in \bR^d}$ on $\CT_+$ (or $\CT_+^\mathrm{ex}$ in the case of the extended structure) such that $\Gamma_{xy} = F_x^{-1} F_y$ where $F_x = (1 \otimes f_x) \Delta$,
			\item $\Pi: \mathbb{R}^d \times \CT \to \CC^\infty(\mathbb{R}^d)$ is such that $\Pi_x \Gamma_{xy} = \Pi_y$.
		\end{enumerate}
		We additionally assume that the following analytic bounds hold for all $\gamma > 0$ and compact sets $\ck \subseteq \mathbb{R}^d$:
		\begin{equs}
			\|\Pi\|_{\gamma; \ck} & \eqdef \sup_{\varphi \in \CB_r} \sup_{\alpha < \gamma} \sup_{x \in \ck} \sup_{\lambda \in (0,1]} \sup_{\tau \in \CT_\alpha} \lambda^{-\alpha} \|\tau\|^{-1} |\Pi_x \tau (\varphi_x^\lambda) | < \infty,
			\\
			\|\Gamma\|_{\gamma; \ck} & \eqdef \sup_{\beta < \alpha < \gamma} \sup_{x,y \in \ck} \sup_{\tau \in \CT_\alpha} \|x-y\|^{\beta-\alpha} \|\tau\|^{-1} \|\Gamma_{xy} \tau \|_\beta < \infty.
		\end{equs}
	\end{definition}
	
\begin{remark}\label{rem:topology}
These ``semi-norms'' yield a natural topology on the space of models. When showing convergence,
we will always restrict ourselves to some fixed but arbitrary $\gamma > 0$ and assume
then without loss of generality that our regularity structure is finite-dimensional.
\end{remark}	

	In addition to the above requirements, we will want to ensure that our models correctly encode the action of integration against the desired integration kernels. To this end, we fix a choice of kernel for each label $\fl \in \Lab_+$
	
	\begin{definition}
		A kernel assignment is a tuple $K = (K_\ft)_{\ft \in \Lab_+}$ such that for each $\ft \in \Lab_+$, $K_\ft$ satisfies \cite[Assumption 5.1]{Hai14} with $\beta = |\ft|_\fs$.
	\end{definition}

	\begin{definition}\label{def: admissibility}
			A model $Z = (\Pi, \Gamma)$ is admissible if
			\begin{enumerate}
				\item For $k \in \bN^d$, $\Pi_z X^k(\bar{z}) = (\bar{z} - z)^k$ and for $\tau \in \CT$ such that $X^k \tau \in \CT$, $\Pi_z X^k \tau = \Pi_zX^k \cdot \Pi_z \tau$.
				\item For $\ft \in \Lab_+$, $k \in \bN^d$ and $\tau \in \CT$ such that $\CI_k^\ft \tau \in \CT$, we have that
				$$\Pi_z \CI_k^\ft \tau (\bar{z}) = D^k K^\ft \ast \Pi_z \tau (\bar{z}) - \sum_{|j|_\fs < |\CJ_k^\ft \tau|_\fs} \frac{(\bar{z} - z)^j}{j!} D^{k+j} K^\ft \ast \Pi_z \tau(z).$$
			\end{enumerate}
	\end{definition}
	
    As is usual, we will without loss of generality assume that for an admissible smooth model $(\Pi, \Gamma)$, the characters $f_x$ satisfy 
\begin{equ}[e:deffx]
    f_x(X_i) = -x_i, \qquad f_x(\tilde{\CJ}_k^\ft \tau) = - (D^k K \ast \Pi_x \tau)(x)
\end{equ}
when $\ft \in \Lab_+$ is such that $|\ft|_\fs + |\tau|_\fs - |k|_\fs > 0$ and that when $\ft \in \Lab_-$ is such that $|\ft|_\fs - |k|_\fs > 0$ (which is possible since we allow noise edges of positive degree), we have that 
    \begin{equ}\label{eq: f_x neg hom int}
    	f_x (\tilde{\CJ}_k^\ft 1 ) = - D^{k} \xi_\ft (x)
    \end{equ}
	for some family of functions $\xi_\ft \in \CC^{|\ft|_\fs}$.

	It then follows that $\PPi = \Pi_x F_x^{-1}$ is independent of $x$ and we may equivalently represent the given model in the form $(\PPi, f)$. As such, we will in what follows interchange between these two descriptions without comment. 
	
	\begin{remark}\label{remark: positive degree noises}
		In general, there is no completely canonical way to associate a pair $(\PPi, f)$ to the pair $(\Pi, \Gamma)$. In the above we obtain a fixed definition only because we have made particular choices for how to define $f_x(X_i)$ and $f_x(\tilde{\CJ}_k^\ft 1)$, which correspond to insisting that $\PPi X_i = x_i$ and $\PPi \Xi_\ft = \xi_\ft$.
		
		However, for any constants $c_i$ and polynomials $\{P_\ft: \ft \in \Lab_-, |\ft|_\fs >0\}$ such that $P_\ft$ has degree less than $|\ft|_\fs$, there exists a choice of characters $\tilde{f}_x$ such that $\tilde{\PPi} \eqdef \Pi_x \tilde{F}_x^{-1}$ is such that $\tilde{\PPi} X_i = - x_i + c_i$ and $\tilde{\PPi} \Xi_\ft = \xi_\ft + P_\ft$, where $\tilde{F}_x$ is the map corresponding to $\tilde{f}_x$. We would then have $\tilde{\PPi} \tilde{F}_x = \Pi_x = \PPi F_x$ so that we see that our choice of $(\PPi, f)$ is far from canonical.
		
		However, in this paper our perspective is always that we start with fixed noises $\xi_\ft \in \CC^{|\ft|_\fs}$ and consider models constructed from them. This means that for us there will always be a well-defined choice of characters $f$ associated to a model $(\Pi, \Gamma)$ by \eqref{e:deffx}.
	\end{remark}
    
	We denote by $\CM_\infty(\CT)$ the space of all smooth and admissible models on $\CT$ and remark that the pseudometrics defined in Definition~\ref{def: model} induce a metric topology on $\CM_\infty(\CT)$. We denote the completion of $\CM_\infty(\CT)$ under this topology by $\CM_0(\CT)$ and note that it includes models which are distributional in nature, rather than function-valued.
	
	We will however be particularly interested in models that are constructed in a canonical way from a smooth driving noise and those that can be obtained by renormalisation of such a model. 
	
	\begin{definition}\label{def: smooth noise}
		A smooth noise is a tuple $\xi = (\xi_\ft)_{\ft \in \Lab_-}$ such that $\xi_\ft \in \CC^\infty(\bR^d)$ for every $\ft \in \Lab_-$. As in \cite{ChHa16}, we denote by $\Omega_\infty$ the collection of all smooth noises.
	\end{definition}
	
	\begin{definition}\label{def: canonical lift}
		The canonical lift $\CL(\xi)$ of a smooth driving noise $\xi$ is the unique admissible model such that
		\begin{enumerate}
			\item If $\tau_1, \tau_2 \in \CT$ are trees such that $\tau_1 \tau_2 \in \CT$, then $\Pi_z \tau_1 \tau_2 = \Pi_z \tau \cdot \Pi_z \tau_2$.
			\item For $\ft \in \Lab_-$, $\Pi_z \Xi_\ft = \xi_\ft - \sum_{|k|_\fs < |\ft|_\fs} \frac{D^k \xi_\ft(z)}{k!} (\cdot - z)^k$.
		\end{enumerate}
	\end{definition}
	
	As is by now well known, it is typically not the case that if $\xi_\varepsilon$ denotes a sequence of smooth approximations of a distributional driving noise $\xi$ then $\CL(\xi_\varepsilon)$ converges as $\varepsilon \to 0$. Instead, it is in general necessary to deform the definition of the product so as to produce a convergent sequence of models from these canonical lifts. This is the procedure of renormalisation of models that we will address in the next subsection.

	\subsubsection{The Renormalisation Group}\label{section: Renormalisation Group}
	
	Thus far in the regularity structures literature, there are two distinct objects that have been given the name ``renormalisation group'' for a given instance of the reduced \slash extended regularity structure. In this paper, we will have reason to work with both objects and thus in this section we will clarify the relation between them and prepare notation. 
	
	In particular, we identify nested groups $\CG_- \subset \fR$ described in \cite{BHZ,Hai14} respectively, together with their action on $\CM_\infty(\CT)$ that will encode the renormalisation that will later appear.
	
	The larger group $\fR$ is the renormalisation group constructed in \cite[Section 8]{Hai14}. Whilst the definition of this group entails fewer properties for its elements, we will have cause to work with an element of $\fR \setminus \CG_-$ later and thus will require it.
	
	\begin{definition}
		$\fR$ is the group of linear maps $M: \CT \to \CT$ such that $M$ commutes with the operators $\CJ_k^\ft$ for $\ft \in \Lab_+$, $M X^k = X^k$ and such that 
		$$\Delta^M \tau = \tau \otimes 1 + \sum \tau^{(1)} \otimes \tau^{(2)}$$ where $|\tau^{(1)}|_\fs > |\tau|_\fs$ and $\Delta^M$ is constructed in \cite[Proposition 8.36]{Hai14}. 
	\end{definition}
	
	\begin{remark}
		\cite{Hai14} also requires upper triangularity for another map $\hat{\Delta}^M$. However by \cite[Theorem B.1]{HQ18}, this condition actually follows automatically from the other conditions imposed in the construction. 
	\end{remark}
	
The action of $\fR$ on the space $\CM_\infty(\CT)$ is given by setting $M (\Pi, \Gamma) = (\Pi^M, \Gamma^M)$ where $\Gamma_{xy}^M = (F_x^M)^{-1} F_y^M$,
	$$\Pi_x^M \tau = (\Pi_x \otimes f_x) \Delta^M \tau, \qquad F_x^M \tau = (1 \otimes f_x \hat{M}) \Delta \tau\;,$$
	and $\hat{M}$ is constructed in \cite[Prop.~8.36]{Hai14}. 

	At times, the renormalisation group of \cite{Hai14} will be unwieldy for our purposes. At these times, we will restrict our statements to elements of the group $\CG_-$ described in \cite[Sec.~6.4]{BHZ}, which is a subgroup of $\fR$ by \cite[Thm~6.36]{BHZ}. 
	
	As mentioned previously, the construction of \cite{BHZ} also yields a regularity structure $\mathscr{T}^\mathrm{ex}$ whose model space $\CT^\mathrm{ex}$ consists of trees with an additional node decoration $\fo: N_T \to \bZ^d \oplus \bZ^d(\Lab)$. Additionally, \cite{BHZ} provides a Hopf algebra $\CT_-^\mathrm{ex}$ of trees of negative degree whose character group $\CG_-^\mathrm{ex}$ is the renormalisation group on the extended structure. An element $g \in \CG_-^\mathrm{ex}$ acts on models on the extended structure via precomposition with $M_g^\mathrm{ex} \eqdef (g \otimes 1) \Delta_\mathrm{ex}^-$. Here $\Delta_\mathrm{ex}^-: \CT^\mathrm{ex} \to \CT_-^\mathrm{ex} \otimes \CT^\mathrm{ex}$ is a co-action which (when ignoring the presence of decorations) 
extracts and contracts negative degree subtrees. The precise definition of the objects in this construction won't be important to us and so we refer the interested reader to \cite[Secs~5--6]{BHZ} for more details.
	
	We will actually only make use of the subgroup $\CG_- \subset \CG_-^\mathrm{ex}$ consisting of 
characters that are reduced in the sense that they do not depend on the additional decoration $\fo$.
The importance of $\CG_-$ stems from the fact that there is a canonical embedding $\CM_0(\CT) 
\hookrightarrow \CM_0(\CT^\ex)$ whose image is preserved under the action of $\CG_-$ on
$\CM_0(\CT^\ex)$. There, it furthermore coincides with the action of $\CG_-$ on $\CM_0(\CT)$
mentioned earlier, thus justifying the abuse of notation. (One has compatible canonical inclusions
$\CG_- \hookrightarrow \CG_-^\mathrm{ex} \hookrightarrow \fR^\ex$ as well as
$\CG_- \hookrightarrow \fR \hookrightarrow \fR^\ex$.)
In what follows, we will often without comment make use of these identifications.
	
	\subsubsection{Spaces of Random Models}
	
	We now turn to the description of the spaces of random models that will be under consideration for the remainder of this paper. 
	
	As in \cite{Felix}, our main assumption on the random driving noises appearing in the construction of models in this paper is a certain spectral gap inequality. In order to formulate this inequality for a collection of driving noises indexed by $\Lab_-$, we introduce some useful function spaces.
	
	\begin{definition}
		Given a map $s : \Lab_- \to \bR$, we set
		\begin{equ}
			H^s(\Lab_-) \eqdef \prod_{\ft \in \Lab_-} H^{s(\ft)}
		\end{equ}
		where the Sobolev space $H^s$ is the space of distributions $\zeta$ that satisfy $\|\zeta\|_{\CB_{2,2}^s; \bR^d} < \infty$ equipped with the obvious choice of norm.
		
		Since the product is finite, there is no ambiguity regarding the topology on this space. However, since we will want to characterise the resulting norm by duality later, we will equip the product space with the norm resulting from the $\ell^1$-product.
	\end{definition}
	
	Fix now a dimension $d$ and write $\CD'(\Lab_-) = \CD'(\bR^d)^{\Lab_-}$, with
	$\CD'(\bR^d)$ the space of Schwartz distributions on $\bR^d$. 
	Recall that a function $F \mathpunct{:}\CD'(\Lab_-) \to \bR$ is said to be 
	\textit{cylindrical} if there exist $k \in \bN$, $\allowbreak \psi_1, \dots, \psi_k \in \CD(\bR^d)$, $\ft_1, \dots, \ft_k \in \Lab_-$ and a smooth function $\bar{F}$ in $k$ real variables such that $F(\xi) = \bar{F}(\xi_{\ft_1}(\psi_1), \dots, \xi_{\ft_k}(\psi_k))$.
		For such an $F$, we define its \textit{gradient} by
		\begin{equ}
			\frac{\partial F}{\partial \xi}(\xi)_{\ft} = \sum_{i=1}^k \partial_i \bar{F} (\xi_{\ft_1}(\psi_1), \dots, \xi_{\ft_k}(\psi_k)) \,\psi_i\,\delta_{\ft_i, \ft}\;.
		\end{equ}
		This is interpreted as an element of $H^s(\Lab_-)$ for any choice of $s: \Lab_- \to \bR$.

	\begin{definition}\label{def: SGap}
		Given $s : \Lab_- \to \bR$, we say that a $\CD'(\Lab_-)$-valued random variable $\xi$ satisfies the spectral gap inequality for $s$ if the bound
		\begin{equ}\label{eq: SGap}
			\bE \left [ |F(\xi)|^2 \right ]^{1/2} \le C_2 \Bigg (|\bE \left [ F(\xi) \right ]| + \bE \left [ \left \| \frac{\partial F}{\partial \xi} \right \|_{H^s(\Lab_-)}^2 \right ]^{1/2} \Bigg)
		\end{equ}
holds uniformly over all cylindrical functions $F$.
	\end{definition}
\begin{remark}\label{remark: SGap upgrade}
	By applying the spectral gap inequality to $F^2$ for $F$ cylindrical, one concludes from it that, for $p=4$,
	there exists a constant $C_4$ such that the bound
	\begin{equ}\label{eq: pSGap}
		\bE \left [ |F(\xi)|^{p} \right ]^{1/p} \le C_p \Bigg (|\bE \left [ F(\xi) \right ]| + \bE \left [ \left \| \frac{\partial F}{\partial \xi} \right \|_{H^s(\Lab_-)}^p \right ]^{1/p} \Bigg)
	\end{equ}
	holds uniformly over smooth cylindrical functions with $p = 4$. Iterating this argument, one obtains the same inequality for all $p$ of the form $2^k$ with $k \in \bZ_+$. (See for example \cite[Lem.~2]{GNO15} for a similar argument.)
\end{remark}
\begin{remark}
	In the Gaussian setting, it is well known that the spectral gap inequality holds so long as the Sobolev norm appearing on the right hand side is the Cameron--Martin norm of the corresponding Gaussian measure (see \cite[Theorem 5.5.1]{Bogachev}).
	
	We will often require a slightly different choice of norm (as dictated by Assumption~\ref{ass:alg}), however it will typically be the case that our choice of norm will be stronger than the corresponding Cameron--Martin norm so that our assumption is weaker than the usual condition for Gaussian measures. For example, in the case of space-time white noise, the Cameron--Martin norm is nothing other than the $L^2$-norm. However, the corresponding part of Assumption~\ref{ass:alg} corresponds to requiring $s$ to be a small but positive real number in this case. 
	
	This is done to avoid working with Besov spaces of integer regularity, which frequently exhibit different behaviour to their non-integer regularity counterparts.
\end{remark}
	
It is unfortunately not the case that, for a given model $(\Pi, \Gamma)$ on a regularity structure $\CT$, the
random variables $\Pi_x \tau (\psi)$ are cylindrical as a function of $\xi$, where $\tau \in \CT$ and $\psi \in \cD(\bR^d)$. Nonetheless, we will throughout apply the spectral gap inequality to such functions of the noise without further ado. This can always be justified by an application of the following technical lemma, for which we only provide a sketch proof since it is not the main conceptual novelty of our approach.

In order to state the lemma, we first fix some notation. We fix the weight function 
$w(x) = (1+|x|^2)^{|\fs|/2+1}$ on $\bR^d$ and write $\CC^{-L}_w(\bR^d)$\label{defCw}
for the corresponding weighted $\CC^{-L}$ space, namely distributions in $\CC^{-L}_w$ are such that
\begin{equ}
	|\zeta(\psi_x^\lambda)| \le C w(x) \lambda^{-L}\;,
\end{equ}
uniformly over $x \in \bR^d$, $\psi \in \CB^r$ and $\lambda \in (0,1]$.
These weighted spaces will not be important in the rest of this paper, and appear only to avoid the 
need for some ad hoc notion of derivative on a Fréchet space.

\begin{lemma}
Let $s$ be as in Definition~\ref{def: SGap} and let
\begin{equ}
\CC_w \eqdef \prod_{\ft \in \Lab_-} \CC^{- s(\ft) - {|\fs|\over 2} - \kappa}_w(\bR^d)\;,
\end{equ}
for some (fixed but arbitrary) $\kappa > 0$.
Suppose that $\xi$ is a centred $\CD'(\Lab_-)$-valued random variable satisfying the 
corresponding spectral gap inequality \eqref{eq: SGap}.
	
Then $\xi$ admits a version taking values in $\CC_w$ with moments of all orders. Let furthermore
$
F\colon \CC_w \to \bR
$
be continuously Fr\'echet differentiable 
such that both $F$ and its Fr\'echet derivative are of polynomial growth.
Then $F$ also satisfies \eqref{eq: SGap}.
\end{lemma}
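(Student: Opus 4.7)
The plan is to split the argument into two parts: first establishing that $\xi$ admits a version in $\CC_w$ with moments of all orders, then extending the spectral gap inequality from cylindrical functions to all $F$ satisfying the hypotheses.

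The first part proceeds by applying \eqref{eq: SGap} to the cylindrical function $F(\xi) = \xi_\ft(\psi)$ with $\psi \in \CD(\bR^d)$. Since $\xi$ is centred and $\partial F/\partial \xi$ has only a $\ft$-component, equal to $\psi$, this yields $\bE[|\xi_\ft(\psi)|^2]^{1/2} \le C_2 \|\psi\|_{H^{s(\ft)}}$. Iterating as in Remark~\ref{remark: SGap upgrade} upgrades this to $\bE[|\xi_\ft(\psi)|^p]^{1/p} \le C_p \|\psi\|_{H^{s(\ft)}}$ for every $p$ of the form $2^k$, and hence for every $p \ge 2$ by interpolation. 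Specialising to $\psi = \eta_x^\lambda$ with $\eta \in \CB_r$ and using the scaling $\|\eta_x^\lambda\|_{H^{s(\ft)}} \le C \lambda^{-s(\ft) - |\fs|/2}$, I would then apply a weighted Kolmogorov-type criterion for Besov--Hölder regularity to obtain a version of $\xi_\ft$ in $\CC^{-s(\ft)-|\fs|/2-\kappa}_w(\bR^d)$ with moments of all orders, the polynomial weight $w$ emerging naturally from $L^p$-integration over $x \in \bR^d$.

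For the second part, the strategy is to approximate $F$ by cylindrical functions and pass to the limit. I would fix for each $\ft \in \Lab_-$ a wavelet-type basis $(\psi_{n,\ft})_{n \ge 1} \subset \CD(\bR^d)$ adapted to the scaling $\fs$ that serves simultaneously as an orthonormal basis of $H^{s(\ft)}$ and as an unconditional basis of $\CC^{-s(\ft)-|\fs|/2-\kappa}_w(\bR^d)$, and denote by $(\tilde\psi_{n,\ft})$ its biorthogonal dual. Setting
\begin{equs}
(P_N \xi)_\ft \eqdef \sum_{n \le N} \xi_\ft(\psi_{n,\ft}) \,\tilde\psi_{n,\ft}\;,
\end{equs}
the composition $F_N \eqdef F \circ P_N$ is cylindrical. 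The unconditional basis property would ensure $P_N \xi \to \xi$ almost surely in $\CC_w$, so continuity of $F$ and of the Fr\'echet derivative $DF \colon \CC_w \to H^s(\Lab_-)$ (identified by Riesz representation) would yield $F_N(\xi) \to F(\xi)$ and $\partial F_N / \partial \xi \to \partial F/\partial \xi$ in $H^s(\Lab_-)$. Applying \eqref{eq: SGap} to each $F_N$ and combining these limits with the moment bounds of the first part via dominated convergence --- allowed by the polynomial growth of $F$ and $DF$ --- would pass \eqref{eq: SGap} to $F$.

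The hardest step will be the construction of the biorthogonal system in the second part: one needs a single family serving simultaneously as an orthonormal basis of $H^{s(\ft)}$ and as a system along which the projections $P_N$ are uniformly bounded on the weighted Hölder space $\CC^{-s(\ft)-|\fs|/2-\kappa}_w(\bR^d)$, with $P_N \to \id$ strongly on a set of full $\xi$-measure. This is classical in the theory of weighted Besov spaces via Daubechies-type wavelets, but requires some care to adapt to the scaled, weighted setting at hand.
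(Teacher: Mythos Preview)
Your first part matches the paper's argument essentially verbatim: apply the spectral gap inequality to $\xi_\ft(\psi_x^\lambda)$, use the scaling of the $H^{s(\ft)}$ norm, and conclude via a weighted Kolmogorov criterion.

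For the second part, the paper takes a noticeably more elementary route than your wavelet approximation. Instead of an orthonormal basis, the paper uses a crude but self-adjoint finite-rank approximation
\begin{equ}
\zeta^\varepsilon = \sum_{x \in \Lambda_\varepsilon \cap B(\varepsilon^{-1})} \frac{1}{\|\theta_x^\varepsilon\|_{L^1}}\, \zeta(\theta_x^\varepsilon)\, \theta_x^\varepsilon\;,
\end{equ}
built from a smooth partition of unity $\{\theta_x^\varepsilon\}$. This converges to the identity on $\CC_w$, and the symmetry $\scal{f,g^\varepsilon}=\scal{f^\varepsilon,g}$ in the $L^2$ pairing lets one identify $d(F\circ(\cdot)^\varepsilon)[\xi]$ with $\big(dF[\xi^\varepsilon]\big)^\varepsilon$, which converges to $dF[\xi]$ in $H^s(\Lab_-)$ by continuity. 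Dominated convergence then closes the argument.

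Your approach is in principle viable, but the hard step you flag is genuinely delicate: you need compactly supported, sufficiently smooth wavelets adapted to a \emph{general anisotropic} scaling $\fs$, simultaneously orthonormal in $H^{s(\ft)}$ and with projections uniformly bounded on the weighted H\"older space. The paper explicitly avoids wavelet methods elsewhere (see the remark following Theorem~\ref{theo: Reconstruction}) precisely because they impose extra restrictions on $\fs$; Daubechies-type constructions are compactly supported but only $C^k$, while Meyer-type constructions are smooth but not compactly supported, so strictly speaking neither yields cylindrical approximants in $\CD(\bR^d)$. The partition-of-unity scheme sidesteps all of this: it is manifestly self-adjoint, works for arbitrary $\fs$, and requires no basis theory at all.
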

\begin{proof}[Sketch of proof]
Let $\psi$ be a smooth compactly supported test function. 
By applying \eqref{eq: pSGap} to $F[\xi] = \xi_\ft(\psi_x^\lambda)$, we obtain for each $p$ of the form $2^k$
\begin{equ}
\|\xi_\ft(\psi_x^\lambda)\|_{L^p} \lesssim \|\psi_x^\lambda\|_{H^{s(\ft)}(\Lab_-)} \lesssim \lambda^{- s(\ft) - \f{|\fs|}2}\;,
\end{equ}
where the last bound follows from the scaling properties of $H^{s(\ft)}$ and the implicit constant is allowed to depend on $p$.

As a result, the fact that $\xi$ admits a version in $\CC_w$ with moments of all orders then follows from a simple variant of 
Kolmogorov's continuity test in the form of Theorem~\ref{theo: Kolmogorov Criterion}, using the fact that $\sum_{x \in \bZ^d} 1/w(x) <\infty$.

	For $\zeta \in \CC^{-L}_w(\bR^d)$, we define 
	\begin{equ}[e:approx]
		\zeta^\varepsilon \eqdef \sum_{x \in \Lambda_\varepsilon \cap B(\eps^{-1})} \frac{1}{\|\theta_x^\varepsilon\|_{L^1}} \zeta(\theta_x^\varepsilon) \theta_x^\varepsilon
	\end{equ}
	where $\{\theta_x^\varepsilon: x \in \Lambda_\varepsilon \eqdef (\varepsilon \bZ)^d\}$ is a smooth partition of unity such that $\theta_x^\varepsilon$ is supported in $B_\fs(x, c\varepsilon)$ for a positive constant $c$ chosen such that the resulting $c\eps$-balls cover the full space. One then checks that $\zeta^\eps \to \zeta$ in $\CC_w$ as $\eps \to 0$.
	
	It then follows from continuous Fr\'echet differentiability of $F$, that if $\xi^\eps \eqdef  (\xi_\ft^\eps)_{\ft \in \Lab_-}$ then $F[\xi^\eps] \to F[\xi]$ in $\bR$ and $dF[\xi^\eps] \to dF[\xi]$ in $H^s(\Lab_-)$ (where in the latter case, we make use of the continuous embedding $(\CC_w)^* \hookrightarrow H^s(\Lab_-)$). Therefore, since $\scal{f, g^\eps} = \scal{f^\eps, g}$, we can write $dF^\eps[\xi](\eta) = dF[\xi^\eps][\eta^\eps] = \scal{dF[\xi^\eps]^\eps, \eta}$ where we interpret $dF[\xi^\eps]$ as an element of $H^s(\Lab_-)$ to conclude that for fixed $\xi \in \CC_w$, $dF^\eps[\xi] \to dF[\xi]$ in $H^s(\Lab_-)$-norm.
		The result then follows by an application of the dominated convergence theorem.
\end{proof}
	
	We will now assume that a map $\reg : \Lab_- \to \bR$ is fixed for the remainder of this paper. When we say that an $\Omega_\infty$-valued random variable satisfies the spectral gap inequality without specifying $s$ we will always mean that it satisfies the spectral gap inequality for $s = - \reg$. 
Here the choice of sign is to simplify notation in our later arguments where we will treat the norm in the derivative term by duality so that $\reg$ corresponds to the regularity of the space we end up working with. 

The map $\reg$ captures the amount by which the scaling of our driving noise differs from that of white noise,
modulo some small perturbation which we will make for purely technical reasons in order to avoid 
integer regularities in our Besov spaces.
	
	\begin{definition}\label{def:smoothNoise}
		We let $\CM(\Omega_\infty)$ denote the set of $\Omega_\infty$-valued random variables $\xi$ such that
		\begin{enumerate}
			\item The law of $\xi$ is invariant under the natural action of $\bR^d$ on $\Omega_\infty$ given by translations.
			\item $\xi$ satisfies the spectral gap inequality with $s = -\reg$. 
			\item For every $x \in \bR^d$, $k \in \bN^d$ and $\ft \in \Lab_-$, $D^k \xi_\ft(x)$ has finite moments of all orders and has vanishing first moment.
		\end{enumerate}
	\end{definition}
	\begin{remark}
		This definition is more restrictive than the one found in \cite[Definition 2.13]{ChHa16}. The difference is that that paper aims to estimate norms of models via moment-cumulant techniques and thus later makes assumptions on certain norms that control cumulants of the noise. We will instead replace this control on cumulants with the spectral gap assumption and thus we encode that at the level of the definition here.
	\end{remark}
	
	The noises we are interested in do not satisfy Definition~\ref{def:smoothNoise} due to their lack of 
	regularity. However, they will satisfy the following, which implies that their convolutions with a mollifier satisfies Definition~\ref{def:smoothNoise}.
	\begin{definition}\label{def: SGap noises}
		We let $\CM(\Omega_0)$ be the set of all $\CD'(\Lab_-)$-valued random variables $\xi$ 
		that are centred and that satisfy the analogues of points 1 and 2 in Definition~\ref{def:smoothNoise}.
	\end{definition}

	Given $\xi \in \cM(\Omega_\infty)$, applying the action of the renormalisation group $\cG_-$ to the canonical lift $\CL(\xi)$ yields a collection of random models. We will now identify several spaces of random models that contain such models that will be required in the sequel. 
	
	\begin{definition}\label{def:stationary}
		We denote by $\bar{\CM}_{\rand}(\CT)$ the space of all $\fM_0(\CT)$-valued random variables that are stationary in the sense that there exists an action $\tau$ of $\bR^d$ on the underlying probability space by measure preserving maps such that for $x,y, h \in \bR^d$, the identities
		\begin{equ}\label{eq: translation invariance}
			T_h \Pi_{x+h}(\omega) = \Pi_x(\tau_h \omega), \qquad \Gamma_{x+h, y+h}(\omega) = \Gamma_{x,y}(\tau_h \omega)
		\end{equ}
		hold almost surely, where $T_h \Pi_x \tau (y) = \Pi_x \tau (y+h)$.
		
		We let $\Mrand$ denote the subspace of $\bar{\CM}_{\rand}(\CT)$ consisting of random models $Z$ such that there exists $M \in \CG_-$ and $\xi \in \CM(\Omega_\infty)$ such that $Z = M \CL(\xi)$. 
	\end{definition}
	
	\begin{remark}
		It is straightforward to see from \cite[Equation 6.20]{BHZ} that all models constructed from the lift of a stationary noise via the action of $\CG_-$ are stationary and hence lie in $\Mrand$.
	\end{remark}
	
	In what follows, we will assume that all random models under consideration are stationary in the sense of Definition~\ref{def:stationary}.	
	As mentioned previously, our aim is to consider random models constructed via renormalisation of the canonical lift of mollifications of $\xi \in \CM(\Omega_0)$. Given $\xi \in \Omega_0$ and a sequence $M^n \in \CG_-$, we obtain a family of models $Z^n = (\Pi^n, \Gamma^n)$ in $\Mrand$ defined by setting $Z^n = M_n \CL(\xi_n)$ where $\xi_n = \varrho^n \ast \xi$ for some mollifier $\varrho$.
	
	Our goal in this paper is then to show convergence for the sequence of models given by this construction for a distinguished sequence of elements $M_n$ known as the \textit{BPHZ renormalisation} which is constructed in \cite{BHZ}.
	
	For brevity, we will not delve into the precise details of the construction of \cite{BHZ} here. Rather, we will satisfy ourselves with a description of its defining properties. 
	\begin{definition}\label{def: BPHZ}
		For $\xi \in \CM(\Omega_\infty)$, the BPHZ renormalised model corresponding to $\xi$ is the unique model $(\hat{\Pi}, \hat{\Gamma})$ in $\CG_-(\CL(\xi))$ such that for each tree $\tau \in \CT$ of negative degree one has that
		$$\bE[\hat{\PPi} \tau(h)] = 0$$
		where $\hat{\PPi} = \hat\Pi_x \,\hat F_x^{-1}$.
		
		Given a noise $\xi \in \CM(\Omega_0)$ and a mollifier $\varrho$, we will often write $\hat{Z}^n = (\hat{\Pi}^n, \hat{\Gamma}^n)$ for the BPHZ renormalised model corresponding to $\xi_n = \varrho^n \ast \xi$.
	\end{definition}
	Existence and uniqueness of a renormalised model with these properties follows from the results of \cite[Section 6]{BHZ} and in particular Theorem~6.18 there. For an explicit description of the action of $\hat{\Pi}^n$ on trees, we refer the reader to \cite[Equations 3.5 and 4.27]{ChHa16}.
	
	In fact, we will proceed by first considering a slight variant of the BPHZ renormalisation which we call the $\bBPHZ$ renormalisation that is better adapted to our approach for estimating the first term on the right hand side of the spectral gap inequality.
	
	\begin{definition}\label{def: bBPHZ}
		For $\xi \in \CM(\Omega_\infty)$, the $\bBPHZ$ renormalised model corresponding to $\xi$ is the unique model $(\bar{\Pi}, \bar{\Gamma})$ in $\CG_-(\CL(\xi))$ such that for each tree $\tau \in \CT$ of negative degree one has that
		\begin{equ}\label{eq: renorm recentering}
			\bE[\bar{\Pi}_0 \tau(\phi)] = 0
		\end{equ}
		where $\phi$ is as in Definition~\ref{def: semigroup kernel}.
		
		Given a noise $\xi \in \CM(\Omega_0)$ and a mollifier $\varrho$, we will often write $\bar{Z}^n = (\bar{\Pi}^n, \bar{\Gamma}^n)$ for the $\bBPHZ$ renormalised model corresponding to $\xi_n = \varrho^n \ast \xi$.
	\end{definition}

	Since we will only ever consider models that are stationary in the sense of Definition~\ref{def:stationary} the arbitrary choice of $0$ as a distinguished base point appearing in the above definition is not of particular importance.
	
	Existence and uniqueness of such a model follows from straightforward adaptation of the proof of \cite[Theorem 6.18]{BHZ} by replacing their character $g^-(\PPi)$ with the character $g^-(\Pi) = \bE[\Pi_0 \tau(\phi)]$. The specific construction will not be of particular importance outside of the property \eqref{eq: renorm recentering} and as a result we once again avoid introducing the lengthy notation required to discuss the constructions of \cite{BHZ} in detail.
	
	\subsection{Assumptions and Main Results}
	
	Our main goal in this paper is to show that if we fix a driving noise $\xi$ satisfying the spectral gap inequality and let $\xi_n = \varrho^n \ast \xi$ then $M_n^{\mathrm{BPHZ}} \CL(\xi_n)$ is a convergent sequence of models. We split this task into two parts. We begin by establishing uniform bounds on this sequence of models and only once these bounds are established do we turn to the convergence result. This division provides an opportunity to demonstrate the core of our argument without the technicalities required for convergence and also allows us to simplify the exposition of the proof of convergence. 
	
	In order to obtain these results, we will apply a Kolmogorov criterion for models (a statement and proof of which can be found in Appendix~\ref{appendix: Kolmogorov}) which reduces the task of obtaining uniform bounds to the task to verifying that the assumption \eqref{eq: Kolmogorov single point ass} is satisfied. Similarly, the task of obtaining convergence is reduced to showing that for each $\varepsilon > 0$ there exists an $N \in \bN$ such that for $n, m \ge N$, the pair of models $M_n^{\mathrm{BPHZ}} \CL(\xi_n), M_m^{\mathrm{BPHZ}} \CL(\xi_m)$ satisfy assumption \eqref{eq: Kolmogorov multi point ass} uniformly in $n,m$.
	
	Whilst such a result is more narrow than the estimates obtained in \cite{ChHa16}, it does cover their main application. 
	
	However in order to formulate such a result, we will need to make some restrictions on the trees that appear in the reduced regularity structure. We assume that
	
	\begin{assumption}\label{ass:alg}
		For every $\ft \in \Lab_-$, we assume that $-|\fs| < |\ft|_\fs < \reg \ft - |\fs|/2$ 
		and that $|\ft|_\fs + |\fs|/2$ and $\reg \ft$ have the same sign.
		
		We  also assume that if $n_\tau$ denotes the number of noise edges in $\tau$ then $|\tau|_\fs = |\bar{\tau}|_\fs$ implies that $n_\tau = n_{\bar \tau}$ and that, for every tree $\tau \in \CT$ containing 
		at least two edges, we have
		$|\tau|_\fs > - \frac{|\fs|}{2}$.
	\end{assumption}
		
	\begin{remark}
		These restrictions should be compared with the corresponding \cite[Assumption 2.24 and Definition 2.28]{ChHa16}. The first and last of our assumptions do appear there also, however our second assumption which is adapted to the spectral gap inequality, is replaced by a more complex assumption which ensures that non-renormalisable cumulants aren't formed when taking expectations of the resulting Feynman diagrams. 
		
		Since the assumptions of \cite{ChHa16} on the trees appearing in the reduced regularity structure are not automatic from the constructions of \cite{BHZ}, applications of that framework require checking these assumptions. This is significantly simpler with the assumptions on trees made here since our only assumptions that aren't posed at the level of the labels themselves are relatively straightforward ones.
	\end{remark}
	
	Additionally, we will make infinitesimal losses in regularity at various stages in our (inductive) proof. This is typically accounted for by the fact that the natural value for the degree of the noise under consideration is an ``infinitesimal amount'' less than some fixed value. Since this ``infinitesimal amount'' doesn't have a natural fixed value, we can alter it throughout the argument to allow ourselves the wiggle room to accommodate these losses.
	
	This phenomenon already occurs in \cite{ChHa16} and is accounted for by the supposition that there is a second degree assignment on the structure that does not alter its core algebraic construction but does assign every negative degree tree a degree that is at least $\kappa > 0$ lower than the original assignment. 
	
	In that paper, this assumption is only needed for one further degree assignment since estimates are established for each tree individually and the wiggle room is only needed to ensure that the estimates are good enough to apply a Kolmogorov criterion of the same type as Theorem~\ref{theo: Kolmogorov Criterion}. 
	
	However, we will need to make use of the wiggle room more often since our proof is inductive in the number of edges so that the bounds on the model at the previous stage are fed as input into the argument that the bounds at the current stage hold. At each stage then, we will want to apply Theorem~\ref{theo: Kolmogorov Criterion} to convert our bounds into control on the norm of the model and this means that at each stage we will make a loss of the same type as in \cite{ChHa16}. As a result, we need a sequence of degree assignments, each of which encodes some amount of wiggle room. 
	
	We define degree assignments for $k \in \bN$ by
	\begin{equs}\label{eq: shifted hom assignments}
		|\tau|_\fs^{(k)} \eqdef |\tau|_\fs + k n_\tau \kappa
	\end{equs}
	where $\kappa> 0$ is a sufficiently small fixed constant. Note that the case $k = 0$ corresponds to our original degree assignment on the reduced regularity structure.
	In particular, it is a consequence of Assumptions~\ref{ass:rule} and~\ref{ass:alg} that for each $N \in \bN$ there exists a $\kappa > 0$ (sufficiently small) such that for each $k \le N$
	\begin{enumerate}
		\item Assumption~\ref{ass:alg} is also satisfied with $|\cdot|_\fs$ replaced by $|\cdot|_\fs^{(k)}$.
		\item The orders on trees induced by $|\cdot|_\fs$ and $|\cdot|_\fs^{(k)}$ coincide.
		\item The rule $R$ generating our regularity structure is also complete and subcritical with respect to
		the degree assignment $|\cdot|_\fs^{(k)}$.
	\end{enumerate}  
		
	With these assumptions the place, the main result of this paper can then be stated as follows.
	
	\begin{theorem}\label{theo: BPHZ}
		Suppose that $\mathscr{T} = (\CT, \CA, \CG)$ is an instance of the reduced regularity structure satisfying Assumptions~\ref{ass:rule} and~\ref{ass:alg} and that $\xi \in \CM(\Omega_0)$. 
		Then, for any mollifier $\varrho$, if $M_n^\mathrm{BPHZ} \in \fR$ denotes the BPHZ choice of renormalisation corresponding to $\varrho^n \ast \xi$, then the sequence of models $\hat Z^n \eqdef M_n^\mathrm{BPHZ} \CL(\varrho^n \ast \xi)$ converges in probability as $n \to \infty$, for the topology on the space of admissible models
		discussed in Remark~\ref{rem:topology}.
	\end{theorem}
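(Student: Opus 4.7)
The plan is to pass through the auxiliary $\bBPHZ$ model rather than attacking the BPHZ model directly, since the centring condition $\bE[\bar\Pi_0 \tau(\phi)]=0$ is one that meshes cleanly with the spectral gap inequality (whereas the centring for the BPHZ model is on $\hat\PPi$ rather than $\hat\Pi_x$). Concretely, I would first prove that $\bar Z^n \eqdef \bar M_n\CL(\varrho^n\ast \xi)$ is Cauchy in probability for the model topology, and only afterwards transfer the result to $\hat Z^n$ using the results of Section~\ref{sec: moving between models}. The reduction to stochastic estimates is then via the Kolmogorov criterion of Appendix~\ref{appendix: Kolmogorov}: it suffices to verify single-point bounds of the form $\|\bar\Pi_x^n\tau(\varphi_x^\lambda)\|_{L^p} \lesssim \lambda^{|\tau|_\fs^{(k)}}$ uniformly in $n$, together with matching two-point bounds controlling $\bar Z^n-\bar Z^m$ by a deterministic quantity tending to zero.

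Both estimates are then produced by the same recursive machine, run by induction on the number $n_\tau$ of noise edges in $\tau$. In the base case $n_\tau=0$ the tree is purely polynomial/abstract-kernel and the bounds are deterministic. For the inductive step, fix $\tau$ with $n_\tau=k\geq 1$ and apply the spectral gap inequality \eqref{eq: pSGap} to $F[\xi]\eqdef\bar\Pi_x^n\tau(\varphi_x^\lambda)$. The first term $|\bE F|$ is controlled by the $\bBPHZ$ centring \eqref{eq: renorm recentering} together with the shift-covariance of the model under translations (this is essentially the whole reason for introducing the $\bBPHZ$ model). For the second term, Section~\ref{sec: Frechet} identifies the Fréchet derivative as $D_h\bar\Pi_x^n\tau = \CR H_x^{\tau,h;n}$ for an explicit family of \emph{pointed} modelled distributions $H_x^{\tau,h;n}$ taking values in trees with strictly fewer noise edges. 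The inductive hypothesis supplies uniform control on $\bar Z^n$ restricted to those trees, which (together with the analytic results of Section~\ref{sec: Modelled Distributions}) feeds into the pointed Besov reconstruction bound to give
\begin{equ}
\bigl|\CR H_x^{\tau,h;n}(\varphi_x^\lambda)\bigr| \lesssim \lambda^{|\tau|_\fs^{(k)}}\,\|h\|_{H^{\reg}(\Lab_-)}\;,
\end{equ}
pointwise in $h$. Dualising this linear bound in $h$ yields control of $\|dF\|_{H^{-\reg}(\Lab_-)}$ of exactly the size demanded by the spectral gap, closing the induction.

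The two-point estimates for convergence are obtained by exactly the same mechanism applied to the difference $\bar Z^n - \bar Z^m$: one writes $\bar\Pi^n-\bar\Pi^m$, applies the spectral gap, and observes that the pointed modelled distribution attached to the difference again lives on trees with fewer noise edges, so the inductive hypothesis (now for \emph{convergence} of the lower-noise part) provides the required smallness. The final passage from $\bar Z^n$ to $\hat Z^n$ is performed in Section~\ref{sec: moving between models} by writing the two renormalisations as differing by a character of $\CT_-$ whose values are polynomials in expectations of model quantities already controlled uniformly in $n$.

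The conceptual and technical crux is the reconstruction bound for the pointed Besov modelled distributions $H_x^{\tau,h;n}$: one needs a space that is simultaneously large enough to accommodate $h$ ranging over the full Cameron--Martin-type space $H^{\reg}(\Lab_-)$ (uniformly, and with $x$-dependent anchoring forced by the structure of the derivative), yet small enough that the reconstruction delivers bounds of exactly the order $\lambda^{|\tau|_\fs^{(k)}}$ consistent with the definition of a model. Previous pointed-space reconstruction theorems (\cite{Cyril,Josef}) are not quite sharp enough, so this tightened reconstruction result, proved in Section~\ref{section: Pointed Modelled Distributions}, is where the real work lies; once it is in hand the rest of the argument is a systematic induction.
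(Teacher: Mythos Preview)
Your overall architecture matches the paper's: reduce to $\bBPHZ$, use the Kolmogorov criterion, run the spectral gap inequality by induction on $n_\tau$, identify the derivative as the reconstruction of a pointed modelled distribution, and finally pass to BPHZ via the character comparison in Section~\ref{sec: moving between models}. The uniform bounds part of your sketch is correct and is essentially the content of Section~\ref{sec: uniform bounds}.

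There is, however, a genuine gap in your treatment of convergence. You write that the two-point estimates ``are obtained by exactly the same mechanism applied to the difference $\bar Z^n-\bar Z^m$'', with smallness coming from the inductive hypothesis on lower-noise trees. This does not work as stated. The pointed modelled distribution $\pH{\tau}$ is built up from the base case $\pH{\Xi_\ft}$, whose coefficients are $D^k(\varrho^n\ast\eta_\ft)$. While $\sup_{\|\eta\|_\CH\le 1}\|\varrho^n\ast\eta_\ft\|_{H^{\reg\ft}}$ is uniformly bounded in $n$, it is \emph{not} true that $\sup_{\|\eta\|_\CH\le 1}\|\varrho^{n_1}\ast\eta_\ft-\varrho^{n_2}\ast\eta_\ft\|_{H^{\reg\ft}}\to 0$: mollification does not converge uniformly over the unit ball in the same norm. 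Consequently $\$\pHa{\tau}-\pHb{\tau}\$_{2,\gamma_\tau,\deg_2\tau;x}$ need not be small uniformly in $\eta$, and the pointed reconstruction bound alone cannot produce the required decay.

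The paper fixes this with an additional layer of analysis (Section~\ref{section: Modelled Distributions with Sobolev Coefficients} and Section~\ref{sec: convergence}): one observes that the difference \emph{does} vanish uniformly in a strictly weaker norm, namely $\|\cdot\|_{\gamma,-\kappa,2;\ck}$ measuring coefficients in $H^{-\kappa}$ (Lemma~\ref{lemma: sob H base case}). One then proves interpolation-type reconstruction and Schauder estimates (Theorems~\ref{theo: SobCoeffReconstr}, \ref{theo: PointedSobReconstr}, \ref{theo: PointedSobSchauder}) that trade a small power of this weak norm against a small loss in the $\lambda$-exponent, and propagates the negative-Sobolev smallness through products and integrations (Lemmas~\ref{lemma: H sob integration 1}--\ref{lemma: sob H bad integral}, Proposition~\ref{prop: sob H control}). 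Only with this extra machinery does the spectral gap argument close for the difference of models. Your sketch should acknowledge this obstruction and the need for the negative-Sobolev interpolation step.
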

	\begin{remark}
		It also follows from a minor modification of our proof that the limit is independent of the choice of mollifier $\varrho$. This is because, given a second mollifier $\tilde{\varrho}$, all of our estimates can also be applied to the sequence of models $$\tilde{Z}^n = \begin{cases}
			M_n^\mathrm{BPHZ} \CL(\varrho^n \ast \xi), \quad n \text{ even}
			\\
			M_n^\mathrm{BPHZ} \CL(\tilde{\varrho}^n \ast \xi), \quad n \text{ odd}			
		\end{cases}
		$$
		to obtain convergence of that sequence of models.
	\end{remark}
	
	\section{Spaces of Modelled Distributions}\label{sec: Modelled Distributions}
	
	We now turn to establishing the core analytic tools that will be used in our proof of Theorem~\ref{theo: BPHZ}. Our eventual goal is to describe the Fr\'echet derivative of a model in terms of certain modelled distributions that have properties that are sufficient to capture the core analytic information for our proof. To that end, we introduce a series of results regarding various spaces of modelled distributions in this section.
	
	\subsection{Besov Modelled Distributions}
	
	In this subsection, we recall the definitions of Besov modelled distributions $\cD^\gamma_{p,q}$ 
	from \cite{Cyril}. We remark that the definitions and subsequent results will differ very slightly from \cite{Cyril} since we will work with local rather than global bounds in the definitions of all of our objects. 
	(For conciseness, we will not use the convention of adding `loc' to the names of the local spaces since all the spaces appearing here are local.) Despite this, we will not include proofs of the direct analogues of the results of \cite{Cyril} in this paper since they follow from the precisely the same techniques: one only needs to keep track of the domain on which bounds are required to hold.

	\begin{definition}\label{def: Dgamma}
		For $\gamma \in \bR$ and a model $Z = (\Pi, \Gamma)$, let $\cD^\gamma_{p,q} = \cD^\gamma_{p,q}(Z)$ be the Fr\'echet space of all measurable maps $f: \bR^d \rightarrow \cT_{<\gamma}$ such that, for all $\zeta\in\cA_\gamma$ and for all compact sets $\ck \subseteq \bR^d$ , we have:\begin{enumerate}
			\item Local bound:
			\begin{equ}
				\big\| \big| f(x) \big|_\zeta \big\|_{L^p(\ck; dx)} < \infty\;,
			\end{equ}
			\item Translation bound:
			\begin{equ}
				\int_{B(0,1)} \bigg\| \frac{\big| f(x+h)-\Gamma_{x+h,x} f(x) \big|_\zeta}{|h|_\fs^{\gamma-\zeta}} \bigg\|_{L^p(\ck; dx)}^q \frac{dh}{|h|_{\fs}^{|\fs|}}< \infty\;.
			\end{equ}
		\end{enumerate}
		We write $\$f\$_{p, q, \gamma; \ck}$ for the corresponding family of seminorms. Since we often restrict to the case $q = \infty$, we will write $\CD_p^\gamma \eqdef \CD_{p,\infty}^\gamma$ and $\$f\$_{p, \gamma; \ck} \eqdef \$f\$_{p, \infty, \gamma; \ck}$.
		
		Additionally, given a second model $\bar{Z} = (\bar{\Pi}, \bar{\Gamma})$, $f \in \cD_{p, q}^\gamma(Z)$ and $\bar{f} \in \cD_{p,q}^\gamma(\bar Z)$ we define
		\begin{equs}
			 \$ f, \bar{f} \$_{p, q, \gamma; \ck} =& \sup_{\zeta < \gamma} \| |f(x) - \bar{f}(x)|_\zeta \|_{L^p(\ck; dx)} \\ &+ \sup_{\zeta < \gamma} \left ( \int_{B(0,1)} \left \| \frac{\big| \Delta_h f(x) - \bar{\Delta}_h \bar{f}(x) \big|_\zeta}{|h|_\fs^{\gamma-\zeta}} \right \|_{L^p(\ck; dx)}^q \frac{dh}{|h|_\fs^{|\fs|}} \right )^{1/q}
		\end{equs}
		where $\Delta_h f(x) \defeq f(x+h) - \Gamma_{x+h, x} f(x)$ and $\bar{\Delta}_h \bar{f}(x) \defeq \bar{f}(x+h) - \bar{\Gamma}_{x+h, x} \bar{f}(x)$. 
	\end{definition}
	
	\begin{remark}\label{remark: projection remark}
		As usual, there is some ambiguity as to the choice of representative of an element $\CD_{p, q}^\gamma$. In the above definition we have chosen a unique representative by insisting that these elements take values in $\CT_{< \gamma}$, however it is often natural to say that functions $f: \bR^d \to \CT$ that contain terms of degree $\gamma$ or higher are elements of this space. When we do so, it should be understood that we mean that $\CQ_{< \gamma} f \in \CD_{p, q}^\gamma$ in the sense of the above definition.
		
		It is easy to check that if $\bar{\gamma} > \gamma$ and $f: \bR^d \to \CT_{\bar{\gamma}}$ satisfies the local bounds for all $\zeta < \bar{\gamma}$ and the translation bound (with no projection $\CQ_{< \gamma}$ present) for all $\zeta < \gamma$ then, provided that $q = \infty$ or $\gamma \not \in \CA$, one has $\CQ_\gamma f \in \CD_{p, q}^\gamma$ thus justifying our convention. 
		
		This observation leads us to make the following assumption.
	\end{remark}

	\begin{assumption}	We will make it a standing assumption that when we consider the case $q < \infty$, we have that $\gamma \not \in \CA$, and will often use this observation without comment in what follows.
	\end{assumption}

	The main result of \cite{Cyril} is the following version of the reconstruction theorem (here stated in its local form). We remark that the proof given in \cite{Cyril} relies on tools from wavelet analysis and so makes a slightly more restrictive assumption on the choice of scaling than we have done. A wavelet free proof of a more general result is provided in \cite[Theorem 3.2]{BL21} which is certainly sufficient for our purposes. 
	
	\begin{theorem}\label{theo: Reconstruction}
		Let $(\cT,\CA, \cG)$ be a regularity structure and $Z = (\Pi,\Gamma)$ be a model. Let $\gamma \in \R_+\backslash\N$, and set $\alpha=\min(\cA\backslash\N) \wedge \gamma$. If $q = \infty$, let $\bar{\alpha} = \alpha$. Otherwise suppose that $\bar{\alpha} < \alpha$. 
		
		Then, for $\gamma > 0$, there exists a unique continuous linear map $\cR:\cD^\gamma_{p,q}(Z) \rightarrow\cB^{\bar \alpha}_{p,q}$ such that
		\begin{equs}\label{Eq:BoundRecons}
			\Bigg \| 2^{n \gamma} \Bigg \| \sup_{\eta \in \CB^r} \left |\langle \cR f - \Pi_x f(x), \eta_x^n \rangle \right | \Bigg \|_{L^p(\ck; dx)} & \Bigg \|_{\ell^q(n)} \\& \lesssim \$ f\$_{p, q, \gamma; \bar{\ck}} \|\Pi\|_{\gamma; \bar{\ck}} (1+\|\Gamma\|_{\gamma; \bar{\ck}})\;,
		\end{equs}
		uniformly over all $f\in\cD^\gamma_{p,q}$, all compact subsets $\ck \subseteq \bR^d$ and all models $(\Pi,\Gamma)$. Here $\bar{\ck}$ denotes the 2-fattening of $\ck$.
		
		Given a second model $\bar{Z} = (\bar{\Pi}, \bar{\Gamma})$, the bound
		\begin{equs}\label{Eq:BoundReconsDiff}
			\left \| 2^{n\gamma} \left \| \sup_{\eta \in \CB^r} \left |\langle \cR f  - \bar{\cR} \bar{f} - \Pi_x f(x) + \bar{\Pi}_x \bar{f}(x), \eta_x^n \rangle \right | \right \|_{L^p(\ck; dx)} \right \|_{\ell^q(n)} \\ \lesssim  \$ f; \bar{f} \$_{p, q, \gamma; \bar{\ck}} +  \|\Pi - \bar{\Pi}\|_{\gamma; \bar{\ck}} + \|\Gamma - \bar{\Gamma}\|_{\gamma; \bar{\ck}};,
		\end{equs}
		holds uniformly over compact sets $\ck \subseteq \bR^d$ and choices of models and modelled distributions satisfying $\$f\$_{p,q,\gamma; \bar{\ck}} + \$\bar f \$_{p,q,\gamma; \bar \ck} + \|Z\|_{\gamma; \bar \ck} + \|\bar{Z}\|_{\gamma; \bar{\ck}} \le C$.
	\end{theorem}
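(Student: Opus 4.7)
The plan is to adapt the wavelet-based or wavelet-free proof of the Besov reconstruction theorem (as in \cite{Cyril} or \cite[Thm.~3.2]{BL21}) to the localised form used here. For uniqueness, if two candidates $\cR f$ and $\cR' f$ both satisfy \eqref{Eq:BoundRecons}, then their difference $\zeta$ lies in $\CB^{\bar\alpha}_{p,q}$ and satisfies the same bound. Since $\gamma > \bar\alpha$, the pairings $\sup_{\eta \in \CB_r}|\scal{\zeta, \eta_x^n}|$ converge to zero in $L^p_{\mathrm{loc}}$ as $n \to \infty$, which by the characterisation of Besov spaces in Appendix~\ref{appendix: Besov Spaces} forces $\zeta = 0$.

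\textbf{Construction.} Fix a smooth, compactly supported ``father'' function $\varphi$ with enough vanishing moments (order $> \gamma$), and set the dyadic coefficients $c_{y,n} = \scal{\Pi_y f(y), \varphi_y^n}$ for $y$ on the scale-$n$ lattice. The key input from the definition of $\cD^\gamma_{p,q}(Z)$ is the coherence identity $\Pi_x f(x) = \Pi_y \Gamma_{yx} f(x)$ together with the translation bound on $f$. This yields, pointwise in $y$ and for $|x-y|_\fs \lesssim 2^{-n}$,
$$|\scal{\Pi_y f(y) - \Pi_x f(x), \eta_y^n}| \lesssim \sum_{\zeta < \gamma} 2^{-n\zeta}\, |f(y) - \Gamma_{yx} f(x)|_\zeta\;,$$
whose $L^p_x L^q_\lambda$ norm is controlled by $\$f\$_{p,q,\gamma; \bar\ck}\, \|\Pi\|_{\gamma;\bar\ck}(1+\|\Gamma\|_{\gamma;\bar\ck})$. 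One then defines $\cR f$ as the limit of its dyadic approximants, which form a Cauchy sequence in $\CB^{\bar\alpha}_{p,q}$ by the above.

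\textbf{Bound and difference bound.} To obtain \eqref{Eq:BoundRecons} for an arbitrary $\eta \in \CB_r$ tested against $\eta_x^n$, decompose $\eta_x^n$ into a father/mother wavelet expansion and sum the single-scale bounds across dyadic scales, using a discrete Hardy-type estimate to convert control on individual scales into the claimed $\ell^q(n)$ structure. The difference bound \eqref{Eq:BoundReconsDiff} is obtained by running the above argument in parallel for $(f, Z)$ and $(\bar f, \bar Z)$, and writing each quantity as a telescoping sum in which only one of $f - \bar f$, $\Pi - \bar\Pi$, $\Gamma - \bar\Gamma$ is ``active''; the uniform bound $\$f\$ + \$\bar f\$ + \|Z\| + \|\bar Z\| \le C$ allows us to close the cross terms. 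I expect the main obstacle to be technical rather than conceptual: careful bookkeeping of the mixed $L^p_x L^q_\lambda$ norms throughout, and handling endpoint issues (which the standing assumption that $q = \infty$ or $\gamma \notin \CA$ is designed to sidestep).
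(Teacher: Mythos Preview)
Your proposal is a reasonable outline of the standard argument, but note that the paper does not actually give a proof of this theorem at all: it explicitly treats the result as known, citing \cite{Cyril} for the wavelet-based version and \cite[Theorem~3.2]{BL21} for a wavelet-free version covering general scalings, and remarks that the only change needed is to track the compact sets $\ck$ and $\bar\ck$ through those proofs. So there is nothing to compare your sketch against beyond those references themselves; your outline is broadly in line with them (uniqueness via $\gamma > \bar\alpha$, construction via dyadic approximants and the coherence bound, telescoping for the difference estimate), and the paper adds no new ideas here.
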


	\begin{remark}
		It is in principle possible to establish existence of a reconstruction operator on $\CD_{p,q}^{\gamma}$ for $\gamma \le 0$. Later, we will in fact apply the reconstruction bound in that case. However, we will always then have a particular value of the reconstruction in mind and, since uniqueness fails when $\gamma \le 0$, the existence part of the statement is then of no use to us. As a result, we don't pursue this route further.
	\end{remark}
	
	At this point, we observe that by Besov embedding, it follows that if $f \in \CD_{p,q}^\gamma$ for $\gamma > 0$ and $f$ takes values in a sector of regularity $\alpha \in \bR$ then $\CR f \in \CC^{\alpha - |\fs|/p}$. On the other hand, if $f$ is sufficiently regular (i.e.\ if $\gamma$ is sufficiently large) one would expect its reconstruction
	to actually belong to $\CC^{\alpha}$.
	
	Our next result will provide for $\gamma < \alpha + |\fs|/p$ an interpolation between these two settings. For convenience, we restrict to the case $q = \infty$ since that is the only setting in which we will apply the result.
	
	\begin{theorem}\label{theo:betterReconstr}
		Fix a model $Z = (\Pi, \Gamma)$ and let $f \in \CD^\gamma_{p}(Z)$ take values in a sector $V$ and let $\alpha \eqdef \min(\CA_V \setminus \bN)$. 
		Let $\gamma$ be such that $0 < \gamma < \alpha + |\fs|/p$ and suppose that $\gamma - |\fs|/p \not \in \bN$. Then the reconstruction $\CR f$
		from Theorem~\ref{theo: Reconstruction} takes values in the Besov--Hölder space $\CC^{\gamma - |\fs|/p}$.
		
		Additionally, we have for any compact set $\ck \subseteq \bR^d$ the bound
		\begin{equ}[ReconBound]
			\|\cR f\|_{\CC^{\gamma - |\fs|/p}; \ck} \lesssim \$ f \$_{p, \gamma; \bar{\ck}} \| \Pi \|_{\gamma; \bar{\ck}} (1 + \| \Gamma \|_{\gamma; \bar{\ck}}).
		\end{equ}
		
		Given a second model $\bar{Z} = (\bar{\Pi}, \bar{\Gamma})$ and $\bar{f} \in  \CD^\gamma_{p,\infty}(\bar{Z})$ taking values in the same sector as $f$, we also have that 
		\begin{equs}[ReconBoundDiff]
			\|\cR f - \bar{\cR} \bar{f} \|_{\CC^{\gamma - |\fs|/p}; \ck} & \lesssim \$ f; \bar{f} \$_{p, \gamma; \bar{\ck}} + \|\Pi - \bar{\Pi}\|_{\gamma; \bar{\ck}} + \|\Gamma - \bar{\Gamma}\|_{\gamma; \bar{\ck}}
		\end{equs}
		uniformly over compact sets $\ck \subseteq \bR^d$ and choices of models and modelled distributions satisfying $\$f\$_{p,q,\gamma; \bar{\ck}} + \$\bar f \$_{p,q,\gamma; \bar \ck} + \|Z\|_{\gamma; \bar \ck} + \|\bar{Z}\|_{\gamma; \bar{\ck}} \le C$.
	\end{theorem}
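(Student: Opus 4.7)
The plan is to convert the $L^p(x)$ reconstruction bound of Theorem~\ref{theo: Reconstruction} into a pointwise-in-$y$ bound on $\langle \CR f, \eta_y^\lambda\rangle$ via an averaging argument, and then to control the resulting ``local model'' contribution using the $\CD^\gamma_p$ translation regularity of $f$. Fix $y \in \ck$, $\lambda \in (0,1]$, and a test function $\eta \in \CB^r$ (with vanishing moments up to order $\lfloor \gamma - |\fs|/p\rfloor$ in the positive H\"older case). Let $\chi$ be a smooth averaging kernel with $\int \chi = 1$ supported in the unit ball, and decompose
\[ \langle \CR f, \eta_y^\lambda\rangle = A + B, \quad A := \int \chi_y^\lambda(x)\, \langle \CR f - \Pi_x f(x), \eta_y^\lambda\rangle\, dx, \quad B := \int \chi_y^\lambda(x)\, \langle \Pi_x f(x), \eta_y^\lambda\rangle\, dx. \]

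First I would bound $A$ directly using Theorem~\ref{theo: Reconstruction}. Since $\chi_y^\lambda$ is supported in $B_\lambda(y)$, the test function $\eta_y^\lambda$ is (up to constants) admissible as a scale-$\lambda$ test function centred near each $x$ in this ball, so Theorem~\ref{theo: Reconstruction} yields the $L^p(x)$ bound $\lesssim \lambda^\gamma \$f\$_{p,\gamma;\bar\ck}\|\Pi\|_{\gamma;\bar\ck}(1 + \|\Gamma\|_{\gamma;\bar\ck})$. Combined with H\"older's inequality and $\|\chi_y^\lambda\|_{L^{p'}} \lesssim \lambda^{-|\fs|/p}$, this gives $|A| \lesssim \lambda^{\gamma - |\fs|/p}$ uniformly in $y$, as required.

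For $B$ I would use the identity $\Pi_x = \Pi_y \Gamma_{yx}$ to rewrite $B = \Pi_y G_y(\eta_y^\lambda)$ with $G_y := \int \chi_y^\lambda(x)\, \Gamma_{yx} f(x)\, dx \in \CT_{<\gamma}$ a pointwise-in-$y$ well-defined averaged element. Splitting $G_y$ by homogeneity and using the model bound $|\Pi_y (G_y)_\zeta (\eta_y^\lambda)| \lesssim \lambda^\zeta \|(G_y)_\zeta\|$, the task reduces to the pointwise estimate $\|(G_y)_\zeta\| \lesssim \lambda^{\gamma - \zeta - |\fs|/p}$ for every $\zeta < \gamma$, which summed over homogeneities yields $|B| \lesssim \lambda^{\gamma - |\fs|/p}$. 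To establish this, I would subtract a suitable pointwise-defined ``centre'' $T(y) \in \CT$ built from an average of the polynomial part of $f$ near $y$, write $(G_y - T(y))_\zeta$ as the integral of the small quantity $(\Gamma_{yx}f(x) - T(y))_\zeta$ against $\chi_y^\lambda$, and then apply H\"older's inequality together with the $\CD^\gamma_p$ translation bound (which controls this integrand in $L^p(x)$ by $|y-x|^{\gamma-\zeta}$). In the positive H\"older case, the vanishing moments of $\eta$ precisely kill the contribution from $T(y)$, which plays the role of the Taylor polynomial to be subtracted in the characterisation of $\CC^{\gamma - |\fs|/p}$.

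The hard part will be the pointwise-in-$y$ control $\|(G_y)_\zeta\| \lesssim \lambda^{\gamma - \zeta - |\fs|/p}$: while an $L^p(y)$ analogue follows essentially for free from H\"older's inequality and the translation bound, the uniform-in-$y$ version requires a careful choice of the centre $T(y)$ and very likely a dyadic/telescoping decomposition over scales $2^{-n} \le \lambda$ to avoid losing any powers of $\lambda$ in the averaging. The hypothesis $\gamma < \alpha + |\fs|/p$ is precisely what allows this to close: it ensures $\gamma - \zeta - |\fs|/p < 0$ for every $\zeta \ge \alpha$ arising in the sector, so averaging at scale $\lambda$ is always a net gain over the naive pointwise estimate $\lambda^\zeta$. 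Finally, the difference bound \eqref{ReconBoundDiff} follows by applying the same argument linearly to the differences of the models and modelled distributions.
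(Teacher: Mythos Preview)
Your decomposition $\langle \CR f,\eta_y^\lambda\rangle = A + B$ and the bound on $A$ via Theorem~\ref{theo: Reconstruction} plus H\"older are correct and match the paper's first step exactly. The gap is in your treatment of $B$.

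You want $\|(G_y)_\zeta\| \lesssim \lambda^{\gamma-\zeta-|\fs|/p}$ pointwise in $y$, and propose to get it by subtracting a polynomial centre $T(y)$ and applying the translation bound, which you say ``controls this integrand in $L^p(x)$ by $|y-x|^{\gamma-\zeta}$''. This is the error. The translation bound controls $\|(f(\cdot+h)-\Gamma_{\cdot+h,\cdot}f(\cdot))_\zeta\|_{L^p}$ for a \emph{fixed displacement} $h$; it says nothing about $\|(\Gamma_{yx}f(x)-T(y))_\zeta\|_{L^p(dx)}$ for a \emph{fixed target} $y$, since then the displacement $h=y-x$ varies with $x$. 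Moreover, a polynomial $T(y)$ is irrelevant at the non-polynomial levels $\zeta\ge\alpha$, which is exactly where the difficulty sits in the hard case $\gamma-|\fs|/p<0$ (the positive case is handled by the embedding $\CD_p^\gamma\subset\CD_\infty^{\gamma-|\fs|/p}$).

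What the paper does instead is telescope the \emph{averaging weight} from scale $\lambda$ \emph{up} to scale $1$ (note: scales $\ge\lambda$, not $\le\lambda$), using the semigroup property \eqref{e:convolProp} of the kernel $\phi$. At the top scale $1$, H\"older costs $O(1)$ rather than $\lambda^{-|\fs|/p}$, and the model bound alone gives $\lambda^{\alpha\wedge 0}\lesssim\lambda^{\gamma-|\fs|/p}$. At each intermediate dyadic scale $2^{-k}$ the telescoping increment $\bar R_z^k$ integrates to zero, which permits insertion of a \emph{second} spatial average $\int\chi_z^k(u)\,du$; this turns the integrand into a genuine translation difference $\Pi_u(f(u)-\Gamma_{ux}f(x))$ integrated over \emph{both} $x$ and $u$. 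After the change of variables $u=x+h$ one can now legitimately apply the translation bound with $h$ fixed and H\"older in $x$, giving $2^{-k(\gamma-\beta)}2^{k|\fs|/p}$, which sums over $k\le n$ precisely because $\gamma-\beta-|\fs|/p<0$. The missing idea in your sketch is this second average: it is what converts an $L^p$-in-base-point bound into something usable at a fixed $y$.
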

	
	\begin{proof}	
		For $\gamma > |\fs|/p$, by (a local analogue of) \cite[Theorem 4.1]{Cyril} we have $\CD_{p,q}^\gamma \subset \CD_{\infty, \infty}^{\gamma - |\fs|/p}$ so that $\CR f \in \CC^{\gamma - |\fs|/p}$ by Theorem~\ref{theo: Reconstruction}. As a result, we will assume without loss of generality that $\bar \gamma = \gamma - |\fs|/p < 0$. We fix $\phi, \rho$ as in Definition~\ref{def: semigroup kernel}.
		
		To obtain the bound \eqref{ReconBound}, by Theorem~\ref{theo: kernel swapping}, it suffices to show that for $n \ge 0$, we have $|\cR f (\phi_y^{n})| \lesssim 2^{-n \bar \gamma}$ where the implicit constant depends on the model and choice of $f$ only through the right-hand side of \eqref{ReconBound}. The bound \eqref{ReconBoundDiff} 
		follows via similar ideas that are only notationally more complex, so we only include a proof of the first bound. 
		
		We first note that by the convolution semigroup type property \eqref{e:convolProp} of $\phi$, we can write
		\begin{equ}
			|\scal{\CR f, \phi_y^{n}}| = \left|\int\scal{\CR f, \phi_x^{n+1}} \rho_y^{n}(x)\,dx\right|\;,
		\end{equ}
		which in turn is bounded by
		\begin{equ}[e:breakTwo]
			\left |\int\scal{\CR f - \Pi_x f(x), \phi_x^{n+1}} \rho_y^{n}(x)\,dx\right |
			+  \left|\int\scal{\Pi_x f(x), \phi_x^{n+1}} \rho_y^{n}(x)\,dx\right|\;.
		\end{equ}
		Regarding the first term, Theorem~\ref{theo: Reconstruction}
		implies that the $L^p$ norm (in $x$) of $\scal{\CR f - \Pi_x f(x), \phi_x^{2\lambda}}$
		is of order $2^{-n \gamma}$ (with a constant of the correct type). Applying Hölder's inequality, this immediately yields the desired bound of order
		$2^{-n \left (\gamma-|\fs|/p \right )}$ without any assumption other than $\gamma > 0$.
		
		To bound the second term, we use again \eqref{e:convolProp} to write 
		\begin{equ}
			\int\scal{\Pi_x f(x), \phi_x^{n+1}} \rho_y^{n}(x)\,dx
			= \iint\scal{\Pi_x f(x), \phi_z^{n+2}} \rho_x^{n+1}(z) \rho_y^{n}(x)\,dx\,dz\;.
		\end{equ}
		Note that the outer integral is restricted to a ball of radius $\CO(2^{-n})$ around $y$
		(which is fixed once and for all) and, for fixed $y,z$, the function 
		$x \mapsto \rho_x^{n+1}(z) \rho_y^{n}(x)$
		is of the form $2^{-n |\fs|} \psi_z^{n+1}$ for some nice test function $\psi$
		which still depends on $y,z$ itself, but in a way that's uniformly bounded in $\CC^r$ (for any fixed $r > 0$)
		and with uniformly bounded supports.
		
		This shows that it is sufficient to get a bound of the form
		\begin{equ}[e:wantedBound]
			\Big|\int\scal{\Pi_x f(x), \phi_z^{n+2}}\, \psi_z^{n+1}(x)\,dx\Big|
			\lesssim \$ f \$_{p, \gamma; \bar{\ck}} \| \Pi \|_{\gamma; \bar{\ck}} (1 + \| \Gamma \|_{\gamma; \bar{\ck}}) 2^{-n \bar \gamma}\;.
		\end{equ}
		We then rewrite $\psi_z^{n+1}$ as a telescopic sum:
		\begin{equ}
			\psi_z^{n+1} = \psi_z^{0} + \sum_{k=0}^{n} \bar{R}_z^k
		\end{equ}
		where $\bar{R}^k = \psi^{k+1} - \psi^k$.
		To bound the first term, we write $\Pi_x f(x) = \Pi_z \Gamma_{zx} f(x)$ and exploit the fact that $f_\zeta \in L^p$ for every $\zeta < \gamma$
		by definition, so that
		\begin{equs}
			\left |\int\scal{\Pi_x f(x), \phi_z^{n+2}}\, \psi_z^{0}(x)\,dx \right|
			& \lesssim \sum_{\zeta = \alpha \wedge 0}^\gamma 2^{-n\zeta} \|\psi_z^{0}\|_{L^{p'}} \$ f \$_{p, \gamma; \bar{\ck}} \| \Pi \|_{\gamma; \bar{\ck}} (1 + \|\Gamma\|_{\gamma; \bar{\ck}})
			\\ & \lesssim 2^{-n (\alpha \wedge 0)} \$ f \$_{p, \gamma; \bar{\ck}} \| \Pi \|_{\gamma; \bar{\ck}}(1 + \|\Gamma\|_{\gamma; \bar{\ck}}) \\ & \lesssim 2^{-n \bar \gamma} \$ f \$_{p, \gamma; \bar{\ck}} \| \Pi \|_{\gamma; \bar{\ck}} (1 + \|\Gamma\|_{\gamma; \bar{\ck}})\;,
		\end{equs}
		where we used the fact that $\bar \gamma \le \alpha$ by assumption and wrote $p'$ for the exponent conjugate to $p$.
		
		For the remaining terms, we use the fact that $\bar R$ integrates to $0$ to write
		\begin{equs}
			\Bigg|\int\scal{\Pi_x f(x), \phi_z^{n+2}}\, \bar R_z^{k}&(x)\,dx\Bigg|
			\\ & =  \left |\iint\scal{\Pi_u (f(u) - \Gamma_{ux} f(x)), \phi_z^{n+2}}\, \bar R_z^{k}(x) \chi^{k}_z(u)\,dx\,du\right |
			\\ & \lesssim \sum_{\alpha \wedge 0 \le \beta < \gamma} 2^{-n \beta} \left | \iint F_\beta(x, h) \bar R_z^k(x) \chi_z^k(x+h) \, dx \, dh \right | 
		\end{equs}
		where $\chi$ is an arbitrary test function integrating to $1$.
		
		Applying H\"older's inequality in $x$, we obtain a bound of order $$\sum_{\alpha \wedge 0 \le \beta < \gamma} 2^{-n \beta} 2^{-k (\gamma - \beta)} 2^{k |\fs|/p} \$ f \$_{p, \gamma; \ck}$$ which has a sum in $k \le n$ of the correct order since $\gamma - \beta - |\fs|/p < 0$ by assumption.
	\end{proof}

	Since by now multiplication of modelled distributions and Schauder estimates for modelled distributions of positive regularity are well understood, we will omit discussion of such results in this paper (though we will later include similar statements for variants of these spaces). However, we will also be interested in a version of the usual Schauder estimates for Besov modelled distributions of negative regularity. 
	
	The only real barrier for such results is that in this regime the reconstruction is not unique and therefore the usual definition of the abstract integral has some ambiguity due to its dependence on the choice of reconstruction. However, it is the case that for any suitable choice of reconstruction, analogues of the usual Schauder results hold.

	\begin{definition}\label{def: reconstruction candidate}
		For $\gamma \in \bR$, $p \in [1, \infty]$, a candidate for the reconstruction of $f \in \CD_{p}^\gamma$ is a distribution $\CR f$ such that for each compact subset $\ck \subseteq \bR^d$ there exists a $C(f; \ck) > 0$ such that
		\begin{equ}[e:boundCandidate]
			\Big\| \sup_{\eta \in \CB^r} \big|\langle \cR f - \Pi_x f(x), \eta_x^\lambda\rangle \big| \Big\|_{L^p(\ck; dx)}  \le C(f; \ck) \lambda^{\gamma} 
		\end{equ}
		for all $\lambda \in (0,1]$ and compact sets $\ck \subseteq \bR^d$.
		
		A candidate for the reconstruction operator on $\CD_{p}^\gamma$ is a map $\CR: \CD_{p}^\gamma \to \CD'(\bR^d)$ such that for each $f \in \CD_{p}^\gamma$, $\CR f$ is a candidate for the reconstruction of $f$.
	\end{definition}
	
	Given $\gamma \leq 0$ and a candidate $\CR f$ for the reconstruction operator of $f \in \CD_{p}^\gamma$, it is now straightforward to mimic the usual definition of the abstract integration operators. That is, we define for $\ft \in \Lab_+$ the abstract integration operators $\CK_\gamma^\ft$ via the same formula as in the case $\gamma > 0$ given in \cite[Equation (5.15)]{Hai14}, where appearances of the reconstruction of $f$ there are replaced by our candidate for its reconstruction.
	We then have the following analogue of the Schauder estimates.
	
	\begin{theorem}
		Let $\gamma \leq 0$ and suppose that $\CR f$ is a candidate for the reconstruction of $f \in \CD_{p}^\gamma$.
		Then $\CK_\gamma^\ft f \in \CD_{p}^{\gamma + |\ft|_\fs}$ and satisfies the bound
		\begin{equs}
			\$ \CK_\gamma^\ft f \$_{p, \gamma + |\ft|_\fs; \ck} \lesssim C(f; \bar{\ck}) + \$ f \$_{p, \gamma; \ck} ( 1+ \|\Pi\|_{\gamma; \bar{\ck}}).
		\end{equs}
		
		Furthermore, $K_\ft \ast \CR f$ is a candidate for the reconstruction of $\CK_\gamma^\ft f$. In particular when $\gamma + |\ft|_\fs > 0$ so that the reconstruction operator $\tilde{\CR}$ on $\CD_{p}^{\gamma + |\ft|_\fs}$ is uniquely defined, we have that $\tilde{\CR} \CK_\gamma^\ft f = K_\ft \ast \CR f$.
		
		Additionally, given a second model $\bar{Z}$ and a candidate $\bar{\CR} \bar{f}$ for the reconstruction operator of $\bar{f} \in \CD_{p}^{\gamma}(\bar{Z})$ such that for each compact $\ck$ there exists $C(f,\bar{f}; \ck) > 0$ such that
		\begin{equs}
			\Big\| \sup_{\eta \in \CB^r} \big|\langle \cR f - \bar{\cR} \bar{f} - \Pi_x f(x) + \bar{\Pi}_x f(x), \eta_x^\lambda\rangle \big| \Big\|_{L^p(\ck; dx)}  \le C(f, \bar{f}; \ck) \lambda^{\gamma},
		\end{equs}
		we have that
		\begin{equs}
			\$ \CK_\gamma f ; \bar{\CK}_\gamma \bar{f} \$_{p, \gamma + |\ft|_\fs; \ck} \lesssim C(f, \bar{f}; \bar{\ck}) + \|\Pi - \bar{\Pi}\|_{\gamma; \bar{\ck}} + \| \Gamma - \bar{\Gamma} \|_{\gamma; \bar{\ck}} + \$ f; \bar{f} \$_{p, \gamma; \ck}
		\end{equs}
		where for any $C > 0$, the implicit constant can be chosen uniformly over $\| Z \|_{\gamma; \bar \ck} + \|\bar{Z}\|_{\gamma; \bar \ck} + \$ f \$_{p, \gamma; \ck} + \$ \bar{f} \$_{p, \gamma; \ck} + C(f; \ck) + C(\bar{f}; \ck) \le C$.
	\end{theorem}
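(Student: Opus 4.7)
The plan is to track through the classical Schauder estimate proof of \cite[Thm.~5.12]{Hai14}, systematically replacing every appearance of $\CR f$ (which is not uniquely defined here) by the candidate $\CR f$, and verifying that the bound \eqref{e:boundCandidate} plays exactly the role that the reconstruction bound of Theorem~\ref{theo: Reconstruction} plays in the classical argument. Concretely, following the decomposition $K_\ft = \sum_{n \ge 0} K_n$ into kernels supported at scale $2^{-n}$ satisfying the usual derivative bounds from \cite[Sec.~5]{Hai14}, I would define
\begin{equ}
\CK_\gamma^\ft f(x) \eqdef \CI^\ft f(x) + J(x) f(x) + (\CN_\gamma f)(x)\;,
\end{equ}
where $J(x)$ is given (as in the classical case) by Taylor expansions of $D^k K \ast \Pi_x \tau$ and the non-local part is
\begin{equ}
\CN_\gamma f(x) \eqdef \sum_{n \ge 0} \sum_{|k|_\fs < \gamma + |\ft|_\fs} \f{X^k}{k!}\bigl(D^k K_n \ast (\CR f - \Pi_x f(x))\bigr)(x)\;,
\end{equ}
with $\CR f$ the chosen candidate. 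All other operations in the definition are independent of the reconstruction and so inherit bounds directly from the data of $f$ and $Z$.

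To verify that $\CK_\gamma^\ft f \in \CD_{p}^{\gamma + |\ft|_\fs}$ with the claimed bound, I would show the local and translation bounds for each homogeneous component separately, splitting into the three regimes used in the classical proof: components of degree below $\alpha + |\ft|_\fs$ for $\alpha$ the minimal degree in the sector, integer components arising from $J$ and the non-local $\CN_\gamma$, and intermediate components. In each case the only nontrivial input is the estimate on $D^k K_n \ast (\CR f - \Pi_x f(x))$ at the various scales: after pairing with $D^k K_n$, whose $L^{p'}$ norm scales like $2^{n(|k|_\fs + |\fs|/p - |\ft|_\fs)}$, Hölder's inequality together with \eqref{e:boundCandidate} yields summability provided $|k|_\fs < \gamma + |\ft|_\fs$, which is exactly the condition appearing in the definition of $\CN_\gamma f$. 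Summing the geometric series gives the bound
\begin{equ}
\$ \CK_\gamma^\ft f \$_{p, \gamma + |\ft|_\fs; \ck} \lesssim C(f; \bar{\ck}) + \$ f \$_{p, \gamma; \ck}(1 + \|\Pi\|_{\gamma; \bar{\ck}})\;,
\end{equ}
where the first summand arises from the candidate reconstruction contributions and the second from the local portion involving $\Pi$ directly.

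For the claim that $K_\ft \ast \CR f$ is a candidate reconstruction of $\CK_\gamma^\ft f$, I would compute $\Pi_x (\CK_\gamma^\ft f)(x)$ using admissibility of the model, which by Definition~\ref{def: admissibility} gives an expression for $\Pi_x \CI^\ft \tau$ in terms of $D^k K \ast \Pi_x \tau$ with polynomial corrections. The telescoping of the Taylor corrections in $J$ against those in the admissibility formula eliminates all singular contributions, leaving exactly
\begin{equ}
K_\ft \ast \CR f(y) - \Pi_x(\CK_\gamma^\ft f)(x)(y) = \sum_{n \ge 0} \bigl(K_n \ast (\CR f - \Pi_x f(x))\bigr)(y) - P_n(x,y)\;,
\end{equ}
for suitable polynomial remainders $P_n$ of degree $< \gamma + |\ft|_\fs$ in $y-x$. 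Testing against $\eta_x^\lambda$, splitting the sum according to whether $2^{-n} \lesssim \lambda$ or $2^{-n} \gtrsim \lambda$, and using once more \eqref{e:boundCandidate} together with the kernel scaling then yields the candidate bound of order $\lambda^{\gamma + |\ft|_\fs}$. Uniqueness of the reconstruction when $\gamma + |\ft|_\fs > 0$ then forces $\tilde{\CR}\CK_\gamma^\ft f = K_\ft \ast \CR f$. The difference bound is obtained by linearity: writing everything in terms of $f - \bar f$, $\CR f - \bar{\CR}\bar f$, $\Pi - \bar\Pi$ and $\Gamma - \bar\Gamma$, then repeating the argument with the hypothesis on $C(f,\bar f; \ck)$ replacing \eqref{e:boundCandidate}.

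The main obstacle, I expect, is bookkeeping rather than genuinely new analysis: one must be careful that in the localisation estimates (especially those producing the translation bound for $\CK_\gamma^\ft f$) the candidate bound \eqref{e:boundCandidate} is always used at the base point $x$ around which the local expansion is being made, since unlike in the classical setting, the candidate $\CR f$ has no a priori relation to $\Pi_y f(y)$ for $y \ne x$ beyond the triangle inequality composed with the translation bound on $f$ and the model bound on $\Pi, \Gamma$. Making this precise requires writing $\CR f - \Pi_{x+h} f(x+h) = (\CR f - \Pi_x f(x)) + \Pi_x(f(x) - \Gamma_{x,x+h} f(x+h))$ and applying \eqref{e:boundCandidate} and the definition of $\CD_p^\gamma$ to the two terms separately, at which point the proof goes through just as in the classical case.
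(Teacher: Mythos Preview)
Your proposal is correct and follows essentially the same approach as the paper. The paper's own proof is a single sentence: it says the argument is almost line by line the same as the proof of \cite[Theorem~5.1]{Cyril}, with the only modification being that every application of the reconstruction theorem is replaced by an appeal to the candidate bound \eqref{e:boundCandidate}, tracking the constant $C(f;\ck)$ through. Your write-up is a more detailed unpacking of exactly this.

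One small remark: you reference \cite[Thm.~5.12]{Hai14} (the $\CD_{\infty,\infty}^\gamma$ case) rather than \cite[Thm.~5.1]{Cyril} (the Besov $\CD_p^\gamma$ case), and your phrasing about ``Hölder's inequality with the $L^{p'}$ norm of $D^k K_n$'' is slightly off. The candidate bound \eqref{e:boundCandidate} is already an $L^p(\ck;dx)$ bound, so one does not pair via Hölder; rather one observes that $D^k K_n(x-\cdot) = 2^{n(|k|_\fs - |\ft|_\fs)}\tilde\eta_x^{2^{-n}}$ for some $\tilde\eta \in \CB^r$ (up to a uniform constant) and applies \eqref{e:boundCandidate} directly at scale $\lambda = 2^{-n}$. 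This gives the same exponent $2^{-n(\gamma + |\ft|_\fs - |k|_\fs)}$ and the same summability condition $|k|_\fs < \gamma + |\ft|_\fs$, so your conclusion is unaffected.
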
 
	\begin{proof}
		The proof is almost line by line the same as the proof of \cite[Theorem 5.1]{Cyril} with the only significant difference outside of adapting to the local setting being that one replaces applications of the reconstruction theorem by references to \eqref{e:boundCandidate}, so one has to be a bit more careful to correctly track the constants showing up in the proof.
	\end{proof}
	
	\subsection{Pointed Modelled Distributions}\label{section: Pointed Modelled Distributions}
	
	Whilst Besov modelled distributions are a powerful tool for describing generic distributions at the level of the regularity structure, they do not capture some important properties for our purposes. In particular, given a model $(\Pi, \Gamma)$, we are interested in describing the Fr\'echet derivative in the driving noise of terms of the form $\Pi_x \tau(\phi_x^\lambda)$. The most important feature of such terms (and their derivatives) is that they behave as if they were a more regular distribution locally around the point $x$. 
	
	To this end, we introduce a novel space of \textit{pointed modelled distributions} which are precisely those modelled distributions which behave like Besov modelled distributions, but exhibit more regular behaviour around some distinguished point. 
	In this paper we consider spaces of pointed modelled distributions whose Besov nature has parameter $q = \infty$. This is mainly for convenience and isn't a serious technical requirement, however such results are sufficient for all of our needs and this assumption does simplify some proofs.
	
	We begin by defining 
	\begin{equ}
		\|f\|_{\lambda,x,p} \eqdef \Bigg ( \int_{|y-x| \le \lambda} |f(y)|^p\,dy \Bigg )^{1/p} .
	\end{equ} 
	Here and in what follows, we consider the value $p \in [1, \infty)$ to be fixed and so we will often suppress it in the notation and simply write $\|f\|_{\lambda, x}$ for the above quantity. In our applications we will always
	take $p=2$, but the results of this section hold for arbitrary values.
	
	\begin{definition}\label{def: pointed modelled distribution}
		Given $\gamma, \nu \in \bR$, a model $Z = (\Pi, \Gamma)$ and $x \in \bR^d$, we let $\CD^{\gamma,\nu;x}_{p}$ be the set of elements $f \in \CD_{p}^\gamma$ that admit the additional bounds
		\begin{equ}[e:assumx]
			\||f|_\zeta\|_{\lambda,x} \lesssim \lambda^{\nu-\zeta}\;,\qquad 
			\||f(y+h) - \Gamma_{y+h,y}f(y)|_\zeta\|_{\lambda,x; dy} \lesssim |h|_\fs^{\gamma-\zeta} \lambda^{\nu-\gamma}\;.
		\end{equ}
		uniformly over $\lambda \in (0,1]$ and $|h|_\fs \le \lambda$.
		We call the first of these bounds the \emph{local bound} and the second the \emph{translation bound}.
		
		We then define $ \$ f \$_{p, \gamma, \nu; x}$ to be the smallest constant implicit in 
		the notation $\lesssim$ such that the local and translation bounds for $f$ hold.
		
		Given a second model $\bar{Z}$ and $f \in \CD^{\gamma,\nu,x}_{p}(Z), \bar{f} \in \CD^{\gamma,\nu,x}_{p}(\bar{Z})$ we then write
		\begin{equs}
			\$ f, \bar{f} \$_{p, \gamma, \nu; x}  = &  \sup_{\zeta < \gamma} \sup_{\lambda \in (0,1]} \frac{\| |f - \bar{f}|_\zeta \|_{\lambda, x}}{\lambda^{\nu - \gamma}} \\& + \sup_{\lambda \in (0,1]} \sup_{\|h\|_\fs < \lambda} \sup_{\zeta < \gamma} \frac{ \||\Delta_h f(y) - \bar{\Delta}_h \bar{f}(y) |_\zeta\|_{\lambda,x; dy}}{|h|_\fs^{\gamma-\zeta}\lambda^{\nu-\gamma}}.
		\end{equs}
	\end{definition}
	
	\begin{remark}
		Whilst typically we will be most interested in the case $\nu > \gamma$, we remark that Definition~\ref{def: pointed modelled distribution} adds non-trivial constraints beyond the definition of $\CD_p^\gamma$ even in the case $\nu \le \gamma$. 
		
Indeed, whilst in that regime the translation bound is immediate from the definition of $\CD_p^\gamma$, the local bound is non-trivial since it places constraints stronger than mere $L^p$ integrability on 
components with $\zeta < \nu$.
	\end{remark}
	
	The idea here is that $f \in \CD_p^{\gamma, \nu;x}$ if it belongs to $\cD^{\gamma}_{p}$ but behaves ``better'' by an order $\nu - \gamma$ near the point $x$. Our main technical result is that the reconstruction of such distributions behaves ``as if'' it were in $\CC^{\nu - |\fs|/p}$ (as opposed to $\CC^{\gamma - |\fs|/p}$
	as in Theorem~\ref{theo:betterReconstr}), provided that it is tested against rescaled test functions centred around $x$, which translates this intuition at the level of modelled distributions into a precise statement for their reconstructions.
	
	Our proof of this reconstruction result will proceed by obtaining a standard Besov modelled distribution with certain norm behaviour by localising a pointed modelled distribution around $x$ and then applying Theorem~\ref{theo:betterReconstr}. As a result, before discussing reconstruction of pointed modelled distributions, it is natural for us to discuss their products.

	In order to cleanly state our multiplication result, we begin by fixing some context and notation. 
	Throughout this subsection $Z = (\Pi, \Gamma)$ and $\bar{Z} = (\bar{\Pi}, \bar{\Gamma})$ will be models and we will sectors $V_1$ and $V_2$ of $\mathscr{T}$ of regularity $\alpha_1$ and $\alpha_2$ respectively. We will assume that $\mathscr{T}$ comes with a product $\star$ such that $(V_1, V_2)$ is $\gamma$-regular for $\gamma = (\gamma_1 + \alpha_2)\wedge(\gamma_2 + \alpha_1$).
	
	We will write $\CD_p^{\gamma, \nu; x}(V; Z)$ for the set of elements of $\CD_p^{\gamma, \nu; x}(Z)$ that take values in the sector $V$. Given $f_1 \in \CD_{p_1}^{\gamma_1, \nu_1; x}(V_1; Z)$ and $f_2 \in \CD_{p_2}^{\gamma_2, \nu_2; x}(V_2; \bar{Z})$ our goal will be to show that the product
	\begin{equ}[e:defProductProj]
		f \eqdef f_1 \star_{\gamma} f_2 \eqdef \CQ_\gamma (f_1 \star f_2)
	\end{equ}
	lies in a suitable space of pointed modelled distributions.
	
	We first remark that it follows from the multiplication result for Besov modelled distributions that we will only have to consider the pointed bounds in what follows. We will often suppress the notation $\star$, simply replacing it either by $\cdot$ or simply by writing $\tau_1 \tau_2 \eqdef \tau_1 \star \tau_2$ since the presence of the product will always be clear from context.
	
	\begin{theorem}\label{theo: pointed multiplication}
		In the context described above, suppose that $f_1 \in \cD_{p_1}^{\gamma_1, \nu_1; x}(V_1; Z)$ and $f_2 \in \CD_{p_2}^{\gamma_2, \nu_2; x}(V_2; \bar{Z})$ where $\gamma_i, \nu_i \in \bR$ and $p_i \in [1,\infty]$. Let $\gamma = (\gamma_1 + \alpha_2) \wedge (\gamma_2 + \alpha_1)$ and let $p$ satisfy $\frac{1}{p} = \frac{1}{p_1} + \frac{1}{p_2}$.
		
		Then $f \eqdef f_1 \star_{\gamma} f_2 \in \cD_p^{\gamma, \nu_1 + \nu_2; x}$ and the bound
		\begin{equ}
			\$ f \$_{p, \gamma, \nu; x} \lesssim \$ f_1 \$_{p_1, \gamma_1, \nu_1; x} \cdot \$ f_2 \$_{p_2, \gamma_2, \nu_2; x}
		\end{equ}
		holds.
		
		Additionally, if $\bar{f}_1 \in \cD_{p_1}^{\gamma_1, \nu_1; x}(V_1; \bar{Z})$ and $\bar{f}_2 \in \cD_{p_2}^{\gamma_2, \nu_2;x}(V_2; \bar{Z})$ then for $\bar{f} \eqdef \bar{f}_1 \star_{\gamma} \bar{f}_2$ we have the bound
		\begin{equs}
			\$ f, \bar{f} \$_{p, \gamma, \nu; x} \lesssim \$ f_1 , \bar{f}_1 \$_{p_1, \gamma_1, \nu_1; x} + \$ f_2 , \bar{f}_2 \$_{p_2, \gamma_2, \nu_2; x} + \| \Gamma - \bar{\Gamma} \|_{\gamma_1 \vee \gamma_2; B_x}
		\end{equs}
		uniformly over choices of $f_i, \bar{f}_i, Z, \bar{Z}$ such that $$\$ f_i \$_{p_1, \gamma_1, \nu_1; x}, \$ \bar{f}_i \$_{p_1, \gamma_1, \nu_1; x},  \| \Gamma \|_{\gamma_1 \vee \gamma_2; B_x}, \| \bar{\Gamma} \|_{\gamma_1 \vee \gamma_2; B_x} \leq C.$$
	\end{theorem}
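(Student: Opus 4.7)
The plan is to verify separately the local and translation pointed bounds for $f = f_1 \star_\gamma f_2$, since membership $f \in \CD_p^\gamma$ is a consequence of the standard multiplication theorem for Besov modelled distributions. Throughout, I would work component-by-component in the grading of $\CT$ and apply H\"older's inequality for the $L^p$-norms over the $\lambda$-ball centred at $x$, using that any contribution to the $\zeta$-component of $f_1 \star f_2$ comes from pairs $((f_1)_{\zeta_1}, (f_2)_{\zeta_2})$ with $\zeta_1 + \zeta_2 = \zeta$, $\zeta_1 \geq \alpha_1$ and $\zeta_2 \geq \alpha_2$.

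For the local bound at $\zeta < \gamma$, the key observation is that these constraints combined with $\zeta < \gamma \leq (\gamma_1 + \alpha_2) \wedge (\gamma_2 + \alpha_1)$ force both $\zeta_1 < \gamma_1$ and $\zeta_2 < \gamma_2$, so the pointed local bounds on each factor apply. H\"older then immediately gives $\||f|_\zeta\|_{\lambda, x} \lesssim \sum \lambda^{\nu_1 - \zeta_1}\lambda^{\nu_2 - \zeta_2} \lesssim \lambda^{\nu - \zeta}$, as required. For the translation bound at $\zeta < \gamma$ and $|h|_\fs \leq \lambda$, I would exploit the identity $\Delta_h(f_1 \star f_2) = (\Delta_h f_1) \star f_2(\cdot + h) + (\Gamma_{\cdot+h,\cdot} f_1) \star (\Delta_h f_2)$ and bound each piece. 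The first piece at $(\zeta_1, \zeta_2)$ is bounded via H\"older (using the pointed translation bound on $f_1$ and the pointed local bound on $f_2$, where $|h|_\fs \leq \lambda$ lets us absorb $f_2(y+h)$ into a $2\lambda$-ball at $x$ up to a constant factor) by $|h|_\fs^{\gamma_1 - \zeta_1}\lambda^{\nu - \gamma_1 - \zeta_2}$. Rewriting this as $|h|_\fs^{\gamma - \zeta}(|h|_\fs/\lambda)^{(\gamma_1-\zeta_1)-(\gamma-\zeta)}\lambda^{\nu - \gamma}$ and using $|h|_\fs \leq \lambda$ together with $\gamma - \gamma_1 \leq \alpha_2 \leq \zeta_2$ (which makes the $|h|_\fs/\lambda$ exponent nonnegative) yields the claim. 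The second piece is handled symmetrically via the standard structure-group estimate $|\Gamma_{y+h,y} f_1(y)|_{\zeta_1} \lesssim \sum_{\zeta_1' \geq \zeta_1}|h|_\fs^{\zeta_1' - \zeta_1}|f_1(y)|_{\zeta_1'}$ combined with $\gamma - \gamma_2 \leq \alpha_1 \leq \zeta_1$.

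For the difference bound, I would decompose $f - \bar f = (f_1 - \bar f_1) \star f_2 + \bar f_1 \star (f_2 - \bar f_2)$ and its translation analogue, where the extra $\|\Gamma - \bar\Gamma\|$ contribution naturally appears from comparing $\Gamma_{y+h,y} f_1$ with $\bar\Gamma_{y+h,y} f_1$ (or more precisely, from comparing the two algebraic identities for $\Delta_h$ and $\bar\Delta_h$). Each resulting term is controlled by the same recipe as above, combining the pointed difference norm of one factor with the uniformly-bounded pointed norm of the other factor (bounded by the constant $C$).

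The main obstacle is the bookkeeping in the translation bound: for each admissible decomposition $\zeta = \zeta_1 + \zeta_2$ with $\zeta_i \geq \alpha_i$, we need at least one of the inequalities $\gamma - \gamma_1 \leq \zeta_2$ (from the first piece) or $\gamma - \gamma_2 \leq \zeta_1$ (from the second piece) to absorb the excess powers via $|h|_\fs \leq \lambda$. The choice $\gamma = (\gamma_1 + \alpha_2) \wedge (\gamma_2 + \alpha_1)$ combined with the sector lower bounds $\zeta_i \geq \alpha_i$ is precisely what makes both inequalities simultaneously available, regardless of which of the two mins realises $\gamma$.
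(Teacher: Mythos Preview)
Your proposal is correct and follows essentially the same approach as the paper. The only cosmetic difference is that for the translation bound you use the two-term telescoping identity $\Delta_h(f_1 f_2) = (\Delta_h f_1)\,f_2(\cdot+h) + (\Gamma_{\cdot+h,\cdot}f_1)\,(\Delta_h f_2)$, whereas the paper expands the same thing into the equivalent three-term form $(\Delta_h f_1)\,f_2(\cdot+h) + f_1(\cdot+h)\,(\Delta_h f_2) - (\Delta_h f_1)(\Delta_h f_2)$; the bookkeeping is the same either way, and similarly the paper's five-term decomposition for the two-model case matches your outlined strategy.
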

	
	\begin{remark}
		In the application we have in mind, we will always apply this result with $p_1 = p$ (typically equal to $2$) and $p_2 = \infty$, but since the general result is no more difficult to obtain we include its proof here.
	\end{remark}
	
	\begin{proof}
		We first consider the case of a single model and begin by verifying the local bound. By H\"older's inequality, we have that
		\begin{equs}
			\| |f|_\zeta \|_{\lambda,x,p} & \leq \sum_{\alpha_1 \le \beta \le \zeta-\alpha_2} \| |f_1|_\beta \|_{\lambda,x,p_1} \||f_2|_{\zeta - \beta}\|_{\lambda,x,p_2}
			\\ & \lesssim \sum_{\alpha_1 \le \beta \le \zeta-\alpha_2} \lambda^{\nu_1 - \beta} \lambda^{\nu_2 - \zeta + \beta} \$f_1\$_{p_1, \gamma_1, \nu_1; x} \$ f_2 \$_{p_2, \gamma_2, \nu_2; x}
			\\ & \lesssim \lambda^{\nu_1 + \nu_2 - \zeta} \$f_1\$_{p_1, \gamma_1, \nu_1; x} \$ f_2 \$_{p_2, \gamma_2, \nu_2; x}
		\end{equs}
		as desired.
		
		We now move to the translation bound. We write as usual
		\begin{equs}
			f(y+h) -  \Gamma_{y+h,y}&f(y) = [f_1(y+h) - \Gamma_{y+h, y} f_1(y)] f_2(y + h) 
			\\ & + f_1(y+h) [f_2(y+h) - \Gamma_{y+h,y} f_2(y)]
			\\ & -  [f_1(y+h) - \Gamma_{y+h, y} f_1(y)] \cdot [f_2(y+h) - \Gamma_{y+h,y} f_2(y)].
		\end{equs}
		To control the first term, we note that $\| | \Delta_h f_1(y) f_2(y+h) |_\zeta \|_{\lambda, x, p; dy}$ is bounded by
		\begin{equs}
			 & \sum_{\alpha_2 \le \beta \le \zeta-\alpha_1} \| |\Delta_h f_1(y) |_{\zeta - \beta} \|_{\lambda, x, p_1; dy} \| |f_2(y+h)|_{\beta} \|_{\lambda, x, p_2; dy}
			\\ & \lesssim \sum_{\alpha_2 \le \beta \le \zeta-\alpha_1} |h|_\fs^{\gamma_1 - \zeta + \beta} \lambda^{\nu_1 - \gamma_1} \lambda^{\nu_2 - \beta} \$f_1\$_{p_1, \gamma_1, \nu_1; x} \$ f_2 \$_{p_2, \gamma_2, \nu_2; x}
			\\ & \lesssim \sum_{\alpha_2 \le \beta \le \zeta-\alpha_1} |h|_\fs^{\gamma_1 + \alpha_2 - \zeta} |h|_\fs^{\beta - \alpha_2} \lambda^{\nu_1 - \gamma_1} \lambda^{\nu_2 - \alpha_2} |h|_\fs^{\alpha_2 - \beta} \$f_1\$_{p_1, \gamma_1, \nu_1; x} \$ f_2 \$_{p_2, \gamma_2, \nu_2; x}
			\\ & \lesssim |h|_\fs^{\gamma_1 + \alpha_2 - \zeta} \lambda^{\nu_1 + \nu_2 - \gamma_1 - \alpha_2} \$f_1\$_{p_1, \gamma_1, \nu_1; x} \$ f_2 \$_{p_2, \gamma_2, \nu_2; x}
		\end{equs}
		Switching the roles of $f_1$ and $f_2$ in the above yields the desired bound on the second term and we are left to consider $\| |\Delta_h f_1(y) \Delta_h f_2(y)]|_\zeta \|_{\lambda, x, p; dy}$. We bound this term by
		\begin{equs}
		&  \sum_{\alpha_1 \le \beta \le \zeta-\alpha_2} \| |\Delta_h f_1(y)|_\beta \|_{\lambda, x, p_1; dy} \| |\Delta_h f_2(y)|_{\zeta - \beta} \|_{\lambda, x, p_2; dy} 
			\\ & \lesssim \sum_{\alpha_1 \le \beta \le \zeta-\alpha_2} |h|_\fs^{\gamma_1 + \gamma_2 - \zeta} \lambda^{\nu_1 + \nu_2 - \gamma_1 - \gamma_2} \$f_1\$_{p_1, \gamma_1, \nu_1; x} \$ f_2 \$_{p_2, \gamma_2, \nu_2; x}
			\\ & \lesssim |h|_\fs^{\gamma - \zeta} \lambda^{\nu_1 + \nu_2 - \gamma} \$f_1\$_{p_1, \gamma_1, \nu_1; x} \$ f_2 \$_{p_2, \gamma_2, \nu_2; x}.
		\end{equs}	
		
		We turn now to the bounds for $f - \bar{f}$. The local bounds work in much the same way as in the case of a single pointed modelled distribution so we will consider only the translation bound. We write
		\begin{equ}
			\Delta_h f(y) - \bar{\Delta}_h \bar{f}(y) = T_1 + T_2 + T_3 + T_4 + T_5
		\end{equ}
		for 
		\begin{equs}
			T_1 & = [\Delta_h f_1 - \bar{\Delta}_h \bar{f}_1](y) \, f_2(y+h) \\
			T_2 & = \Gamma_{y+h, y} f_1(y) \, [\Delta_h f_2 - \bar{\Delta}_h \bar{f}_2](y) \\
			T_3 & = - \bar{\Gamma}_{y+h, y} [ f_1 - \bar{f}_1](y) \,  \bar{\Delta}_h \bar{f}_2(y) \\
			T_4 & = - [\bar{\Gamma}_{y+h, y} f_1(y) - \Gamma_{y+h, y} f_1(y)] \,  \bar{\Delta}_h \bar{f}_2(y) \\
			T_5 & = \bar{\Delta}_h \bar{f}_1(y) \, [ f_2(y) - \bar{f}_2(y)].
		\end{equs}
		
		The desired bounds on $T_1$ and $T_5$ follow in much the same way as the bounds in the single model case. The remaining three bounds are all obtained in a similar way to each other so in the interest of brevity we 
		only consider $T_2$.
		We write
		\begin{equs}
			\| |T_2|_\zeta \|_{\lambda, x, p} & \le \sum_{\alpha_1 \le \beta \le \zeta - \alpha_2} \| | \Gamma_{y+h, y} f_1(y) |_\beta \|_{\lambda, x, p_1; dy} \, \| | \Delta_h f_2 - \bar{\Delta}_h \bar{f}_2 |_{\zeta - \beta} \|_{\lambda, x, p_2; dy}
			\\
			& \lesssim \sum_{\alpha_1 \le \beta \le \zeta - \alpha_2} \sum_{\beta \le \eta < \gamma_1} |h|_\fs^{\eta - \beta} \||f_1|_\eta \|_{\lambda, x, p_1} \| | \Delta_h f_2 - \bar{\Delta}_h \bar{f}_2 |_{\zeta - \beta} \|_{\lambda, x, p_2}
			\\
			& \lesssim \sum_{\alpha_1 \le \beta \le \zeta} \sum_{\beta \le \eta < \gamma_1} |h|_\fs^{\eta - \beta} \lambda_1^{\nu_1 - \eta} |h|_\fs^{\gamma_2 - \zeta + \beta} \lambda^{\nu_2 - \gamma_2} \$ g, \bar{g} \$_{p_2, \gamma_2, \nu_2; x}
			\\
			& \lesssim |h|_\fs^{\gamma_2 + \alpha_1 - \zeta} \lambda^{\nu_1 + \nu_2 - \alpha_1 - \gamma_2} \$ f_2, \bar{f}_2 \$_{p_2, \gamma_2, \nu_2; x}
		\end{equs}
		which yields the desired bound.
	\end{proof}
	
	With this result in hand, we proceed with our plan to obtain a reconstruction result for pointed modelled distributions via localisation. The key observation here is the following proposition.
	
	\begin{proposition} \label{prop: localise}
		Suppose $f \in \cD_p^{\gamma, \nu; x}(Z)$ is such that $\supp f \subseteq B(x, \lambda)$ for $\lambda \in (0,1]$. Then
		\begin{equ} 
			\$ f\$_{p, \gamma; \ck} \lesssim \lambda^{\nu - \gamma} \$ f \$_{p, \gamma, \nu; x} 
		\end{equ}
		with a constant that is independent of $\lambda$ and $f$.
		
		Additionally, given a second model $\bar{Z}$ and $\bar{f} \in \cD_p^{\gamma, \nu; x}(\bar{Z})$ such that $\supp \bar{f} \subseteq B(x, \lambda)$, we have the bound $\$ f, \bar{f} \$_{p, \gamma; \ck} \lesssim \lambda^{\nu - \gamma} \$ f, \bar{f} \$_{p, \gamma, \nu; x}$.
	\end{proposition}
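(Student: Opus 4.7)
The plan is to verify directly the two defining conditions of $\CD_p^\gamma(Z)$ for $f$, exploiting the support hypothesis together with the pointed defining bounds of $\$f\$_{p,\gamma,\nu;x}$. For the local bound, the inclusion $\supp f \subseteq B(x,\lambda)$ immediately gives $\||f|_\zeta\|_{L^p(\ck;dy)} = \||f|_\zeta\|_{\lambda,x}$, which by the pointed local bound is at most $\lambda^{\nu-\zeta}\$f\$_{p,\gamma,\nu;x}$. Since $\zeta<\gamma$ and $\lambda\le 1$, this is further dominated by $\lambda^{\nu-\gamma}\$f\$_{p,\gamma,\nu;x}$.

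For the translation bound I will split on whether $|h|_\fs \le \lambda$ or $|h|_\fs > \lambda$. In the first regime, the support condition forces $\Delta_h f(y)=f(y+h)-\Gamma_{y+h,y}f(y)$ to vanish whenever $y\notin B(x,2\lambda)$ (since both summands then vanish), so the $L^p(\ck;dy)$ norm equals $\|\cdot\|_{2\lambda,x}$ (replacing $2\lambda$ by $1$ if $\lambda>1/2$, which is essentially vacuous as $\lambda^{\nu-\gamma}$ is then of order one). Applying the pointed translation bound at the scale $\lambda'=\min(2\lambda,1)$, which satisfies $|h|_\fs\le\lambda'$, and dividing by $|h|_\fs^{\gamma-\zeta}$ produces the desired factor $\lambda^{\nu-\gamma}\$f\$_{p,\gamma,\nu;x}$ (up to an innocuous multiplicative constant).

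The second regime $|h|_\fs>\lambda$ is treated by the triangle inequality $\|\Delta_h f\|_{L^p(\ck;dy)} \le \|f(y+h)\|_{L^p(\ck;dy)} + \|\Gamma_{y+h,y}f(y)\|_{L^p(\ck;dy)}$. For the first piece, a translation of variable gives $\|f(\cdot+h)\|_{L^p(\ck;dy)} \le \|f\|_{\lambda,x}$, which is bounded by $\lambda^{\nu-\zeta}\$f\$$; after dividing by $|h|_\fs^{\gamma-\zeta}$ and using $|h|_\fs>\lambda$ to convert $|h|_\fs^{\zeta-\gamma}$ into $\lambda^{\zeta-\gamma}$, this combines to $\lambda^{\nu-\gamma}\$f\$$. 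For the second piece, the integrand is supported in $B(x,\lambda)$, and the upper-triangular structure of $\Gamma$ gives $|\Gamma_{y+h,y}f(y)|_\zeta \le |f(y)|_\zeta + \|\Gamma\|_\gamma \sum_{\zeta<\beta<\gamma}|h|_\fs^{\beta-\zeta}|f(y)|_\beta$; each term is controlled by the pointed local bound and, after dividing by $|h|_\fs^{\gamma-\zeta}$, the same conversion $|h|_\fs^{\beta-\gamma}\le \lambda^{\beta-\gamma}$ yields $\lambda^{\nu-\gamma}\$f\$$, with $\|\Gamma\|_\gamma$ absorbed into the implicit constant.

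The two-model statement is carried out in parallel. The local difference and the translation difference in the regime $|h|_\fs\le\lambda$ follow immediately from the pointed difference bounds, exactly as in the single-model case. In the regime $|h|_\fs>\lambda$ one rearranges
\[
\Delta_h f(y) - \bar\Delta_h \bar f(y) = (f-\bar f)(y+h) - \bar\Gamma_{y+h,y}(f-\bar f)(y) + (\bar\Gamma_{y+h,y}-\Gamma_{y+h,y})f(y)\;,
\]
and estimates the first two terms using the pointed difference local bound (with a loss of $\|\bar\Gamma\|_\gamma$), while the final model-difference term is absorbed into the implicit constant under the standing convention, used throughout, of working uniformly in models of bounded norm. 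The expected main obstacle is not conceptual but combinatorial: keeping track of the exponents $\nu-\gamma$, $\nu-\zeta$, $\gamma-\zeta$ and $\beta-\zeta$ under the conversion $|h|_\fs>\lambda$ in the various subcases. Each individual step is a one-line application of the pointed defining bounds or of the upper-triangularity of $\Gamma$.
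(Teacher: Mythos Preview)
Your proof is correct and follows the same approach as the paper: the local bound is immediate from the support condition, and the translation bound is split into $|h|_\fs \le \lambda$ (handled by the pointed translation bound at scale $2\lambda$) and $|h|_\fs > \lambda$ (handled by the triangle inequality plus the pointed local bounds, converting $|h|_\fs^{\beta-\gamma} \le \lambda^{\beta-\gamma}$).

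One small caveat on the two-model case: the term $(\bar\Gamma_{y+h,y} - \Gamma_{y+h,y})f(y)$ in your decomposition contributes a factor $\|\Gamma-\bar\Gamma\|\,\$f\$_{p,\gamma,\nu;x}$ rather than $\$f,\bar f\$_{p,\gamma,\nu;x}$, so it cannot literally be ``absorbed into the implicit constant'' under a bounded-norm (as opposed to bounded-difference) convention. The paper's own statement glosses over this as well --- it merely says the two-model case ``follows in essentially the same way'' --- and the point is harmless for the application in Theorem~\ref{theo:pointedReconstruction}, where $\|\Gamma - \bar\Gamma\|$ terms appear explicitly on the right-hand side anyway.
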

	\begin{proof}
		For the local bound in the definition of $\$f\$_{p, \gamma; \ck}$, we note that, 
		since $f$ is supported in $B(x, \lambda)$,  we can write for $\zeta < \gamma$
		\begin{equ}
			\| |f|_\zeta \|_{L^p(\ck)} \le \| |f|_\zeta \|_{\lambda, x}  \lesssim \lambda^{\nu - \zeta} \$f\$_{p, \gamma, \nu; x}  \lesssim \lambda^{\nu - \gamma} \$f\$_{p, \gamma, \nu; x} \;,
		\end{equ}
		as desired, where we used the definition of $\$f\$_{p,\gamma, \nu;x}$ in the second inequality.
		
		We now turn to the translation bounds in the definition of $\$f\$_{p, \gamma; \ck}$. We divide into the cases $|h|_\fs \le \lambda$ and $|h|_\fs > \lambda$ since the definition of $\$f\$_{p, \gamma, \nu; x}$ only gives us added information in the former case.
		
		First suppose that $|h|_\fs \le \lambda$. With $\Delta_h f$ as in Definition~\ref{def: Dgamma}, we write
		\begin{equ}
			\| |\Delta_h f(y)|_\zeta \|_{L^p(\ck; dy)} \le \| |\Delta_h f(y)|_\zeta \|_{2 \lambda, x} \lesssim \lambda^{\nu - \gamma} |h|_\fs^{\gamma - \zeta} \$ f \$_{p, \gamma, \nu; x}
		\end{equ}
		which is a bound of the desired type. (Recall that we are considering the case $q = \infty$.)
		In the regime $|h|_\fs > \lambda$, we write
		\begin{equs}
			\| |\Delta_h f(y)|_\zeta \|_{L^p(\ck)} & \leq \| |f|_\zeta \|_{L^p(\ck)} + \sum_{\zeta \le \beta < \gamma} |h|_\fs^{\beta - \zeta} \| |f|_\zeta \|_{L^p(\ck)}
			\\ & \lesssim \sum_{\zeta \le \beta < \gamma}  \$ f \$_{p, \gamma, \nu; x}  |h|_\fs^{\beta - \zeta} \lambda^{\nu - \beta} \lesssim  \$ f \$_{p, \gamma, \nu; x}  |h|_\fs^{\gamma - \zeta} \lambda^{\nu - \gamma}.
		\end{equs}
		
		The bound in the case of two pointed modelled distributions then follows in essentially the same way.
	\end{proof}
	
	In order to leverage this result, recall that the reconstruction operator is local 
	in the following sense
	
	\begin{lemma}\label{lemma: ReconIsLocal}
		Let $f,g \in \CD_p^{\gamma}$ with $\gamma > 0$ and let $\psi$ be a smooth test function. 
		If $f = g$ on $\supp \psi$, then $\CR f(\psi) = \CR g(\psi)$.
	\end{lemma}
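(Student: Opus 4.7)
The plan is to use linearity together with a partition of unity argument at arbitrarily small scales, exploiting the fact that vanishing of $h \eqdef f-g$ on $\supp\psi$ forces $\Pi_x h(x) = 0$ for every $x \in \supp\psi$, which turns the reconstruction bound into small $L^p$ control on the pairing of $\CR h$ with rescaled test functions centred in $\supp\psi$.

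First I would reduce the statement. By linearity of $\CR$, it suffices to show that $\CR h(\psi) = 0$ for $h \eqdef f - g \in \CD_p^\gamma$ satisfying $h(x) = 0$ for all $x \in \supp\psi$. The key observation is that since $h(x)=0$ on $\supp\psi$, we automatically have $\Pi_x h(x) = 0$ there, so the reconstruction bound from Theorem~\ref{theo: Reconstruction} yields
\begin{equ}
\Big\|\sup_{\eta \in \CB^r}|\scal{\CR h,\eta_x^\lambda}|\Big\|_{L^p(\supp\psi;\,dx)} \lesssim \lambda^\gamma
\end{equ}
uniformly in $\lambda \in (0,1]$.

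Next I would introduce, for each small $\lambda > 0$, a smooth partition of unity $\{\phi_i\}_{i \le N}$ subordinate to a cover of $\supp\psi$ by balls $B_\fs(x_i,\lambda)$ with centres $x_i \in \supp\psi$ and cardinality $N \lesssim \lambda^{-|\fs|}$; the $\phi_i$ can be chosen so that each $\psi\phi_i$ is of the form $c_i\,\eta_{i,x_i}^{\lambda}$ for some $\eta_i \in \CB^r$ and $|c_i| \lesssim \lambda^{|\fs|}\|\psi\|_\infty$ (possibly after enlarging the scale by a fixed constant factor). Decomposing $\psi = \sum_i \psi\phi_i$ gives
\begin{equ}
|\CR h(\psi)| \le \sum_i |c_i|\,\bigl|\scal{\CR h - \Pi_{x_i} h(x_i),\,\eta_{i,x_i}^{\lambda}}\bigr|,
\end{equ}
where I used that $\Pi_{x_i} h(x_i) = 0$ to insert the centring term for free.

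Finally, applying Hölder's inequality with exponents $p,p'$ over the $N$ terms, and comparing the resulting discrete $\ell^p$-sum with the integral appearing in the displayed $L^p$ reconstruction estimate (using that $\sup_\eta|\scal{\CR h,\eta_x^\lambda}|$ varies on scale $\lambda$ only up to bounded constants when one allows a fixed enlargement of the test-function class $\CB^r$), one arrives at
\begin{equ}
|\CR h(\psi)| \lesssim \lambda^{|\fs|} \cdot N^{1/p'} \cdot \lambda^{-|\fs|/p}\cdot \lambda^\gamma \lesssim \lambda^\gamma,
\end{equ}
which vanishes as $\lambda \to 0$ since $\gamma > 0$. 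The only mildly delicate step is the passage from the discrete sum over lattice centres $x_i$ to the continuous $L^p$-norm in the reconstruction bound, which is where one has to argue that translating a bump $\eta_{x_i}^\lambda$ by distance $\lesssim \lambda$ produces another bump in $\CB^r$ at a comparable scale — a routine but pedantic verification.
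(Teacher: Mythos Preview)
The paper does not give a proof of this lemma (it is stated as a recall of a standard fact), so there is no argument there to compare against. Your overall strategy is the standard one and is morally correct, but the step you flag as ``routine but pedantic'' hides a genuine issue that you have misidentified. The delicate point in comparing the discrete sum $\sum_i|a_i|^p$ with the $L^p(\supp\psi)$ reconstruction estimate is \emph{not} the recentring of bumps (which is indeed routine); it is that the balls $B_\fs(x_i,\lambda)$ over which you would average in order to pass to an integral may protrude beyond $\supp\psi$. To make the comparison work you would need either $|B_\fs(x_i,\lambda)\cap\supp\psi|\gtrsim\lambda^{|\fs|}$, which can fail for centres $x_i$ near an irregular part of $\partial(\supp\psi)$, or control of $\sup_\eta|\scal{\Pi_xh(x),\eta_x^\lambda}|$ on the annulus $(\supp\psi)_\lambda\setminus\supp\psi$, which brings in a factor $\lambda^\alpha$ (with $\alpha$ the sector regularity, possibly negative) that is not compensated by anything in your bookkeeping.

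The clean fix keeps all base points inside $\supp\psi$ by replacing the partition of unity with a continuous mollification. Choose $\chi\in\CB^r$ with $\int\chi=1$; then $\psi*\chi^\lambda\to\psi$ in $\CC^r$ on a fixed compact set, so
\begin{equ}
\scal{\CR h,\psi}=\lim_{\lambda\to0}\scal{\CR h,\psi*\chi^\lambda}=\lim_{\lambda\to0}\int_{\supp\psi}\psi(x)\,\scal{\CR h,\chi_x^\lambda}\,dx\;.
\end{equ}
Since every $x$ in this integral lies in $\supp\psi$ one has $\scal{\CR h,\chi_x^\lambda}=\scal{\CR h-\Pi_xh(x),\chi_x^\lambda}$, and a single application of H\"older together with \eqref{Eq:BoundRecons} gives $|\scal{\CR h,\psi}|\lesssim\|\psi\|_{L^{p'}}\lambda^\gamma\to0$. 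This is the same idea as yours, only with the discrete decomposition replaced by an integral one so that no boundary layer appears.
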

	This gives us the ability to leverage Proposition~\ref{prop: localise} if we can suitably lift bump functions into an appropriate $\CD_p^\gamma$ space.
	
	We fix a smooth compactly supported function $\chi$ such that $\chi(x) = 1$ for $|x|_\fs \le 2$ and, for given $\mu > 0$, define the Taylor lift of order $\mu$ of its rescaled version by
	\begin{equ}
		\chi_\lambda(x) = \sum_{|k|<\mu} \lambda^{-|k|_\fs} (D^k\chi)(x/\lambda) \, \frac{X^k}{k!}\;.
	\end{equ}
	
	Notice that we suppress the $\mu$ dependency in the notation since the precise value of $\mu$ will not be important to us so long as it is sufficiently large.
	
	The preceding results then allow us to derive the following pointed versions of the reconstruction theorem.
	\begin{theorem}\label{theo:pointedReconstruction}
		Let  $\gamma \in (0, \alpha + |\fs|/p) \setminus \bN$ and suppose that 
		$f \in \CD^{\gamma,\nu,x}_{p}(Z)$ takes values in a sector of regularity $\alpha \le 0$. Then writing $B_x$ for the closed ball around $x$ of radius $2$, 
\begin{equ}[e:boundPointed1]
		|\scal{\CR f,\psi_x^\lambda}| \lesssim \lambda^{\nu-|\fs|/p} \$ f \$_{p, \gamma, \nu; x} \| \Pi \|_{\gamma; B_x} (1 + \| \Gamma \|_{\gamma;B_x}),
\end{equ}		
uniformly over $\lambda \in (0,1]$ and $\psi \in \CB^r$.
		
		Further, for $\delta \le \lambda$, we have that
		\begin{equ}[e:boundPointed2]
			\|\scal{ \CR f - \Pi_y f(y), \psi_y^\delta } \|_{\lambda, x, p; dy} \lesssim \lambda^{\nu - \gamma}  \delta^{\gamma} \$ f \$_{p, \gamma, \nu; x}  \| \Pi \|_{\gamma; B_x} (1 + \| \Gamma \|_{\gamma;B_x}).
		\end{equ}
		
		Given a second model $\bar{Z}$ and $\bar{f} \in \CD^{\gamma,\nu,x}_{p}(\bar{Z})$ taking values in the same sector as $f$, we have that
		\begin{equs}
			\lambda^{|\fs|/p-\nu} | & \scal{\CR f - \bar{\CR} \bar{f},\psi_x^\lambda}|  \\ & \lesssim \$ f; \bar{f} \$_{p, \gamma, \nu; x} \| \Pi \|_{\gamma; B_x} (1 + \|\Gamma\|_{\gamma; B_x}) \\ & \quad + \$ \bar{f} \$_{p, \gamma, \nu; x} \left ( \|\Pi - \bar{\Pi}\|_{\gamma; B_x} (1 + \|\Gamma\|_{\gamma; B_x}) + \|\bar{\Pi}\|_{\gamma; B_x} \|\Gamma - \bar{\Gamma}\|_{\gamma; B_x} \right ).
		\end{equs}
		and further
		\begin{equ}
			\lambda^{\gamma - \nu} \delta^{- \gamma} \| \scal{ \CR f - \bar{\CR} \bar{f} - \Pi_y f(y) + \bar{\Pi}_y \bar{f}(y), \psi_y^\delta } \|_{\lambda, x, p; dy}
		\end{equ}
		is bounded by the same quantity.
		
		In the case where $\alpha = 0$, $\bar \alpha = \min(\CA_V \setminus \bN)$, $\alpha < \gamma - |\fs|/p < \bar\alpha$ and $\gamma, \gamma - |\fs|/p \not \in \bN$, the same bounds hold for $\psi \in \CB_{\lfloor \bar\alpha \rfloor}^r$.
		
	\end{theorem}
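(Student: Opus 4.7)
The plan is to reduce the pointed reconstruction bounds to the (local, standard) reconstruction theorems already established by localising $f$ around $x$. Take $\chi_\lambda$ as defined just before the statement with $\chi$ chosen so that $\chi \equiv 1$ on $B_\fs(0,2)$. A direct computation from the definition shows that $\chi_\lambda \in \cD^{\gamma,0;x}_\infty(Z)$ with $\$\chi_\lambda\$_{\infty,\gamma,0;x} \lesssim 1$ uniformly in $\lambda \in (0,1]$, essentially because $\chi_\lambda$ is the Taylor lift of $\chi(\cdot/\lambda)$ and its coefficients enjoy the natural $\lambda$-scaling.

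Define $g \eqdef f \star_\gamma \chi_\lambda$. By the pointed multiplication bound (Theorem~\ref{theo: pointed multiplication}), $g \in \cD^{\gamma,\nu;x}_p(Z)$ with $\$g\$_{p,\gamma,\nu;x} \lesssim \$f\$_{p,\gamma,\nu;x}$ uniformly in $\lambda$. Since $\chi_\lambda$ is supported in a ball of radius $\CO(\lambda)$ around $x$, so is $g$, and Proposition~\ref{prop: localise} gives $\$g\$_{p,\gamma;B_x} \lesssim \lambda^{\nu-\gamma}\$f\$_{p,\gamma,\nu;x}$. The hypothesis $\gamma < \alpha + |\fs|/p$ then lets us apply Theorem~\ref{theo:betterReconstr}, yielding $\|\cR g\|_{\CC^{\gamma - |\fs|/p};B_x} \lesssim \lambda^{\nu-\gamma}\$f\$_{p,\gamma,\nu;x}\|\Pi\|_{\gamma;B_x}(1+\|\Gamma\|_{\gamma;B_x})$.

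To prove \eqref{e:boundPointed1}, observe that $\chi_\lambda \equiv 1$ as a modelled distribution on $B_\fs(x,2\lambda)$, so Lemma~\ref{lemma: ReconIsLocal} gives $\langle \cR f, \psi_x^\lambda\rangle = \langle \cR g, \psi_x^\lambda\rangle$; the standard bound for a $\CC^{\gamma - |\fs|/p}$ function tested against $\psi_x^\lambda$ contributes a factor $\lambda^{\gamma - |\fs|/p}$, and combining with the previous bound gives the scaling $\lambda^{\nu - |\fs|/p}$. For \eqref{e:boundPointed2}, for any $y \in B_\fs(x,\lambda)$ and $\delta \le \lambda$ the support of $\psi_y^\delta$ sits inside $B_\fs(x,2\lambda)$, where $g$ and $f$ agree as modelled distributions; applying the local reconstruction bound \eqref{Eq:BoundRecons} to $g$ at scale $\delta$ and restricting the resulting $L^p$ integration to $B_\fs(x,\lambda)$ produces exactly the claimed estimate. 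The two-model comparison bounds follow by the same argument, using the difference versions of Theorems~\ref{theo: pointed multiplication}, \ref{theo: Reconstruction}, \ref{theo:betterReconstr} and Proposition~\ref{prop: localise} in place of their single-model counterparts; the extra terms $\|\Pi - \bar\Pi\|$ and $\|\Gamma - \bar\Gamma\|$ multiplied by $\$\bar f\$_{p,\gamma,\nu;x}$ arise precisely when comparing the two applications of the better reconstruction theorem.

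The main obstacle is the final extension to the case $\alpha = 0$. There the hypothesis $\gamma < \alpha + |\fs|/p$ of Theorem~\ref{theo:betterReconstr} degenerates, because the polynomial (integer-degree) component of $V$ has regularity $0$. The remedy is that against test functions $\psi \in \CB^r_{\lfloor \bar\alpha\rfloor}$ with sufficient vanishing moments, the polynomial contributions to $\Pi_y g(y)(\psi_y^\delta)$ vanish, so only the non-polynomial part of the sector (of regularity $\bar\alpha$) effectively contributes to the pairings. Under the hypothesis $\gamma - |\fs|/p < \bar\alpha$ one can then rerun the previous argument with $\alpha$ replaced by $\bar\alpha$; the delicate point to check is that this cancellation survives the localisation step, which works because on $B_\fs(x,2\lambda)$ the Taylor lift $\chi_\lambda$ reduces to the constant $\mathbf{1}$ and so does not reintroduce any spurious polynomial components into $g$.
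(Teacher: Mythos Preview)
Your approach matches the paper's: localise by multiplying with the Taylor lift $\chi_\lambda$, use Proposition~\ref{prop: localise} to trade the pointed norm for a standard $\cD_p^\gamma$ norm carrying the factor $\lambda^{\nu-\gamma}$, then apply Theorems~\ref{theo:betterReconstr} and~\ref{theo: Reconstruction} via the locality of reconstruction.

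One parameter needs correcting, however. Asserting only $\chi_\lambda \in \cD_\infty^{\gamma,0;x}$ is too weak for the multiplication step: with $f$ in a sector of regularity $\alpha$ and $\chi_\lambda$ in the polynomial sector of regularity $0$, Theorem~\ref{theo: pointed multiplication} outputs regularity $(\gamma+0)\wedge(\gamma+\alpha) = \gamma+\alpha$, which is strictly below $\gamma$ when $\alpha<0$, and then Proposition~\ref{prop: localise} and Theorem~\ref{theo:betterReconstr} no longer give the right exponents. The fix (and what the paper does) is to choose $\mu = \gamma-\alpha$ in the definition of $\chi_\lambda$, so that $\chi_\lambda \in \cD_\infty^{\gamma-\alpha,0;x}$ with uniform norm, and the product lands in $\cD_p^{\gamma,\nu;x}$ as you claim. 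A second minor point: as defined, $\chi_\lambda$ is centred at the origin, so you should multiply by $y \mapsto \chi_\lambda(y-x)$ to obtain support near $x$; this is presumably what you intended given your subsequent support claim.
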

	\begin{proof}
		We provide a proof of the bounds for a single model; those for two models following similarly. 
		
		A straightforward calculation yields that if we take $\mu = \gamma - \alpha$ then $\chi_\lambda \in \CD_\infty^{\gamma - \alpha, 0;x}$ and that $\$\chi_\lambda\$_{\infty, \gamma - \alpha, 0;x} \lesssim 1$ uniformly in $\lambda$.
		
		Therefore, by Lemma~\ref{theo: pointed multiplication}, it follows that $f_\lambda(y) = f(y) \chi_\lambda(y-x)$ defines an element of $\cD_p^{\gamma, \nu;x}$ with $\$ f_\lambda \$_{p, \gamma; B_x} \lesssim \lambda^{\nu - \gamma} \$ f \$_{p, \gamma, \nu; x}$ uniformly in $\lambda$ by Proposition~\ref{prop: localise}.
Since		$|\scal{\CR f, \varphi_x^\lambda}| = |\scal{\CR f_\lambda, \varphi_x^\lambda}|$ by Lemma~\ref{lemma: ReconIsLocal}, the bound \eqref{e:boundPointed1} follows from Theorem~\ref{theo:betterReconstr}.
		The bound \eqref{e:boundPointed2}  follows similarly from Theorem~\ref{theo: Reconstruction}.
	\end{proof}
	
	We now turn our attention to establishing analogues of the usual Schauder estimates for pointed modelled distributions. 
	
	We note that our definition of the abstract integration map here will differ from the one for the usual spaces of Besov modelled distributions. The reason for this is that in order to preserve the better behaviour around the distinguished point $x$ it turns out to be necessary to subtract off a suitable Taylor jet around that distinguished point.
	
	As usual, we will assume that a kernel assignment $K = (K^\ft)_{\ft \in \Lab_+}$ has been fixed and will fix a choice of $\ft \in \Lab_+$ and write $\beta = |\ft|_\fs$ and suppress the dependency on $\ft$ in the kernel in order to simplify the forthcoming notation.
	
	\begin{definition}\label{def: pointed integration}
		For $\gamma, \nu > 0$, $p \in [1,\infty]$ and $f \in \cD_p^{\gamma, \nu; x}$, we define the pointed abstract integral $\CK_{\gamma, \nu}^{x,p} f$ of $f$ by
		\begin{equs}
			\CK_{\gamma, \nu}^{x,p} f(y) = \CI f(y) + \CJ(y) f(y) + (\CN_\gamma f)(y) - T_{\nu,p}^x f(y)
		\end{equs} 
		where 
		\begin{equs}
			\CJ(y) \tau &= \sum_{|k|_\fs < |\tau| + \beta} \frac{X^k}{k!} \big(D^k K \ast \Pi_y \tau\big) (y) \\
			(\CN_\gamma f)(y) &= \sum_{|k|_\fs < \gamma + \beta} \frac{X^k}{k!} [D^k K \ast (\CR f - \Pi_y f(y))](y) \\
			T_{\nu,p}^x f(y) &= \CQ_{< \gamma + \beta} \Big ( \sum_{|k|_\fs < \nu + \beta - |\fs|/p}  \frac{(X+y - x)^k}{k!} \big(D^k K \ast \CR f \big)(x) \Big ) \;.
		\end{equs}
	\end{definition}
	
	\begin{remark}
		Since $D^k K$ isn't a smooth test function, it is not immediately obvious that the evaluation of
		the distribution $D^k K \ast \CR f$ at the point $x$ is a well defined operation. However, this is a consequence of Theorem~\ref{theo:pointedReconstruction} in essentially the same way as the corresponding statement in \cite[Lemma 5.19]{Hai14} is a consequence of the reconstruction theorem there.
	\end{remark}

	Later, given a kernel assignment $(K^\ft)_{\ft \in \Lab_+}$ we will write $\CK_{\gamma, \nu, \ft}^x, \CI^\ft, \CN_{\gamma, \ft}, T_{\nu, p, \ft}^x$ for the operators defined in this way for the kernels $K^\ft$.
	
	Additionally, in order to simplify upcoming expressions we will set
	\begin{equs}
		K_{n; y,x}^{k, \eta}(z) = \partial^k K_n(y-z) - \sum_{\substack{l \ge k \\ |l|_\fs \le \eta}} \frac{(y-x)^{l-k}}{(l-k)!} \partial^l K_n(x-z).
	\end{equs}
	
	We will also repeatedly use the following estimates, which are immediate to obtain from the definitions but we state separately here for the reader's convenience.
	
	\begin{lemma} \label{Lemma: Pointed Model Bound}
		Fix a model $Z = (\Pi, \Gamma)$ and let $f \in \cD_p^{\gamma, \nu; x}(Z)$. Then for $\zeta \leq \gamma$ and $|h|_\fs < \lambda$
		\begin{equs}
			\|\scal{\Pi_y \CQ_\zeta f (y), \eta_y^\varepsilon} \|_{\lambda, x, p; dy} & \lesssim \| \Pi \|_{\delta; B_x} \$ f \$_{p, \gamma, \nu; x} \varepsilon^\zeta \lambda^{\nu - \zeta} \\
			\|\scal{\Pi_y \cQ_\zeta \Delta_{h} f(y), \eta_y^\varepsilon} \|_{\lambda, x, p; dy} & \lesssim \sum_{\eta: \zeta \le \eta < \gamma} \| \Pi \|_{\delta; B_x} (1 + \| \Gamma\|_{\delta; B_x}) |h|_\fs^{\eta - \zeta} \varepsilon^\eta \lambda^{\nu - \zeta}.
		\end{equs}
		Additionally, given a second model $\bar{Z} = (\bar{\Pi}, \bar{\Gamma})$ and $\bar{f} \in \CD_p^{\gamma, \nu; x}(\bar{Z})$, we have the bounds
		\begin{equ}
			 \|\scal{\Pi_y \CQ_\zeta - \bar{\Pi}_y \CQ_\zeta \bar{f}(y), \eta_y^\varepsilon} \|_{\lambda, x, p; dy} \lesssim  \varepsilon^\zeta \lambda^{\nu - \zeta} \left ( \|\Pi - \bar{\Pi}\|_{\delta; B_x} + \$f; \bar{f} \$_{p, \gamma, \nu;x} \right )
		\end{equ}
		and
		\begin{equs}
			\|\scal{\Pi_y \cQ_\zeta \Delta_{h} f(y) - & \bar{\Pi}_y \cQ_\zeta \bar{\Delta}_{h} \bar{f}(y), \eta_y^\varepsilon} \|_{\lambda, x, p; dy}  \\ &\lesssim \sum_{\eta: \zeta \le \eta < \gamma}  |h|_\fs^{\eta - \zeta} \varepsilon^\zeta \lambda^{\nu - \eta} \left ( \|Z - \bar{Z}\|_{\delta; B_x} + \$f; \bar{f} \$_{p, \gamma, \nu;x}\right )
		\end{equs}
		uniformly over choices of $Z, \bar{Z}, f, \bar{f}$ such that $\|Z\|_{\delta; B_x} + \|\bar{Z}\|_{\delta; B_x} + \$f\$_{p,\gamma, \nu; x} + \$\bar{f}\$_{p, \gamma, \nu; x} \le C$.
	\end{lemma}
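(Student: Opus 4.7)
All four estimates follow from a direct application of the analytic bounds in Definition~\ref{def: model} together with the local and translation bounds defining $\CD_p^{\gamma, \nu; x}$, so the proof is essentially one of bookkeeping. For the first estimate, the key point is that $\CQ_\zeta f(y) \in \CT_\zeta$, so the model bound gives the pointwise inequality
\begin{equ}
|\scal{\Pi_y \CQ_\zeta f(y), \eta_y^\varepsilon}| \lesssim \|\Pi\|_{\delta; B_x}\, |f(y)|_\zeta\, \varepsilon^\zeta
\end{equ}
for $y \in B(x, \lambda) \subseteq B_x$, and taking the $L^p(dy)$-norm together with the pointed local bound $\||f|_\zeta\|_{\lambda, x, p} \lesssim \$f\$_{p, \gamma, \nu; x}\, \lambda^{\nu - \zeta}$ yields the claim.

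For the second estimate, the plan is to decompose
\begin{equ}
\CQ_\zeta \Delta_h f(y) = \CQ_\zeta f(y+h) - \sum_{\eta \ge \zeta} \CQ_\zeta \Gamma_{y+h, y} \CQ_\eta f(y)\;,
\end{equ}
exploiting the upper triangularity of $\Gamma$ from Definition~\ref{def: regularity structure}. The first term is estimated just as in the first inequality after the change of variable $y \mapsto y - h$, for which the assumption $|h|_\fs < \lambda$ guarantees that we remain inside $B_x$. For each $\eta$ in the sum, one combines the structural bound $|\CQ_\zeta \Gamma_{y+h,y} \CQ_\eta f(y)| \lesssim \|\Gamma\|_{\delta; B_x}\, |h|_\fs^{\eta - \zeta}\, |f(y)|_\eta$ with the model bound and the pointed local bound on $|f|_\eta$ to get the corresponding summand on the right-hand side.

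For the two-model bounds, I would telescope as
\begin{equ}
\Pi_y \CQ_\zeta f(y) - \bar\Pi_y \CQ_\zeta \bar f(y) = (\Pi_y - \bar\Pi_y)\CQ_\zeta f(y) + \bar\Pi_y \CQ_\zeta (f(y) - \bar f(y))\;,
\end{equ}
and similarly for $\Pi_y \CQ_\zeta \Delta_h f - \bar\Pi_y \CQ_\zeta \bar\Delta_h \bar f$; the latter additionally produces a $(\Gamma_{y+h, y} - \bar\Gamma_{y+h, y})f(y)$ contribution which accounts for the $\|\Gamma - \bar\Gamma\|$ piece. Each resulting summand is then controlled by the single-model arguments above, with the uniform hypotheses on $\|Z\|, \|\bar Z\|, \$f\$, \$\bar f\$$ used to absorb the factors that do not themselves carry a difference. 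The only genuine (and minor) obstacle is combinatorial: to match each term produced by the triangle inequality to the appropriate summand on the right-hand side of the stated bound. No new analytic ideas are required beyond those already introduced in this section.
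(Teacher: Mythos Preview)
Your proposal is correct and follows essentially the same approach as the paper: apply the model bound from Definition~\ref{def: model}, then the pointed local or translation bound from Definition~\ref{def: pointed modelled distribution}, and telescope exactly as you do for the two-model case (the paper writes the identical decomposition $\Pi_y \CQ_\zeta f(y) - \bar\Pi_y \CQ_\zeta \bar f(y) = (\Pi_y - \bar\Pi_y)\CQ_\zeta f(y) + \bar\Pi_y \CQ_\zeta (f(y) - \bar f(y))$). The only minor deviation is your treatment of the second single-model inequality: you split $\CQ_\zeta \Delta_h f(y)$ via upper-triangularity of $\Gamma$ and then use the \emph{local} bound on each piece, whereas the most direct reading of the paper's one-line argument is to apply the model bound to $\CQ_\zeta \Delta_h f(y) \in \CT_\zeta$ (giving $\varepsilon^\zeta |\Delta_h f(y)|_\zeta$) and then invoke the pointed \emph{translation} bound directly. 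Both routes are immediate from the definitions and either suffices for the downstream applications.
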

	\begin{proof}
		The results in the case of a single model follow immediately by applying the definition of the norm of the model followed by the definition of $\$f\$_{p, \gamma, \nu;x}$.
		
		In the case of two models, a similar approach works where one has to appropriately rewrite the difference of models. For example, for the first inequality in this case we would write $\Pi_y \CQ_\zeta f(y) - \bar{\Pi}_y \CQ_\zeta \bar{f}(y) = (\Pi_y - \bar{\Pi}_y) \CQ_\zeta f(y) + \bar{\Pi}_y \CQ_\zeta (f(y) - \bar{f}(y))$ and then proceed in almost the exact same way as in the case of a single model. 
		
		The second inequality for the case of two models is then similar.
	\end{proof}
	
	The main result of this subsection is the following theorem.
	\begin{theorem} \label{theo:schauder}
		Fix models $Z = (\Pi, \Gamma)$ and $\bar{Z}= (\bar{\Pi}, \bar{\Gamma})$ and for $p \in [1, \infty]$, $\gamma, \nu \in \bR$ and $x \in \bR^d$, let $\CK_{\gamma, \nu}^{x,p}, \bar{\CK}_{\gamma, \nu}^{x,p}$ be the corresponding pointed integration operators. 
		
		Then for $f \in \CD_p^{\gamma, \nu; x}(Z)$ valued in a sector $V$ such that $0 < \gamma < \bar \alpha + |\fs|/p$ where $\bar \alpha = \min(\CA_V \setminus \bN)$, one has $\CK_{\gamma, \nu}^{x,p} f \in \CD_p^{\gamma + \beta, \nu + \beta; x}(Z)$ and further
		\begin{equs}
			\$ \CK_{\gamma, \nu}^{x, p} f \$_{p, \gamma + \beta, \nu + \beta; x} \lesssim \| \Pi \|_{\gamma; B_x} (1+\|\Gamma\|_{\gamma; B_x}) \$f\$_{p, \gamma, \nu; x}
		\end{equs}
		where $\beta$ is the order to which the kernel $K$ is regularising.
		
		Additionally, given $\bar{f} \in \CD_{p}^{\gamma, \nu; x}(\bar{Z})$
		\begin{equs}
			\$ \CK_{\gamma, \nu}^{x, p} f ; \bar{\CK}_{\gamma, \nu}^{x,p} \bar{f} \$_{p, \gamma + \beta, \nu + \beta} \lesssim \|Z - \bar{Z}\|_{\delta; B_x} + \$f; \bar{f} \$_{p, \gamma, \nu;x} 
		\end{equs}
		uniformly over choices of $Z, \bar{Z}, f, \bar{f}$ such that $\|Z\|_{\delta; B_x} + \|\bar{Z}\|_{\delta; B_x} + \$f\$_{p,\gamma, \nu; x} + \$\bar{f}\$_{p, \gamma, \nu; x} \le C$.
	\end{theorem}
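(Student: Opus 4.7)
The plan is to reduce the claim to two things: the standard Besov Schauder estimate (which gives membership in $\CD_p^{\gamma+\beta}$ with the correct global bound) and a verification that the added pointed control around $x$ survives the integration, with the Taylor jet $T_{\nu,p}^x f$ playing exactly the role of a renormalising ``counterterm'' that cancels the polynomial part of $K\ast \CR f$ around $x$ up to the required order. The starting observation is that since $\CK_{\gamma,\nu}^{x,p} f$ differs from the standard abstract integral $\CK_\gamma f = \CI f + \CJ f + \CN_\gamma f$ only by a polynomial (namely $T_{\nu,p}^x f$, which is a lift of a smooth function on $B_x$), the bound $\$\CK_{\gamma,\nu}^{x,p} f\$_{p,\gamma+\beta;B_x}\lesssim \|\Pi\|_{\gamma;B_x}(1+\|\Gamma\|_{\gamma;B_x})\$f\$_{p,\gamma,\nu;x}$ is immediate from (the local form of) \cite[Thm~5.1]{Cyril} together with the pointed local control of $f$ which dominates $\$f\$_{p,\gamma;B_x}$.

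So it remains to verify the two pointed bounds \eqref{e:assumx} for $\CK_{\gamma,\nu}^{x,p} f$, namely
\begin{equ}
\||(\CK_{\gamma,\nu}^{x,p} f)|_\zeta\|_{\lambda,x}\lesssim \lambda^{\nu+\beta-\zeta},\qquad \||\Delta_h\CK_{\gamma,\nu}^{x,p} f|_\zeta\|_{\lambda,x;dy}\lesssim |h|_\fs^{\gamma+\beta-\zeta}\lambda^{\nu-\gamma},
\end{equ}
for $|h|_\fs\le\lambda\le 1$. I would split the homogeneities $\zeta<\gamma+\beta$ into two classes. For non-integer $\zeta\ge\alpha+\beta$ only $\CI f$ contributes to $(\CK_{\gamma,\nu}^{x,p} f)_\zeta(y)=\CI(f_{\zeta-\beta})(y)$, so both the local and translation bounds follow verbatim from the pointed bounds on $f$ at homogeneity $\zeta-\beta$. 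For integer $\zeta=|k|_\fs<\gamma+\beta$ one reads off from Definition~\ref{def: pointed integration} that
\begin{equ}
(\CK_{\gamma,\nu}^{x,p} f)_k(y) = \tfrac{1}{k!}(D^kK\ast \CR f)(y) - \tfrac{1}{k!}\sum_{|l|_\fs<\nu+\beta-|\fs|/p}\tfrac{(y-x)^{l-k}}{(l-k)!}(D^l K\ast\CR f)(x) - E_k(y),
\end{equ}
where $E_k(y)$ collects the Taylor-jet truncation errors in $\CJ(y)f(y)$ and $\CN_\gamma f(y)$ at orders $\ge \gamma+\beta$ at $y$ (these are the harmless ``upper'' pieces). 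The first two terms are precisely the remainder in the Taylor expansion of $z\mapsto (D^kK\ast \CR f)(z)$ at $x$, truncated at order $\nu+\beta-|\fs|/p-|k|_\fs$.

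To estimate the polynomial part, I would decompose the kernel $K=\sum_n K_n$ à la \cite[Ass.~5.1]{Hai14} and, using the pointed reconstruction bounds \eqref{e:boundPointed1}--\eqref{e:boundPointed2} from Theorem~\ref{theo:pointedReconstruction} applied to $\scal{\CR f - \Pi_z f(z),\cdot}$ tested against rescalings of $D^kK_n$ (together with Taylor expansion of $D^kK_n$ around $x$ to absorb the polynomial terms), estimate the contribution from each scale $n$. The scales $2^{-n}\gtrsim |y-x|\vee\lambda$ are controlled by the Taylor remainder of the (smooth) function $D^k K_n \ast \CR f$, which gives an extra factor $|y-x|^{\nu+\beta-|\fs|/p-|k|_\fs}$ so that, after integrating $|y-x|\le\lambda$ with the $L^p$ norm, we pick up a factor $\lambda^{\nu+\beta-|k|_\fs}$ of the right order. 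The scales $2^{-n}\lesssim \lambda$ are controlled directly via the pointed reconstruction bound \eqref{e:boundPointed2}, which yields $\|\scal{\CR f-\Pi_y f(y),D^kK_n(y-\cdot)}\|_{\lambda,x,p}\lesssim \lambda^{\nu-\gamma}2^{-n(\gamma-|k|_\fs+\beta)}$ (plus an analogous estimate for $\Pi_y f(y)$ via Lemma~\ref{Lemma: Pointed Model Bound}). Summing the geometric series in $n$ (which converges because $\gamma+\beta-|k|_\fs>0$ on one side and $\nu+\beta-|\fs|/p-|k|_\fs$ is above/below integers by the assumption $\gamma-|\fs|/p\notin\bN$ on the other) yields the local bound. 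For the translation bound one repeats essentially the same decomposition for $\Delta_h (\CK_{\gamma,\nu}^{x,p} f)$, using that $\Delta_h$ annihilates $T_{\nu,p}^x f$ in its low-order polynomial piece (it's a genuine polynomial in $y$), so that the only thing that remains is the Taylor remainder of $K\ast\CR f$ at $y$ estimated against $h$; the same scale split plus the Besov translation bound for $f$ gives the factor $|h|_\fs^{\gamma+\beta-\zeta}\lambda^{\nu-\gamma}$.

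The two-model bound is then obtained by running the identical argument with $f,\bar f,Z,\bar Z$ and replacing every use of Theorem~\ref{theo:pointedReconstruction} and Lemma~\ref{Lemma: Pointed Model Bound} by its ``difference'' counterpart; all the scale-by-scale estimates are linear in the data, so the geometric sums converge identically. The main obstacle I anticipate is bookkeeping: correctly matching, scale by scale, which Taylor coefficients in $T_{\nu,p}^x f$ cancel which parts of $(D^k K\ast \CR f)(y)-(D^kK\ast\CR f)(x)$, and checking that the cutoffs $|k|_\fs<\gamma+\beta$, $|l|_\fs<|\tau|+\beta$, $|l|_\fs<\nu+\beta-|\fs|/p$ interact correctly (especially at the boundary case where $\nu+\beta-|\fs|/p$ is close to an integer homogeneity). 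The non-integer hypotheses on $\gamma$ and $\gamma-|\fs|/p$ are what keep all these sums finite and away from logarithmic divergences.
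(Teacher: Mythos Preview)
Your overall strategy is the same as the paper's and the local bound argument is essentially correct (modulo a misdescription: the ``error'' term $E_k(y)$ is not the contribution at orders $\ge\gamma+\beta$ but rather the low-homogeneity piece $\sum_{\zeta\le|k|_\fs-\beta}\scal{\Pi_y f_\zeta(y),\partial^kK(y-\cdot)}$; this is what the paper writes explicitly in its $I_n$\slash$J_n$ decomposition).

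There is however a genuine gap in your treatment of the translation bound. The claim that $\Delta_h$ annihilates $T_{\nu,p}^x f$ is false. The map $y\mapsto T_{\nu,p}^x f(y)$ is \emph{not} a Taylor lift of a polynomial: it would be, were it not for the projection $\CQ_{<\gamma+\beta}$ in Definition~\ref{def: pointed integration}. Because that truncation drops the components $X^k$ with $\gamma+\beta\le|k|_\fs<\nu+\beta-|\fs|/p$, one finds (as the paper computes explicitly)
\[
\Delta_h T_{\nu,p}^x f(y)\;=\;\sum_{(k,j,l)}\frac{(y-x)^j}{j!}\,\frac{h^l}{l!}\,\bigl(D^{k+j+l}K\ast\CR f\bigr)(x)\,\frac{X^k}{k!}\;,
\]
the sum running over $|k|_\fs<\gamma+\beta$, $\gamma+\beta\le|k+l|_\fs<\nu+\beta-|\fs|/p$, $|k+j+l|_\fs<\nu+\beta-|\fs|/p$. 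These terms are not merely bookkeeping nuisances: they are precisely what allows, at large scales $2^{-n}\ge\lambda$, the regrouping of the $\CR f$ contributions into Taylor remainders \emph{around $x$} (the kernels $K_{n;y,x}^{k+l,\nu+\beta-|\fs|/p}$ in the paper's $I_n'$), rather than around $y$. Without them you only get Taylor remainders around $y$, which produce powers of $|h|_\fs$ but not the crucial factor $\lambda^{\nu-\gamma}$.

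Relatedly, because two scales $\lambda$ and $|h|_\fs$ are now in play, the paper uses a \emph{three}-regime split $n\le n_0$, $n_0<n\le m_0$, $n>m_0$ (with $2^{-n_0}\sim\lambda$, $2^{-m_0}\sim|h|_\fs$) rather than the two-way split you describe; the middle regime is where the $\Delta_h T_{\nu,p}^x f$ terms are still present and must be estimated via Theorem~\ref{theo:pointedReconstruction} at the point $x$.
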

	\begin{proof}
		As is usual in these proofs, the bounds at non-integer homogeneities follow immediately from the definition so we will consider only the bounds at integer levels. 
		
		We let $n_0$ be the largest integer such that $2^{-n_0} \ge \lambda$. Since $|k|_\fs < \gamma + \beta$ by assumption, we can write $k! \cQ_k \CK_{\gamma, \nu}^x f(y) = \sum_{n \le n_0} I_n + \sum_{n > n_0} J_n$ where
		\begin{equs}
			I_n & \eqdef \scal{ \CR f, K_{n;y,x}^{k, \nu + \beta - |\fs|/p}} - \sum_{\zeta \le |k|_\fs - \beta} \scal{\Pi_y f_\zeta(y), \partial^k K_n(y- \cdot)}
			\\
			J_n & \eqdef \scal{\CR f - \Pi_y f(y), \partial^k K_n(y- \cdot)} + \sum_{\zeta > |k|_\fs - \beta} \scal{\Pi_y f_\zeta(y), \partial^k K_n(y- \cdot)} \\ & \quad - \sum_{\substack{l \ge k \\ |l|_\fs \le \nu + \beta - |\fs|/p}} \frac{(y-x)^{l-k}}{(l-k)!} \scal{\CR f, \partial^l K_n(x - \cdot)}
		\end{equs}
		We will first derive bounds on $\|I_n\|_{\lambda, x, p}$ for $n \le n_0$. 
		
		Lemma~\ref{Lemma: Pointed Model Bound} immediately yields a bound of order $2^{n(|k| - \beta - \zeta)} \lambda^{\nu - \zeta}$ on the summand of the second term so that we are left to consider only the first term.
		
		To bound this term, we apply \cite[Proposition A.1]{Hai14} to write
		\begin{equs}
			\| \scal{\CR f, K_{n;y,x}^{k, \nu}}\|_{\lambda, x, p} \le \sum_{l \in \partial A_{\nu, p}^k} \left \| \int_{\mathbb{R}^d} \scal{\CR f, \partial^{l+k} K_n(x+h-\cdot)} \cQ^l(y-x, dh) \right \|_{\lambda, x, p}
		\end{equs}
		where $A_{\alpha,p}^k = \{j: |k+j|_\fs < \alpha + \beta - |\fs|/p \}$, $\partial A_{\alpha, p}^k = \{j \not \in A_{\alpha, p}^k: j - e_{m(j)} \in A_\alpha^k\}$ for $m(j) = \min\{i: j_i \neq 0\}$ and $\cQ^j(y-x, \cdot)$ is a signed measure supported on $\{h: h_i \in [0,(y-x)_i]\}$ with total mass $\frac{(y-x)^j}{j!}$.
		
		Since in the domain of integration $|h|_\fs \le |y-x|_\fs \le \lambda$, we can apply Theorem~\ref{theo:pointedReconstruction} to bound $|\scal{\CR f, \partial^{l+k} K_n(x+h - \cdot)}| \lesssim 2^{n(|l+k|_\fs - \beta - \nu + |\fs|/p)}$. This yields the bound
		\begin{equs}
			\| \scal{\CR f, K_{n;y,x}^{k, \nu}}\|_{\lambda, x, p} \lesssim \sum_{l \in \partial A_{\nu, p}^k} 2^{n(|l+k|_\fs - \beta - \nu + |\fs|/p)} \lambda^{|l|_\fs + |\fs|/p}
		\end{equs} 
		so that in total
		\begin{equs}
			\sum_{n \le n_0} & \|I_n\|_{\lambda, x, p} \\ & \lesssim \sum_{n \le n_0} \left [ \sum_{\zeta \le |k|_\fs - \beta} 2^{n(|k|_\fs - \beta - \zeta)} \lambda^{\nu - \zeta} + \sum_{l \in \partial A_{\nu, p}^k} 2^{n(|l+k|_\fs - \beta - \nu + |\fs|/p)} \lambda^{|l|_\fs + |\fs|/p} \right ]
			\\ & \lesssim \lambda^{\nu + \beta - |k|_\fs}
		\end{equs}
		as required.
		
		We now turn our attention to $J_n$ for $n > n_0$. Here, applying the second statement of Theorem~\ref{theo:pointedReconstruction} to the first term, Lemma~\ref{Lemma: Pointed Model Bound} to the second term and the first statement of Theorem~\ref{theo:pointedReconstruction} to the third term yields the bound
		\begin{equs}
			\sum_{n > n_0} \|J_n\|_{\lambda, x, p} & \lesssim \sum_{n > n_0} \bigg [2^{n(|k|_\fs - \beta - \gamma)} \lambda^{\nu - \gamma}  \quad + \sum_{\zeta > |k|_\fs - \beta} 2^{n(|k|_\fs - \beta - \zeta)} \lambda^{\nu - \zeta}  \\ &  + \sum_{\substack{l \ge k \\ |l|_\fs \le \nu + \beta - |\fs|/p}} \lambda^{|l-k|_\fs} 2^{n(|l|_\fs - \beta - \nu + |\fs|/p)} \bigg ]
			\lesssim \lambda^{\nu - \gamma}\;,
		\end{equs}
		again as required.
		
		We now turn our attention to the translation bound at integer homogeneities. We fix $h \in \mathbb{R}^d$ such that $|h|_\fs \le \lambda$. We remark that a (slightly fiddly but essentially straightforward) calculation shows that $$\Delta_h T_{\nu, p}^x f (y) = \sum_{(k,j,l)} \frac{(y-x)^j}{j!} \frac{h^l}{l!} D^{k+j+l} K \ast \CR f(x) \frac{X^k}{k!} $$
		where the sum is over multi-indices $k,j,l$ such that $|k|_\fs < \gamma+ \beta, \gamma+ \beta \le |k+l|_\fs < \nu + \beta - |\fs|/p$ and $|k+j+l|_\fs < \nu + \beta - |\fs|/p$. In what immediately follows, for a fixed multi-index $k$ such that $|k|_\fs < \gamma + \beta$, we write $M_k$ for the set of $(j,l)$ such that $(k,j,l)$ satisfies the above conditions.
		
		We will split our consideration for the translation bound into 3 regimes. We let $m_0$ be the largest integer such that $2^{-m_0} \ge |h|_\fs$. Note that $n_0 \le m_0$. We then write $k! \cQ_k \Delta_{h} f(y) = \sum_{n \le n_0} I_n' + \sum_{n_0 < n \le m_0} J_n' + \sum_{n > m_0} L_n'$ where
		\begin{equs}
			L_n' & \eqdef \scal{\CR f - \Pi_{y+h} f(y+h), \partial^k K_n(y+h - \cdot)} \\ & \quad - \sum_{l: |l+k|_\fs < \gamma + \beta} \frac{h^l}{l!} \scal{\CR f - \Pi_y f(y), \partial^{k+l} K_n(y-\cdot)} \\  & \quad + \sum_{\zeta > |k|_\fs - \beta} \scal{\Pi_{y+h} \cQ_\zeta \Delta_{h}f(y), \partial^k K_n(y+h - \cdot)}
			\\ & \quad - \sum_{(j,l) \in M_k} \frac{(y-x)^j}{j!} \frac{h^l}{l!} \scal{\CR f, D^{k+j+l} K_n(x - \cdot)}
			\\ J_n' & \eqdef \scal{\CR f - \Pi_y f(y), K_{n; y+h, y}^{k, \gamma + \beta}} - \sum_{\zeta \le |k|_\fs - \beta} \scal{\Pi_{y+h} \cQ_\zeta \Delta_{h} f(y), \partial^k K_n(y+h - \cdot)}
			\\ & \quad - \sum_{(j,l) \in M_k} \frac{(y-x)^j}{j!} \frac{h^l}{l!} \scal{\CR f, D^{k+j+l} K_n(x - \cdot)}
			\\
			I_n' & \eqdef \scal{\CR f, K_{n; y+h, y}^{k, \nu + \beta - |\fs|/p}} + \sum_{l : \gamma + \beta \le |l + k|_\fs \le \nu+ \beta - |\fs|/p} \frac{h^l}{l!} \scal{\CR f, K_{n ; y, x}^{k + l, \nu + \beta - |\fs|/p}} \\ & \quad - \scal{\Pi_y f(y), K_{n; y+h, y}^{k, \gamma + \beta}}  - \sum_{\zeta \le |k|_\fs - \beta} \scal{\Pi_{y+h} \cQ_\zeta \Delta_{h}f(y), \partial^k K_n(y+h - \cdot)}.
		\end{equs}
		First, we bound $\|L_n'\|_{\lambda, x, p}$ for $n > m_0$. An application of the second claim in Theorem~\ref{theo:pointedReconstruction} to the first and second term, the first claim in that result to the final term and of Lemma~\ref{Lemma: Pointed Model Bound} to the third term immediately yields
		\begin{equs}
			\|L_n'\|_{\lambda, x, p} & \lesssim 2^{n(|k|_\fs - \beta - \gamma)} \lambda^{\nu - \gamma} + \sum_{l: |l+k|_\fs < \gamma + \beta} |h|_\fs^{|l|_\fs} 2^{n(|k+l|_\fs - \beta - \gamma)} \lambda^{\nu - \gamma} \\ & \quad + \sum_{\zeta > |k|_\fs - \beta} \sum_{\eta \ge \zeta} |h|_\fs^{\eta - \zeta} 2^{n(|k| - \beta - \eta)} \lambda^{\nu - \zeta}
			\\ & \quad + \sum_{(j, l) \in M_k} |h|_\fs^{|l|_\fs} \lambda^{|j|_\fs + |\fs|/p} 2^{-n (\nu + \beta - |\fs|/p - |k+j+l|_\fs)}.
		\end{equs}
		It is then straightforward to see that this implies the required bound.
		
		The bounds on $\|J_n'\|_{\lambda, x, p}, \|I_n'\|_{\lambda, x, p}$ follow similarly by first applying \cite[Proposition A.1]{Hai14} to any term including an appearance of an expression of the form $K_{n;z, w}^{m, \eta}$ and then applying one of Theorem~\ref{theo:pointedReconstruction} or Lemma~\ref{Lemma: Pointed Model Bound} in a very similar way to many of the bounds already derived in this proof. As a result, we omit the details for these terms.
		
		It remains to obtain the bounds for the instance in which there are two models. These bounds follow in almost the same way as above, replacing the applications of statements in Lemma~\ref{Lemma: Pointed Model Bound} and Theorem~\ref{theo:pointedReconstruction} with their analogues for differences of models and hence we again omit the details for brevity.
	\end{proof}
	
	Once again, we will also need to apply (pointed) Schauder estimates in the case where $\gamma \le 0$. As in the non-pointed variant of the result that follows, the proof of estimates in this regime follow almost line-by-line as in the $\gamma > 0$ case by replacing applications of the bound coming from the Pointed Reconstruction Theorem (Corollary~\ref{theo:pointedReconstruction}) with an assumed bound on candidates for the reconstruction operator. Therefore we will provide only a statement of our Schauder estimate in this case.
	
	\begin{definition}\label{def: pointed reconstruction candidate}
		For $\gamma, \nu \in \bR$, $x \in \bR^d$ and $p \in [1, \infty]$, a candidate for the pointed reconstruction of $f \in \CD_{p}^{\gamma, \nu; x}$ is a distribution $\CR f$ that is a candidate for the reconstruction of $f$ as an element of $\CD_{p, \infty}^\gamma$ such that the constant $C(f; B_x)$ for $f$ in Definition~\ref{def: reconstruction candidate} can be taken such that
		
		\begin{equs}
			|\scal{\CR f, \phi_x^\lambda}| & \lesssim C(f; B_x) \lambda^{\nu - |\fs|/p}
			\\
			\| \scal{\CR f - \Pi_y f(y), \phi_y^\delta} \|_{\lambda, x, p; dy} & \lesssim C(f; B_x) \lambda^{\nu - \gamma} \delta^\gamma.
		\end{equs}
		
		A candidate for the pointed reconstruction operator on $\CD_{p}^{\gamma, \nu; x}$ is then a map $\CR: \CD_p^{\gamma, \nu; x} \to \CD'(\bR^d)$ such that for every $f \in \CD_p^{\gamma, \nu; x}$, $\CR f$ is a candidate for the pointed reconstruction of $f$.
	\end{definition}
	
	Given $\gamma \leq 0$, $\nu \in \bR$ and a candidate $\CR f$ for the reconstruction of $f \in \CD_p^{\gamma, \nu; x}$ the formula for $\CK_{\gamma, \nu}^{x, p} f$ in Definition~\ref{def: pointed integration} makes perfect sense and we will take it as a definition of the pointed abstract integration in this setting. With this in mind, we have the following result.
	
	\begin{theorem}\label{theo:schauder candidate}
		Let $\gamma \leq 0$, $\nu \in \bR$ and $p \in [1,\infty]$. Fix models $Z, \bar{Z}$.
		
		If $\CR f$ is a candidate for the pointed reconstruction of $f \in \CD_p^{\gamma, \nu;x}(Z)$ then $\CK_{\gamma, \nu}^{x,p} f \in \CD_p^{\gamma + \beta, \nu + \beta; x}(Z)$ and further
		\begin{equs}
			\$ \CK_{\gamma, \nu}^{x, p} f \$_{p, \gamma + \beta, \nu + \beta; x} \lesssim C(f; B_x) +  \$f\$_{p, \gamma, \nu; x} \left ( 1+\| Z \|_{\gamma; B_x} \right )^2
		\end{equs}
		where $\beta$ is the degree to which the corresponding kernel is regularising.
		
		Additionally if $\bar{\CR} \bar{f}$ is a candidate for the pointed reconstruction of $\bar{f} \in \CD_p^{\gamma, \nu;x}(\bar{Z})$ such that
		\begin{equs}
			& |\scal{\CR f - \bar{\CR} f, \phi_x^\lambda}| \lesssim C(f,\bar{f}) \lambda^{\nu - |\fs|/p}
			\\
			& \| \scal{ \CR f - \bar{\CR} \bar{f} - \Pi_y f(y) + \bar{\Pi}_y \bar{f}(y), \varphi_y^\delta } \|_{\lambda, x, p; dy} \lesssim C(f, \bar{f}) \lambda^{\nu - \gamma} \delta^\gamma
		\end{equs}
		then
		\begin{equs}
			\$ \CK_{\gamma, \nu}^{x, p} f, \bar{\CK}_{\gamma, \nu}^{x, p} \bar{f} \$_{p, \gamma + \beta, \nu + \beta; x} \lesssim C(f, \bar{f}) + \$f, \bar{f} \$_{p, \gamma, \nu; x} + \|Z; \bar{Z}\|_{\gamma; B_x}
		\end{equs}
		uniformly over modelled distributions $f, \bar f$ and models $Z, \bar Z$ such that $\$f\$_{p, \gamma, \nu; x} + \$\bar{f}\$_{p, \gamma, \nu; x} + \|Z\|_{\gamma, B_x} + \|\bar{Z}\|_{\gamma; B_x} + C(f; B_x) + C(\bar f; B_x) \le C$.
	\end{theorem}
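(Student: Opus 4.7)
The plan is to mirror the proof of Theorem~\ref{theo:schauder} almost verbatim, substituting the candidate bounds from Definition~\ref{def: pointed reconstruction candidate} for every application of the Pointed Reconstruction Theorem. The formula for $\CK_{\gamma,\nu}^{x,p} f$ in Definition~\ref{def: pointed integration} makes sense for any candidate reconstruction because $\CR f$ is only ever tested against kernels of the form $\partial^k K_n(w-\cdot)$ for $w \in \{x,y,y+h\}$ and against the differences $K_{n;y,x}^{k,\eta}$; the key point is that these pairings are precisely the objects controlled by Definition~\ref{def: pointed reconstruction candidate}.

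For the local bound at integer level, I would fix $k$ with $|k|_\fs < \gamma+\beta$ and decompose
$k!\,\CQ_k \CK_{\gamma,\nu}^{x,p}f(y) = \sum_{n \leq n_0} I_n + \sum_{n > n_0} J_n$
exactly as in Theorem~\ref{theo:schauder}, with $n_0$ the largest integer satisfying $2^{-n_0} \geq \lambda$. Each $I_n$ and $J_n$ splits into three kinds of terms: (i) pairings of $\CR f$ alone against a kernel expression $\partial^l K_n(w-\cdot)$ or $K_{n;z,w}^{k,\eta}$, handled via \cite[Proposition~A.1]{Hai14} together with the first candidate inequality (which yields $|\langle \CR f, \partial^l K_n(w-\cdot)\rangle| \lesssim C(f;B_x)\,2^{n(|l|_\fs-\nu+|\fs|/p)}$ uniformly in $w \in B_x$); (ii) pairings of $\CR f - \Pi_y f(y)$ against $\partial^k K_n$, handled by the second candidate inequality; and (iii) pairings of $\Pi_y \CQ_\zeta f(y)$ or $\Pi_y \CQ_\zeta \Delta_h f(y)$ against $\partial^k K_n$, handled directly by Lemma~\ref{Lemma: Pointed Model Bound}. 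The geometric sums over $n \leq n_0$ and $n > n_0$ produce exactly the same powers of $\lambda$ as in the $\gamma>0$ case, with the type (i)/(ii) contributions carrying a factor of $C(f;B_x)$ and the type (iii) contributions carrying $\$f\$_{p,\gamma,\nu;x}(1+\|Z\|_{\gamma;B_x})^2$; the square arises because the translation clause of Lemma~\ref{Lemma: Pointed Model Bound} already contains $\|\Pi\|(1+\|\Gamma\|)$.

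For the translation bound, I would similarly decompose $k!\,\CQ_k \Delta_h \CK_{\gamma,\nu}^{x,p}f(y)$ into the three dyadic regimes $\sum_{n \leq n_0} I'_n + \sum_{n_0 < n \leq m_0} J'_n + \sum_{n > m_0} L'_n$ with $2^{-m_0} \geq |h|_\fs$, and apply the same substitution pattern; the decomposition of $\Delta_h T_{\nu,p}^x f$ carried out in the proof of Theorem~\ref{theo:schauder} is unchanged. The two-model comparison bound proceeds by the same decomposition for the differences, using the telescopic identity $\Pi_y f - \bar\Pi_y \bar f = (\Pi_y - \bar\Pi_y)f + \bar\Pi_y(f-\bar f)$: each candidate-inequality invocation now contributes $C(f,\bar f)$, each appeal to the single-model part of Lemma~\ref{Lemma: Pointed Model Bound} is replaced by its two-model counterpart (producing $\|Z-\bar Z\|_{\gamma;B_x} + \$f,\bar f\$_{p,\gamma,\nu;x}$), and everything else goes through verbatim under the stated uniform boundedness of $\$f\$$, $\$\bar f\$$, $\|Z\|_{\gamma;B_x}$, $\|\bar Z\|_{\gamma;B_x}$, $C(f;B_x)$ and $C(\bar f;B_x)$. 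There is no conceptual obstacle — the task is purely bookkeeping. The mildly delicate point is verifying that the final constant factorises as $C(f;B_x) + \$f\$_{p,\gamma,\nu;x}(1+\|Z\|_{\gamma;B_x})^2$ rather than as a cross-term, which is guaranteed by the fact that the candidate bounds come with a constant that is already independent of the model norms.
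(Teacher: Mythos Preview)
Your proposal is correct and is exactly the approach the paper takes: the paper does not give a detailed proof of this theorem but simply remarks that it follows line by line from the proof of Theorem~\ref{theo:schauder} by replacing every invocation of the Pointed Reconstruction Theorem (Theorem~\ref{theo:pointedReconstruction}) with the assumed candidate bounds of Definition~\ref{def: pointed reconstruction candidate}. Your description of the three types of terms and the bookkeeping involved matches the substitution pattern the paper indicates.
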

	
	\subsection{Modelled Distributions Measured in Negative Sobolev Norm}\label{section: Modelled Distributions with Sobolev Coefficients}

	Unfortunately, whilst the tools of the last section will be good enough for us to obtain uniform bounds on renormalised models built from mollifications of a fixed driving noise, they are not quite good enough to obtain convergence of those models.
	
	The problem that propagates throughout our constructions is essentially that if $\varrho_\varepsilon$ is a mollifier at scale $\varepsilon$ then whilst $\varrho_\varepsilon \ast f \to f$ as $n \to \infty$ for any fixed $f$ in $L^p$, this convergence is not uniform over the $L^p$ unit ball. Since that is the kind of convergence we would require (once lifted to the level of modelled distributions) our previous framework runs into a problem.
	
	However, it is \emph{nearly} the case that we do have the desired convergence. We mean this in the sense 
	that, as usual in this kind of situation, uniform convergence does occur in any sensible weaker topology,
	for example $\sup_{\|f \|_{L^p} = 1} \| f - \varrho_\varepsilon \ast f \|_{\CB_{p,p}^{- \kappa}} \to 0$ as $\varepsilon \to 0$ for every $\kappa > 0$.
	
	To this end,  we introduce in this section another variant of our reconstruction and integration results which will allow us to track the behaviour of Besov and pointed modelled distributions through negative Sobolev norms of their coefficients. Since these negative Sobolev norms are controlled by $L^p$-norms, as sets the spaces we work with will coincide with the standard Besov and pointed spaces respectively. The interesting addition will be that we will derive a number of results extending the results of the previous sections to obtain bounds on abstract integrals and reconstructions of such modelled distributions that depend on negative regularity Sobolev norms of coefficients of the modelled distributions involved.
	
	We assume throughout this section that for $\zeta \in \CA$, we have a distinguished orthonormal basis $B_\zeta$ of $\CT_\zeta$. In our application of these results we will be interested in the case of the reduced regularity structure so that $B_\zeta$ will be given by the set of trees of degree $\zeta$, however our results here are not limited to that structure. 
	
	We begin with the setting of Besov modelled distributions.
	
	\begin{definition}\label{def: sob mod norm}
		For $f \in \CD_p^\gamma$, let
		\begin{equs}
			\|f\|_{\gamma, - \kappa, p; \ck} \eqdef \sup_{\zeta < \gamma} \sup_{\tau \in B_\zeta} \|f_\tau \|_{\CB_{p,p}^{-\kappa}(\ck)}.
		\end{equs}
		where $f_\tau$ is the coefficient of $f$ in direction $\tau$.  
	\end{definition}
	
	Our strategy throughout all the proofs of this subsection will be to obtain bounds on norms of very bad regularity on the objects under consideration that depend on $\|\cdot\|_{\gamma, - \kappa, p; \ck}$ and then use interpolation to obtain a bound in a space of better regularity that will depend on a smaller power of this norm, which will be sufficient for our purposes later.
	
	Such proofs are unfortunately necessarily notationally rather messy when written out in full, but are also all conceptually very similar to each other. As a result, we will provide one proof that demonstrates how the interpolation works in some detail and then in the other proofs will omit the details of the interpolation step.
	
	Throughout this subsection, we will let $Z = (\Pi, \Gamma)$, $\bar{Z} = (\bar{\Pi}, \bar{\Gamma})$ denote an arbitrary pair of admissible models. 
	
	We first aim to establish several results that will allow us to control the reconstruction operator in terms of this Sobolev-type norm. In the result that follows, we obtain control on the $\CB_{p,\infty}^{\alpha - \varepsilon}$ seminorms of the reconstruction. We remark that it would have sufficed to consider Besov seminorms of much worse regularity since we only use this result as input for a further interpolation argument. However obtaining the regularity we list below requires no additional effort.
	
	\begin{proposition}\label{prop: firstSobReconstr}
		Fix models $Z = (\Pi, \Gamma), \bar{Z} = (\bar{\Pi}, \bar{\Gamma})$ and parameters $p \in [1, \infty]$ and $\gamma \in \bR_+ \setminus \bN$. Then for any sufficiently small $\kappa > 0$, there exists a $\theta > 0$ such that for $f_1 \in \CD_p^\gamma(Z), f_2 \in \CD_{p}^\gamma(\bar{Z})$
		\begin{equs}
			\| \CR f_1 \|_{\CB_{p, \infty}^{\alpha - \varepsilon}(\ck)} \lesssim \|\Pi\|_{\gamma; \ck} \|f_1\|_{\gamma, - \kappa, p; \bar{\ck}}^\theta \$ f_1 \$_{p, \gamma; \ck}^{1- \theta}
		\end{equs}
		Additionally, uniformly over $\$f_i\$_{p, \gamma; \bar{\ck}} + \|\Gamma\|_{\gamma; \bar{\ck}} + \|\bar\Gamma\|_{\gamma; \bar{\ck}} + \|\Pi\|_{\gamma; \bar{\ck}} + \|\bar{\Pi}\|_{\gamma;\bar{\ck}} \le C$,
		\begin{equs}
			\| \CR f_1 - \bar{\CR} f_2 \|_{\CB_{p, \infty}^{\alpha - \varepsilon}(\ck)} & \lesssim \|f_1 - f_2\|_{\gamma, -\kappa, p; \bar{\ck}}^\theta + \|\Pi - \bar{\Pi}\|_{\gamma; \bar{\ck}}^\theta + \|\Gamma - \bar{\Gamma}\|_{\gamma; \bar{\ck}}^\theta.
		\end{equs}
		
	\end{proposition}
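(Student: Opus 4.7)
The strategy is to interpolate between two estimates on $\CR f_1$ in the Besov scale $\CB_{p,\infty}^\bullet(\ck)$: a sharp one at regularity $\alpha$ depending on the modelled distribution seminorm $\$ f_1\$$, and a very rough one at some fixed, highly negative regularity $-N$ depending instead on the negative Sobolev norm $\|f_1\|_{\gamma,-\kappa,p;\bar\ck}$. The sharp estimate
\begin{equs}
\|\CR f_1\|_{\CB_{p,\infty}^\alpha(\ck)} \lesssim \|\Pi\|_{\gamma;\bar\ck}(1+\|\Gamma\|_{\gamma;\bar\ck})\$ f_1\$_{p,\gamma;\bar\ck}
\end{equs}
follows directly from Theorem~\ref{theo: Reconstruction} with $q=\infty$ combined with the admissibility bound on $\scal{\Pi_x f_1(x), \eta_x^\lambda}$. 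Once a rough estimate of the form
\begin{equs}
\|\CR f_1\|_{\CB_{p,\infty}^{-N}(\ck)} \lesssim \|\Pi\|_{\gamma;\bar\ck}(1+\|\Gamma\|_{\gamma;\bar\ck})\|f_1\|_{\gamma,-\kappa,p;\bar\ck}
\end{equs}
is in hand, the desired conclusion follows from the standard Besov interpolation inequality $\|u\|_{\CB_{p,\infty}^{\alpha-\varepsilon}} \lesssim \|u\|_{\CB_{p,\infty}^{-N}}^\theta \|u\|_{\CB_{p,\infty}^\alpha}^{1-\theta}$ with $\theta = \varepsilon/(N+\alpha)$, which can be made arbitrarily small by taking $N$ large.

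To obtain the rough estimate, I would split
\begin{equs}
\scal{\CR f_1, \varphi_x^\lambda} = \scal{\CR f_1 - \Pi_x f_1(x), \varphi_x^\lambda} + \sum_\tau f_{1,\tau}(x)\,g_\tau^\lambda(x)\;,\quad g_\tau^\lambda(x)\eqdef\scal{\Pi_x\tau, \varphi_x^\lambda}\;.
\end{equs}
The $L^p(\ck;dx)$-norm of the first summand is of order $\lambda^\gamma \$ f_1\$\|\Pi\|(1+\|\Gamma\|)$ by Theorem~\ref{theo: Reconstruction}, which is much stronger than the required $\lambda^{-N}$. The second summand is controlled by reinterpreting the $L^p$-norm in $x$ via duality so as to bring in $\|f_{1,\tau}\|_{\CB_{p,p}^{-\kappa}(\bar\ck)} \le \|f_1\|_{\gamma,-\kappa,p;\bar\ck}$ at the cost of a Besov--H\"older norm of small positive order $\kappa$ on $g_\tau^\lambda$. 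Such a bound on $g_\tau^\lambda$ is extracted from the admissibility estimates together with the identity $\Pi_x = \Pi_y \Gamma_{yx}$ and a Taylor expansion of $\varphi_x^\lambda - \varphi_y^\lambda$, at the price of polynomial losses in $\lambda^{-1}$ which are absorbed into the negative exponent $-N$.

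The two-model estimate is obtained by the same strategy applied to $\CR f_1 - \bar\CR f_2$: the sharp estimate is the difference form of Theorem~\ref{theo: Reconstruction}, and the rough estimate splits the difference into contributions respectively proportional to $\|f_1 - f_2\|_{\gamma,-\kappa,p;\bar\ck}$, $\|\Pi - \bar\Pi\|_{\gamma;\bar\ck}$ and $\|\Gamma - \bar\Gamma\|_{\gamma;\bar\ck}$, each treated by the same interpolation. The main obstacle is the rough estimate itself: admissibility alone yields no regularity in $x$ for $g_\tau^\lambda$, so one must invoke the full model structure to obtain the positive-order H\"older bound, and then verify that the resulting polynomial blow-ups in $\lambda^{-1}$ are indeed absorbed by the freedom to take $N$ arbitrarily large.
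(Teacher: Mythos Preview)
Your interpolation strategy is the right high-level idea, but the rough estimate as you describe it does not go through. The problem is the second summand $f_{1,\tau}(x)\,g_\tau^\lambda(x)$: you need its $L^p(\ck;dx)$-norm, which is the $L^p$-norm of a \emph{pointwise product} in $x$, and there is no duality that bounds $\|f\cdot g\|_{L^p}$ by $\|f\|_{\CB_{p,p}^{-\kappa}}\|g\|_{\CC^\kappa}$. Writing $\|fg\|_{L^p}=\sup_{\|h\|_{L^{p'}}\le 1}\int f\,gh$ does not help, because $gh$ is only in $L^{p'}$, not in $\CB_{p',p'}^\kappa$, so you cannot pair it against $f\in\CB_{p,p}^{-\kappa}$. (Taking $g\equiv 1$ shows the claimed bound would imply $\|f\|_{L^p}\lesssim\|f\|_{\CB_{p,p}^{-\kappa}}$, which is false.) A second, independent difficulty is that your first summand, the reconstruction remainder, is controlled only by $\$f_1\$$ and not by $\|f_1\|_{\gamma,-\kappa,p}$; since you need the rough bound to depend \emph{only} on the negative Sobolev norm for the interpolation to yield $\|f_1\|_{-\kappa}^\theta\$f_1\$^{1-\theta}$, this term would have to be absorbed differently, and there is no a~priori reconstruction bound in terms of $\|f_1\|_{-\kappa}$ available (that is precisely what you are trying to prove).

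The paper circumvents both issues by \emph{not} decomposing $\scal{\CR f_1,\eta_x^\lambda}$ at the point $x$. Instead it uses the explicit representation of the reconstruction from the proof of \cite[Theorem~3.2]{BL21}, which writes $\scal{\CR f_1,\eta_x^n}$ as a telescoping sum of integrals of the form $\int \Pi_y f_1(y)(\phi_y^k)\,\psi(y)\,dy$ in a \emph{separate} variable $y$. After expanding $f_1(y)=\sum_\tau f_{1,\tau}(y)\tau$, each term is a genuine duality pairing in $y$ between $f_{1,\tau}\in\CB_{p,p}^{-\kappa}$ and $y\mapsto \Pi_y\tau(\phi_y^k)\psi(y)\in\CB_{p',p'}^{\kappa}$; the latter membership follows from the H\"older continuity of $y\mapsto\Pi_y\tau(\phi_y^k)$, exactly the estimate you anticipated for $g_\tau^\lambda$, but now applied in the integration variable rather than in the $L^p$ variable $x$. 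The interpolation is then performed scale by scale between this bound and the direct model bound, which plays the role of your sharp estimate.
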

	
	\begin{proof}
		We begin with the case of a single model. Since this proof is otherwise notationally dense, we will track only the dependency on the Sobolev type norm explicitly and will absorb the other terms into the implicit constant.
		
		We write $f \eqdef f_1$ for notational convenience. We note that it follows from the proof of \cite[Theorem 3.2]{BL21} that if $\phi$ is as in Definition~\ref{def: semigroup kernel} then we can write the $\CB_{p,\infty}^{\alpha - \varepsilon}(\ck)$ norm of $\CR f$ as
		\begin{equ}
		\sup_{n \ge 0} 2^{n (\alpha - \varepsilon)} \Big \| \sup_{\eta \in \CB^r} \left | A_n(x) \right | \,   \Big \|_{L^p(\ck)}
		\end{equ}
		where
		\begin{equ}
			A_n(x) \eqdef \int \Pi_y f(y) (\phi_y^n) \eta_x^n(y) \, dy + \sum_{k = n}^\infty \int \Pi_y f(y) (\phi_z^{k+2}) \eta_x^n(y) R^k(y - z) \, dy \, dz
		\end{equ}
	 	and $R = \rho^1 - \rho^1 \ast \rho$. 
	
		To bound the term corresponding to the first term of $A_n$, we first note that applying the definition of the model immediately yields a bound of  $2^{-n \alpha}$. However this bound has no dependency on $\|f\|_{\gamma, -\kappa, p; \bar{\ck}}$. 
		
		In order to obtain a bound that does have such a dependency we write
		\begin{equs}
			 \sup_{\eta \in \CB^r} \int \Pi_y f(y)(\phi_y^n) \eta_x^n(y) dy =  \sup_{\eta \in \CB^r} \sum_{\zeta < \gamma} \sum_{\tau \in B_\zeta} \int f_\tau(y) \Pi_y \tau (\phi_y^n) \eta_x^n(y) dy.
		\end{equs}
		
		We define $\gap(\gamma)$ to be the minimum distance between distinct elements of the degree set $\CA_\gamma$. We will assume that $\kappa < \gap(\gamma)$.
We also recall that, by the scaling properties of Besov spaces,
		$
			\sup_{\eta \in \CB_r} \| \eta_x^n \|_{\CB_{p', p'}^\kappa(\ck)} \lesssim 2^{n \kappa + n|\fs|/p}
		$,
		where $p'$ is the H\"older conjugate of $p$. 
		
		Further, for $y, \bar{y} \in B(x, 2^{-n})$, we have that
		\begin{equs}
			|\Pi_y \tau (\phi_y^n) - \Pi_{\bar{y}} \tau (\phi_{\bar{y}}^n)| & \le |(\Pi_y - \Pi_{\bar{y}})\tau(\phi_y^n)| + |\Pi_{\bar{y}} \tau (\phi_y^n - \phi_{\bar{y}}^n)|
			\\
			& = |\Pi_y (\tau - \Gamma_{y \bar{y}} \tau)(\phi_y^n)| + |\Pi_{\bar{y}} (\phi_y^n - \phi_{\bar{y}}^n)|\;.
		\end{equs}
		The first term on the right-hand side is bounded by a term of order $2^{-n \alpha}|y - \bar{y}|^{\gap(\gamma)}$. For the second term, we note that $|\phi_y(x) - \phi_{\bar{y}}(x)| \le |y - \bar{y}|$, and
		similarly for its derivatives. Therefore, there is a $\psi \in \CB^r$ such that $\phi_y^n - \phi_{\bar{y}}^n = |y-\bar{y}| \psi_{\bar{y}}^n$. It then follows that $|\Pi_{\bar{y}} \tau (\phi_y^n - \phi_{\bar{y}}^n)| \lesssim 2^{-n \alpha}|y-\bar{y}|$.
		
		Hence, $y \mapsto \Pi_y \tau(\phi_y^n)$ is H\"older continuous of order $\gap(\gamma)$ with a norm of order $2^{-n \alpha}$ on $B(x, 2^{-n})$.
		By a standard multiplication result for $\CB_{p,q}^\gamma$ spaces (see e.g. \cite[Theorem 3.11]{BL21}, we obtain that $y \mapsto \scal{\Pi_y \tau, \phi_y^n} \eta_x^n(y) \in \CB_{p',p'}^\kappa$ with a norm of order $2^{ n ( \kappa - \alpha + |\fs|/p)}$. 
		
		Hence, since the obvious bilinear map on $\CB_{p',q'}^{-\alpha} \times \CD(\bR^d)$ extends to a bounded bilinear map on $\CB_{p',q'}^{-\alpha} \times \CB_{p,q}^\alpha$, we have that
		\begin{equs}
			\Bigg | \int f_\tau(y) \scal{\Pi_y \tau, \phi_y^n} \eta_x^n(y) dy \Bigg | \lesssim 2^{n (\kappa - \alpha + |\fs|/p)} \|f\|_{\gamma, - \kappa, p; \bar{\ck}}.
		\end{equs}
		
		By choosing $\theta$ such that $\theta (\kappa + |\fs|/p) < \varepsilon$ and interpolating between our order $2^{-n \alpha}$ bound and this bound we obtain
		\begin{equs}
			2^{n(\alpha - \varepsilon)} \left \| \int \Pi_y f(y)(\phi_y^n) \eta_x^n(y) dy \right \|_{L^p(\ck; dx)} \lesssim \|f\|_{\gamma, - \kappa, p; \bar \ck}^\theta.
		\end{equs}
		We are then only left to consider the sum in $k$.
		
		First, we obtain a bound that does not depend on our Sobolev norm which we will use for interpolation. We write
		\begin{equs}\label{eq: Sobolev Reconstruction 1}
			& \left | \int \Pi_yf(y)(\phi_z^{k+2}) R^k(y-z) \eta_x^n(y) dy dz \right | \\ &\le \left | \int \Pi_z f(z) (\phi_z^{k+2}) R^k \ast \eta_x^n(z) dz \right | + \left | \int \Pi_z (f(z) - \Gamma_{zy} f(y))(\phi_z^{k+2}) R^k(y-z) \eta_x^n(y) dy dz \right |.
		\end{equs}
		
		We then have
		\begin{equ}
			\left | \int \Pi_z f(z)(\phi_z^{k+2}) R^k \ast \eta_x^n(z) dz \right | \lesssim 2^{- k\alpha} \|R^k \ast \eta_x^n \|_{L^1}.
		\end{equ}
		Since convolution with $R^k$ annihilates polynomials of up to a suitably high degree, we can replace $\eta_x^n$ with its Taylor jet to degree $r$ to find that  $\| R^k \ast \eta_x^n \|_{L^1} \lesssim 2^{-kr} 2^{nr}$ uniformly in $\eta$. In total, we obtain that
		\begin{equ}
			\left | \int \Pi_z f(z)(\phi_z^{k+2}) R^k \ast \eta_x^n(z) dz \right | \lesssim 2^{-k (r + \alpha)} 2^{n r}.
		\end{equ} 
		For the second term on the right hand side of \eqref{eq: Sobolev Reconstruction 1}, applying the definition of the model and of $\$f\$_{p, \gamma; \ck}$ immediately yields a bound of order $2^{-k \gamma}$. 
		
		Therefore 
		\begin{equs}
			\left \| \sup_\eta \int \Pi_y f(y) (\phi_z^{k+2}) \eta_x^n(y) R^k(y-z) dy dz \right \|_{L^p(\ck)} \lesssim 2^{-k \gamma} + 2^{-k (r+ \alpha)} 2^{nr}
		\end{equs}
		
		We now turn to obtaining a bound that depends on our Sobolev type norm. Again, we expand $f(y)$ in terms of our basis to write
		\begin{equs}
			\Bigg | \int \Pi_y f(y) (\phi_z^{k+2}) & R^k(y-z) \eta_x^n(y) dy dz \Bigg | \\ & \le \sum_{\zeta < \gamma} \sum_{\tau \in B_\zeta} \left | \int f_\tau(y) \Pi_y \tau (\phi_z^{k+2}) R^k(y-z) \eta_x^n(y) dy dz \right |
		\end{equs}
		
		By a straightforward estimate, one finds that $\|R_z^k\|_{\CB_{\infty, \infty}^\kappa(B)} \lesssim 2^{k(\kappa + |\fs|)}$ and $\| \eta_x^n \|_{\CB_{p', q'}^\kappa(B)} \lesssim 2^{n (\kappa + |\fs|/p)}$ where $B \eqdef B(z, 2^{-k})$.
		
		Additionally, for $y, \bar{y} \in B$, we have that
		\begin{equ}
			|\scal{\Pi_y \tau - \Pi_{\bar{y}} \tau, \phi_z^{k+2}}| = |\scal{\Pi_y [\tau - \Gamma_{y \bar y} \tau], \phi_z^{k+2}}| \lesssim 2^{-k \alpha} |y-\bar{y}|^{\gap(\gamma)}
		\end{equ}
		so that $y \mapsto \Pi_y \tau(\phi_y^{k+2})$ is $\gap(\gamma)$-H\"older with a norm of order $2^{-k \alpha}$ on $B$. Therefore, again applying Besov multiplication and duality results and not being careful to obtain the sharpest estimates since we will anyway interpolate these bounds, we get that
		\begin{equs}
			\Bigg | \int f_\tau(y) \Pi_y \tau (\phi_z^{k+2}) & R^k(y-z) \eta_x^n(y) dy dz \Bigg |  \\ & \lesssim \| f\|_{\gamma, -\kappa, p; \ck} \int_{|z-x| \lesssim 2^{-n}} 2^{-k \alpha} 2^{k (2 \kappa + |\fs|(1 + 1/p))} dz 
			\\
			& \lesssim 2^{k (2 \kappa - \alpha + |\fs|(1 + 1/p))} \| f\|_{\gamma, -\kappa, p; \bar \ck}
		\end{equs}
		which yields the same bound on the $L^p(\ck; dx)$ norm up to an irrelevant multiplicative constant. 
		
		We define $c = 2 \kappa + |\fs|(1 + 1/p)$. Interpolating between our bounds, we have that for every $\theta \in [0,1]$,
		\begin{equs}
			\Bigg \| \sup_\eta \int \Pi_y f(y) (\phi_z^{k+2}) & \eta_x^n(y) R^k(y-z) dy dz \Bigg \|_{L^p(\ck)} \\ & \lesssim \left [2^{-k \gamma(1- \theta)} 2^{ \theta k(c - \alpha)} + 2^{-k \alpha} 2^{(1- \theta) k r} 2^{k \theta c} \right ] \|f\|_{\gamma, - \kappa, p; \bar \ck}^\theta.
		\end{equs}
		By choosing $\theta$ to be sufficiently small, we can ensure that the term in brackets is summable in $k$ to a term of order $C(n)$ such that $|C(n) 2^{n(\alpha - \varepsilon)}| \lesssim 1$ which yields the desired result. 
		
		The case of two models follows via essentially the same techniques but is even more notationally dense and so we omit its proof.
	\end{proof}
	
	\begin{theorem}\label{theo: SobCoeffReconstr}
		Fix models $Z = (\Pi, \Gamma), \bar{Z} = (\bar{\Pi}, \bar{\Gamma})$ and parameters $p \in [1, \infty]$, $\alpha \le 0$ and $\gamma \in \bR_+ \setminus \bN$ such that $0 < \gamma < \alpha + |\fs|/p$. Then for any $\kappa > 0$, there exists a $\theta > 0$ such that for $f_1 \in \CD_p^\gamma(Z)$ valued in a sector $V$ of regularity $\alpha$, the following bound holds
		\begin{equs}
			|\CR f_1(\psi_x^\lambda)| \lesssim \lambda^{\gamma - |\fs|/p - \kappa} \|f_1\|_{\gamma, - \kappa, p; \bar{\ck}}^\theta \$f_1\$_{p, \gamma; \bar{\ck}}^{1- \theta} \|\Pi\|_{\gamma; \ck} ( 1 + \| \Gamma \|_{\gamma; \ck})
		\end{equs}
		uniformly over $\lambda \in (0,1]$ and $\psi \in \CB^r$.
		
		Given $f_2 \in \CD_p^\gamma(\bar{Z})$ valued also in $V$, we also have the bound
		\begin{equs}
			|\CR f_1(\psi_x^\lambda) - & \bar{\CR} f_2(\psi_x^\lambda)| \\ & \lesssim \lambda^{\gamma - |\fs|/p - \kappa} \left ( \|f_1 - f_2\|_{\gamma, -\kappa, p; \bar{\ck}}^\theta + \|\Pi - \bar{\Pi}\|_{\gamma; \bar{\ck}}^\theta + \|\Gamma - \bar{\Gamma}\|_{\gamma; \bar{\ck}}^\theta \right )
		\end{equs}
		uniformly over $\$f_i\$_{p, \gamma; \bar{\ck}} + \|\Gamma\|_{\gamma; \bar{\ck}} + \|\bar\Gamma\|_{\gamma; \bar{\ck}} + \|\Pi\|_{\gamma; \bar{\ck}} + \|\bar{\Pi}\|_{\gamma;\bar{\ck}} \le C$.
		
		In the case where $\alpha = 0$, let $\bar \alpha = \min (\CA_V \setminus \bN)$. Then for $0 < \gamma < \bar \alpha + |\fs|/p$, assuming that $\gamma - |\fs|/p \not \in \bN$, the same bounds hold uniformly over $\psi \in \CB_{\lfloor \bar \alpha \rfloor}^r$.
	\end{theorem}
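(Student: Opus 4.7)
The approach I would take is interpolation between two complementary pointwise bounds on $|\scal{\CR f_1, \psi_x^\lambda}|$. Since $\alpha \le 0$ and $\gamma < \alpha + |\fs|/p$ force $\gamma - |\fs|/p < 0$, Theorem~\ref{theo:betterReconstr} directly yields the ``good'' bound
$$
|\scal{\CR f_1, \psi_x^\lambda}| \lesssim \lambda^{\gamma - |\fs|/p}\, \$ f_1 \$_{p, \gamma; \bar\ck}\, \|\Pi\|_{\gamma; \bar\ck}\, (1+\|\Gamma\|_{\gamma; \bar\ck})\;,
$$
which has the optimal $\lambda$-scaling but no Sobolev-type dependence. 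On the other hand, Proposition~\ref{prop: firstSobReconstr} combined with the Besov embedding $\CB_{p,\infty}^{\alpha - \varepsilon} \hookrightarrow \CC^{\alpha - \varepsilon - |\fs|/p}$ (valid since the target regularity is negative) produces the ``bad'' bound
$$
|\scal{\CR f_1, \psi_x^\lambda}| \lesssim \lambda^{\alpha - \varepsilon - |\fs|/p}\, \|\Pi\|_{\gamma; \bar\ck}\, \|f_1\|_{\gamma, -\kappa, p; \bar\ck}^{\theta'}\, \$ f_1 \$_{p, \gamma; \bar\ck}^{1-\theta'}
$$
for some $\theta' > 0$ furnished by that proposition; here the Sobolev-type norm appears but the $\lambda$-scaling has degraded considerably.

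Taking the geometric mean of these two estimates with weight $s \in (0,1)$ produces a $\lambda$-exponent of $\gamma - |\fs|/p + s(\alpha - \varepsilon - \gamma)$. Choosing $s = \kappa/(\gamma + \varepsilon - \alpha)$ (which lies in $(0,1)$ provided $\kappa$ is sufficiently small) makes this exactly equal to $\gamma - |\fs|/p - \kappa$ as required. The powers of $\|\Pi\|_{\gamma; \bar\ck}$ sum to $1$, the factor $(1+\|\Gamma\|_{\gamma; \bar\ck})^{1-s}$ is trivially dominated by $(1+\|\Gamma\|_{\gamma; \bar\ck})$, and the coefficient factors of $f_1$ combine as $\|f_1\|_{\gamma, -\kappa, p; \bar\ck}^{\theta' s} \$ f_1 \$_{p, \gamma; \bar\ck}^{1 - \theta' s}$, so that setting $\theta \defeq \theta' s$ gives exactly the claimed single-model bound.

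The difference version is obtained by the same interpolation scheme applied to the difference statements of Theorem~\ref{theo:betterReconstr} and Proposition~\ref{prop: firstSobReconstr}. The good bound becomes $\lambda^{\gamma - |\fs|/p}\bigl(\$ f_1; f_2 \$_{p, \gamma; \bar\ck} + \|\Pi - \bar\Pi\|_{\gamma; \bar\ck} + \|\Gamma - \bar\Gamma\|_{\gamma; \bar\ck}\bigr)$ and the bad bound becomes $\lambda^{\alpha - \varepsilon - |\fs|/p}\bigl(\|f_1 - f_2\|_{\gamma, -\kappa, p; \bar\ck}^{\theta'} + \|\Pi - \bar\Pi\|_{\gamma; \bar\ck}^{\theta'} + \|\Gamma - \bar\Gamma\|_{\gamma; \bar\ck}^{\theta'}\bigr)$. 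Using the assumed uniform bound $C$ on $\$f_i\$, \|\Pi\|, \|\bar\Pi\|, \|\Gamma\|, \|\bar\Gamma\|$ to absorb the $(1-s)$-powers of the linear difference factor into the implicit constant, together with the concavity inequality $(a+b+c)^s \le a^s + b^s + c^s$ for $s \in (0,1)$ to distribute the remaining $s$-power across the three summands, one recovers exactly the target form with $\theta = \theta' s$.

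The boundary case $\alpha = 0$ is handled by the same two-step procedure: the vanishing-moment condition $\psi \in \CB^r_{\lfloor \bar\alpha \rfloor}$ annihilates the polynomial components of $f_1$ when tested against $\psi_x^\lambda$, which has the effect of replacing $\alpha$ by $\bar\alpha$ in both input bounds (both Theorem~\ref{theo:betterReconstr} and Proposition~\ref{prop: firstSobReconstr} are formulated precisely to accommodate this), after which the interpolation proceeds verbatim. The one subtle step in the whole argument is the passage from the $L^p(\ck;dx)$-type estimate in Proposition~\ref{prop: firstSobReconstr} to a genuinely pointwise-in-$x$ estimate; this is exactly what the local Besov embedding delivers, at the cost of the $-|\fs|/p$ loss in the bad-bound exponent, and because we are free to pick $\kappa$ arbitrarily small this loss is absorbed harmlessly into the interpolation weight.
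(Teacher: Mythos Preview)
Your proposal is correct and follows essentially the same route as the paper: obtain a ``bad'' pointwise bound with Sobolev-norm dependence from Proposition~\ref{prop: firstSobReconstr} via Besov embedding, combine it with the ``good'' bound from Theorem~\ref{theo:betterReconstr}, and interpolate. The paper is simply less explicit, writing the bad exponent as a generic $\lambda^{-C}$ rather than your $\lambda^{\alpha - \varepsilon - |\fs|/p}$ and omitting the computation of the interpolation weight.
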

	\begin{proof}
		By Proposition~\ref{prop: firstSobReconstr} and Besov embedding, there exists a $C > 0$ such that 
		\begin{equ}
			\sup_{x \in \ck} \sup_{\psi \in \CB_r} \CR f(\psi_x^\lambda) \lesssim \lambda^{-C} \|f\|_{\gamma, - \kappa, p; \bar{\ck}}^{\bar{\theta}} \$ f \$_{p, \gamma; \ck}^{1- \bar{\theta}} \|\Pi\|_{\gamma; \ck} ( 1 + \| \Gamma \|_{\gamma; \ck})
		\end{equ}
		for sufficiently small $\bar{\theta}, \varepsilon > 0$, uniformly over $\lambda \in (0,1]$. Interpolating this bound with the bound of Theorem~\ref{theo:betterReconstr} immediately yields the result for a single model. Since the only difficulty here is in notational complexity and the ideas of such an interpolation were already demonstrated in the proof of Proposition~\ref{prop: firstSobReconstr} we omit the details.
	\end{proof}
	
	We will also be interested in applying Schauder estimates to modelled distributions with coefficients that are small in a negative Sobolev norm. To this end, we have the following result.
	
	\begin{proposition}\label{prop: SobSchauder}
		Fix models $Z = (\Pi, \Gamma)$ and $\bar{Z} = (\bar{\Pi}, \bar{\Gamma})$ and suppose that $f_1 \in \CD_p^\gamma(Z), f_2 \in \CD_p^\gamma(\bar{Z})$ where $\gamma \in \bR_+ \setminus \bN$. If $\CK_\gamma, \bar{\CK}_\gamma$ are the abstract integration operators for the models $Z, \bar{Z}$ defined in \cite[Equation (5.15)]{Hai14}
		then 
		\begin{equs}
			\| \CK_\gamma f_1 \|_{\gamma + \beta, -\kappa, p; \ck} & \lesssim \left (\|f_1\|_{\gamma, -\kappa,p; \bar{\ck}} + \|f_1\|_{ \gamma, -\kappa,p; \bar{\ck}}^\theta \$ f_1 \$_{p, \gamma; \bar{\ck}}^{1- \theta} \right ) (1+ \|Z\|_{\gamma; \bar{\ck}})
			\\
			\| \CK_\gamma f_1 - \bar{\CK}_\gamma f_2 \|_{\gamma + \beta, -\kappa, p; \ck} & \lesssim \|\Pi - \bar{\Pi}\|_{\gamma; \bar{\ck}} + \|\Gamma - \bar{\Gamma}\|_{\gamma; \bar{\ck}} + \|f_1 - f_2\|_{\gamma, -\kappa,p; \bar{\ck}} \\ & + \|f_1 - f_2\|_{\gamma, -\kappa,p; \bar{\ck}}^\theta \|f_1 - f_2\|_{p, \gamma; \bar{\ck}}^{1 - \theta} 
		\end{equs}
		where the second bound has an implicit constant which can be chosen uniformly over $\$f_i\$_{p, \gamma; \bar{\ck}} + \|\Gamma\|_{\gamma; \bar{\ck}} + \|\bar\Gamma\|_{\gamma; \bar{\ck}} + \|\Pi\|_{\gamma; \bar{\ck}} + \|\bar{\Pi}\|_{\gamma;\bar{\ck}} \le C.$
	\end{proposition}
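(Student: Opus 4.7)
The plan is to bound each coefficient of $\CK_\gamma f_1$ according to the type of basis element. Recalling the definition of $\CK_\gamma$ from \cite[Equation (5.15)]{Hai14}, the output decomposes as $\CI f_1(y) + \CJ(y) f_1(y) + (\CN_\gamma f_1)(y)$. The first piece contributes only to coefficients indexed by trees of the form $\CI \tau$, and each such coefficient is precisely $f_{1,\tau}$. Hence these contributions are directly controlled by the first summand $\|f_1\|_{\gamma, -\kappa, p; \bar \ck}$ in the claimed estimate.

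The remaining coefficients are polynomial coefficients of $X^k$ with $|k|_\fs < \gamma + \beta$. Combining the $\CJ$ and $\CN$ terms, they equal (up to $1/k!$)
\begin{equ}
p_k(y) \eqdef D^k K \ast \CR f_1(y) - \bigl(D^k K \ast \Pi_y \CQ_{\le |k|_\fs - \beta} f_1(y)\bigr)(y)\;.
\end{equ}
To bound $\|p_k\|_{\CB_{p,p}^{-\kappa}(\ck)}$, I interpolate between a ``safe'' estimate independent of the Sobolev norm of $f_1$ and a ``strong'' estimate that features the Sobolev norm at the cost of worse scaling, following the same template as the proof of Proposition~\ref{prop: firstSobReconstr}. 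For the safe bound I invoke the classical Schauder estimate \cite[Theorem 5.1]{Cyril}, yielding $\CK_\gamma f_1 \in \CD_p^{\gamma + \beta}(Z)$ with seminorm $\lesssim \$f_1\$_{p, \gamma; \bar \ck}(1+ \|Z\|_{\gamma; \bar \ck})$, so that each $p_k$ lies in $L^p(\ck) \hookrightarrow \CB_{p,p}^{-\kappa}(\ck)$ with the corresponding bound.

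For the strong bound I test $p_k$ against any $\phi$ with $\|\phi\|_{\CB_{p',p'}^\kappa(\ck)} \le 1$ and split
\begin{equ}
\langle p_k, \phi \rangle = \langle \CR f_1, D^k K \ast \phi \rangle - \sum_\tau \int \phi(y) f_{1,\tau}(y) \bigl(D^k K \ast \Pi_y \tau\bigr)(y)\,dy\;.
\end{equ}
In the first summand I apply the scale decomposition $K = \sum_n K_n$ and Theorem~\ref{theo: SobCoeffReconstr} to each $\langle \CR f_1, D^k K_n \ast \phi \rangle$ (writing $D^k K_n(y-\cdot)$ as a rescaled test function up to a factor of order $2^{n(|k|_\fs - \beta)}$ and treating $\phi$ as a localising weight), obtaining a bound involving $\|f_1\|_{\gamma, -\kappa, p; \bar \ck}^{\bar\theta}\$f_1\$_{p,\gamma; \bar \ck}^{1-\bar\theta}$ at a possibly large negative power of scale. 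In the second summand I use that $y \mapsto (D^k K \ast \Pi_y \tau)(y)$ is H\"older continuous of order $\gap(\gamma)$ (by the same argument as in the proof of Proposition~\ref{prop: firstSobReconstr}), combine this with the Besov multiplication inequality and the duality pairing $\CB_{p,p}^{-\kappa} \times \CB_{p',p'}^{\kappa} \to \bR$, and dualise onto $\|f_{1,\tau}\|_{\CB_{p,p}^{-\kappa}}$. Interpolating this crude but Sobolev-dependent bound against the safe bound produces the desired estimate for some strictly positive (but possibly smaller) exponent $\theta$.

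The estimate for $\CK_\gamma f_1 - \bar\CK_\gamma f_2$ follows by the same decomposition after writing every term as a telescoping sum of differences in $f_1 - f_2$, $\Pi - \bar\Pi$ and $\Gamma - \bar\Gamma$, and invoking the ``difference'' statements of Theorems~\ref{theo: Reconstruction} and~\ref{theo: SobCoeffReconstr}. The main obstacle is purely technical: ensuring that the interpolation exponents remain strictly positive and uniform in all relevant parameters, which works out exactly as in Proposition~\ref{prop: firstSobReconstr} because the Sobolev-dependent bound is allowed to feature an arbitrarily negative power of a small length scale that is absorbed once interpolated against the classical Schauder bound.
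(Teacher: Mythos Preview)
Your overall strategy (bound the $\CI$-coefficients trivially, then interpolate a ``safe'' and a ``Sobolev-dependent'' bound on the polynomial coefficients) is right, but the execution has a genuine gap in how you split $p_k$.

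You write
\[
\langle p_k, \phi \rangle = \langle \CR f_1, D^k K \ast \phi \rangle - \sum_\tau \int \phi(y) f_{1,\tau}(y)\, \bigl(D^k K \ast \Pi_y \tau\bigr)(y)\,dy\;,
\]
with the sum over $\tau$ of degree $\le |k|_\fs - \beta$. But for such $\tau$ the convolution $(D^k K \ast \Pi_y \tau)(y)$ is not defined: $D^k K$ has a singularity of order $|k|_\fs - \beta + |\fs|$ against a distribution of regularity $|\tau|_\fs \le |k|_\fs - \beta$, so the pairing diverges at the diagonal. Likewise $D^k K \ast \CR f_1$ need not be a function one can pair with a mere $\CB_{p',p'}^\kappa$ element. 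The two halves of your split are individually ill-defined; it is precisely their cancellation that makes $p_k$ a function. Consequently your ``strong'' bound, whether obtained globally or summed over scales, is infinite and there is nothing to interpolate against. (A related point: you invoke Theorem~\ref{theo: SobCoeffReconstr}, but that result assumes $\gamma < \alpha + |\fs|/p$, which is not hypothesised here.)

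The paper avoids this by decomposing scale by scale \emph{before} splitting, and by using the grouping
\[
\sum_{\zeta > |k|_\fs - \beta}\scal{\Pi_x \CQ_\zeta f_1(x), \partial^k K_n(x-\cdot)} \;+\; \scal{\CR f_1 - \Pi_x f_1(x), \partial^k K_n(x-\cdot)}\;.
\]
The first group is directly bounded in $\CB_{p,p}^{-\kappa}$ by $\|f_1\|_{\gamma,-\kappa,p}$ times $2^{-n(\zeta+\beta-|k|_\fs)}$ with a positive exponent, hence summable. The second group is kept intact: one has the reconstruction bound $2^{-n(\gamma+\beta-|k|_\fs)}$ (positive exponent) on one hand, and on the other a Sobolev-dependent bound (via Proposition~\ref{prop: firstSobReconstr} for $\CR f_1$ and the same basis expansion for $\Pi_x f_1(x)$) which may \emph{grow} in $n$. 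Interpolating \emph{at each fixed scale $n$} with a small enough $\theta$ makes the resulting exponent positive, so the sum over $n$ converges. The crucial differences from your argument are: (i) only the $\zeta > |k|_\fs - \beta$ pieces are peeled off, since those are the ones with individually convergent sums; and (ii) the interpolation is performed scale by scale against the reconstruction bound, not against the global Schauder bound.
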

	\begin{proof}
		As usual, the bounds at non-integer homogeneities follow immediately from the definition so that we are left only to consider the terms valued in the polynomial part of the regularity structure. 
		
		We note that the component of $\CK_\gamma f_1$ in the direction $X^k$ for $|k|_\fs < \gamma$ can be written as
		\begin{equs}
			\sum_{n \geq 0} \Big ( \sum_{\zeta > |k|_\fs - \beta} \scal{\Pi_x \CQ_\zeta f_1 (x), \partial^k K_n(x - \cdot)} + \scal{\CR f_1 - \Pi_x f_1(x), \partial^k K_n(x- \cdot)} \Big ).
		\end{equs}
		We apply the definition of a model to write
		\begin{equs}
			\left \| \scal{\Pi_x \CQ_\zeta f_1(x), \partial^k K_n(x - \cdot)} \right\|_{\CB_{p,p}^{-\kappa}(\ck)} \le \sum_{\tau \in B_\zeta} \| \CQ_\tau f_1 \|_{\CB_{p,p}^{-\kappa}(\ck)} 2^{-n(\zeta + \beta - |k|_\fs)}
		\end{equs}
		to see that the first term has a sum in $n$ of the correct order.
		
		For the second term, we must again interpolate between two bounds. On the one hand, since the $\CB_{p,p}^{-\kappa}(\ck)$ norm is controlled by the $L^p(\ck)$ norm, Theorem~\ref{theo: Reconstruction} immediately yields a bound of order $2^{-n(\gamma + \beta - |k|_\fs)}$. On the other hand, we can treat the reconstruction operator and the model appearing in this term separately. 
		
		Proposition~\ref{prop: firstSobReconstr} yields the bound
		$${\|\scal{\CR f_1, \partial^k K_n(x - \cdot)} \|_{L^p(\ck)}} \lesssim 2^{-n(\alpha - \varepsilon + \beta - |k|_\fs)} \| f \|_{\gamma, -\kappa, p; \bar{\ck}}^{\bar{\theta}}.$$
		Further, the same treatment as used to control the first term yields a bound of order
		\begin{equ}
			\|f\|_{\gamma, - \kappa, p; \bar{\ck}} 2^{-n (\alpha + \beta - |k|_\fs)}
		\end{equ} 
		on $\| \scal{\Pi_x f_1(x), \partial^k K_n(x - \cdot)} \|_{\CB_{p,p}^{- \kappa}(\ck)}$.
		
		Since $\gamma + \beta - |k|_\fs > 0$, interpolating between these bounds will yield a bound that is summable in $n$ to a term of the correct order.
		
		The bounds in the case of multiple models then follow in much the same way.
	\end{proof}
	
	We will also require the pointed analogues of all of these statements. Again, the proofs here would be tedious but are conceptually no different to the non-pointed versions. As a result we only provide very brief sketches of the proofs for the following two results.
	
	\begin{theorem}\label{theo: PointedSobReconstr}
		Let $\gamma \in \bR_+ \setminus \bN$ be such that $0 < \gamma < \alpha + |\fs|/p$ and suppose that 
		$f \in \CD^{\gamma,\nu,x}_{p}(Z)$ takes values in a sector of regularity $\alpha \le 0$. Then writing $B_x$ for the closed ball around $x$ of radius $2$, for every $\varepsilon > 0$ sufficiently small, there exists $\theta > 0$ such that $$|\scal{\CR f,\psi_x^\lambda}| \lesssim \lambda^{\nu-|\fs|/p - \varepsilon} \$ f \$_{p, \gamma, \nu; x}^{1- \theta} \|f\|_{\gamma, -\kappa, p; B_x}^\theta \| \Pi \|_{\gamma; B_x} (1 + \| \Gamma \|_{\gamma;B_x}),$$ uniformly over $\lambda \in (0,1]$ and $\psi \in \CB^r$.
		
		Given a second model $\bar{Z}$ and $\bar{f} \in \CD^{\gamma,\nu,x}_{p}(\bar{Z})$ taking values in the same sector as $f$, we have that
		\begin{equs}
			\lambda^{|\fs|/p-\nu + \varepsilon} | & \scal{\CR f - \bar{\CR} \bar{f},\psi_x^\lambda}|  \\ & \lesssim \|f_1 - f_2\|_{\gamma, -\kappa, p; B_x}^\theta + \|\Pi - \bar{\Pi}\|_{\gamma; B_x}^\theta + \|\Gamma - \bar{\Gamma}\|_{\gamma; B_x}^\theta.
		\end{equs}
		uniformly over $\$f_i\$_{p, \gamma, \nu; x} + \|\Gamma\|_{\gamma; \bar{\ck}} + \|\bar\Gamma\|_{\gamma; \bar{\ck}} + \|\Pi\|_{\gamma; \bar{\ck}} + \|\bar{\Pi}\|_{\gamma;\bar{\ck}} \le C.$
		
		In the case where $\alpha = 0$, if $\bar \alpha = \min(\CA_V \setminus \bN)$ and $0 < \gamma < \bar \alpha + |\fs|/p$ then the same bounds hold uniformly over $\psi \in \CB_{\lfloor \bar \alpha \rfloor}^r$.
	\end{theorem}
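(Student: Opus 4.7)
The plan is to interpolate between the two reconstruction bounds we already have in hand. Theorem~\ref{theo:pointedReconstruction} delivers the target $\lambda$-scaling $\lambda^{\nu-|\fs|/p}$ but only in terms of the pointed norm $\$f\$_{p,\gamma,\nu;x}$, whereas Theorem~\ref{theo: SobCoeffReconstr}, which applies since $\CD_p^{\gamma,\nu;x}(Z)\subset \CD_p^\gamma(Z)$, produces a bound depending on the Sobolev-type norm $\|f\|_{\gamma,-\kappa,p;B_x}$ at the cost of a polynomially worse $\lambda$-scaling $\lambda^{\gamma-|\fs|/p-\kappa}$. Crucially, the two source bounds share identical model-dependent prefactors $\|\Pi\|_{\gamma;B_x}(1+\|\Gamma\|_{\gamma;B_x})$, so a Lyapunov-type interpolation between them with weights $\theta_1$ on the Sobolev side and $1-\theta_1$ on the pointed side produces a bound whose $\lambda$-exponent is
\begin{equs}
\theta_1(\gamma-|\fs|/p-\kappa) + (1-\theta_1)(\nu-|\fs|/p) \;=\; \nu-|\fs|/p - \theta_1(\nu+\kappa-\gamma).
\end{equs}
Choosing $\theta_1 > 0$ small enough that $\theta_1 |\nu+\kappa-\gamma| \le \varepsilon$ produces the required exponent $\nu - |\fs|/p - \varepsilon$.

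To match the exact combination of norms appearing in the statement I would then set $\theta \eqdef \theta_1\bar\theta$, where $\bar\theta$ is the exponent of the Sobolev-type norm appearing in Theorem~\ref{theo: SobCoeffReconstr}, and absorb the spurious factor $\$f\$_{p,\gamma;B_x}^{\theta_1(1-\bar\theta)}$ produced by the interpolation into $\$f\$_{p,\gamma,\nu;x}^{1-\theta_1}$. This last step relies on the elementary observation that $\$f\$_{p,\gamma;B_x} \lesssim \$f\$_{p,\gamma,\nu;x}$, which itself follows immediately by specialising the local and translation bounds \eqref{e:assumx} in Definition~\ref{def: pointed modelled distribution} to the endpoint $\lambda = 1$.

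The two-model difference bound is obtained by the same recipe, applied now to the difference versions of Theorems~\ref{theo:pointedReconstruction} and~\ref{theo: SobCoeffReconstr}. The only additional care needed is the bookkeeping of the three independent error sources $f_1-f_2$, $\Pi-\bar\Pi$, and $\Gamma-\bar\Gamma$, each of which already appears on the right-hand side of both source bounds, so that after interpolation they combine into a sum of three terms of the form displayed in the theorem statement. The degenerate case $\alpha = 0$ is handled identically once the enlarged class $\CB_{\lfloor\bar\alpha\rfloor}^r$ of test functions is used consistently in both source bounds, for which provision is already made in the statements of both theorems.

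No genuine obstacle is expected: the substantive analytic work is already carried out in the proofs of Theorems~\ref{theo:pointedReconstruction} and~\ref{theo: SobCoeffReconstr}. The only mildly delicate bookkeeping point is the verification that the pointed norm dominates the non-pointed norm on the ball $B_x$, which is what makes the interpolation collapse to the clean two-norm product appearing in the conclusion.
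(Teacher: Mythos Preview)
Your approach is correct and essentially coincides with the paper's. The paper's one-line proof interpolates Proposition~\ref{prop: firstSobReconstr} against Theorem~\ref{theo:pointedReconstruction}, whereas you interpolate Theorem~\ref{theo: SobCoeffReconstr} against Theorem~\ref{theo:pointedReconstruction}; since Theorem~\ref{theo: SobCoeffReconstr} is itself obtained by interpolating Proposition~\ref{prop: firstSobReconstr} with Theorem~\ref{theo:betterReconstr}, the two routes are the same up to the order in which the interpolations are performed.

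One small caveat on your bookkeeping remark: specialising \eqref{e:assumx} to $\lambda=1$ controls $\$f\$_{p,\gamma}$ only on the unit ball $B(x,1)$, whereas $B_x$ in the statement is the ball of radius~$2$, and the fattening in Theorem~\ref{theo: SobCoeffReconstr} pushes this out further still. This is not a genuine obstruction---in every application the non-pointed norm $\$f\$_{p,\gamma;\bar\ck}$ is uniformly bounded anyway, and the statement is only used to extract a positive power of the Sobolev-type norm---but the inequality $\$f\$_{p,\gamma;B_x}\lesssim\$f\$_{p,\gamma,\nu;x}$ as written is not literally a consequence of the definition. The paper's proof sketch glosses over the same point.
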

	\begin{proof}
		This follows by interpolating between the bounds of Proposition~\ref{prop: firstSobReconstr} and Theorem~\ref{theo:pointedReconstruction}.
	\end{proof}
	
	\begin{theorem}\label{theo: PointedSobSchauder}
		Fix models $Z = (\Pi, \Gamma)$ and $\bar{Z}= (\bar{\Pi}, \bar{\Gamma})$ and for $p \in [1, \infty]$ let $\CK_{\gamma, \nu}^{x,p}, \bar{\CK}_{\gamma, \nu}^{x,p}$ be the corresponding pointed integration operators. Suppose that $f_1 \in \CD_p^{\gamma, \nu; x}(Z)$ and $f_2 \in \CD_p^{\gamma, \nu; x}(\bar Z)$ are valued in the sector $V$, $\gamma, \gamma - |\fs|/p \not \in \bN$ and that $0 < \gamma < \bar \alpha + |\fs|/p$ for $\bar \alpha = \min(\CA_V \setminus \bN)$.
		
		Then we have the bounds
		\begin{equs}
			\| \CK_{\gamma, \nu}^{x,p} f_1 \|_{\gamma + \beta,-\kappa, p; \ck} & \lesssim \left (\|f_1\|_{\gamma,-\kappa,p; \bar{\ck}} + \|f_1\|_{\gamma,-\kappa,p; \bar{\ck}}^\theta \$ f_1 \$_{p, \gamma; \bar{\ck}}^{1- \theta} \right ) (1+ \|Z\|_{\gamma; \bar{\ck}})
			\\
			\| \CK_{\gamma, \nu}^{x,p} f_1 - \bar{\CK}_{\gamma, \nu}^{x,p} f_2 \|_{\gamma + \beta,-\kappa, p; \ck} & \lesssim \|\Pi - \bar{\Pi}\|_{\gamma; \bar{\ck}} + \|\Gamma - \bar{\Gamma}\|_{\gamma; \bar{\ck}} + \|f_1 - f_2\|_{\gamma, -\kappa,p; \bar{\ck}} \\& \quad + \|f_1 - f_2\|_{\gamma, -\kappa,p; \bar{\ck}}^\theta \|f_1 - f_2\|_{-\kappa,p; \bar{\ck}}^\theta
		\end{equs}
		where the second bound has an implicit constant which can be chosen uniformly over $\$f_i\$_{p, \gamma; \bar{\ck}} + \|\Gamma\|_{\gamma; \bar{\ck}} + \|\bar\Gamma\|_{\gamma; \bar{\ck}} + \|\Pi\|_{\gamma; \bar{\ck}} + \|\bar{\Pi}\|_{\gamma;\bar{\ck}} \le C.$
	\end{theorem}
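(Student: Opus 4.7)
The plan is to combine the negative-Sobolev interpolation strategy of Proposition~\ref{prop: SobSchauder} with the pointed telescopic decomposition from the proof of Theorem~\ref{theo:schauder}, substituting in the pointed reconstruction bound from Theorem~\ref{theo: PointedSobReconstr} wherever the ordinary pointed reconstruction theorem was previously used.

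First I would dispose of the non-integer homogeneities: by inspection of Definition~\ref{def: pointed integration}, the coefficient of $\CK_{\gamma,\nu}^{x,p} f_1$ in a direction of the form $\CI_k^\ft \tau$ simply equals the corresponding coefficient of $f_1$ in direction $\tau$, so the required $\CB_{p,p}^{-\kappa}(\ck)$ bound follows directly from $\|f_1\|_{\gamma,-\kappa,p;\bar\ck}$.

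For the integer-level components $\CQ_{X^k}\CK_{\gamma,\nu}^{x,p} f_1$, I would decompose the coefficient of $X^k/k!$ into the telescopic sum $\sum_n(I_n+J_n)$ used in the proof of Theorem~\ref{theo:schauder}, including the additional contributions stemming from the pointed Taylor jet $T_{\nu,p}^x f_1$. Terms of the form $\scal{\Pi_y \CQ_\zeta f_1(y), \partial^k K_n(y-\cdot)}$ are handled exactly as in Proposition~\ref{prop: SobSchauder}: expanding $\CQ_\zeta f_1(y) = \sum_{\tau \in B_\zeta} (f_1)_\tau(y)\,\tau$ and using duality between $\CB_{p,p}^{-\kappa}$ and $\CB_{p',p'}^{\kappa}$ together with the H\"older regularity in $y$ of $\scal{\Pi_y\tau,\partial^k K_n(y-\cdot)}$ on dyadic balls yields a bound by $\|f_1\|_{\gamma,-\kappa,p;\bar\ck}$ that is summable in $n$. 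For terms genuinely involving $\CR f_1$---both $\scal{\CR f_1 - \Pi_y f_1(y), \partial^k K_n(y-\cdot)}$ and the point-evaluation expressions $\scal{\CR f_1, D^k K_n(x-\cdot)}$ arising from $T_{\nu,p}^x f_1$---I would interpolate between a crude bound of worse $n$-behaviour coming from Proposition~\ref{prop: firstSobReconstr} that depends only on $\|f_1\|_{\gamma,-\kappa,p;\bar\ck}$, and the sharp bound from Theorem~\ref{theo:pointedReconstruction} depending on $\$f_1\$_{p,\gamma;\bar\ck}$. Choosing the interpolation exponent $\theta$ small enough makes the combined bound summable in $n$ with the required endpoint $\lambda^{\gamma+\beta-|k|_\fs-\kappa}$, and summing all contributions produces the first bound of the theorem.

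The two-model estimate is then obtained by telescoping the difference $\CK_{\gamma,\nu}^{x,p}f_1 - \bar{\CK}_{\gamma,\nu}^{x,p}f_2$ exactly as in the proof of Theorem~\ref{theo:schauder} and invoking the difference versions of each ingredient above. The main technical obstacle is purely bookkeeping: several error terms stem from the pointed Taylor jet $T_{\nu,p}^x f$ which evaluates convolutions of $\CR f$ at the fixed basepoint $x$, and one must verify that a single interpolation parameter $\theta$ works simultaneously for every such term, and that these point evaluations at $x$ are cleanly absorbed into the pointed Sobolev-interpolated reconstruction bound of Theorem~\ref{theo: PointedSobReconstr}.
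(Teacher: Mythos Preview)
Your approach is correct but takes a longer route than the paper. The paper's proof exploits the observation that, by Definition~\ref{def: pointed integration}, one has the decomposition
\[
\CK_{\gamma,\nu}^{x,p} f_1 = \CK_\gamma f_1 - T_{\nu,p}^x f_1\;,
\]
so Proposition~\ref{prop: SobSchauder} already controls $\|\CK_\gamma f_1\|_{\gamma+\beta,-\kappa,p;\ck}$ in full, and \emph{only} the extra polynomial-valued term $T_{\nu,p}^x f_1$ remains. Its coefficients are constants of the form $(D^k K\ast \CR f_1)(x)$ multiplied by monomials in $y-x$, so controlling its $\CB_{p,p}^{-\kappa}$ norm reduces directly to bounding those point evaluations via Theorem~\ref{theo: PointedSobReconstr}. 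By contrast, you propose to redo the full telescopic analysis of Theorem~\ref{theo:schauder} from scratch, interpolating each individual term; this works, but duplicates all the effort already packaged in Proposition~\ref{prop: SobSchauder}. The paper's reduction is shorter and keeps the new content isolated to the single genuinely pointed term.
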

	\begin{proof}
		By Proposition~\ref{prop: SobSchauder}, it suffices to consider $\| T_{\nu, p}^x f \|_{\CB_{p,p}^{-\kappa}(\ck)}$. This term is valued in the polynomial structure and has coefficients that depend on pointwise evaluations of the reconstruction of $f$. Therefore control on this term follows in a very similar fashion as to control on the terms considered in the proof of Proposition~\ref{prop: SobSchauder} by making use of Theorem~\ref{theo: PointedSobReconstr} at the appropriate point.
	\end{proof}

	\section{Fr\'echet Derivatives of Renormalised Models}\label{sec: Frechet}
	
	In order to bound the derivative term on the right-hand side of the spectral gap inequality applied to $\Pi_x \tau(\psi)$, we will want to describe the Fr\'echet derivative of a model with respect to the underlying noise 
	in terms of a pointed modelled distribution so as to obtain bounds from Theorem~\ref{theo:pointedReconstruction}.
	
	In this section we recursively define a family of pointed modelled distributions for this task. Essentially this is done by observation in the base case, noting that there is only one plausible definition for a tree of the form $\tau = \CI^\ft \sigma$, and then by postulating that the obvious analogue of the Leibniz rule holds for products.
	
	We will also introduce a second family of modelled distributions on a larger regularity structure. The idea here is that whilst the first family is well-adapted to the task of obtaining analytic bounds, it is more difficult to see that its reconstruction really coincides with the Fr\'echet derivative of the model. The second family of modelled distributions is better suited to this task since the augmented regularity structure we construct is exactly formed so as to allow us to encode the action of shifts in the driving noise. In doing this we follow the sketch given in \cite[Section 5]{Jonathan}, however there is an error in that work that we must correct for.
	
	The goal will then be to relate these two constructions so as to see that the former family of modelled distributions does suitably describe the Fr\'echet derivative of the model.
	
	We will let $\gamma_\tau = \alpha_\tau + |\fs|/2 - n_\tau \bar \kappa$ where $\alpha_\tau$ is the lowest degree of all non-polynomial terms appearing in the smallest sector containing $\tau$, $n_\tau$ is the number of noise edges in $\tau$ and $\bar \kappa > 0$ is a sufficiently small constant (to be fixed later) such that there is no integer $k \in \bZ$ with $\gamma_\tau \le k < \alpha_\tau + |\fs|/2$. We also write $\deg_2 \tau = \deg \tau + |\fs|/2$
	and we adopt the shorthand $\gamma_\ft \eqdef \gamma_{\Xi_\ft}$. We adopt the convention that if $\tau \in \Tpoly$ then $\alpha_\tau = \infty$.
	
	\begin{remark}
		The inclusion of a $n_\tau \bar \kappa$ term in the definition of $\gamma_\tau$ is necessary since our 
		reconstruction result and hence also our Schauder estimate apply only to the regime $0 < \gamma < 
		\alpha_\tau + |\fs|/2$. Whilst in what follows it would seem natural to take $\gamma_\tau = \alpha_\tau 
		+ |\fs|/2$, this would prevent us from applying these results, so we must allow for a small 
		loss of regularity. Since we will only use analytic estimates for the resulting pointed modelled distribution around their distinguished point, this is not an issue.
	\end{remark}
	
	Given an element $\eta \in \CH \eqdef H^{- \reg}(\Lab_-)^* = \prod_{\ft \in \Lab_-} H^{\reg \ft}(\Lab_-)$, our goal is now to inductively define a collection $H^{x,\eta}_\tau$ of modelled distributions
	whose reconstruction will be seen to coincide with the derivative of the renormalised model
	$\Pi_x \tau$ in the direction $\eta$ of the underlying driving noise, so long as $\eta$ is sufficiently smooth. 
	
	Since we will eventually have in mind the case where a family of models is built from mollifications at dyadic scales of the underlying noise, we will write the construction with this in mind here.
	
	We first define modelled distributions $f^\tau_z$ 
	by $f^\tau_z(\bar z) = \Gamma_{\bar zz} \tau$ and then set
	\begin{equs}\label{e:baseH1}
		H^{x,\eta}_{\Xi_\ft; n}(y) & = \sum_{|k|_\fs < \gamma_\ft}{X^k \over k!} \Big [(D^k\varrho^n \ast \eta_\ft)(y) - P_x^{|\ft|_\fs}[D^k \varrho^n \ast \eta_\ft](y)\Big ] \\ \label{e:baseH2} 
		H^{x,\eta}_{\one; n}(y) & = H^{x,\eta}_{X_i; n}(y) = 0\;,
	\end{equs}
	where $P_x^a[f]$ denotes the Taylor polynomial of $f$ at base point $x$ up to degree $a$.
	
	We also set
	\begin{equ}[e:defProdH]
		H^{x,\eta}_{\tau\bar \tau; n}(y) = f^\tau_x(y)\,H^{x,\eta}_{\bar\tau; n}(y)
		+ H^{x,\eta}_{\tau; n}(y)\, f_x^{\bar\tau}(y)\;,
	\end{equ}
	and 
	\begin{equ}[e:int]
		H^{x,\eta}_{\CI^\ft \tau; n}(y) = \bigl(\CK_{\gamma_\tau, \deg_2 \tau; \ft}^{x, 2} H^{x,\eta}_{\tau; n}\bigr)(y)\;,
	\end{equ}
	in the case where $\gamma_\tau > 0$. We will handle integration in the case $\gamma_\tau \le 0$ by hand later, which will complete the recursive definition of this family of pointed modelled distributions.
	
	We remark that \eqref{e:defProdH} also implies the corresponding Leibniz rule for multiple products
	as a consequence of the fact that 
	$f^{\tau\bar \tau}_z = f^{\tau}_zf^{\bar \tau}_z$ and the associativity of our product.
	
	\begin{remark}
		In principle, planted trees of the form $\CI_k^\ft \tau$ for $k \in \bN^d \setminus \{0\}$ also show up in the reduced regularity structure. For such trees, we note that if we denote by $\CT_{\text{plant}}$ the linear subspace of $\CT$ generated by planted trees then there is a natural abstract gradient $D_i : V \to V$ on the sector $V = \CT_{\text{plant}} \sqcup \Tpoly$ given by setting $D_i X_j = \delta_{i,j}$ and extending to the polynomial structure by the Leibniz rule and by setting $D_i \CI_k^\ft \tau = \CI_{k + e_i}^\ft \tau$. We refer the reader to \cite[Section 5.4]{Hai14} for a definition of an abstract gradient. 
		
		The upshot of this construction in our case is that since $D_i : V_\alpha \to V_{\alpha - \fs_i}$ and commutes with the action of the structure group, it is immediate from the definitions that if $f \in \CD_p^\gamma$ then $D_i f \in \CD_p^{\gamma - \fs_i}$ (with a similar result in the pointed case). Additionally, since for admissible models we have that $\Pi_x D_i \tau = D_i \Pi_x \tau$, it is the case that $\CR D_i f = D_i \CR f$. Therefore, one can define $\pH{\CI_k^\ft \tau} \eqdef D^k \pH{\CI^\ft \tau}$ and all of our desired results automatically extend along this definition.
		
		As a result, we will assume without loss of generality that all planted trees are of the form $\CI^\ft$ for the remainder of this paper in order to simplify notation.
	\end{remark}
	
	We begin by verifying that it is indeed the case that $\pH{\Xi_\ft} \in \CD_2^{\gamma_\ft, \gamma_\ft; x}$.
	
	\begin{lemma}\label{lemma: H base check}
		Let $\ft \in \Lab_-$. Then for $\eta \in \CH$ we have that $H_{\Xi_\ft; n}^{x,\eta} \in \CD_2^{\gamma_\ft, \gamma_\ft; x}$. Further, we have the bound
		\begin{equ}
			\sup_{\|\eta\| \le 1} \$H_{\Xi_\ft; n}^{x, \eta} \$_{2, \gamma_\ft, \gamma_\ft; x} \lesssim 1
		\end{equ}
		where the norm appearing in the set over which the supremum is taken is the $\CH$ norm.
	\end{lemma}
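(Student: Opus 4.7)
Let $g \defeq \varrho^n \ast \eta_\ft$. Convolution with $\varrho^n$ is uniformly bounded on $H^{\reg \ft}$ (its $L^1$ norm being controlled in $n$), so $\|g\|_{H^{\reg \ft}} \lesssim \|\eta_\ft\|_{H^{\reg \ft}} \le \|\eta\|_{\CH}$ uniformly in $n$. Assumption~\ref{ass:alg} gives $\reg \ft > |\ft|_\fs + |\fs|/2$, so $\gamma_\ft = |\ft|_\fs + |\fs|/2 - \bar\kappa < \reg \ft$ once $\bar\kappa$ is taken small enough. Under the natural scaled reading of \eqref{e:baseH1} (so that the combined degree of $D^k$ and the remainder matches $|\ft|_\fs$, in line with the canonical lift of Definition~\ref{def: canonical lift}), the polynomial-valued modelled distribution $H \defeq H^{x,\eta}_{\Xi_\ft;n}$ is the truncation to scaled degree $< \gamma_\ft$ of the Taylor lift of $G \defeq g - P_x^{|\ft|_\fs}[g]$; that is, $H_l(y) = D^l G(y)$ for each multi-index $l$ with $|l|_\fs < \gamma_\ft$. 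By construction $G$ has vanishing Taylor jet at $x$ up to degree $|\ft|_\fs$, and the coefficients $g_m(x)$ of the subtracted polynomial (with $|m|_\fs < |\ft|_\fs < \reg \ft - |\fs|/2$) are controlled pointwise by Sobolev embedding, so $\|G\|_{H^{\reg \ft}(K)} \lesssim 1$ uniformly on any fixed compact $K \ni x$.

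For the pointed local bound I split into two regimes depending on whether $|l|_\fs \le |\ft|_\fs$. In the first regime, $D^l G$ inherits a vanishing Taylor jet at $x$ of order $|\ft|_\fs - |l|_\fs$, and a scaled Sobolev--Taylor inequality (of the form: if $F \in H^s$ vanishes at $x$ to Taylor order $A$ with $s > A + |\fs|/2$, then $\|D^l F\|_{L^2(B_\fs(x,\lambda))} \lesssim \lambda^{A - |l|_\fs + |\fs|/2}\|F\|_{H^s}$ for $\lambda \le 1$) yields $\|H_l\|_{L^2(B_\fs(x,\lambda))} \lesssim \lambda^{\gamma_\ft - |l|_\fs}$, the Besov-embedding $\epsilon$-loss being absorbed into $\bar\kappa$. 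In the second regime, $H_l = g_l$ with no Taylor cancellation; but Sobolev embedding of $g_l \in H^{\reg \ft - |l|_\fs}$ into $L^\infty$ (if $\reg \ft - |l|_\fs > |\fs|/2$) or into an $L^p$ for finite $p$ (otherwise), combined with H\"older's inequality on the ball, gives $\|g_l\|_{L^2(B_\fs(x,\lambda))} \lesssim \lambda^{\min(|\fs|/2,\,\reg \ft - |l|_\fs)}$, and this exponent is at least $\gamma_\ft - |l|_\fs$ by Assumption~\ref{ass:alg}.

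For the translation bound, the polynomial-valued structure of $H$ gives $\Gamma_{y+h,y}(X^k/k!) = (X+h)^k/k!$, so
\begin{equ}
(\Delta_h H)_l(y) = D^l G(y+h) - \sum_{k \ge l,\, |k|_\fs < \gamma_\ft} \frac{h^{k-l}}{(k-l)!}\, D^k G(y),
\end{equ}
which is precisely the Taylor remainder of $D^l G$ at the base point $y$ truncated at scaled order $\gamma_\ft - |l|_\fs$. Using again that $G \in H^{\reg \ft}$ locally with $\reg \ft > \gamma_\ft$, an $L^2$-valued Taylor-remainder estimate produces the required bound $\|(\Delta_h H)_l\|_{L^2(B_\fs(x,\lambda);dy)} \lesssim |h|_\fs^{\gamma_\ft - |l|_\fs}$ uniformly in $|h|_\fs \le \lambda \le 1$. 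The main analytic obstacle is the Sobolev--Taylor inequality itself in its scaled, fractional-degree form, and the careful matching of the two $|l|_\fs$-regimes so that every $\epsilon$-loss from Besov embeddings can be absorbed into the single parameter $\bar\kappa$ once and for all.
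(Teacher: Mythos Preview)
Your approach is correct and, at the level of the actual estimates, coincides with the paper's, but you are working considerably harder than necessary in two places.

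First, the paper observes that since here $\nu = \gamma_\ft$, the pointed translation bound $\||\Delta_h H|_\zeta\|_{\lambda,x} \lesssim |h|_\fs^{\gamma-\zeta}\lambda^{\nu-\gamma} = |h|_\fs^{\gamma-\zeta}$ is \emph{identical} to the ordinary $\CD_2^{\gamma_\ft}$ translation bound, which in turn is nothing but the Besov-space characterisation of $\eta_\ft^n \in \CB_{2,\infty}^{\gamma_\ft}$ (this is what the paper's citation of \cite[Proposition~A.5]{BL21} together with $\CB_{2,2}^{\reg\ft} \hookrightarrow \CB_{2,\infty}^{\gamma_\ft}$ gives). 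So your direct Taylor-remainder argument for the translation bound is fine but redundant: once you know $H \in \CD_2^{\gamma_\ft}$, the pointed translation bound is free.

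Second, your ``main analytic obstacle'' --- the scaled Sobolev--Taylor inequality --- is in fact immediate once you use Sobolev embedding $H^s \hookrightarrow \CC^{s - |\fs|/2 - \epsilon}$ to pass to a pointwise H\"older estimate, exactly as the paper does in the $|\ft|_\fs > 0$ case: if $F \in \CC^\beta$ with $\beta > A$ has vanishing Taylor jet at $x$ to order $A$, then $|D^l F(y)| \lesssim |y-x|_\fs^{A - |l|_\fs}$ pointwise, whence the $L^2$ bound on the ball picks up the extra $\lambda^{|\fs|/2}$. The paper only writes this out at level $l=0$, splitting instead by the sign of $|\ft|_\fs$ (which governs whether any subtraction is present); your split by $|l|_\fs$ versus $|\ft|_\fs$ is the natural extension to general $l$ and your second-regime H\"older argument is exactly the paper's $|\ft|_\fs < 0$ argument. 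So the two proofs are really the same, with the paper trading your explicit inequality for a citation and a one-line remark about $\gamma = \nu$.
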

	\begin{proof}
		Since $H_{\Xi_\ft; n}^{x, \eta} = 0$ when $\gamma_\ft < 0$, we may assume without loss of generality that $\gamma_\ft > 0$.
		We first note that $\pH{\Xi_\ft} \in \CD_2^{\gamma_\ft}$ with the desired control on the norm. This is essentially immediate from \cite[Proposition A.5]{BL21} in combination with the embeddings $\CB_{2,2}^\gamma \hookrightarrow \CB_{2, \infty}^\gamma \hookrightarrow \CB_{\infty, \infty}^{\gamma - |\fs|/2}$.
		
		Therefore we turn to considering the pointed bounds required in the definition of $\cD_2^{\gamma_\ft, \gamma_\ft}$. In fact, since $\gamma = \nu$, the pointed translation bound follows immediately from its non-pointed variant so that we are left only to consider the local bound.
		
		Here we treat the regimes $|\ft|_\fs > 0$ and $|\ft|_\fs < 0$ differently, though we again only demonstrate the bound at degree $0$. In the latter regime, we choose $p \in [1, \infty)$ such that $|\ft|_\fs + |\fs|/2 < |\fs|\frac{p-2}{2p} < \reg \ft$. Then, by H\"older's inequality, we write
		$$\|\rho^n \ast \eta_\ft\|_{\lambda, x} \le \|\rho^n \ast \eta_\ft \|_{L^p}^{1/2} \| 1_{B(x, \lambda)} \|_{L^{\frac{p}{p-2}}}^{1/2}.$$
		By Besov embedding, $\eta_\ft \in \CB_{2,2}^{\reg \ft} \subseteq \CB_{p,p}^{\reg \ft - |\fs|\frac{p-2}{2p}}$. Since this is a positive regularity Sobolev space by our choice of $p$, the above yields a bound of order $\lambda^{|\fs|\frac{p-2}{2p}}$ which is sufficient, again by our choice of $p$.
			
		It remains to consider the case $|\ft|_\fs > 0$. Here we use that $\eta_\ft \in \CB_{\infty, \infty}^{|\ft|_\fs}$ by Besov embedding so that $\| \rho^n \ast \eta_\ft - P_x^{|\ft|_\fs}[\rho^n \ast \eta_\ft]\|_{\lambda,x}$ can be estimated in the right way by using Taylor's theorem and applying an $L^\infty$ bound to the resulting integrand.
	\end{proof}
	
	Unfortunately, \eqref{e:baseH1}, \eqref{e:baseH2}, \eqref{e:defProdH} and \eqref{e:int} do not quite provide a complete definition of this family of modelled distributions since it is not guaranteed that $\gamma_\tau > 0$. As a result, it may be that \eqref{e:int} is not well-defined by an application of Theorem~\ref{theo:schauder}. In this case, we will want to make use of Theorem~\ref{theo:schauder candidate} which means that we will require an a priori candidate for the pointed reconstruction of $H_{\tau; n}^{x,\eta}$ for each $\tau$ such that $\gamma_\tau \le 0$. For this, we first identify the circumstances in which $\gamma_\tau$ can be non-positive.
	
	\begin{lemma}\label{lemma: bad tree identification}
		Suppose that $\gamma_\tau \le 0$. Then $\tau$ is of the form $\tau = \Xi_\ft \cdot X^k \cdot \big (\prod_{i = 1}^n \CI^{\fl_i} \sigma_i \big )$ where $|\ft|_\fs < - |\fs|/2$ and for each $i$, $\alpha_{\sigma_i} + |\fl_i|_\fs + |\ft|_\fs + |\fs|/2 > 0$. 
	\end{lemma}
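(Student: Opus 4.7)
The plan is to combine Assumption~\ref{ass:alg} with the recursive structure of the coproduct $\Delta$ to pin down the admissible shapes of $\tau$. The first step is to unpack the hypothesis $\gamma_\tau \le 0$. Since $\gamma_\tau = \alpha_\tau + |\fs|/2 - n_\tau \bar\kappa$ and $\bar\kappa$ is chosen so that no integer lies in $[\gamma_\tau, \alpha_\tau + |\fs|/2)$, we cannot have $\gamma_\tau \le 0 < \alpha_\tau + |\fs|/2$, so necessarily $\alpha_\tau + |\fs|/2 \le 0$. By definition $\alpha_\tau$ is realized by a non-polynomial basis tree lying in the smallest sector containing $\tau$, and the second clause of Assumption~\ref{ass:alg} forces every basis tree with at least two edges to have degree strictly greater than $-|\fs|/2$. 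Hence this minimizer must contain exactly one edge, so it is of the form $\Xi_\ft \cdot X^{k'}$ for some noise label $\ft$ and multi-index $k'$.

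The next step is to read off the shape of $\tau$ from the requirement that $\Xi_\ft \cdot X^{k'}$ lie in the sector of $\tau$. By Definition~\ref{def: typed trees}, every basis tree factors at the root as $X^k$ times at most one noise edge and some planted pieces $\CI^{\fl_i}\sigma_i$. Since $|\ft|_\fs \le \alpha_\tau \le -|\fs|/2 < 0$, direct computation using \eqref{eq: Delta def int} gives $\Delta \Xi_\ft = \Xi_\ft \otimes 1$ (all polynomial corrections vanish because $\tilde \CJ^\ft_k 1 = 0$ whenever $|\ft|_\fs < 0$). By multiplicativity of $\Delta$, the only way for a single-edge noise tree to appear in the sector of $\tau$ is for $\tau$ itself to carry $\Xi_\ft$ at its root and for each planted factor $\CI^{\fl_i}\sigma_i$ to be simultaneously cuttable to a polynomial. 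Inspecting the second term of \eqref{eq: Delta def int}, such a cut is allowed only when $|\CI^{\fl_i}\sigma_i|_\fs > 0$. This yields the claimed form $\tau = \Xi_\ft \cdot X^k \cdot \prod_i \CI^{\fl_i}\sigma_i$, while the sign-compatibility clause of Assumption~\ref{ass:alg} rules out $|\ft|_\fs = -|\fs|/2$ and so sharpens $|\ft|_\fs \le -|\fs|/2$ to $|\ft|_\fs < -|\fs|/2$.

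For the refined inequality $\alpha_{\sigma_i} + |\fl_i|_\fs + |\ft|_\fs + |\fs|/2 > 0$, I would exhibit a second element of the sector of $\tau$: for fixed $i$, cut every planted factor except the $i$-th down to $1$, and replace $\sigma_i$ inside $\CI^{\fl_i}\sigma_i$ by a minimum-degree non-polynomial element $\sigma'_i$ of its own sector. The resulting basis tree has at least two edges (namely $\Xi_\ft$ together with at least one edge inside $\CI^{\fl_i}\sigma'_i$), so Assumption~\ref{ass:alg} forces its degree $|\ft|_\fs + |\fl_i|_\fs + \alpha_{\sigma_i}$ to strictly exceed $-|\fs|/2$, which is precisely the claimed inequality. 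The main delicate point throughout is to verify that the intermediate trees produced by these cuts really are basis elements of the (rule-driven) regularity structure rather than formal expressions killed by the rule; this is handled by the normality of $R$, since cutting an admissible planted subtree down to a polynomial decoration preserves compatibility with the rule.
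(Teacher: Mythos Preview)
Your proof is correct and follows essentially the same approach as the paper's: both argue that $\gamma_\tau\le 0$ forces $\alpha_\tau\le -|\fs|/2$, invoke Assumption~\ref{ass:alg} to conclude the minimizer in the sector is a single noise edge, use the coproduct $\Delta$ to pin down the shape of $\tau$, and then exhibit the auxiliary tree $\Xi_\ft\,\CI^{\fl_i}\sigma_i'$ (the paper's $\tilde\sigma_i$) to apply Assumption~\ref{ass:alg} again for the final inequality. Your version is more detailed in justifying why the constructed tree lies in the structure (via the sector of $\tau$ rather than a bare appeal to normality of $R$), but the argument is the same.
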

	\begin{proof}
		If $\gamma_\tau \le 0$ then $\alpha_\tau < -|\fs|/2$. By Assumption~\ref{ass:alg}, this is only possible if there exists $\ft \in \Lab_-$ with $|\ft|_\fs < -|\fs|/2$ such that $\Xi_\ft$ lies in the smallest sector containing $\tau$. By definition of the map $\Delta$ this is only possible if $\tau$ is of the form specified in the statement.
		
		To obtain the inequality for $\alpha_{\sigma_i}$, note that there exists a subtree $\tilde{\sigma}_i$ of $\sigma_i$ such that $\alpha_{\sigma_i} + |\fl_i|_\fs = |\CI^{\fl_i} \tilde{\sigma}_i|_\fs$. It then follows that $\Xi_\ft \CI_k^{\fl_i} \tilde{\sigma}_i$ is a tree in the reduced regularity structure so that the inequality follows by Assumption~\ref{ass:alg}.
	\end{proof}
	
	Given this restriction on the kind of problematic trees that can arise, we have hope to provide a candidate for the pointed reconstruction by hand in these instances. We first show that the obvious candidate for the reconstruction of $\pH{\Xi_\ft}$ in the case where $\gamma_\ft \le 0$ (namely simply $\eta_\ft^n = \rho^n*\eta_\ft$) is suitable. We emphasise that in all of our discussion of candidates for the reconstruction in the case where $\gamma_\tau \le 0$, the bounds that we obtain will be independent of the choice of $n$.
	
	\begin{lemma}\label{lemma: eta candidate}
		Given $\ft \in \Lab_-$ such that $|\ft| \le - |\fs|/2$ and $\eta \in \CH$, $\eta_\ft^n = \varrho^n \ast \eta_\ft$ is a candidate for the $\CD_2^{\gamma_\ft, \gamma_\ft; x}$ pointed reconstruction of $\pH{\Xi_\ft}$. Furthermore, we have the bound
		\begin{equ}
			C(f; \ck) \lesssim \|\eta\|_{\CH}.
		\end{equ}
	\end{lemma}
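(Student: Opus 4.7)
The first observation is that, under the hypothesis $|\ft|_\fs \le -|\fs|/2$, we have $\gamma_\ft = |\ft|_\fs + |\fs|/2 - \bar\kappa \le -\bar\kappa < 0$, so no multi-index $k \in \bN^d$ satisfies $|k|_\fs < \gamma_\ft$ and the sum defining $\pH{\Xi_\ft}$ in \eqref{e:baseH1} is empty. Hence $\pH{\Xi_\ft} \equiv 0$, which in particular means $\Pi_y \pH{\Xi_\ft}(y) = 0$. In view of Definition~\ref{def: pointed reconstruction candidate}, the claim therefore reduces to three bounds on $\eta_\ft^n$, all with constants controlled by $\|\eta\|_\CH$: the non-pointed candidate bound $\|\sup_{\psi \in \CB^r}|\scal{\eta_\ft^n,\psi_y^\lambda}|\|_{L^2(\ck;dy)} \lesssim \lambda^{\gamma_\ft}$, the pointwise bound $|\scal{\eta_\ft^n,\phi_x^\lambda}| \lesssim \lambda^{\gamma_\ft - |\fs|/2}$, and the pointed $L^2$ bound $\|\scal{\eta_\ft^n,\phi_y^\delta}\|_{\lambda, x, 2; dy} \lesssim \delta^{\gamma_\ft}$, uniformly in $\delta \le \lambda \le 1$.

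For the first and third bounds, I would appeal to Besov embedding. Since convolution with $\varrho^n$ does not inflate the Sobolev norm, $\|\eta_\ft^n\|_{H^{\reg \ft}} \le \|\eta_\ft\|_{H^{\reg\ft}} \le \|\eta\|_\CH$, and the embedding $H^{\reg \ft} = \CB_{2,2}^{\reg \ft} \hookrightarrow \CB_{2,\infty}^{\reg \ft}$ immediately yields, for any fixed compact $\ck$,
\begin{equ}
\Big \|\sup_{\psi \in \CB^r} |\scal{\eta_\ft^n, \psi_y^\lambda}| \Big \|_{L^2(\ck; dy)} \lesssim \|\eta\|_\CH \lambda^{\reg \ft}
\end{equ}
when $\reg \ft < 0$, while for $\reg \ft \ge 0$ one gets the simpler bound $\lesssim \|\eta\|_\CH$ from the fact that $\eta_\ft^n \in L^2_{\rm loc}$ uniformly in $n$ together with Fubini applied to $|\scal{\eta_\ft^n, \psi_y^\lambda}| \lesssim \lambda^{-|\fs|/2}\|\eta_\ft^n\|_{L^2(B_\fs(y,\lambda))}$. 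In both regimes the right-hand side is dominated by $\|\eta\|_\CH \lambda^{\gamma_\ft}$ for $\lambda \le 1$, using that $\reg \ft > \gamma_\ft$ (guaranteed by $\gamma_\ft = |\ft|_\fs + |\fs|/2 - \bar\kappa$ together with the strict inequality $\reg \ft > |\ft|_\fs + |\fs|/2$ of Assumption~\ref{ass:alg}), and, when $\reg \ft \ge 0$, the fact that $\lambda^{\gamma_\ft} \ge 1$. The pointed $L^2$ bound then follows by applying this estimate with a fixed compact $\ck \supseteq B_\fs(x,1)$ and restricting the $L^2$-norm to $B_\fs(x,\lambda) \subseteq \ck$.

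For the pointwise bound I would use duality: $|\scal{\eta_\ft^n, \phi_x^\lambda}| \le \|\eta_\ft^n\|_{H^{\reg \ft}} \|\phi_x^\lambda\|_{H^{-\reg \ft}}$, combined with the routine scaling estimate $\|\phi_x^\lambda\|_{H^{-\reg \ft}} \lesssim \lambda^{(\reg \ft - |\fs|/2) \wedge 0}$, valid because Assumption~\ref{ass:alg} forces $\reg \ft > -|\fs|/2$. A brief case check, splitting according to whether $\reg\ft$ exceeds $|\fs|/2$ or not, confirms that $\lambda^{(\reg \ft - |\fs|/2) \wedge 0} \le \lambda^{\gamma_\ft - |\fs|/2}$ for $\lambda \le 1$, once again relying on $\reg \ft > \gamma_\ft$ and $\gamma_\ft < 0$.

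I do not anticipate any serious obstacle here: the entire argument is a scaling computation exploiting the Sobolev regularity of the driving noise, and Assumption~\ref{ass:alg} is tailored precisely so that the gap $\reg \ft - \gamma_\ft > \bar\kappa$ suffices for all three estimates with room to spare. The only mild technicality is the measurability of the supremum over $\CB^r$ in the first bound, which is standard and can be dealt with by restricting to a countable dense subset of $\CB^r$.
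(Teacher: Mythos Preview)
Your proposal is correct and follows essentially the same route as the paper. The paper's proof is more compressed: it observes that since $\gamma_\ft = \nu_\ft$, both pointed bounds in Definition~\ref{def: pointed reconstruction candidate} follow from the single non-pointed estimate $\|\scal{\eta_\ft^n,\psi_y^\lambda}\|_{L^2(\ck;dy)} \lesssim \lambda^{\gamma_\ft}$ (the pointed $L^2$ bound by restriction, the pointwise bound via the local Besov embedding $\CB_{2,\infty}^{\gamma_\ft} \hookrightarrow \CC^{\gamma_\ft - |\fs|/2}$), and then obtains that estimate from $\CB_{2,2}^{\gamma_\ft} \hookrightarrow \CB_{2,\infty}^{\gamma_\ft}$. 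You instead treat the pointwise bound separately by the $H^{\reg\ft}$--$H^{-\reg\ft}$ duality pairing, which is of course the same mechanism underlying the embedding; your case split on the sign of $\reg\ft$ is harmless but in fact unnecessary here, since under the hypothesis $|\ft|_\fs \le -|\fs|/2$ Assumption~\ref{ass:alg} forces $\reg\ft \le 0$.
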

	\begin{proof}
		The pointed bounds follow in this case from their non-pointed variants so that it suffices to show that $\eta_\ft^n$ is a candidate for the $\CD_{2}^{\gamma_\tau}$ reconstruction of $\pH{\Xi_\ft}$.
		
		Since $\gamma_\tau \le 0$, the desired reconstruction bound is of the form 
		\begin{equ}
			\| \scal{\eta_\ft^n, \phi_x^\lambda} \|_{L^p(\ck; dx)} \lesssim C(f, \ck) \lambda^{\gamma_\tau}.
		\end{equ} 
		Therefore the result is immediate from the embedding $\CB_{2,2}^{\gamma_\tau} \hookrightarrow \CB_{2, \infty}^{\gamma_\tau}$.
	\end{proof}
	
	Next, we turn to the more complex task of providing a candidate for the pointed reconstruction of each tree $\tau$ with more than one edge such that $\gamma_\tau \le 0$. Since multiplication with terms of the form $X^k$ is a straightforward operation, it will suffice for us to consider trees $\tau$ that have a vanishing node label at the root.
	
	\begin{lemma}\label{lemma: pointed candidate exists}
		Suppose $\gamma_\tau \le 0$ and $\tau = \Xi_{\ft} \cdot \left ( \prod_{i=1}^n \CI^{\fl_i} \sigma_i \right ) = \Xi_\ft \bar \tau$. Then there exists $\gamma > 0$ such that $f_x^{\Xi_\ft} \cdot \pH{\bar \tau} \in \CD_2^{\gamma, \deg_2 \tau; x}$. 
		Additionally, $\eta_\ft^n \cdot \Pi_x \bar \tau$ is a candidate for the $\CD_2^{\gamma_\tau, \nu_\tau; x}$ pointed reconstruction of $f_x^{\bar \tau} \cdot \pH{\Xi_\ft} = 0$. 
		
		In particular, it follows that $\CR (f_x^{\Xi_\ft} \cdot \pH{\bar \tau}) + \eta_\ft^n \cdot \Pi_x \bar \tau$ is a candidate for the $\CD_2^{\gamma_\tau, \deg_2 \tau; x}$ pointed reconstruction of $\pH\tau$, where $\CR$ is the reconstruction operator on $\CD_2^{\gamma, \deg_2 \tau; x}$. Furthermore, if $C(\pH{\tau})$ is the corresponding constant in Definition~\ref{def: pointed reconstruction candidate} then 
		\begin{equ}
			C(\pH{\tau}) \lesssim (\$\pH{\bar\tau}\$_{2, \gamma_{\bar \tau}, \deg_2 \bar \tau; x} + \|\eta\|_\CH) (1+ \|Z\|_{V_{\bar \tau}; B_x} ).
		\end{equ}
	\end{lemma}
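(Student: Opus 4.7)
The plan is to exploit the Leibniz rule~\eqref{e:defProdH}: writing $\tau = \Xi_\ft\bar\tau$, we have $\pH\tau = f_x^{\Xi_\ft}\cdot\pH{\bar\tau} + \pH{\Xi_\ft}\cdot f_x^{\bar\tau}$. Since Lemma~\ref{lemma: bad tree identification} gives $|\ft|_\fs < -|\fs|/2$, the index $\gamma_\ft$ is negative, the sum in \eqref{e:baseH1} is empty, and $\pH{\Xi_\ft} = 0$ identically, confirming $f_x^{\bar\tau}\cdot\pH{\Xi_\ft} = 0$. Thus $\pH\tau = f_x^{\Xi_\ft}\cdot\pH{\bar\tau}$, and I would split the construction of the candidate into two parts: a genuine reconstruction of this nontrivial product (covered by the pointed Schauder machinery of Section~\ref{section: Pointed Modelled Distributions}), together with the ``natural'' noise-times-tree product $\eta_\ft^n\,\Pi_x\bar\tau$ supplied by hand to play the role of the reconstruction of the formally vanishing second summand.

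For the first part, I would first note that $|\ft|_\fs < 0$ forces every $\tilde\CJ^\ft_k\one$ in \eqref{eq: Delta def int} to vanish, so $\Delta\Xi_\ft = \Xi_\ft\otimes\one$ and $f_x^{\Xi_\ft}$ is literally the constant function $y\mapsto\Xi_\ft$. It therefore trivially lies in $\CD_\infty^{\gamma_1, |\ft|_\fs;x}$ for any $\gamma_1$, and Theorem~\ref{theo: pointed multiplication} with $p_1=\infty$ and $p_2=2$ puts $f_x^{\Xi_\ft}\cdot\pH{\bar\tau}$ into $\CD_2^{\gamma, \deg_2\tau; x}$ where $\gamma = \gamma_{\bar\tau} + |\ft|_\fs$ and the pointed degree comes out to $|\ft|_\fs + \deg_2\bar\tau = \deg_2\tau$. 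Strict positivity of $\gamma$ follows from the strict inequality $\alpha_{\sigma_i} + |\fl_i|_\fs + |\ft|_\fs + |\fs|/2 > 0$ of Lemma~\ref{lemma: bad tree identification}, which forces $\alpha_{\bar\tau} + |\ft|_\fs + |\fs|/2$ to be strictly positive with a margin uniform over the finitely many trees of our structure, so $\gamma > 0$ once $\bar\kappa$ is chosen small. Theorem~\ref{theo:pointedReconstruction} then supplies a genuine reconstruction $\CR(f_x^{\Xi_\ft}\cdot\pH{\bar\tau})$ satisfying the pointed candidate bounds of Definition~\ref{def: pointed reconstruction candidate} at exponent $\gamma$, and hence \emph{a fortiori} at the weaker exponent $\gamma_\tau < 0 < \gamma$, with the required dependence on $\$\pH{\bar\tau}\$_{2,\gamma_{\bar\tau},\deg_2\bar\tau;x}$ and $\|Z\|_{V_{\bar\tau};B_x}$.

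For the second part, I would verify directly that $\eta_\ft^n\Pi_x\bar\tau$ satisfies the candidate bounds for the zero modelled distribution at level $(\gamma_\tau, \deg_2\tau)$, namely $|\scal{\eta_\ft^n\Pi_x\bar\tau, \phi_x^\lambda}|\lesssim \lambda^{\deg_2\tau - |\fs|/2}$ and $\|\scal{\eta_\ft^n\Pi_x\bar\tau, \phi_y^\delta}\|_{\lambda,x,2;dy}\lesssim \lambda^{\deg_2\tau - \gamma_\tau}\delta^{\gamma_\tau}$. These I would obtain by duality, $\scal{\eta_\ft^n\Pi_x\bar\tau,\psi} = \scal{\eta_\ft^n, \Pi_x\bar\tau\,\psi}$, using $\|\eta_\ft^n\|_{H^{\reg\ft}}\lesssim\|\eta\|_\CH$ uniformly in $n$ together with a Besov-multiplication estimate on $\|\Pi_x\bar\tau\cdot\psi\|_{H^{-\reg\ft}}$: the regularity mismatch closes because combining $\reg\ft > |\ft|_\fs + |\fs|/2$ from Assumption~\ref{ass:alg} with $\alpha_{\bar\tau} > -|\ft|_\fs - |\fs|/2$ from Lemma~\ref{lemma: bad tree identification} yields $\alpha_{\bar\tau} + \reg\ft > 0$ with room to spare, and the $\lambda$-dependence is extracted by localising the $y$-integration and using the improved local behaviour of $\Pi_x\bar\tau$ near $x$. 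Summing these bounds with those of the first part, and noting that $\Pi_y\pH\tau(y) = \Pi_y(f_x^{\Xi_\ft}\cdot\pH{\bar\tau})(y)$ so the translation correction in Definition~\ref{def: pointed reconstruction candidate} is absorbed into the first summand's estimate, yields the total candidate together with the stated bound on $C(\pH\tau)$. The principal technical obstacle I anticipate is in this third step: arranging the $\lambda$-scaling in the product bound to match $\lambda^{\deg_2\tau - \gamma_\tau}$ exactly, which requires a careful decomposition of $\Pi_x\bar\tau\cdot\phi_y^\delta$ that exploits the pointed (rather than merely ambient) regularity of $\Pi_x\bar\tau$ near $x$.
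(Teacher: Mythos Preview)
Your overall strategy matches the paper's. The first claim is handled exactly as you say: $f_x^{\Xi_\ft}\equiv\Xi_\ft$ lies in $\CD_\infty^{\gamma_1,|\ft|_\fs;x}$ for arbitrary $\gamma_1$, Theorem~\ref{theo: pointed multiplication} then gives $f_x^{\Xi_\ft}\cdot\pH{\bar\tau}\in\CD_2^{\gamma_{\bar\tau}+|\ft|_\fs,\deg_2\tau;x}$, and the strict inequality of Lemma~\ref{lemma: bad tree identification} makes $\gamma_{\bar\tau}+|\ft|_\fs>0$ for small $\bar\kappa$. The vanishing $\pH{\Xi_\ft}=0$ and the identification of $\pH\tau$ are also exactly as in the paper.

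For the candidate bounds on $\eta_\ft^n\Pi_x\bar\tau$, your duality approach and the paper's argument are two presentations of the same computation. The paper does not invoke an abstract Besov-product lemma but instead writes out a paraproduct-type telescoping by hand: with $\beta=\min_i(\alpha_{\sigma_i}+|\fl_i|_\fs)$ and $F_y$ the order-$\beta$ Taylor polynomial of $\Pi_x\bar\tau$ at $y$, it uses the convolution identity \eqref{e:convolProp} to expand
\begin{equ}
(\eta_\ft^n\Pi_x\bar\tau)(\psi_x^m) = \int \eta_\ft^n(F_y\phi_y^m)\psi_x^m(y)\,dy + \sum_{k\ge m}\iint \eta_\ft^n(F_y\phi_z^{k+2})R_y^k(z)\psi_x^m(y)\,dy\,dz\;,
\end{equ}
and bounds each piece by applying Cauchy--Schwarz to the $\eta_\ft^n$-pairing, an $L^\infty$ bound on $\partial^j\Pi_x\bar\tau$, and the Taylor remainder estimate of \cite[Theorem~A.1]{Hai14} to exploit the $\CC^\beta$ regularity. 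Summability of the tail is precisely the condition $\beta+\reg\ft>0$, which is your $\alpha_{\bar\tau}+\reg\ft>0$. So there is no gap in your plan, but the step you flag as the ``principal technical obstacle'' is the substantive content of the proof, and the paper's explicit multiscale decomposition is exactly how one extracts the sharp $\lambda$-scaling that a bare duality bound $\|\eta_\ft^n\|_{H^{\reg\ft}}\|\Pi_x\bar\tau\cdot\psi\|_{H^{-\reg\ft}}$ does not immediately deliver.
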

	\begin{proof}
		The first statement is a straightforward consequence of Theorem~\ref{theo: pointed multiplication} and Lemma~\ref{lemma: bad tree identification}.
		
		Since the final statement is an immediate consequence of the first two, it remains to see that $\eta_\ft^n \cdot \Pi_x \bar \tau$ is a suitable candidate for the pointed reconstruction of $f_x^{\bar \tau} \cdot \pH{\Xi_\ft} = 0$. (The reason why it vanishes is the presence
		of the projection in \eqref{e:defProductProj}.) The bound on $C(\pH{\tau})$ follows by tracking the constants in the bounds obtained throughout this proof. This is straightforward but notationally messy and so we omit this detail.
		
		To obtain suitable bounds on $\big(\eta_\ft^n \cdot \Pi_x \bar{\tau}\big) (\psi_x^m)$, inspired by \cite[Theorem 3.11]{BL21},
we set 
\begin{equ}
F_y(z) = \sum_{|k|_\fs < \beta} \frac{(z-y)^k}{k!} \partial^k \Pi_x \bar \tau (y)\;
\end{equ}
where $\beta = \min_i \alpha_{\sigma_i} + |\fl_i|_\fs$.
($\Pi_x \bar \tau$ is guaranteed to be a function in $\CC^{\beta}$ by Lemma~\ref{lemma: bad tree identification}.)

Choosing $\phi$ as in Definition~\ref{def: semigroup kernel} and using \eqref{e:convolProp} in the last step, one has
\begin{equs}
\big(\eta_\ft^n \Pi_x &\bar{\tau}\big) (\psi_x^m) = \int \big(\eta_\ft^n \Pi_x \bar{\tau}\big)(y)\,\psi_x^m(y)\,dy
= \lim_{N \to \infty} \int \big(\eta_\ft^n \Pi_x \bar{\tau}\big)(\phi_y^N)\,\psi_x^m(y)\,dy \\
&= \int \eta_\ft^n( F_y \cdot \phi_y^m) \psi_x^m(y) dy 
+ \sum_{k \ge m} \int \eta_\ft^n( F_y \cdot (\phi_y^{k+1} - \phi_y^{k})) \psi_x^m(y) dy \\
&= \int \eta_\ft^n( F_y \cdot \phi_y^m) \psi_x^m(y) dy 
  + \sum_{k \ge m} \iint \eta_\ft^n (F_y \cdot \phi_z^{k+2}) R_y^k(z) \psi_x^m(y) dy dz
		\end{equs}
		where $R = \rho^1 - \rho^1 \ast \rho$.
		
		The first term in this expression is bounded by 
		\begin{equs}
			\Big | \int \eta_\ft^n (F_y \cdot \phi_y^m)  \psi_x^m(y) dy \Big | & \lesssim \sum_{|k|_{\fs} < \beta} \Big | \int \eta_\ft^n \left( (\cdot - y)^k \phi_y^m  \right) \partial^k \Pi_x \bar \tau (y) \psi_x^m(y) dy \Big |
			\\
			& \lesssim 2^{-m ( \reg \ft + |\bar \tau|_\fs - |\fs|/2)} \lesssim 2^{-m |\tau|_\fs}\;,
		\end{equs}
		since $\reg \ft - |\fs|/2 > |\ft|_\fs$. Here the second inequality was obtained by hitting the model with an $L^\infty$ bound and applying Cauchy--Schwarz in $L^2$ to what is left.
		
		We now turn our attention to the summand, which we rewrite as
		\begin{equs}
			\iint \eta_\ft^n & (F_z \cdot \psi_z^{k+2})  R^k \ast \psi_x^m(z) dz \\ & + \iint \eta_\ft^n \left ( (F_{z+h} - F_z) \phi_z^{k+2} \right ) R^k(-h) \psi_x^m(z + h) dz dh.
		\end{equs}
		For the first term write, 
		\begin{equs}
			\Bigg | \int \eta_\ft^n & (F_z \cdot \phi_z^{k+2})  R^k \ast \psi_x^\lambda(z) dz \Bigg | \\ & \lesssim \sum_{|j|< \beta} \int \eta_\ft^n \left ( (\cdot - z)^j \phi_z^{k+2} \right ) \partial^j \Pi_x \bar \tau (z) R^k \ast \psi_x^m(z) dz
			\\
			& \lesssim \sum_{|j|_\fs < \beta} 2^{-m(|\bar \tau|_\fs - |j|_\fs)} 2^{-k( \reg \ft + |j|_\fs)} 2^{-kr} 2^{mr + m |\fs|/2}
		\end{equs}
		where the last inequality follows again by applying an $L^\infty$ bound to the model, Cauchy--Schwarz to the integral and using the improved bound on the convolution $R^k \ast \psi_x^m$ that follows from the fact that $R^k$ annihilates polynomials so that a Taylor jet of $\psi_x^m$ can be inserted. Since we can choose $r$ to be as large as we like, this bound is summable in $k$ to a bound of order $2^{-m( \reg \ft + |\bar \tau|_\fs - |\fs|/2)}$ which is of the correct order as before.
		
		It remains to consider 
		\begin{equ}
			\iint\eta_\ft^n \left ( (F_{z+h} - F_z) \phi_z^{k+2} \right ) R^k(-h) \psi_x^m(z + h) dz dh.
		\end{equ}
		Writing $T_f^\gamma(x, h) \eqdef f(x+h) - \sum_{|l|_\fs < \gamma} \frac{h^l}{l!} \partial^l f(x)$, we have that
		\begin{equ}
			(F_{z+h} - F_z)(y)  = - \sum_{|j|_\fs < \beta} \frac{(y - z)^j}{j!} T_{\partial^j \Pi_x \bar \tau}^{\beta - |j|_\fs}(z, h).
		\end{equ} 
		Therefore, by an application of \cite[Theorem A.1]{Hai14}, we have that the remaining term is
		\begin{equ}
			\sum_{|j|_\fs < \beta} \sum_{l \in \partial \bN_{\beta}^j} \iint \eta_\ft^n ((\cdot - z)^j \phi_z^{k+2}) \partial^{l+j} \Pi_x \bar \tau(z + u) R^k(-h) \phi_x^m(z+h) \CQ^{l}(du, h) dh dz
		\end{equ}
		where $\partial \bN_{\beta}^j = \{l \in \bN^d: |l + j|_\fs > \beta \text{ and there exists } i \text{ such that } |l + j - e_i|_\fs < \beta\}$.
		By first bounding the appearance of the model by an $L^\infty$ estimate and performing the integral in $u$, we obtain a bound of order
		\begin{equ}
			\sum_{|j|_\fs < \beta} \sum_{l \in \partial \bN_\beta^j} 2^{-m (|\bar \tau| - |l + j|_\fs)} 2^{-k |l|_\fs} \iint (\rho^n \ast \eta_\ft) ((\cdot - z)^j \phi_z^{k+2}) R^k(-h) \phi_x^m(z+h) dz dh.
		\end{equ} 
		
		Applying Cauchy--Schwarz to the integral in $z$, and using the fact that $R^k$ integrates to $1$, we obtain a bound of order $2^{-m (|\bar \tau|_\fs - \beta - |\fs|/2)} 2^{-k (\beta + \reg \ft)}$. Since $\beta = \min_i \alpha_{\sigma_i} + |\fl_i|_\fs$ and $\reg \ft > |\ft|_\fs + |\fs|/2$, Lemma~\ref{lemma: bad tree identification} implies that $\beta + \reg \ft > 0$, so that this bound is summable in $k$ to a bound of the correct order. 			
	\end{proof}
	
	The family of modelled distributions $\pH{\tau}$ is now uniquely defined by the specifications \eqref{e:baseH1},\eqref{e:baseH2}, \eqref{e:defProdH}, \eqref{e:int}, as well as the candidate for the pointed reconstruction of $\pH{\tau}$ in the case $\gamma_\tau \le 0$ provided by Lemma~\ref{lemma: pointed candidate exists}. Additionally, the proof of the following result is now automated by the machinery of Section~\ref{section: Pointed Modelled Distributions} and Lemma~\ref{lemma: pointed candidate exists}.
	
	We remind the reader that we have restricted our consideration to a finite dimensional regularity structure so that in particular there is a finite collection of trees $T \subseteq \CT$ such that $\CT$ is spanned by $T$. 
	
	\begin{proposition}\label{prop: H control}
		There exists $k \in \bN$ and a compact set $\ck$ containing $x$ such that
		\begin{equ}[e:mainBound]
			\max_{\tau \in T} \sup_{\|\eta\|_\CH \le 1} \$ H_{\tau; n}^{x, \eta} \$_{2, \gamma_\tau, \deg_2 \tau; x} \lesssim (1+ \|Z\|_{\CT; \ck})^k.
		\end{equ} 
	\end{proposition}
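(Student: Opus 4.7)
The bound will be established by induction on the number of edges in the tree $\tau$, following the recursive definition of $H^{x,\eta}_{\tau;n}$. For the base cases, the trivial trees $\tau = \one$ and $\tau = X_i$ give $H^{x,\eta}_{\tau;n} = 0$, so the bound holds with any $k$ and any compact set. The case $\tau = \Xi_\ft$ is precisely Lemma~\ref{lemma: H base check}, which gives the bound with $k = 0$ and $\ck$ a fixed neighbourhood of $x$ (the dependence on $\|Z\|_{\CT;\ck}$ is not even needed at this stage). By taking the union over the finitely many trees in $T$ and enlarging $\ck$ accordingly, we may assume throughout that $\ck$ is a single fixed compact set, chosen large enough to contain the $2$-fattening $B_x$ used in all the Schauder and multiplication estimates applied to any $\tau \in T$.

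For the inductive step in the product case $\tau = \sigma_1 \sigma_2$, we use the Leibniz-type identity \eqref{e:defProdH} and apply Theorem~\ref{theo: pointed multiplication} to each summand with $p_1 = \infty$ and $p_2 = 2$. This requires bounding $f^\sigma_x$ in an appropriate pointed space $\CD_\infty^{\gamma, \deg_2 \sigma; x}$; since $f^\sigma_x(y) = \Gamma_{yx}\sigma$ is nothing but the canonical lift, one has $|f^\sigma_x(y)|_\zeta \lesssim \|\Gamma\|_{\CT;\ck}\,|y-x|_\fs^{|\sigma|_\fs - \zeta}$, which after integrating the trivial $L^\infty$ norm over a ball of radius $\lambda$ yields the required local and translation bounds with a constant bounded by $1 + \|\Gamma\|_{\CT;\ck}$. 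Choosing $\gamma$ large enough that $(\gamma_{\sigma_1} + |\sigma_2|_\fs) \wedge (\gamma_{\sigma_2} + |\sigma_1|_\fs) \ge \gamma_\tau$ (which is possible by definition of $\gamma_\tau$ and the gap inequalities from Assumption~\ref{ass:alg}), Theorem~\ref{theo: pointed multiplication} then gives the desired bound on $\$H^{x,\eta}_{\tau;n}\$_{2, \gamma_\tau, \deg_2 \tau; x}$, with the power of $\|Z\|_{\CT;\ck}$ increasing by a controlled amount.

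For the integration case $\tau = \CI^\ft \sigma$, we distinguish two regimes. When $\gamma_\sigma > 0$, Theorem~\ref{theo:schauder} applied to $\CK^{x,2}_{\gamma_\sigma, \deg_2 \sigma; \ft} H^{x,\eta}_{\sigma;n}$ directly yields the bound on $\$H^{x,\eta}_{\tau;n}\$_{2, \gamma_\tau, \deg_2 \tau; x}$ (noting $\gamma_\tau = \gamma_\sigma + |\ft|_\fs$ and $\deg_2 \tau = \deg_2 \sigma + |\ft|_\fs$), with an additional multiplicative factor of $\|\Pi\|_{\gamma_\sigma;B_x}(1+\|\Gamma\|_{\gamma_\sigma;B_x})$. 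When $\gamma_\sigma \le 0$, we invoke instead Theorem~\ref{theo:schauder candidate}, using the explicit candidate for the pointed reconstruction of $H^{x,\eta}_{\sigma;n}$ constructed in Lemma~\ref{lemma: pointed candidate exists}; the bound on $C(H^{x,\eta}_{\sigma;n})$ provided there is exactly of the form $(\$\cdots\$ + \|\eta\|_\CH)(1+\|Z\|_{\CT;\ck})$, so composes cleanly with the inductive hypothesis. In either regime the power of $(1+\|Z\|_{\CT;\ck})$ increases by at most one, so iterating through the finite tree $\tau$ yields the claimed power $k$.

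The main technical obstacle is ensuring that the pointed parameters match at every step: one must verify that $\gamma_\tau$ never falls into an integer (which is arranged by the choice of $\bar\kappa$) and that the condition $0 < \gamma < \alpha + |\fs|/p$ required to apply Theorems~\ref{theo:schauder} and \ref{theo:pointedReconstruction} (implicitly used in Lemma~\ref{lemma: pointed candidate exists}) is satisfied along the way. This is precisely why the definition of $\gamma_\tau$ incorporates the $-n_\tau \bar\kappa$ correction: each edge added to the tree leaves room to reduce the pointed regularity parameter by a further $\bar\kappa$ while staying strictly below $\alpha_\tau + |\fs|/2$. The rest of the argument is bookkeeping: at each inductive step the number of factors of $(1+\|Z\|_{\CT;\ck})$ multiplies by a bounded factor and $\ck$ need only be enlarged by a fixed amount, so since $T$ is finite the claim follows.
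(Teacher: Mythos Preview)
Your approach is identical to the paper's: induction on the number of edges, using Lemma~\ref{lemma: H base check} for the base case, Theorem~\ref{theo: pointed multiplication} for products, and Theorems~\ref{theo:schauder}/\ref{theo:schauder candidate} (the latter together with Lemma~\ref{lemma: pointed candidate exists}) for integration according to the sign of $\gamma_\sigma$. One small correction: the pointed parameter for $f^\sigma_x$ is $\nu = \deg\sigma$, not $\deg_2\sigma$ --- your own bound $|f^\sigma_x(y)|_\zeta \lesssim |y-x|_\fs^{|\sigma|_\fs - \zeta}$ already gives exactly this --- and then $\deg\sigma_1 + \deg_2\sigma_2 = \deg_2(\sigma_1\sigma_2)$ produces the correct product parameter.
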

	\begin{proof}
		With all the machinery in hand, this result now follows by a straightforward induction in the number of edges using Lemma~\ref{lemma: H base check} to handle the base case where $\tau = \Xi_\ft$, Theorem~\ref{theo: pointed multiplication} to handle the case $\tau = \tau_1 \cdot \tau_2$ in the inductive step, Theorem~\ref{theo:schauder} to handle integration in the inductive step in the case $\gamma_\tau > 0$ and finally using Theorem~\ref{theo:schauder candidate} and Lemma~\ref{lemma: pointed candidate exists} to handle integration in the case $\gamma_\tau \le 0$. 
	\end{proof}
	
	It remains to show that this family of pointed modelled distributions is related via the reconstruction operator (or our inserted candidate for the reconstruction) to the Fr\'echet derivative of the given model in direction $\eta$. For this, we will restrict ourselves to slightly smoother directions which will make no difference in our eventual analysis by virtue of \eqref{e:mainBound} and a density argument.
	
	We now write $\reg_\theta \ft = \reg \ft + \theta$ and define the space of sufficiently smooth directions
	\begin{equ}
		\nice = \prod_{\ft \in \Lab_-} \CC_w^{\reg_\theta \ft}(\bR^d)\;, 
	\end{equ}
	with $\CC_w^\alpha$ as on page~\pageref{defCw},
	where $\theta > 0$ is a fixed constant which is such that $\reg_\theta : \Lab_- \to \bR_+$. Later, in Section~\ref{section: Identification of Derivatives}, it will be the case that we will want to take $\theta$ to be sufficiently large so that a certain augmented structure has nice properties, but for the time being this assumption is not needed.
	
	Since we wish to harvest this additional smoothness, we will want to include more terms in the expansion than appear in the definition of the $H$'s. In order to avoid confusion as to which reconstruction operator is in play or where truncations are implicitly present, we introduce a second family of functions $\pG{\tau}: \bR^d \to \CT_{\bar{\gamma}_\tau}$ for some $\bar{\gamma}_\tau > 0$ whose projection onto $\CT_{< \gamma_\tau}$ will agree with $\pH{\tau}$ and whose reconstruction will agree with that of $\pH{\tau}$ (including when $\gamma_\tau \le 0$).
	
	\begin{proposition}\label{prop:regHtau}
		There exists a $\bar \theta > 0$ and, for every $\eta \in \nice$, a family of modelled distributions $\pG{\tau}$ such that if $\bar \gamma_\tau = (\alpha_\tau + |\fs|/2) \vee \bar \theta$ then for every
		homogeneous $\tau \in \CT$, one has that $\pG{\tau} \in \CD^{\bar \gamma_\tau} = \CD_{\infty, \infty}^{\bar \gamma_\tau}$, $\CQ_{< \gamma_\tau} \pG{\tau} = \pH{\tau}$ and when $\gamma_\tau \le 0$, $\CR \pG{\tau}$ coincides with the candidate for the reconstruction of $\pH{\tau}$ given in Lemma~\ref{lemma: pointed candidate exists}.
	\end{proposition}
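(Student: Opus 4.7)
The plan is to construct $G^{x,\eta}_{\tau;n}$ by a recursion on trees that mirrors the one defining $H^{x,\eta}_{\tau;n}$, but carried out entirely in the standard (non-pointed) Besov framework. Because the directions in $\nice$ have genuine Hölder regularity $\reg_\theta\ft$, the mollified noise $\varrho^n \ast \eta_\ft$ can be Taylor-expanded to any desired order; by fixing $\bar\theta > 0$ small and $\theta > 0$ large enough the target regularity $\bar\gamma_\tau$ will be strictly positive at every stage, so that integration can be performed via the classical Schauder estimate (Theorem~\ref{theo:schauder}) without ever invoking candidates for the reconstruction on the $G$ side, even when $\gamma_\tau \le 0$.

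Explicitly, I would set $G^{x,\eta}_{X^k;n} = 0$ and, in parallel with \eqref{e:baseH1},
\begin{equ}
G^{x,\eta}_{\Xi_\ft;n}(y) = \sum_{|k|_\fs<\bar\gamma_\ft}\frac{X^k}{k!}\bigl[D^k(\varrho^n \ast \eta_\ft)(y) - P_x^{|\ft|_\fs}[D^k(\varrho^n \ast \eta_\ft)](y)\bigr]\;,
\end{equ}
with $P_x^{|\ft|_\fs}$ read as the empty sum when $|\ft|_\fs < 0$. For the inductive step, products are defined by the same Leibniz rule as \eqref{e:defProdH} but with outer projection $\CQ_{<\bar\gamma_{\tau_1\tau_2}}$, and integration by $G^{x,\eta}_{\CI^\ft\tau;n} = \CK^\ft_{\bar\gamma_\tau} G^{x,\eta}_{\tau;n}$, using the standard abstract integration of \cite{Hai14} rather than the pointed variant. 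There are three inductive statements to verify: (a) $G^{x,\eta}_{\tau;n} \in \CD^{\bar\gamma_\tau}$; (b) $\CQ_{<\gamma_\tau}G^{x,\eta}_{\tau;n} = H^{x,\eta}_{\tau;n}$; and (c) $\CR G^{x,\eta}_{\tau;n}$ agrees with $\CR H^{x,\eta}_{\tau;n}$, interpreted as the candidate of Lemma~\ref{lemma: pointed candidate exists} when $\gamma_\tau \le 0$.

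Claim (a) is standard once the classical Besov multiplication and Schauder theorems are in hand; claim (b) reduces to a degree-counting argument whose key input $\alpha_{\tau_j} + \gamma_{\tau_i} \ge \gamma_{\tau_1\tau_2}$ is guaranteed by the $n_\tau\bar\kappa$-shift built into the definition of $\gamma_\tau$, so that the components of $G^{x,\eta}_{\tau_i;n}$ living above degree $\gamma_{\tau_i}$ cannot contribute after the outer projection. The main obstacle is claim (c). When $\gamma_\tau > 0$, the difference $R \eqdef G^{x,\eta}_{\tau;n} - H^{x,\eta}_{\tau;n}$ takes values in degrees $\zeta$ with $\gamma_\tau \le \zeta < \bar\gamma_\tau$, so $|\langle\Pi_x R(x),\phi_x^\lambda\rangle|\lesssim \lambda^{\gamma_\tau}$ and $\CR G^{x,\eta}_{\tau;n}$ therefore satisfies the local approximation bound characterising $\CR H^{x,\eta}_{\tau;n}$; uniqueness on $\CD^{\gamma_\tau}$ then forces the two reconstructions to coincide.

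When $\gamma_\tau \le 0$, Lemma~\ref{lemma: bad tree identification} confines the analysis to $\tau = \Xi_\ft X^k \prod_i \CI^{\fl_i}\sigma_i$ with $|\ft|_\fs < -|\fs|/2$, so that the Taylor subtraction $P_x^{|\ft|_\fs}$ in the formula for $G^{x,\eta}_{\Xi_\ft;n}$ is an empty sum and $\CR G^{x,\eta}_{\Xi_\ft;n} = \varrho^n \ast \eta_\ft = \eta^n_\ft$. Writing $\bar\tau = X^k\prod_i \CI^{\fl_i}\sigma_i$ and noting that $\gamma_{\bar\tau} > 0$ by the inequality $\alpha_{\sigma_i} + |\fl_i|_\fs + |\ft|_\fs + |\fs|/2 > 0$ of Lemma~\ref{lemma: bad tree identification}, the Leibniz rule together with the classical product-reconstruction identity (valid here because all models under consideration are smooth at finite mollification scale $n$) gives
\begin{equ}
\CR G^{x,\eta}_{\tau;n} = \CR\bigl(f^{\Xi_\ft}_x G^{x,\eta}_{\bar\tau;n}\bigr) + \eta^n_\ft \cdot \Pi_x \bar\tau\;,
\end{equ}
which matches the candidate of Lemma~\ref{lemma: pointed candidate exists} once the inductive hypothesis (c) at $\bar\tau$ (in the already-handled positive-$\gamma$ case) is used to replace $\CR(f^{\Xi_\ft}_x G^{x,\eta}_{\bar\tau;n})$ by $\CR(f^{\Xi_\ft}_x H^{x,\eta}_{\bar\tau;n})$, which closes the induction.
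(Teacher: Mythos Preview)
There is a genuine gap in your integration step. You define
$G^{x,\eta}_{\CI^\ft\tau;n} = \CK^\ft_{\bar\gamma_\tau} G^{x,\eta}_{\tau;n}$
using only the standard abstract integration operator of \cite{Hai14}, whereas $H^{x,\eta}_{\CI^\ft\tau;n}$ is defined in \eqref{e:int} via the \emph{pointed} integration map $\CK^{x,2}_{\gamma_\tau,\deg_2\tau;\ft}$ of Definition~\ref{def: pointed integration}. The difference between these two operators is precisely the polynomial correction $T^x_{\deg_2\tau,2;\ft}$, which subtracts a Taylor jet of $K^\ft\ast\CR f$ centred at the distinguished point $x$. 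This polynomial has nontrivial components at every degree $|k|_\fs < \gamma_\tau + \beta = \gamma_{\CI^\ft\tau}$, so it is \emph{not} removed by the projection $\CQ_{<\gamma_{\CI^\ft\tau}}$. Consequently your claim (b) fails at every integrated tree: $\CQ_{<\gamma_{\CI^\ft\tau}} G^{x,\eta}_{\CI^\ft\tau;n} - H^{x,\eta}_{\CI^\ft\tau;n} = T^x_{\deg_2\tau,2;\ft} H^{x,\eta}_{\tau;n} \neq 0$.

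Claim (c) fails for the same reason: with your definition one has $\CR G^{x,\eta}_{\CI^\ft\tau;n} = K^\ft \ast \CR G^{x,\eta}_{\tau;n}$, whereas $\CR H^{x,\eta}_{\CI^\ft\tau;n}$ equals $K^\ft\ast\CR H^{x,\eta}_{\tau;n}$ \emph{minus} the polynomial $\sum_{|k|_\fs < \deg_2\tau + \beta - |\fs|/2}\frac{(\cdot-x)^k}{k!}(D^kK^\ft\ast\CR H^{x,\eta}_{\tau;n})(x)$. Your argument that ``$R = G - H$ takes values in degrees $\zeta \ge \gamma_\tau$'' therefore breaks down immediately after the first integration, and the error propagates through all subsequent products and integrations. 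The paper fixes this by setting $G^{x,\eta}_{\CI^\fl\tau;n} = \CK^\fl_{\bar\gamma_\tau} G^{x,\eta}_{\tau;n} - T^x_{\deg_2\tau,2;\fl} G^{x,\eta}_{\tau;n}$, i.e.\ by explicitly carrying the same pointed polynomial subtraction into the $G$ family; once this is done, your degree-counting for (b) and your uniqueness argument for (c) go through essentially as you describe them.
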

	
	\begin{remark}
		In the case where $\gamma_\tau > 0$, it follows from the fact that $\CQ_{< \gamma_\tau} \pG{\tau} = \pH{\tau}$ and the uniqueness of the reconstruction operator that $\CR \pG{\tau} = \CR \pH{\tau}$.
	\end{remark}
	\begin{proof}
		Our construction will again be an inductive one (over the number of edges).
		As in the case of the $H$'s, we set $\pG{X^k} = 0$ and we define 
		\begin{equ}
			\pG{\Xi_\ft} = \sum_{|k|_\fs < \bar{\gamma}_\ft} \frac{X^k}{k!} \big [D^k \eta_\ft^n - P_x^{|\ft|_\fs}[D^k \eta_\ft^n] \big ].
		\end{equ}
		It is straightforward to see that so long as $\bar \theta < \reg_\theta \ft$, these definitions provide elements of $\CD^{\bar \gamma_\tau}$ and satisfy the desired properties.
		
		We now move to the induction step, beginning with the case of integration. We suppose that we are given a homogeneous $\tau \in \CT$ such that $\pG{\tau} \in \CD^{\bar \gamma_\tau}$ and $\CR \pG{\tau} = \CR \pH{\tau}$.
		
		We define
		\begin{equ}[e:degGInt]
			\pG{\CI^\fl \tau} = \CK_{\bar \gamma_\tau}^\fl \pG{\tau} - T_{\deg_2 \tau, 2; \fl}^x \pG{\tau}
		\end{equ} 
		where $\CK_{\bar{\gamma}_\tau}^\fl$ is the usual abstract integration map on $\CD^{\bar \gamma_\tau}$ corresponding to the integral kernel $K^{\fl}$ (see \cite[Section 5]{Hai14}) and $T_{\nu, 2; \fl}^x$ is as in Definition
		~\ref{def: pointed integration}.
		
		We remark that the latter term is well-defined since the reconstruction operator appearing coincides with $\CR\pH{\tau}$ so that its pointwise evaluations at $x$ do make sense.
		It is straightforward to check by comparing the definitions that $\CQ_{< \gamma_\tau} \pG{\CI^\fl \tau} = \pH{\CI^\fl\tau}$. Additionally, since the second term in \eqref{e:degGInt} is polynomial,  one actually obtains
		\begin{equ}[e:betterBound]
			G_{\CI^\fl \tau; n}^{x,h} \in \CD^{\bar \gamma_\tau + |\fl|_\fs}\;,
		\end{equ}
		which is an improvement since in general the inequality $\bar \gamma_{\CI^\fl\tau} \le \bar \gamma_\tau + |\fl|_\fs$ may be strict.
		
		Regarding multiplication, as for $\pH{\tau \bar \tau}$, we define $\pG{\tau \bar \tau} = f^\tau_x(y)\,G^{x,\eta}_{\bar\tau; n}(y)
		+ G^{x,\eta}_{\tau; n}(y)\, f_x^{\bar\tau}(y)$. 
		
		It follows immediately from \cite[Theorem 4.7]{Hai14} that
		\begin{equ}
			G_{\tau\bar\tau; n}^{x,h} \in \CD^{(\bar \gamma_\tau + \tilde{\alpha}_{\bar\tau}) \wedge (\bar \gamma_{\bar \tau} + \tilde{\alpha}_\tau)}
		\end{equ}
		where $\tilde{\alpha}_\tau$ is the lowest degree in the smallest sector containing $\tau$ (rather than the lowest \textit{non-polynomial} degree). Since $\alpha_{\tau \bar \tau} = (\alpha_\tau + \tilde{\alpha}_{\bar \tau})\wedge (\tilde{\alpha}_\tau + \alpha_{\bar \tau})$ we conclude that if $\alpha_{\tau \bar \tau} + |\fs|/2 \ge \bar{\theta}$, then this product does indeed belong to $\CD^{\bar \gamma_{\tau \bar \tau}}$.
		
		If this constraint on $\alpha_{\tau \bar \tau}$ fails then, by choosing $\bar\theta$ sufficiently small, we have that $\alpha_{\tau \bar \tau} < - |\fs|/2$ and Lemma~\ref{lemma: bad tree identification} implies that without loss of generality we can assume that $\tau = \Xi_\ft$ and $\bar{\tau} =  \left ( \prod_{i=1}^n \CI^{\fl_i} \sigma_i \right )$. Further, for $\bar{\theta}$ sufficiently small, for each $i$ we have that $\alpha_{\sigma_i} + |\fl_i|_\fs + |\ft|_\fs \ge - |\fs|/2 + \bar\theta$. 
		
		We then note that $f_x^\tau \pG{\bar \tau} \in \CD^{\min_i (\bar\gamma_{\sigma_i} + |\fl_i|_\fs + |\ft|_\fs)} \subseteq \CD^{\bar \theta}$ by \cite[Theorem 4.7]{Hai14}. Additionally, since for each $i$, $\alpha_{\sigma_i} + |\fl_i|_\fs \ge 0$, $f_x^{\bar \tau}$ is valued in a function-like sector and lies in $\CD^c$ for any $c > 0$, we conclude also that $f_x^{\bar \tau} \pG{\tau} \in \CD^{\bar \theta}$. In total, this implies that $\pG{\tau \bar \tau}$ lies in the correct space.
		
		It follows immediately from the definitions and the induction hypothesis that $\CQ_{< \gamma_{\tau \bar \tau}} \pG{\tau \bar \tau} = \pH{\tau \bar \tau}$ so that it only remains to check that in the case where $\gamma_{\tau \bar \tau} \le 0$, the reconstruction of $\pG{\tau \bar \tau}$ coincides with our candidate for the reconstruction of $\pH{\tau \bar \tau}$.
		
		Without loss of generality, we will again assume that $\tau = \Xi_\ft$ and $\bar{\tau} =\left ( \prod_{i=1}^n \CI^{\fl_i} \sigma_i \right )$. By uniqueness of the reconstruction operator and the form of our candidate for the reconstruction of $\pH{\tau \bar \tau}$, it will suffice for us to see that $\CR (f_x^{\bar \tau} \cdot \pG{\Xi_\ft}) =\eta_\ft^n \cdot \Pi_x \bar \tau$.
		
		Since $\pG{\Xi_\ft}$ is valued in $\Tpoly$ so that the model acts multiplicatively on $f_x^{\bar \tau} \pG{\Xi_\ft}$, by uniqueness of the reconstruction, in the case $|\ft|_\fs < 0$ it suffices to see that for $y \in \bR^d$
		\begin{equ}
			\bigg | \int \bigg ( \eta_\ft^n(z) - \sum_{|k|_\fs < \bar{\gamma}_\ft} \frac{(z-y)^k}{k!} D^k \eta_\ft^n(y) \bigg ) \Pi_x\bar \tau(z) \psi_y^\lambda(z) \, dz \bigg | \lesssim \lambda^{\bar \theta}.
		\end{equ}
		This is straightforward by applying a naive uniform bound on the model and using Taylor's theorem to treat the first term since $\bar \gamma_\ft = \bar \theta$ in this case. Since we have already covered several similar but less simple estimates in this section we will omit the details. The adaptations required in the case $|\ft|_\fs > 0$ are also essentially the same as in the proof of Lemma~\ref{lemma: H base check} and so we also omit repetition of those techniques here.
	\end{proof}

	\subsection{Identification of the Fr\'echet derivatives}\label{section: Identification of Derivatives}
	
	In this section we show that, for $\eta \in \nice$, we have the previously announced identity
	\begin{equ}
		D_\eta \Pi_z^n \tau = \CR H_{\tau; n}^{z,\eta}\;,
	\end{equ}
	where $D_\eta$ denotes the Fr\'echet derivative in the direction $\eta$ and $\Pi^n$ is a model constructed via the action of the renormalisation group $\CG_-$ on $\CL(\xi^n)$. By Proposition~\ref{prop:regHtau}, it suffices to show that $D_\eta \Pi_z^n \tau = \CR \pG{\tau}$ and this is the route we pursue.
	
	For the remainder of this section, for notational simplicity we will consider $n$ to be fixed and will suppress the dependence on $n$ in our models and modelled distributions.
	
	We will broadly follow the sketch of proof in \cite[Section 5]{Jonathan} (see also \cite[Section~3.2]{Giuseppe} for
	a similar construction, but in a restricted setting). However, in the notation of \cite{Jonathan} for this sentence only, it is not true that $f_z^\tau(\bar{z})$ is a linear combination of terms of $\overline{\deg}$-degree equal to $\deg \tau$ (as can be seen by considering $f_z^{\CI \tau}(z)$ since $\deg$ and $\overline\deg$ agree on polynomials). Since this claim was crucial to the correctness of their argument, we must make adaptations to correct for this error here.
	
	We consider an augmented regularity structure $\tilde{\CT}$ where we duplicate every noise $\Xi_\ft$ and assign  degree 
	$\deg\tilde \Xi_\ft = \reg_\theta \ft$ to its twin $\tilde \Xi_\ft$. 
	The augmented structure is then spanned by those trees that can be obtained from a tree in $\CT$ by substituting some of its noise edges with their respective twins.
	
	More precisely, $\tilde{\CT}$ is the reduced regularity structure for the noise labels $\widetilde{\Lab}_- = \Lab_- \cup \{\tilde{\ft} : \ft \in \Lab_-\}$ and rule $\tilde{R}$ formed by extending the rule $R$ to treat the edges with labels in the latter set in the same way as their twins. 
	
	We note that it follows from
	Assumption~\ref{ass:alg} that all trees in $\tilde{\CT}$ containing an instance of $\tilde{\Xi}_\ft$ have positive degree so long as $\theta$ is chosen sufficiently large. Hence, it follows from \cite[Theorem 5.14, Proposition 3.31]{Hai14} that for such $\theta$ (which we now assume to be fixed), given $\eta \in \nice$, 
	\textit{any} admissible model
	$\PPi$ for $\CT$ extends \textit{uniquely} to an admissible model $\PPi^\eta$ for $\tilde \CT$ with the 
	property that $\PPi^\eta \tilde \Xi_\ft = \eta_\ft^n$ for every $\ft \in \Lab_-$. Furthermore
	the map $(\eta, \PPi) \mapsto \CY_\eta(\PPi) = \PPi^\eta$ is continuous in the respective topologies. We henceforth assume that 
	such a model is fixed.
	
	We then define the operators $\CJ^\ft(z) \colon \tilde \CT \to \Tpoly$ as in \cite[Eq.~5.11]{Hai14},
	as well as setting for $\tau \in \CT$, $\bar \CJ^{\ft}_{\tau}(z) \colon \tilde \CT \to \Tpoly$ to be the map obtained by setting
	$\bar \CJ^{\ft}_\tau(z) \eqdef \CQ_{<\deg \CI^\ft \tau}\CJ_\ft(z)$.
	
	Similarly to the way the modelled distributions $H_\tau^{x,\eta}$ are defined, we define a collection of $\tilde{\CT}$-valued modelled distributions indexed by $\tau \in \CT$ by setting
	$\tilde f^{X^k}_z = 0$ for all $k \in \bN^d$, 
	\begin{equ}
		\tilde f^{\Xi_\ft}_z = \tilde{\Xi}_\ft + \sum_{|k|_\fs < \reg_\theta \ft}{X^k \over k!} \big [D^k \eta_\ft^n - P_z^{|\ft|_\fs}[D^k \eta_\ft^n] \big ]\;,
	\end{equ}
	and then inductively 
	\begin{equ}[e:defProdf]
		\tilde f^{\tau \bar \tau}_z = \tilde f^{\tau }_z\,  f^{\bar \tau}_z + \tilde f^{\bar \tau}_z\, f^{\tau}_z\;,\qquad
		\tilde f_z^{\CI^\ft \tau}(\bar z) = \bigl(\CI^\ft  + \CJ^{\ft}(\bar z) - \Gamma_{\bar zz} \bar \CJ^{\ft}_ \tau(z)\Gamma_{z\bar z}^\eta\bigr)\tilde f_z^{\tau}(\bar z)\;.
	\end{equ}

	\begin{definition}
		A $\PPi^\eta$-polynomial is a map $f : \bR^{d} \to \tilde{\CT}$ such that for any $z, \bar{z}$ we have that $f(z) = \Gamma^\eta_{z \bar{z}} f(\bar{z})$
	\end{definition}
	\begin{proposition}\label{prop:idenftilde}
		For all $\tau \in \CT$ and $z \in \bR^d$, $\tilde{f}_z^\tau$ is a $\PPi^\eta$-polynomial. 
	\end{proposition}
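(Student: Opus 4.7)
The plan is to induct on the number of edges of $\tau$. The base case $\tau = X^k$ is trivial since $\tilde f_z^{X^k} \equiv 0$. For $\tau = \Xi_\ft$, a direct substitution shows $\tilde f_z^{\Xi_\ft}(z) = \tilde{\Xi}_\ft$, since the bracket $D^k\eta_\ft^n - P_z^{|\ft|_\fs}[D^k\eta_\ft^n]$ vanishes at $\bar z = z$. So the identity to verify reduces to showing that $\Gamma_{\bar z z}^\eta \tilde\Xi_\ft$ has polynomial part matching the monomial expansion in the definition of $\tilde f_z^{\Xi_\ft}(\bar z)$. This can be read off from admissibility of $\PPi^\eta$: the identity $\Pi_z^\eta \Gamma_{z\bar z}^\eta \tilde\Xi_\ft = \Pi_{\bar z}^\eta \tilde\Xi_\ft$ together with $\Pi_z^\eta \tilde\Xi_\ft = \eta_\ft^n - P_z^{|\ft|_\fs}[\eta_\ft^n]$ fixes the polynomial correction, and a direct rearrangement of the Taylor polynomials at $z$ and $\bar z$ in the monomial basis gives the desired match.

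For the product step, the key preliminary observation is that $f^\tau_z$ is itself a $\PPi^\eta$-polynomial. This uses that $\Gamma^\eta$ restricted to the subspace $\CT \subset \tilde{\CT}$ coincides with $\Gamma$, which in turn follows from the fact that $\PPi^\eta$ extends $\PPi$ and that the characters on $\tilde{\CT}_+$ defining $\Gamma^\eta$ restrict to the characters on $\CT_+$ defining $\Gamma$ (since the admissibility formulas for the characters on $\CT_+$ only see the model on $\CT$). Hence $\Gamma_{z\bar z}^\eta f_z^\tau(\bar z) = \Gamma_{z\bar z}\Gamma_{\bar z z}\tau = \tau = f_z^\tau(z)$, so $f_z^\tau$ is a $\PPi^\eta$-polynomial. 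The Leibniz formula \eqref{e:defProdf}, together with multiplicativity of $\Gamma^\eta$, then yields the claim for $\tilde f_z^{\tau\bar\tau}$ as a sum of products of $\PPi^\eta$-polynomials.

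The integration step is where the main technical work lies and where the adjustment to \cite{Jonathan} enters. Writing $\sigma = \tilde f_z^\tau(z) \in \tilde{\CT}$, the inductive hypothesis $\Gamma_{z\bar z}^\eta \tilde f_z^\tau(\bar z) = \sigma$ allows one to rewrite $\bar\CJ^{\ft}_\tau(z)\,\Gamma_{z\bar z}^\eta \tilde f_z^\tau(\bar z) = \CQ_{<\deg \CI^\ft\tau}\,\CJ^\ft(z)\,\sigma$. The desired identity $\tilde f_z^{\CI^\ft\tau}(\bar z) = \Gamma_{\bar z z}^\eta \tilde f_z^{\CI^\ft\tau}(z)$ then reduces, after applying the inductive hypothesis a second time to $\tilde f_z^\tau(\bar z)$ on the left-hand side, to the purely algebraic identity
\begin{equ}
\Gamma_{\bar z z}^\eta\bigl(\CI^\ft + \CJ^\ft(z)\bigr)\sigma = \bigl(\CI^\ft + \CJ^\ft(\bar z)\bigr)\Gamma_{\bar z z}^\eta \sigma\;,
\end{equ}
which is the standard admissibility statement that $\CI^\ft + \CJ^\ft(\cdot)$ intertwines with the structure group (an immediate consequence of \eqref{eq: Delta def int}). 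The projection in $\bar\CJ^\ft_\tau$ and the twist by the non-augmented $\Gamma_{\bar z z}$ rather than $\Gamma_{\bar z z}^\eta$ in its definition are precisely what is needed for the polynomial remainders to cancel cleanly: since $\CJ^\ft(z)\sigma$ lies in $\Tpoly$, on which $\Gamma^\eta$ acts the same as $\Gamma$, the splitting $\CJ^\ft(z) = \bar\CJ^\ft_\tau(z) + \CQ_{\geq \deg\CI^\ft\tau}\CJ^\ft(z)$ matches the degree bookkeeping forced by $\CI^\ft\sigma$. I expect the main obstacle to lie in carefully verifying that the truncation levels line up, which is precisely the point at which the sketch in \cite{Jonathan} requires correction, given that $\tilde f_z^\tau$ need not be supported on a single $\bdeg$-homogeneity.
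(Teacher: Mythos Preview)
Your proposal is correct and follows essentially the same route as the paper: induction on edges, with the integration step reducing via \cite[Lemma~5.16]{Hai14} to the intertwining identity $\Gamma_{\bar z z}^\eta(\CI^\ft + \CJ^\ft(z)) = (\CI^\ft + \CJ^\ft(\bar z))\Gamma_{\bar z z}^\eta$, after using that $\Gamma^\eta = \Gamma$ on $\Tpoly$ to match the $\bar\CJ^\ft_\tau$ contributions. Your concluding worry about truncation levels is unnecessary: once you observe that $\bar\CJ^\ft_\tau(z)\sigma$ is polynomial so that $\Gamma_{\bar z z}\bar\CJ^\ft_\tau(z)\sigma = \Gamma_{\bar z z}^\eta\bar\CJ^\ft_\tau(z)\sigma$, these terms are literally identical on both sides and cancel, and the projection degree in $\bar\CJ^\ft_\tau$ never enters the argument.
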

	\begin{proof}
		We need only check that $\tilde{f}_z^\tau(\bar{z}) = \Gamma_{\bar{z}z}^\eta \tilde{f}_z^\tau(z)$.
		
		This is trivially true in the case where $\tau = X^k$ and in the case where $\tau = \Xi_\ft$ this follows by construction from the action of the structure group on $\tilde{\Xi}_\ft$ by a tedious calculation that is made somewhat simpler by noting that $f_z^{\Xi_\ft}(\bar z) - \Gamma_{\bar z z}f_z^{\Xi_\ft}(z)$ is valued in the polynomial part of the regularity structure so that one can leverage injectivity of $\Pi_{\bar z}$ on polynomials.
		
		Seeing that the property is stable under multiplication is straightforward since the maps $f_z^\tau$ have the same property and $\Gamma^\eta$ is multiplicative. Hence we will consider now the case of integration.
		
		Here we write
		\begin{equ}
			\Gamma_{\bar z, z}^\eta \tilde{f}_z^{\CI^\fl \tau}(z) = \Gamma_{\bar z, z}^\eta \left ( \CI^\fl + \CJ^\fl(z) - \bar{\CJ}^{\fl}_\tau(z) \right ) \tilde{f}_z^\tau(z).
		\end{equ} 
		
		Applying \cite[Lemma 5.16]{Hai14} to the first
		two terms on the right-hand side yields
		\begin{equs}
			\Gamma_{\bar{z}z}^\eta \tilde{f}_z^{\CI^\ft \tau}(z) = (\CI^\ft + \CJ^\ft(\bar{z})) \Gamma_{\bar{z}z}^\eta \tilde{f}_z^\tau(z) - \Gamma_{\bar{z}z}^\eta \bar{\CJ}^{\ft}_\tau(z) \tilde{f}_z^\tau(z).
		\end{equs}
		The first term on the right-hand side equals $(\CI^\ft + \CJ^\ft(\bar{z})) \tilde{f}_z^\tau(\bar{z})$ by the induction hypothesis. For the second term, we note that $\Gamma_{\bar{z}z}^\eta = \Gamma_{\bar{z}z}$ on polynomials and write $\operatorname{Id} = \Gamma_{z\bar{z}}^\eta \Gamma_{\bar{z}z}^\eta$ to see that this term is equal to $\Gamma_{\bar{z}z} \bar{\CJ}^\ft(z) \Gamma_{z \bar{z}}^h \Gamma_{\bar{z}z}^h \tilde{f}_z^\tau(z)$. We then recognise the right-hand side of \eqref{e:defProdf} and conclude that we have indeed proven the identity 
		$\Gamma_{\bar{z}z}^\eta \tilde{f}_z^{\CI^\ft \tau}(z) = \tilde{f}_z^{\CI^\ft \tau}(\bar z)$
		as claimed.
	\end{proof}
	
	We now write $\hat \CQ \colon \tilde \CT \to \CT$ for the map that sends any 
	symbol containing an instance of $\tilde{\Xi}_\ft$ (for some $\ft$) 
	to $0$ and otherwise acts as the identity. 
	We then have the following result
	relating these two definitions.
	
	\begin{proposition}\label{prop:MalDer}
		Let $\eta \in \nice$, let $\PPi$ be an admissible model for $\CT$, and endow
		$\tilde \CT$ with the admissible model $\PPi^\eta$.
		For any $\tau \in \CT$, one then has the identity
		\begin{equ}[e:idenHf]
			\CR H_\tau^{z,\eta} = \CR^\eta \tilde f_z^{\tau} = \Pi_z^\eta \tilde f_z^{\tau}(z)\;,
		\end{equ}
		where $\CR^\eta$ is the reconstruction operator for $\PPi^\eta$.
	\end{proposition}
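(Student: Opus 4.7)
The second equality $\CR^\eta \tilde f_z^\tau = \Pi_z^\eta \tilde f_z^\tau(z)$ follows immediately from Proposition~\ref{prop:idenftilde}. Since $\tilde f_z^\tau$ is a $\PPi^\eta$-polynomial, one has
\begin{equ}
\Pi_{\bar z}^\eta \tilde f_z^\tau(\bar z) = \Pi_{\bar z}^\eta \Gamma^\eta_{\bar z z}\tilde f_z^\tau(z) = \Pi_z^\eta \tilde f_z^\tau(z)
\end{equ}
for every $\bar z$, so the right-hand side (constant in $\bar z$) trivially satisfies the reconstruction identity to arbitrary order. By uniqueness of the reconstruction on $\CD^{\bar\gamma}(\PPi^\eta)$ with $\bar\gamma > 0$ (which is available throughout $\tilde\CT$ once $\theta$ has been chosen sufficiently large), this distribution coincides with $\CR^\eta \tilde f^\tau_z$.

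The first equality $\CR H^{z,\eta}_\tau = \CR^\eta \tilde f^\tau_z$ would be proved by induction on the number of edges of $\tau$, working on the $H$-side with the regularised family $G^{z,\eta}_\tau$ from Proposition~\ref{prop:regHtau}: when $\gamma_\tau > 0$ one has $\CR G^{z,\eta}_\tau = \CR H^{z,\eta}_\tau$ by uniqueness, and when $\gamma_\tau \le 0$ the quantity $\CR G^{z,\eta}_\tau$ coincides by construction with the candidate for $\CR H^{z,\eta}_\tau$ supplied by Lemma~\ref{lemma: pointed candidate exists}. The base cases $\tau = X^k$ and $\tau = \Xi_\ft$ are direct computations from the explicit definitions. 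The multiplicative step $\tau = \sigma_1\sigma_2$ follows by comparing the parallel Leibniz rules \eqref{e:defProdH} and \eqref{e:defProdf}, using the inductive hypothesis together with the fact that each $f^{\sigma_i}_z$ is a $\PPi$-polynomial with $\CR f^{\sigma_i}_z = \Pi_z \sigma_i$ and the standard multiplication result for reconstructions. For the integration step $\tau = \CI^\fl \sigma$, one compares the pointed integration $\CK^{z,2}_{\gamma_\sigma, \deg_2 \sigma, \fl} H^{z,\eta}_\sigma$ appearing in \eqref{e:int} with the algebraic prescription $\bigl(\CI^\fl + \CJ^\fl(\bar z) - \Gamma_{\bar z z}\bar\CJ^\fl_\sigma(z)\Gamma^\eta_{z\bar z}\bigr)\tilde f^\sigma_z$ on the $\tilde f$-side; both reconstruct, via the Schauder identity $\CR\,\CK f = K^\fl \ast \CR f$ (Theorems~\ref{theo:schauder} and~\ref{theo:schauder candidate}) and admissibility of $\PPi^\eta$, to $K^\fl \ast \CR H^{z,\eta}_\sigma = K^\fl \ast \CR^\eta \tilde f^\sigma_z$ up to the same Taylor polynomial at $z$, so the inductive hypothesis on $\sigma$ closes the step.

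The main obstacle is the regime $\gamma_\tau \le 0$, where the reconstruction of $H^{z,\eta}_\tau$ is prescribed by hand via Lemma~\ref{lemma: pointed candidate exists}. By Lemma~\ref{lemma: bad tree identification}, such trees are of the form $\tau = \Xi_\ft \cdot \bar\tau$, and the prescribed candidate reads $\CR(f^{\Xi_\ft}_z \cdot G^{z,\eta}_{\bar\tau}) + \eta^n_\ft \cdot \Pi_z\bar\tau$. I would decompose $\tilde f^{\Xi_\ft}_z$ into its $\tilde\Xi_\ft$ part and its polynomial part, apply the Leibniz rule \eqref{e:defProdf} to $\tilde f^\tau_z$, and identify each of the two resulting pieces separately with the corresponding summand of the candidate: the polynomial piece times $f^{\bar\tau}_z$ matches $\CR(f^{\Xi_\ft}_z \cdot G^{z,\eta}_{\bar\tau})$ by the inductive hypothesis and the standard multiplication theorem, while the $\tilde\Xi_\ft$-piece times $f^{\bar\tau}_z$ is identified with $\eta^n_\ft \cdot \Pi_z\bar\tau$ using admissibility of $\PPi^\eta$ and the defining identity $\PPi^\eta\tilde\Xi_\ft = \eta^n_\ft$. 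This computation is routine but notationally heavy; once it is settled, the multiplicative inductive step in the ``bad'' regime closes as well.
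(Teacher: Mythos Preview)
Your argument for the second equality $\CR^\eta \tilde f_z^\tau = \Pi_z^\eta \tilde f_z^\tau(z)$ is correct and matches the paper's.

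For the first equality, the multiplication step has a genuine gap. You invoke a ``standard multiplication result for reconstructions'' to pass from the inductive hypothesis on $\sigma_1,\sigma_2$ to the claim for $\sigma_1\sigma_2$, i.e.\ something like $\CR\bigl(f^{\sigma_1}_z \cdot G^{z,\eta}_{\sigma_2}\bigr) = (\Pi_z\sigma_1)\cdot \CR G^{z,\eta}_{\sigma_2}$. No such result is available here: $f^{\sigma_1}_z$ need not take values in a function-like sector, so $\Pi_z\sigma_1$ may be a genuine distribution and the right-hand side may not even be defined; and the extended model $\Pi^\eta_z$ is not multiplicative (renormalisation destroys this), so one cannot factorise $\Pi^\eta_z \tilde f_z^{\sigma_1\sigma_2}(z)$ either. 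The Leibniz rules for $H$ and $\tilde f$ simply do not propagate through the reconstruction operator.

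The paper avoids this by strengthening the inductive hypothesis to an equality of \emph{modelled distributions}: it shows $\CQ_{<\bar\gamma_\tau}\tilde f_z^\tau = G^{z,\eta}_\tau$ as $\tilde\CT$-valued functions. For this stronger statement the multiplication step \emph{is} immediate, since both sides obey identical Leibniz rules and the difference $\tilde f_z^{\sigma_i} - G^{z,\eta}_{\sigma_i}$ sits at degrees $\ge \bar\gamma_{\sigma_i}$, which after multiplication by $f^{\sigma_j}_z$ lands above $\bar\gamma_{\sigma_1\sigma_2}$. For the integration step the paper then argues as follows: by the induction hypothesis the difference $\Delta \eqdef \CQ_{<\bar\gamma_\tau}(G^{z,\eta}_\tau - \tilde f_z^\tau)$ is valued in the polynomial sector; since both $G^{z,\eta}_\tau$ and $\tilde f_z^\tau$ belong to $\CD^{\bar\gamma_\tau}$, $\Delta$ is the Taylor lift of some $\Delta_0 \in \CC^{\bar\gamma_\tau}$; and one checks (essentially as you sketch) that both reconstruct to $K^\fl \ast \CR G^{z,\eta}_\sigma$ minus the same Taylor jet at $z$, whence $\Delta_0 = 0$ and so $\Delta = 0$. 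The desired equality of reconstructions then follows from uniqueness on $\CD^{\bar\gamma_\tau}$ together with Proposition~\ref{prop:regHtau}.
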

	
	\begin{proof}
		The identity $\CR^\eta \tilde f_z^{\tau} = \Pi_z^\eta \tilde f_z^{\tau}(z)$ follows at once
		from the fact that $\tilde f_z^{\tau}(\bar z) = \Gamma^\eta_{\bar z z}\tilde f_z^{\tau}(z)$ by
		Proposition~\ref{prop:idenftilde}, so that $\Pi_{\bar z}^\eta \tilde f_z^{\tau}(\bar z)$ 
		is independent of $\bar z$.
		
		We then claim that for every homogeneous element $\tau$ one has the identity
		\begin{equ}[e:equalProj]
			\CQ_{< \bar \gamma_\tau} \tilde f_z^{\tau} = G_\tau^{z,\eta} 
		\end{equ}
		whence the first equality follows from the uniqueness of the reconstruction operator and Proposition~\ref{prop:regHtau}.
		We show \eqref{e:equalProj} again by induction. Clearly, it holds for the base cases
		thanks to the fact that $\deg \tilde \Xi_\ft = \bar \gamma_{\Xi_\ft}$. 
		
		Regarding integration, say $\tau = \CI^\ft \sigma$, we note that the definition 
		of $G_{\CI^\ft\sigma}^{z,\eta}$ guarantees
		that
		\begin{equ}
			\CR G_{\tau}^{z,\eta} = K^\ft \ast \CR G_{\sigma}^{z,\eta}
			- \sum_{|k|_\fs < \deg\tau} (D^k K^\ft \ast \CR G_{\sigma}^{z,\eta})(z) {(\cdot - z)^k \over k!}\;,
		\end{equ}
		and similarly for $\Pi_z \tilde f_z^{\tau}(z)$ (and therefore $\CR^\eta \tilde f_z^{\tau}$).
		
		It is also straightforward to see from the induction hypothesis that
		$\Delta \eqdef \CQ_{< \bar \gamma_\tau} (G_\tau^{z,\eta} - \tilde f_z^{\tau})$
		takes values in the Taylor polynomials. Since we know furthermore that when considered as modelled distributions valued in $\tilde{\CT}$
		both $G_{\tau}^{z,\eta}$ and $\tilde f_z^{\tau}$ belong to $\CD^{\bar \gamma_\tau}$ by
		Propositions~\ref{prop:regHtau} and~\ref{prop:idenftilde}, $\Delta$ coincides with the 
		Taylor lift of a function $\Delta_0 \in \CC^{\bar \gamma_\tau}$, so that we necessarily have
		$\Delta = 0$ since $\Delta_0 = \CR G_{\tau}^{z,\eta} - \CR^\eta \tilde f_z^{\tau} = 0$.
	\end{proof}
	It now remains to prove that $\CR^\eta \tilde f_z^\tau = D_\eta \Pi_z \tau$. To do this, we identify models constructed via the canonical lift and the action of the renormalisation group from the noise $\xi + \varepsilon \eta$. We are again inspired by the sketch argument given in \cite[Section 5]{Jonathan}.
	
	For $\varepsilon > 0$, to describe such a model we construct another family of $\PPi^\eta$-polynomials which we denote by $f_z^{\tau, \varepsilon}$. 
	
	We define $f_z^{X^k, \varepsilon}(\bar{z}) = \Gamma_{\bar{z}z}X^k$ and $f_z^{\Xi_\ft, \varepsilon}(x) = \Xi_\ft + \varepsilon \tilde{f}_z^{\Xi_\ft}$. We then recursively define
	\begin{equs}
		f_z^{\tau \bar{\tau}, \varepsilon}(\bar{z})= f_z^{\tau, \varepsilon}(\bar{z}) f_z^{\bar{\tau},\varepsilon}(\bar{z}), \qquad f_z^{\CI^\ft \tau, \varepsilon}(\bar{z}) = \left ( \CI^\ft + \CJ^\ft(\bar{z}) - \Gamma_{\bar{z} z} \bar{\CJ}^{\ft}_ \tau(z) \Gamma_{z \bar{z}}^\eta \right ) f_z^{\tau, \varepsilon}(\bar{z}).
	\end{equs}
	
	Additionally, we define maps $\Lambda_{\bar{z}z}^\varepsilon : \CT\to \CT$ recursively by setting $\Lambda_{\bar{z}z}^\varepsilon \Xi_\ft = \Xi_\ft$, $\Lambda_{\bar{z}z} X^k = \Gamma_{\bar{z}z}X^k$, declaring that $\Lambda_{\bar{z}z}^\varepsilon$ is multiplicative, and by setting
	\begin{equs}
		\Lambda_{\bar{z}z}^\varepsilon \CI^\ft \tau = \CI^\ft \Lambda_{\bar{z}z}^\varepsilon \tau + \left ( \bar{\CJ}^{\ft}_ \tau(\bar{z}) \Gamma_{\bar{z}z} - \Gamma_{\bar{z}z} \bar{\CJ}^{\ft}_\tau(z) \right ) f_{\bar{z}}^{\tau, \varepsilon}(\bar{z}).
	\end{equs}
	
	We then define $Z^\varepsilon: \CM_0(\tilde \CT) \to \CM_0(\CT)$ by setting $Z^\varepsilon (\Pi^\eta, \Gamma^\eta) = (\Pi, \Gamma)$ where $\Gamma_{\bar{z}z} = \Lambda_{\bar{z}z}^\varepsilon$ and $\Pi_z \tau = \Pi_z^\eta f_z^{\tau, \varepsilon}(z)$. 
	
	\begin{proposition}
		The map $Z^\varepsilon:\CM_0(\tilde \CT) \to \CM_0(\CT)$ is well-defined.
	\end{proposition}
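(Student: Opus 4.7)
The plan is to verify, for $(\Pi,\Gamma) \eqdef Z^\varepsilon(\Pi^\eta,\Gamma^\eta)$, each clause of Definition~\ref{def: model} together with admissibility (Definition~\ref{def: admissibility}), proceeding by induction on the number of edges of $\tau \in \CT$. The base cases $\tau = X^k$ and $\tau = \Xi_\ft$ are essentially definitional: in the first case $\Gamma_{\bar z z}X^k = \Lambda_{\bar z z}^\varepsilon X^k$ and $\Pi_z X^k(\bar z) = (\bar z - z)^k$ by construction; in the second case $\Pi_z \Xi_\ft = \Pi_z^\eta \Xi_\ft + \varepsilon\, \Pi_z^\eta \tilde f_z^{\Xi_\ft}(z) = \xi_\ft + \varepsilon \eta_\ft^n$ (up to the usual polynomial recentering), which is exactly what one expects for the canonical lift of the shifted noise.

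The first key step is to show that $\Lambda_{\bar z z}^\varepsilon$ belongs to the structure group $\CG$, i.e.\ that $\Lambda_{\bar z z}^\varepsilon \tau - \tau \in \CT_{< |\tau|_\fs}$ and that the semigroup identity $\Lambda_{z_1 z_2}^\varepsilon \Lambda_{z_2 z_3}^\varepsilon = \Lambda_{z_1 z_3}^\varepsilon$ holds. Upper triangularity is immediate for the base cases and for products; for integration one uses that the correction term $(\bar{\CJ}^{\ft}_\tau(\bar z)\Gamma_{\bar z z} - \Gamma_{\bar z z}\bar{\CJ}^{\ft}_\tau(z))f_{\bar z}^{\tau,\varepsilon}(\bar z)$ lies in the polynomial part of degree strictly below $|\CI^\ft \tau|_\fs$, which follows from the truncation built into $\bar{\CJ}^{\ft}_\tau$ combined with the $\PPi^\eta$-polynomial property of $f_z^{\tau,\varepsilon}$ (the analogue of Proposition~\ref{prop:idenftilde}). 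The semigroup identity can be checked by reducing it to the corresponding identities for $\Gamma$ on $\CT$ and $\Gamma^\eta$ on $\tilde \CT$, again inductively.

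The second key step is the model identity $\Pi_z \Gamma_{zy} = \Pi_y$. For this I would first prove the analogue of Proposition~\ref{prop:idenftilde} for $f_z^{\tau,\varepsilon}$: namely that $f_z^{\tau,\varepsilon}(\bar z) = \Gamma_{\bar z z}^\eta f_z^{\tau,\varepsilon}(z)$, by the same inductive argument, using \cite[Lemma 5.16]{Hai14} and the bookkeeping for the integration piece. Once this is established, the map $\bar z \mapsto \Pi_{\bar z}^\eta f_z^{\tau,\varepsilon}(\bar z)$ is independent of $\bar z$, and combining this with the definition of $\Lambda_{\bar z z}^\varepsilon$ (which exactly encodes how $f^{\cdot,\varepsilon}$ transforms between basepoints) yields $\Pi_z f_z^{\tau,\varepsilon}(z) = \Pi_{\bar z} \Lambda_{z \bar z}^\varepsilon f_{\bar z}^{\tau,\varepsilon}(\bar z)$, i.e.\ the required compatibility. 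Admissibility is then read off from the definition of $f_z^{\CI^\ft\tau,\varepsilon}$ and the admissibility of $\PPi^\eta$.

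The analytic bounds on $\|\Pi\|_{\gamma;\ck}$ and $\|\Gamma\|_{\gamma;\ck}$ follow by induction once the algebraic structure is in place: for each $\tau$ one expands $\Pi_z\tau = \Pi_z^\eta f_z^{\tau,\varepsilon}(z)$ and bounds each homogeneous component using the induction hypothesis together with the boundedness of $\PPi^\eta$ on $\tilde\CT$; the correction terms built from $\CJ^\ft$ and $\bar\CJ^\ft$ are controlled by the usual Schauder-type arguments as in \cite[Thm~5.14]{Hai14}. The main obstacle, and the part requiring the most care, is the algebraic consistency in the integration step: verifying that the somewhat ad hoc correction $\Gamma_{\bar z z}\bar{\CJ}^\ft_\tau(z)\Gamma_{z\bar z}^\eta$ is precisely what is needed to make $\Lambda^\varepsilon$ multiplicative on products, compatible with $\Gamma^\eta$ under the projection $\hat\CQ$, and to reproduce the canonical admissible lift when one specialises to the case where $\xi + \varepsilon\eta$ is the driving noise. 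This is exactly the point where the error in \cite{Jonathan} alluded to in Section~\ref{section: Identification of Derivatives} needs to be circumvented, and so care must be taken to confirm that the definitions above are internally consistent.
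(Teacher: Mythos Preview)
Your proposal is correct and follows essentially the same inductive strategy as the paper. The main difference is how the semigroup identity $\Lambda_{xy}^\varepsilon \Lambda_{yz}^\varepsilon = \Lambda_{xz}^\varepsilon$ is obtained: the paper proves the single identity $f_z^{\Lambda_{z\bar z}^\varepsilon \tau, \varepsilon} = f_{\bar z}^{\tau,\varepsilon}$ by induction (extending $f_z^{\cdot,\varepsilon}$ linearly to all of $\CT$) and then invokes injectivity of $\tau \mapsto f_z^{\tau,\varepsilon}$, rather than your direct inductive check. That same identity, combined with the $\PPi^\eta$-polynomial property of $f_z^{\tau,\varepsilon}$ (which the paper states before the definition rather than re-proving), also delivers $\Pi_z \Gamma_{z\bar z} = \Pi_{\bar z}$ immediately. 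For the analytic bounds the paper likewise uses one structural observation---that $f_z^{\tau,\varepsilon}(z)$ is a linear combination of terms of degree at least $|\tau|_\fs$---to get the $\Pi$-bounds for free, and then handles the $\Gamma$-bounds exactly as you describe, via an adaptation of \cite[Lemma~5.21]{Hai14}. Your plan to verify upper triangularity and admissibility explicitly is more detailed than the paper, which leaves these implicit.
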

	\begin{proof}
		We are required to show that for $\PPi^\eta = (\Pi^\eta, \Gamma^\eta) \in \CM_0(\tilde \CT)$, it is the case that $Z^\varepsilon \PPi^\eta = (\Pi, \Gamma)$ is a model on $\CT$.
		
		We observe that it follows by a simple induction that $f_z^{\Lambda_{z \bar{z}}^\varepsilon \tau, \varepsilon} = f_{\bar{z}}^{\tau, \varepsilon}$, where we extended the definition of $f_z^{\tau, \varepsilon}$ to all $\tau \in \CT$ by linearity. Since $\tau \mapsto f_z^{\tau, \varepsilon}$ is an injective map, this also yields that $\Lambda_{xy}^\varepsilon \Lambda_{yz}^\varepsilon = \Lambda_{xz}^\varepsilon$. Therefore the algebraic requirements for $Z^\varepsilon \PPi^h$ to be a model do hold
and it remains to verify the analytic requirements. 
		
		For the bounds on $\Pi_z$, it follows from a simple induction that $f_z^{\tau, \varepsilon}$ is a linear combination of terms of degree at least $|\tau|_\fs$ so that the bounds on $\Pi_z$ are immediate since no uniformity in $\varepsilon$ is required. 
		
		This leaves us to consider the bounds on $\Gamma$. These follow again by an inductive argument where the only non-trivial step is obtaining the appropriate bound for $\Gamma_{\bar{z}z} \CI^\ft \tau$. The bound in this case is immediate for the first term in the definition of $\Lambda_{\bar{z}z}^\varepsilon \CI^\ft \tau$ so that leaves us only to obtain the appropriate bound on $\left ( \bar{\CJ}^{\ft}_\tau \Gamma_{\bar{z}z}(\bar{z}) - \Gamma_{\bar{z}z} \bar{\CJ}^{\ft}_ \tau(z) \right ) f_{\bar{z}}^{\tau, \varepsilon}(\bar{z})$.
		This however follows from the observation made above on the degree of terms in $f_z^{\tau, \varepsilon}$ and a minor adaptation of the proof of \cite[Lemma 5.21]{Hai14}, so we omit the details.
	\end{proof}
	
	It is straightforward to check by induction that if $\CL$ denotes the canonical lift then $Z^\varepsilon \CY_\eta \CL(\xi) = \CL(\xi + \varepsilon \eta)$. (Recall that $\CY_\eta \colon \PPi \mapsto \PPi^\eta$.) Since we wish to work with renormalised models, it is natural to ask if this construction commutes with the renormalisation group in the sense that for $M_g \in \CG_-$ we have that
	\begin{equ}[e:commutRG]
		M_g Z^\varepsilon \CY_\eta \CL(\xi) = Z^\varepsilon \CY_\eta M_g \CL(\xi).
	\end{equ}
	We emphasise here that we restrict ourselves here to elements $g$ of the subgroup $\CG_-$ of $\fR$ described at the end of Section~\ref{section: Renormalisation Group}. This subgroup consists of elements that are of the form $\CQ M_g^\mathrm{ex} = \CQ (g \otimes 1) \Delta_{\mathrm{ex}}^-$ where $g$ is a reduced character (in the sense that it does not depend on the decoration $\fo$) on the space $\CT_-^\mathrm{ex}$. 
	
	This is necessary since we split the proof of the commutation result \eqref{e:commutRG} into two parts. In the first part, we establish a commutation result with the map $\CY_\eta$ and in the second we establish a commutation result with $Z^\varepsilon$.
	
	For the first of these steps we will need to restrict to elements of $\CG_-$ whilst in the second step we will need to introduce a particular element of $\fR \setminus \CG_-$ that will describe the action of $Z^\varepsilon$ so that we will require both descriptions in what follows. 
	
	As was mentioned in Section~\ref{section: Renormalisation Group}, we will identify elements of $\CG_-$ (which are properly speaking linear maps $M : \CT \to \CT$) with the corresponding reduced characters $g$ on $\CT_-^\mathrm{ex}$ and we will write $M_g$ when we wish to make clear that we mean the corresponding element of $\fR$.
	
	To prove the relation \eqref{e:commutRG}, we follow the sketch argument given at the end of \cite[Section 5.1]{Jonathan}, making the necessary adaptations to turn it
	into a rigorous and correct argument in the general setting considered here.
	
	As mentioned, for our first intermediate step, we would like to introduce a map $\Phi : \CG_- \to \tilde{\CG}_-$ on characters such that $\CY_\eta M_g \PPi = M_{\Phi(g)} \CY_\eta \PPi$. Here $\tilde{\CG}_-$ is the analogue of $\CG_-$ for the structure $\tilde{\CT}$.
	
	We define such a $\Phi$ by setting $\Phi(g) = g \circ \hat\CQ$ where $\hat \CQ$ is the linear map which sends each tree containing an edge with a decoration in $\tilde{\Lab}_- \setminus \Lab_-$ to $0$ and otherwise acts as the identity map.
	
	\begin{proposition}\label{prop:commute}
		For any $g \in \CG_-$, one has the identity $\CY_\eta M_g \PPi = M_{\Phi(g)} \CY_\eta \PPi$.
	\end{proposition}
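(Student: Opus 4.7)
My plan is to exploit the uniqueness built into the definition of $\CY_\eta$. Recall that $\CY_\eta(\PPi)$ was characterised as the unique admissible model on $\tilde \CT$ whose restriction to $\CT$ coincides with $\PPi$ and which satisfies $\PPi^\eta \tilde \Xi_\ft = \eta_\ft^n$. By construction, $\CY_\eta M_g \PPi$ is the admissible model on $\tilde \CT$ extending $M_g \PPi$ and sending every $\tilde \Xi_\ft$ to $\eta_\ft^n$. It therefore suffices to verify that $M_{\Phi(g)} \CY_\eta \PPi$ shares these same two defining properties, plus admissibility.

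Admissibility of $M_{\Phi(g)} \CY_\eta \PPi$ follows from the fact that $\CG_-^\ex$ acts on admissible models and that $\Phi(g) = g \circ \hat \CQ$ is indeed a character on $\tilde \CT_-^\ex$: the map $\hat\CQ$ is an algebra morphism (it kills every tree containing at least one twin edge, and twin edges can only be introduced through multiplication or grafting, both of which are respected), so precomposition with $\hat\CQ$ preserves the character property of $g$. For the second defining property, note that $|\tilde \Xi_\ft|_\fs = \reg_\theta \ft > 0$ by our choice of $\theta$, so that when $\Delta_\mathrm{ex}^-$ extracts negative-degree subtrees from $\tilde \Xi_\ft$ there is nothing to extract beyond the empty forest. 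Hence $\Delta_\mathrm{ex}^- \tilde \Xi_\ft = \mathbf{1} \otimes \tilde \Xi_\ft$, which yields $M_{\Phi(g)} \tilde \Xi_\ft = \tilde \Xi_\ft$ and in turn $(M_{\Phi(g)} \CY_\eta \PPi)(\tilde \Xi_\ft) = (\CY_\eta \PPi)(\tilde \Xi_\ft) = \eta_\ft^n$.

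The remaining and main task is to show that for every basis vector $\tau \in \CT \subset \tilde \CT$ one has $M_{\Phi(g)} \tau = M_g \tau$. This is a purely algebraic statement about $\Delta_\mathrm{ex}^-$: when it acts on a tree $\tau$ with no twin noise edges, every subtree extracted on the left tensor factor is itself a subtree of $\tau$ and therefore also contains no twin noise edges (the extended decoration $\fo$ is modified by the extraction, but no new edges are introduced). On such tensors $\hat \CQ$ acts as the identity and so $(\Phi(g) \otimes \id) \Delta_\mathrm{ex}^- \tau = (g \otimes \id)\Delta_\mathrm{ex}^- \tau$. Composing with the identification of $M_g$ with $M_g^\ex$ on the canonical image of $\CT$ in $\CT^\ex$ (as recalled at the end of Section~\ref{section: Renormalisation Group}) gives $M_{\Phi(g)} \tau = M_g \tau$. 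Using that $\CY_\eta \PPi$ restricts to $\PPi$ on $\CT$ and that $M_g \tau$ remains in $\CT$, one then concludes
\begin{equ}
(M_{\Phi(g)} \CY_\eta \PPi)(\tau) = (\CY_\eta \PPi)(M_{\Phi(g)} \tau) = \PPi(M_g \tau) = (M_g \PPi)(\tau)\;.
\end{equ}
The main obstacle is precisely this algebraic bookkeeping around $\Delta_\mathrm{ex}^-$ and the interplay between the coaction and $\hat\CQ$; once that is in hand, the rest of the argument is a mechanical appeal to the uniqueness statement for $\CY_\eta$.
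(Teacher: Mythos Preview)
Your proof is correct and follows essentially the same strategy as the paper: both invoke the uniqueness of the extension $\CY_\eta$ and reduce to checking that the two models agree on each $\tilde\Xi_\ft$ and on $\CT$. The only cosmetic difference is that the paper phrases the check on $\CT$ at the level of $\Delta^{M_g}$ via \cite[Theorem~6.36]{BHZ}, whereas you verify directly that $M_{\Phi(g)} = M_g$ on $\CT$ and conclude through the non-centred map $\PPi$.
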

	\begin{proof}
		Since it is clear that these models agree on instances of $\tilde{\Xi}_\ft$, by \cite[Theorem 5.14, Proposition 3.31]{Hai14}, it is only necessary to check that the two models agree on elements of $\CT$.
		
		This follows from the explicit description of $\Delta^{M_g}, \Delta^{M_{\Phi(g)}}$ given in \cite[Theorem 6.36]{BHZ} since one can easily check that the map $\tilde{L}$ such that $\Delta^{M_{\Phi(g)}} = \tilde{L} (g \otimes 1) \Delta_\mathrm{ex}^-$ extends the corresponding map $L$ for $M_g$.
	\end{proof}
	
	In order to show that \eqref{e:commutRG} holds, it now remains to see that $Z^\varepsilon M_{\Phi(g)} \PPi^\eta = M_g Z^\varepsilon \PPi^\eta$ for $\PPi^\eta \in \CM_0(\tilde{\CT})$. To do this, we describe the action of $Z^\varepsilon$ as being given (up to composing with the natural projection $\tilde{\CM} \to \CM$) by an element of the renormalisation group $\tilde{\fR}$ for $\tilde{\CT}$.
	
	We define a map $M^\varepsilon: \tilde{\CT} \to \tilde{\CT}$ by replacing all instances of a noise $\Xi_\ft$ in a symbol with $\Xi_\ft + \varepsilon \tilde{\Xi}_\ft$. At this point, since $M^\varepsilon$ is a multiplicative map, it follows by a simple induction using Lemmas~\ref{Lemma: Integration Coproduct} and~\ref{Lemma: Multiplication Coproduct} that $M^\varepsilon \in \tilde{\fR}$. Additionally, we have the following result relating $M^\varepsilon$ to the map $Z^\varepsilon$.
	
	\begin{proposition}\label{prop: Z map is renorm}
		Let $P: \tilde{\CM} \to \CM$ be the natural projection. Then $Z^\varepsilon = P \circ M^\varepsilon$.
	\end{proposition}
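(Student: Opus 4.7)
The strategy is to prove the identity by induction on the structure of $\tau \in \CT$ (viewed as a subset of $\tilde\CT$), verifying simultaneously that both the $\Pi$ and $\Gamma$ components of $Z^\varepsilon \PPi^\eta$ and $P M^\varepsilon \PPi^\eta$ coincide. The base cases are handled directly: for $\tau = X^k$ admissibility of both renormalised models forces the same polynomial, while for $\tau = \Xi_\ft$ one compares $P M^\varepsilon \PPi^\eta$, where $M^\varepsilon \Xi_\ft = \Xi_\ft + \varepsilon \tilde\Xi_\ft$ gives $\Pi^\eta_z(\Xi_\ft) + \varepsilon \Pi^\eta_z(\tilde\Xi_\ft)$, with $\Pi^\eta_z f_z^{\Xi_\ft,\varepsilon}(z) = \Pi^\eta_z(\Xi_\ft + \varepsilon \tilde f_z^{\Xi_\ft}(z))$. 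Equality then follows from admissibility of $\PPi^\eta$, since the polynomial corrections appearing in $\tilde f_z^{\Xi_\ft}$ are precisely those generated by the basepoint-centring convention for $\PPi^\eta \tilde \Xi_\ft = \eta_\ft^n$.

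For the inductive step, the key observation is that $M^\varepsilon$ is multiplicative and commutes with each abstract integration operator $\CI^\ft$. Using the recursive constructions of $\Delta^{M^\varepsilon}$ and $\hat M^\varepsilon$ given in \cite[Prop.~8.36]{Hai14}, these two properties allow one to write explicit recursive formulas for both $\Pi^{M^\varepsilon}$ and $F^{M^\varepsilon}$ that can be matched step-by-step against the recursions defining $f_z^{\tau,\varepsilon}$ and $\Lambda^\varepsilon_{\bar z z}$. The product case then reduces immediately to multiplicativity of $M^\varepsilon$ combined with the product rule $f_z^{\tau\bar\tau,\varepsilon} = f_z^{\tau,\varepsilon}\, f_z^{\bar\tau,\varepsilon}$, together with the fact that $\Pi^\eta_z$ respects the product on trees in the domain of the partial product thanks to admissibility.

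The integration case is the principal technical step. One must show that the shifts $\CJ^\ft(\bar z) - \Gamma_{\bar z z}\,\bar\CJ^\ft_\tau(z)\,\Gamma^\eta_{z \bar z}$ built into the definition of $f_z^{\CI^\ft\tau,\varepsilon}$, together with the parallel term in $\Lambda^\varepsilon \CI^\ft \tau$, precisely reproduce the positive renormalisation produced by $M^\varepsilon$ once its action is restricted back to $\CT$ via $P$. This is expected since both constructions arise from enforcing admissibility on $\CT$ while coherently relating $\CT$ and $\tilde\CT$, but the main obstacle is the careful bookkeeping required to identify the two truncation prescriptions: the projection $\CQ_{<\deg \CI^\ft \tau}$ entering $\bar\CJ^\ft_\tau$ versus the implicit truncation present in $\Delta^{M^\varepsilon}$ (and its twist by $\hat M^\varepsilon$). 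Matching the two relies on the fact that $M^\varepsilon$ preserves the degree filtration on $\tilde\CT$ and restricts to the identity on polynomial components, which ensures that the appropriate Taylor jets agree; modulo this verification, the induction closes and the proposition follows.
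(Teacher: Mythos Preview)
Your inductive strategy matches the paper's, but there is a genuine gap in your product step. You write that the product case ``reduces immediately to multiplicativity of $M^\varepsilon$ \ldots\ together with the fact that $\Pi^\eta_z$ respects the product on trees in the domain of the partial product thanks to admissibility.'' This last claim is false: admissibility only forces multiplicativity against polynomial factors (point~1 of Definition~\ref{def: admissibility}), not against general tree products. Since $\PPi^\eta$ is built from an arbitrary admissible model $\PPi$ on $\CT$ (in the application, a renormalised one in the $\CG_-$-orbit of $\CL(\xi_n)$), the map $\Pi_z^\eta$ need not be multiplicative, and so from the induction hypothesis $\Pi_z^\eta f_z^{\tau,\varepsilon}(z) = \Pi_z^{M^\varepsilon}\tau$ and its analogue for $\bar\tau$ you cannot conclude the same for $\tau\bar\tau$ by taking pointwise products.

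The paper avoids this by not applying $\Pi_z^\eta$ prematurely: it proves the stronger \emph{abstract} identity $(1 \otimes f_x)\Delta^{M^\varepsilon}\tau = f_x^{\tau,\varepsilon}(x)$ as an equality of elements in $\tilde\CT$, and only at the very end applies $\Pi_x^\eta$ to both sides. At this abstract level multiplicativity \emph{does} hold, because $\Delta^{M^\varepsilon}$ is multiplicative whenever $M^\varepsilon$ is (Lemma~\ref{Lemma: Multiplication Coproduct}) and $f_x$ is a character, so the product step is genuinely immediate. The integration step is then handled cleanly by the explicit formula for $\Delta^{M^\varepsilon}\CI\tau$ in Lemma~\ref{Lemma: Integration Coproduct}, which packages exactly the truncation bookkeeping you describe into a single identity that matches the recursion for $f_x^{\CI\tau,\varepsilon}(x)$ term by term. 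Working at the $\tilde\CT$-level also spares you from having to verify the $\Gamma$ component separately.
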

	\begin{proof}	
		It suffices to see that for $\tau \in \CT$, $(1 \otimes f_x) \Delta^{M^\varepsilon} \tau = f_x^{\tau, \varepsilon}(x)$ where $f_x$ is as defined in \cite[Eqs~8.20--8.21]{Hai14}.
		We check that this identity holds by induction. The base cases where $\tau = X^k$ or $\tau = \Xi_\ft$ hold by construction.  Since $M^\varepsilon$ is multiplicative, the inductive step involving multiplication is immediate from Lemma~\ref{Lemma: Multiplication Coproduct} since $f_x$ is multiplicative. 
		
		It remains to see that this relation is stable under integration. By Lemma~\ref{Lemma: Integration Coproduct} we can write 
		\begin{equs}
			(1 \otimes f_x) \Delta^{M^\varepsilon} \CI \tau = (\CI\otimes 1)(1 \otimes f_x) \Delta^{M^\varepsilon} \tau - \sum_{|k|> |\tau| + \beta} \frac{X^k}{k!} (f_x \CJ_k \otimes 1) (1 \otimes f_x) \Delta^{M^\varepsilon} \tau.
		\end{equs}
		By the induction hypothesis, this is nothing but
		\begin{equs}
			\CI f_x^{\tau, \varepsilon}(x) - \sum_{|k| > |\tau| + \beta} \frac{X^k}{k!} f_x( \CJ_k f_x^{\tau, \varepsilon}(x))
		\end{equs}
		which is easily checked to coincide with the definition of $f_x^{\CI \tau, \varepsilon}(x)$.
	\end{proof}
	
	We are now in position to prove the following result.
	\begin{proposition}
		For $\PPi^\eta \in \tilde{\CM}$ we have that $M_g Z^\varepsilon \PPi^\eta = Z^\varepsilon M_{\Phi(g)} \PPi^\eta$.
	\end{proposition}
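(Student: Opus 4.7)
The plan is to use Proposition~\ref{prop: Z map is renorm} to rewrite $Z^\varepsilon$ as $P \circ M^\varepsilon$, so that the identity to prove becomes
\begin{equ}
M_g \circ P \circ M^\varepsilon \,\PPi^\eta = P \circ M^\varepsilon \circ M_{\Phi(g)} \,\PPi^\eta\;.
\end{equ}
Since $M^\varepsilon, M_{\Phi(g)} \in \tilde{\fR}$, their composition $N \eqdef M^\varepsilon \circ M_{\Phi(g)}$ is again an element of $\tilde{\fR}$, with coproduct $\Delta^N$ obtained from $\Delta^{M^\varepsilon}$ and $\Delta^{M_{\Phi(g)}}$ by the standard co-associativity property. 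Both sides of the identity are now models on $\CT$ produced from $\PPi^\eta$, one through the composite element $N$ of $\tilde{\fR}$ followed by projection, the other by projection followed by the action of $M_g \in \fR$.

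Unpacking the definitions of the action of $\fR$ and $\tilde{\fR}$ on models, the required equality reduces to the purely algebraic statement that, for every $\tau \in \CT$,
\begin{equ}
\Delta^{N} \tau = (\iota \otimes \tilde \iota) \Delta^{M_g} \tau\;,
\end{equ}
where $\iota : \CT \hookrightarrow \tilde \CT$ and $\tilde \iota : \CT_+ \hookrightarrow \tilde{\CT}_+$ are the canonical inclusions, together with the compatibility of the characters $\tilde f_x$ associated to $\PPi^\eta$ with $f_x$ of its projection, which is automatic since $\PPi^\eta$ was constructed so as to extend $\PPi$ on admissible elements.

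I would prove this by induction on the number of edges in $\tau$, using the explicit recursive formulas for $\Delta^M$ from \cite[Prop.~8.36]{Hai14}. The base cases $\tau = X^k$ and $\tau = \Xi_\ft$ are direct from the definitions of $M^\varepsilon$ (which substitutes $\Xi_\ft \mapsto \Xi_\ft + \varepsilon \tilde \Xi_\ft$) and $\Phi(g) = g \circ \hat{\CQ}$ (which kills any symbol containing a twin). The multiplicative step follows from Lemma~\ref{Lemma: Multiplication Coproduct} and the multiplicativity of both $M^\varepsilon$ and $M_g$. The integration step uses Lemma~\ref{Lemma: Integration Coproduct} to reduce it to the inductive hypothesis together with the simple identity $\hat{\CQ} \circ M^\varepsilon = \hat \CQ$ on $\CT$, reflecting the fact that $M^\varepsilon$ acts trivially modulo symbols containing at least one twin.

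The main obstacle will be bookkeeping: when expanding $\Delta^{N}\tau$ via the composition $\Delta^N = (\mathrm{id} \otimes M_{\Phi(g)}^*) \Delta^{M^\varepsilon}$ (or its analogue in the proper convention), one generates contractions in which twins created by $M^\varepsilon$ appear inside the argument of $\Phi(g)$, while other twins appear ``free'' in the second tensor factor. The non-trivial verification is that the twin contributions cancel systematically, so that the resulting expression coincides with $\Delta^{M_g}\tau$, which lives purely in $\CT \otimes \CT_+$. The identity $\Phi(g) = g \circ \hat\CQ$ is tailor-made for this cancellation, since any twin-containing contracted subtree is annihilated, and the remaining contractions correspond exactly to the negative degree subtrees picked up by $\Delta_\ex^-\tau$ in $\CT$.
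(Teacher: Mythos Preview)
Your reduction to the algebraic identity $\Delta^{N}\tau = (\iota \otimes \tilde\iota)\Delta^{M_g}\tau$ for $\tau\in\CT$ is incorrect, and in fact that identity is false already in the base case. Take $\tau=\Xi_\ft$: since $M_{\Phi(g)}\Xi_\ft=\Xi_\ft$ and $M^\varepsilon\Xi_\ft=\Xi_\ft+\varepsilon\tilde\Xi_\ft$, one has $N\Xi_\ft=\Xi_\ft+\varepsilon\tilde\Xi_\ft$, so $\Delta^N\Xi_\ft$ necessarily contains a contribution from $\tilde\Xi_\ft$; on the other hand $(\iota\otimes\tilde\iota)\Delta^{M_g}\Xi_\ft=\Xi_\ft\otimes 1$ lives entirely in $\CT\otimes\CT_+$. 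The mistake in your unpacking is that on the left-hand side $M_g$ acts on the model $P M^\varepsilon\PPi^\eta$, whose $\Pi_x$ and $f_x$ are those of $M^\varepsilon\PPi^\eta$ restricted to $\CT$, \emph{not} those of $\PPi^\eta$ itself. Since $M^\varepsilon$ genuinely modifies the model on $\CT$ (that is the whole point of $Z^\varepsilon$), your ``compatibility of the characters $\tilde f_x$ with $f_x$'' fails exactly where it is needed.

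The paper's argument avoids this entirely. Using $Z^\varepsilon=P\circ M^\varepsilon$ together with the (essentially definitional) fact that $M_g P = P M_{\Phi(g)}$ on $\tilde\CM$, the statement reduces to showing $M^\varepsilon M_{\Phi(g)}\PPi^\eta = M_{\Phi(g)} M^\varepsilon\PPi^\eta$. By Lemma~\ref{Lemma: Commutativity Coproduct} this follows once $M^\varepsilon$ and $M_{\Phi(g)}$ commute as linear maps on $\tilde\CT$. That commutativity is a one-line observation: every tree containing some $\tilde\Xi_\ft$ has positive degree, so $\tilde\Delta_\ex^-$ never extracts a twin-containing subtree; hence the extraction/contraction performed by $M_{\Phi(g)}$ is blind to whether a noise edge carries $\Xi_\ft$ or $\tilde\Xi_\ft$, and therefore commutes with the substitution $\Xi_\ft\mapsto\Xi_\ft+\varepsilon\tilde\Xi_\ft$ performed by $M^\varepsilon$. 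Your instinct that $\hat\CQ\circ M^\varepsilon=\hat\CQ$ and that $\Phi(g)=g\circ\hat\CQ$ are the key facts is correct, but they should be used to prove commutativity of the two maps, not the stronger (and false) coproduct identity you wrote down.
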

	\begin{proof}
		By Proposition~\ref{prop: Z map is renorm}, it suffices to see that $M^\varepsilon M_{\Phi(g)} \PPi^\eta = M_{\Phi(g)} M^\varepsilon \PPi^\eta$. By Proposition~\ref{Lemma: Commutativity Coproduct} and the definition of the action of the renormalisation group, for this it suffices to see that $M^\varepsilon$ and $M_{\Phi(g)}$ commute on $\tilde{\CT}$. 
		
		This follows immediately from the fact that every tree containing a term $\tilde{\Xi}_\ft$ has positive degree so that by its definition $\tilde{\Delta}_{\mathrm{ex}}^-$ never extracts such a tree to be hit by the character $\tilde{g}$ in the definition of $M_{\Phi(g)}$. 
	\end{proof}
	
	The equality $Z^\varepsilon \CY_\eta \CL(\xi) = \CL(\xi + \varepsilon \eta)$ alongside the previous results on commutation with renormalisation then immediately yield the following Lemma.
	\begin{lemma}\label{Lemma: Identifying shift}
		For $g \in \CG_-$, we have that $M_g \CL(\xi + \varepsilon \eta) = Z^\varepsilon \CY_\eta(M_g \CL(\xi))$\qed
	\end{lemma}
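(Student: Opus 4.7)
The plan is to chain together the three previously established relations: the base identity $Z^\varepsilon \CY_\eta \CL(\xi) = \CL(\xi + \varepsilon \eta)$ (stated just before the lemma, which follows from the inductive definitions of $\CY_\eta$ and $Z^\varepsilon$ unwinding canonical lifts), the commutation result between renormalisation and the extension map from Proposition~\ref{prop:commute}, and the commutation result between renormalisation and $Z^\varepsilon$ established via the identification $Z^\varepsilon = P \circ M^\varepsilon$ in Proposition~\ref{prop: Z map is renorm}.

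Concretely, the computation I would write is
\begin{equs}
M_g \CL(\xi + \varepsilon \eta)
&= M_g Z^\varepsilon \CY_\eta \CL(\xi) \\
&= Z^\varepsilon M_{\Phi(g)} \CY_\eta \CL(\xi) \\
&= Z^\varepsilon \CY_\eta M_g \CL(\xi)\;,
\end{equs}
where the first equality is the base identity, the second uses the commutation of $M_g$ with $Z^\varepsilon$ (at the cost of replacing $g$ with $\Phi(g)$ on the extended structure), and the third uses Proposition~\ref{prop:commute} read in the opposite direction to pull $M_g$ back through $\CY_\eta$. No further analytic input is required since each of these identities has already been verified at the level of models.

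The main (and essentially the only) obstacle has already been handled upstream: namely, making sense of the fact that the renormalisation action of $M_g \in \CG_-$ on the reduced structure $\CT$ lifts compatibly to the renormalisation action of $M_{\Phi(g)}$ on the augmented structure $\tilde \CT$, and that the latter commutes with the multiplicative map $M^\varepsilon \in \tilde \fR$ which encodes $Z^\varepsilon$. The subtlety there is that $M^\varepsilon$ introduces the twin noises $\tilde \Xi_\ft$, but by construction every tree containing a $\tilde \Xi_\ft$ has strictly positive degree (by our choice of $\theta$), so such trees are never extracted by $\tilde \Delta_{\mathrm{ex}}^-$ when applying $M_{\Phi(g)}$. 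Hence the resulting proof of the lemma itself is just the three-line computation above.
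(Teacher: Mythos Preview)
Your proposal is correct and is exactly the argument the paper has in mind: the lemma is stated with a \qed because it is simply the composition of the base identity $Z^\varepsilon \CY_\eta \CL(\xi) = \CL(\xi + \varepsilon \eta)$ with the two commutation results (Proposition~\ref{prop:commute} and the subsequent proposition giving $M_g Z^\varepsilon = Z^\varepsilon M_{\Phi(g)}$), and your three-line chain spells this out verbatim.
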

	
	We are now in position to prove:
	\begin{theorem}\label{theo: Malliavin Derivative}
		For $\tau \in \CT$, $\eta \in \nice$ and $\PPi = (\Pi, \Gamma)$ in the orbit of $\CL(\xi_n)$ under the renormalisation group $\CG_-$, we have that 
		$D_\eta \Pi_z \tau = \CR H_{\tau;n}^{z, \eta}$.
	\end{theorem}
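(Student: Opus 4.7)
The strategy is to compute the Fr\'echet derivative explicitly using the representation of shifted models provided by Lemma~\ref{Lemma: Identifying shift}, and then compare the result with $\CR H_{\tau;n}^{z,\eta}$ using the identification in Proposition~\ref{prop:MalDer}. Concretely, writing $\PPi = M_g \CL(\xi_n)$ for some $g \in \CG_-$, Lemma~\ref{Lemma: Identifying shift} gives that the model corresponding to the shifted noise $\xi_n + \varepsilon \eta$ is $Z^\varepsilon \CY_\eta \PPi$. By the very definition of $Z^\varepsilon$, the $\Pi$-part of this model evaluated on $\tau$ at base point $z$ is $\Pi_z^\eta f_z^{\tau,\varepsilon}(z)$, where $\Pi^\eta$ is the extension of $\PPi$ to the enlarged structure $\tilde\CT$. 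Since $\Pi_z^\eta$ is a fixed (non-$\varepsilon$-dependent) linear map on the finite-dimensional space in which $f_z^{\tau,\varepsilon}(z)$ lives, the Fr\'echet derivative becomes
\begin{equ}
D_\eta \Pi_z \tau \;=\; \frac{d}{d\varepsilon}\bigg|_{\varepsilon=0}\Pi_z^\eta\, f_z^{\tau,\varepsilon}(z) \;=\; \Pi_z^\eta\,\partial_\varepsilon f_z^{\tau,\varepsilon}(z)\big|_{\varepsilon=0}\;.
\end{equ}

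The main step is thus to verify the identity
\begin{equ}[e:derivClaim]
\partial_\varepsilon f_z^{\tau,\varepsilon}(\bar z)\big|_{\varepsilon=0} \;=\; \tilde f_z^{\tau}(\bar z)\;,\qquad \forall\,\bar z \in \bR^d\;,
\end{equ}
which I would prove by induction on the number of edges in $\tau$. The base cases are immediate from the defining formulas: $f_z^{X^k,\varepsilon}$ is independent of $\varepsilon$ and matches $\tilde f_z^{X^k} = 0$, while $f_z^{\Xi_\ft,\varepsilon} = \Xi_\ft + \varepsilon \tilde f_z^{\Xi_\ft}$ has derivative exactly $\tilde f_z^{\Xi_\ft}$. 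For products, the Leibniz rule applied to $f_z^{\tau\bar\tau,\varepsilon} = f_z^{\tau,\varepsilon}\, f_z^{\bar\tau,\varepsilon}$, combined with $f_z^{\sigma,0} = f_z^\sigma$ and the induction hypothesis, reproduces precisely the recursion \eqref{e:defProdf} for $\tilde f_z^{\tau\bar\tau}$. For integration, differentiating the recursive formula for $f_z^{\CI^\ft \tau,\varepsilon}$ in $\varepsilon$ is straightforward because the operators $\CI^\ft + \CJ^\ft(\bar z) - \Gamma_{\bar zz}\bar\CJ^\ft_\tau(z)\Gamma_{z\bar z}^\eta$ involved depend on the model data but not on $\varepsilon$, so the chain rule together with the induction hypothesis yields \eqref{e:defProdf} for $\tilde f_z^{\CI^\ft\tau}$.

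Combining \eqref{e:derivClaim} at $\bar z = z$ with the computation above gives $D_\eta \Pi_z\tau = \Pi_z^\eta \tilde f_z^{\tau}(z)$, and Proposition~\ref{prop:MalDer} identifies this with $\CR H_{\tau;n}^{z,\eta}$, completing the argument. The only delicate point is justifying the differentiation under $\Pi_z^\eta$: since $\tau$ ranges over a finite set of basis elements and all $f_z^{\tau,\varepsilon}(z)$ live in a fixed finite-dimensional subspace of $\tilde\CT$ whose components depend polynomially on $\varepsilon$ (as is visible from the inductive construction), this is automatic and requires no further analysis. The main obstacle is really just the careful bookkeeping in the inductive verification of \eqref{e:derivClaim}, in particular ensuring that the twisted integration operator appearing in the definition of $\tilde f_z^{\CI^\ft\tau}$ matches exactly the derivative of the operator in $f_z^{\CI^\ft\tau,\varepsilon}$, which it does by construction since both use the same $\Gamma^\eta$-adjusted truncation.
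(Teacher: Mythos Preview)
Your proposal is correct and follows essentially the same route as the paper: identify the $\varepsilon$-linear term of $f_z^{\tau,\varepsilon}$ with $\tilde f_z^\tau$ by induction, apply $\Pi_z^\eta$, and invoke Lemma~\ref{Lemma: Identifying shift} together with Proposition~\ref{prop:MalDer}. One small caveat: the auxiliary identity $f_z^{\sigma,0}=f_z^\sigma$ you use in the product step can fail at $\bar z\neq z$ when $\sigma=\Xi_\ft$ with $|\ft|_\fs>0$ (since then $\Gamma_{\bar z z}\Xi_\ft\neq\Xi_\ft$), so strictly speaking your claim \eqref{e:derivClaim} need not hold for all $\bar z$; however, restricting the whole induction to $\bar z=z$ (where $f_z^{\sigma,0}(z)=\sigma=f_z^\sigma(z)$ always holds) fixes this immediately, and that is all you actually use---which is exactly how the paper phrases it.
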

	\begin{proof}
		By Proposition~\ref{prop:MalDer}, it suffices to check that if $\CY_\eta(\PPi) = (\Pi^\eta, \Gamma^\eta)$ then $\Pi_z^\eta \tilde{f}_z^\tau(z) = D_\eta \Pi_z \tau$.
		
		It follows from a straightforward induction that $f_z^{\tau, \varepsilon}(z)$ is a polynomial in $\varepsilon$ with constant term reconstructing to $\Pi_z \tau$ and first order term given by $\varepsilon \tilde{f}_z^\tau$.
		
		By Lemma~\ref{Lemma: Identifying shift}, we can then write
		\begin{equs}
			D_\eta \Pi_z \tau = \lim_{\varepsilon \to 0} \varepsilon^{-1} \left [ \left ( \Pi_z \tau + \varepsilon \Pi_z^\eta \tilde{f}_z^\tau(z) + o(\varepsilon^2) \right ) - \Pi_z \tau \right ]
		\end{equs}
		which immediately yields the desired result.
	\end{proof}
	
\begin{remark}
As already hinted at earlier, Theorem~\ref{theo: Malliavin Derivative} does \emph{not} hold
in general if we replace $\CG_-$ by the larger renormalisation group $\fR$ defined
in \cite{Hai14}. This can be seen already on the base case $\tau = \Xi_\ft$ since, if there are two
elements $\ft_1, \ft_2 \in \Lab_-$ with $|\ft_1|_\fs = |\ft_2|_\fs$, then the map $M$ that swaps
the corresponding noises belongs to $\fR$. In that case however, 
setting $\PPi = M \CL(\xi_n)$, one has
\begin{equ}
D_\eta \Pi_z \Xi_{\ft_1} = \eta_{\ft_2}^n\;,\qquad
\CR H_{\Xi_{\ft_1};n}^{z, \eta}  = \eta_{\ft_1}^n\;.
\end{equ}
\end{remark}
	
	\section{Uniform Bounds for the $\bBPHZ$ Model}\label{sec: uniform bounds}
	
	In this section we will derive (uniform in $m$) bounds on $\bE[\|\bar{Z}^m\|_{\gamma; \ck}^p]$ where $\bar{Z}^m = (\bar{\Pi}^m, \bar{\Gamma}^m)$ is the $\bBPHZ$ model constructed from the driving noise $\xi^m$ as defined in Definition~\ref{def: bBPHZ} and $p = 2^k$ for an arbitrary $k \in \bZ_+$. In order to give a more precise statement of the main result of this section we first introduce some relevant notation.
	
	We let $\CF_-$ denote the set of trees $\tau \in \CT$ such that $|\tau|_\fs < 0$ and,
	given a tree $\tau \in \CF_-$, we write $\bar \CF_-(\tau)$ for the set of all subtrees of $\tau$ that have 
	at least one noise edge fewer than $\tau$. We then also set $\bar \CF_- = \bigcup_{\tau \in \CF_-}\bar \CF_-(\tau)$.
	
	Since $\bar{\CF}_-$ is a finite set, we can order its elements $\tau_1, \dots, \tau_{n_0}$ in such a way that for each $j \le n_0-1$ we have that $(n_{\tau_j}, |\tau_j|_\fs) \le (n_{\tau_{j+1}}, |\tau_{j+1}|_\fs)$ where $n_\tau$ is the number of noise edges of $\tau$ and the ordering on pairs is lexicographic. We remark here that when it is the case that $n_{\tau_j} = n_{\tau_k}$ and $|\tau_j|_\fs = |\tau_k|_\fs$, the order in which they appear will not matter to us since in this case we have that $\tau_k \not \in V^{\tau_j}$ and vice versa.
	
	The broad approach of our proof is to proceed to control the behaviour of the $\bBPHZ$ model on each of the symbols $\tau_j$ via induction in $j$. Since we will need to apply the Kolmogorov criterion given in Theorem~\ref{theo: Kolmogorov Criterion} at each stage of the induction, we will lose an arbitrarily small (but still positive) amount of regularity at each step. 
	
	In order to encode this in our proof in a way that separates the core analytic ideas from this technicality, we introduce the following sequence of regularity structures.
	We set $\CT^0 = \CT_{\mathrm{poly}}$. Then, given $\CT^j$ for $j < n_0$, we let $V^{(j+1)}$ be the smallest sector of $\CT$ containing $\CT^j$ and $\tau_{j+1}$. We then define $\CT^{(j+1)}$ to be the regularity structure with model space $V^{(j+1)}$, homogeneities given by $|\cdot|_\fs^{(N-j-1)}$ and structure group given by the restriction of the structure group of $\CT$ to $V^{(j+1)}$. Here $N$ is a fixed constant which is much larger than $n_0$.
	
	An important observation that will allow for the inductive nature of our proof is that the algebraic structure we just described is compatible with an induction in the number of noise edges appearing. 
	
	\begin{lemma}\label{lemma: induction algebraic}
		The sector $V^{(j)}$ is spanned by $\{\tau_1, \dots, \tau_j\}\cup \CT_{\mathrm{poly}}$.
	\end{lemma}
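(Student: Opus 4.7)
The plan is to proceed by induction on $j$, with the base case $j = 0$ being immediate since $V^{(0)} = \Tpoly$ by definition. For the inductive step, set $S_j \eqdef \operatorname{span}(\{\tau_1, \dots, \tau_j\} \cup \Tpoly)$ and assume $V^{(j)} = S_j$. One inclusion, $S_{j+1} \subseteq V^{(j+1)}$, is immediate: by construction $V^{(j+1)}$ is a sector containing $V^{(j)} = S_j$ and $\tau_{j+1}$, so it contains the linear span $S_{j+1}$. For the reverse inclusion, since $V^{(j+1)}$ is defined as the \emph{smallest} sector containing $V^{(j)}$ and $\tau_{j+1}$, it suffices to verify that $S_{j+1}$ is itself a sector. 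The $\CA$-grading property is clear (every generator is homogeneous) and $\Gamma X^k$ lands in $\Tpoly \subseteq S_{j+1}$ by Assumption~\ref{ass:poly_structure}, so the only non-trivial point is to check $\Gamma \tau_i \in S_{j+1}$ for every $\Gamma \in \CG$ and every $1 \le i \le j+1$.

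For this, write $\Gamma \tau_i = (1 \otimes f) \Delta \tau_i = \tau_i + \sum_k c_k(f) \sigma_k$ where the $\sigma_k$ range over the non-trivial left factors of $\Delta \tau_i$ and satisfy $|\sigma_k|_\fs < |\tau_i|_\fs$. The key structural claim, which I would verify by induction on tree complexity using the recursive definition of $\Delta$ in \eqref{eq: Delta def base}--\eqref{eq: Delta def int} together with its multiplicativity, is that every such $\sigma_k$ is either (a) an element of $\Tpoly$, or (b) a decorated tree whose underlying rooted graph is a proper subtree of that of $\tau_i$ (possibly with modified $\fn$-decorations coming from the Taylor terms in \eqref{eq: Delta def int}); in particular $n_{\sigma_k} \le n_{\tau_i}$. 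Case (a) immediately gives $\sigma_k \in S_{j+1}$. In case (b), the assumption $\tau_i \in \bar\CF_-$ furnishes some $\rho \in \CF_-$ of which $\tau_i$ is a subtree with $n_{\tau_i} < n_\rho$; transitivity of the subtree relation then exhibits $\sigma_k$ as a subtree of $\rho$ with $n_{\sigma_k} \le n_{\tau_i} < n_\rho$, so $\sigma_k \in \bar\CF_-(\rho) \subseteq \bar\CF_-$. Combining $|\sigma_k|_\fs < |\tau_i|_\fs$ with $n_{\sigma_k} \le n_{\tau_i}$ yields $(n_{\sigma_k}, |\sigma_k|_\fs) < (n_{\tau_i}, |\tau_i|_\fs)$ in the lexicographic order, which forces $\sigma_k = \tau_m$ for some $m < i \le j+1$, hence $\sigma_k \in S_{j+1}$. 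This establishes $\CG$-invariance and completes the induction.

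The main point requiring care is the structural claim in case (b): one must track how the Taylor terms in the recursive formula for $\Delta \CI_k^\ft \tau$ can perturb node decorations when propagated through products, and check that these perturbations never increase the number of noise edges nor destroy the property of being a subtree of some enveloping $\rho \in \CF_-$. This is essentially book-keeping, since these perturbations only modify $\fn$-decorations at existing nodes and never add edges; implicit here is the (consistent) convention that $\bar\CF_-$ is closed under such decoration changes, which is natural given the way it appears in the renormalisation induction.
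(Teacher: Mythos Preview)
Your proof is correct and follows the same inductive approach as the paper: show the span is $\CG$-invariant by using that the non-polynomial left factors of $\Delta\tau_i$ are proper subtrees of strictly lower degree, hence appear earlier in the list. You are in fact more explicit than the paper about why these subtrees lie in $\bar\CF_-$ (via transitivity through the enveloping $\rho\in\CF_-$) and about the lexicographic comparison, and you correctly flag the $\fn$-decoration book-keeping that the paper's proof leaves implicit.
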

	\begin{proof}
		We proceed by induction in $j$ where the base case with $j = 0$ is trivial.
		
		Hence it remains to see that $\tilde{V}^{(j)} \eqdef V^{(j-1)} \oplus \R \tau_j$ is in fact a sector. By the induction hypothesis, if $\tau \in \tilde{V}^{(j)}$ then $\tau = a_0 \tau_{\mathrm{poly}} + \sum_{i=1}^j a_i \tau_i$ where $a_i \in \bR$ and $\tau_{\mathrm{poly}} \in \CT_{\mathrm{poly}}$. Therefore, we only need to show that $\Gamma \in \CG$, we have that $\Gamma \tau_j \in \tilde{V}^{(j)}$, since $V^{(j-1)}$ is a sector.
		
		We then note that if $\Delta$ is the operator introduced in \eqref{eq: Delta def base} and \eqref{eq: Delta def int} and $\Delta \tilde{\tau} = \sum \tilde{\tau}^{(1)} \otimes \tilde{\tau}^{(2)}$ (in Sweedler notation) then $\tilde{\tau}^{(1)}$ is a subtree of $\tilde{\tau}$ and for $\Gamma \in \CG$, $\Gamma \tilde{\tau}$ is a linear combination of the $\tilde{\tau}^{(1)}$. Hence it follows that $\Gamma \tau_j - \tau_j = b_0 \tau_j^{\mathrm{poly}} + \sum_{i=1}^{j-1} b_i \tau_i$ where $\tau_j^{\mathrm{poly}} \in \CT_{\mathrm{poly}}$ and $b_i \in \bR$. This completes the proof. 
	\end{proof}
	
	We also observe that if $k \ge j$ and $\pH{\tau_j}$ is the pointed modelled distribution corresponding to the Fr\'echet derivative of $\bar{\Pi}_0 \tau_j$ in direction $\eta$ on the regularity structure $\CT^k$, then as a $\CT$-valued function, $\pH{\tau}$ is independent of $k$. As a result, the only dependency on $k$ lies in the choice of space we consider this pointed modelled distribution to be a member of and hence the corresponding norm. Since this is always clear from context, we suppress the $k$ dependency in the notation.
	
	We are now ready to proceed with the main result of this section.
	
	\begin{proposition}\label{prop: uniform bounds}
		For all $k \in \bZ_+$, there exists a constant $C > 0$ such that uniformly in $m, n \in \bN$ and $1 \le j \le n_0$, we have the bound
		\begin{equ}\label{eq: induction hyp 1}
			\bE \left [\sum_{1 \le l \le j}  2^{np_k|\tau_l|_\fs^{(N-j)}} \left |\bar{\Pi}_0^m \tau_l (\phi_0^{(n)}) \right |^{p_k} + \|\bar{\Gamma}^m \|_{\CT^j}^{p_k} \right ] \le C.
		\end{equ}
		where $p_k = 2^k$.
		
		\sloppy In particular, it follows from Theorem~\ref{theo: Kolmogorov Criterion} that for each fixed $p \in[1, \infty)$,
		$
		\bE \left [ \| \bar{Z}^m \|_{\CT; \ck}^p \right ]
		$
		is bounded uniformly in $m$.
	\end{proposition}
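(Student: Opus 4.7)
The proof proceeds by induction on $j \in \{0, 1, \ldots, n_0\}$. The base case $j = 0$ is trivial: $\CT^0 = \Tpoly$, the sum is empty, and $\bar\Gamma^m$ acts on the polynomial sector via pure Taylor shifts, yielding deterministic control. For the inductive step, assume the bound for $j-1$; the plan is to first derive the single-point estimate $\bE[|\bar\Pi_0^m \tau_j(\phi_0^n)|^{p_k}] \lesssim 2^{-np_k|\tau_j|_\fs^{(N-j)}}$ uniformly in $m, n$, and then to invoke Theorem~\ref{theo: Kolmogorov Criterion} to upgrade this to the model norm bound on $\CT^j$.

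For the single-point estimate, I apply the iterated spectral gap inequality \eqref{eq: pSGap} to $F_n(\xi) = \bar\Pi_0^m \tau_j(\phi_0^n)$. If $|\tau_j|_\fs < 0$ then $\bE[F_n]$ vanishes by the defining centring condition \eqref{eq: renorm recentering} of the $\bBPHZ$ model; when $|\tau_j|_\fs \ge 0$ the relevant bounds instead follow directly from admissibility together with the inductive control on $\bar\Gamma^m$ via the multiplicative structure of $\PPi$. The main task is therefore to bound the gradient term $\bE[\|\partial F_n/\partial\xi\|_{H^{-\reg}(\Lab_-)}^{p_k}]$. By duality between $H^{-\reg}(\Lab_-)$ and $\CH$, this reduces to the pointwise bound $|D_\eta F_n| \lesssim 2^{-n|\tau_j|_\fs} C_m \|\eta\|_\CH$, where $C_m$ is a power of the $\CT^{j-1}$ model norm of $\bar Z^m$.

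For smooth directions $\eta \in \nice$, Theorem~\ref{theo: Malliavin Derivative} identifies $D_\eta F_n = \CR H^{0,\eta}_{\tau_j;m}(\phi_0^n)$. By Lemma~\ref{lemma: induction algebraic} and the recursive formula \eqref{e:defProdH}, the pointed modelled distribution $H^{0,\eta}_{\tau_j;m}$ takes values in $V^{(j-1)}$, since every term produced by the Leibniz rule strictly reduces the noise count. Consequently Proposition~\ref{prop: H control} yields $\$H^{0,\eta}_{\tau_j;m}\$_{2, \gamma_{\tau_j}, \deg_2\tau_j; 0} \lesssim (1+\|\bar Z^m\|_{\CT^{j-1};\ck})^k \|\eta\|_\CH$, and Theorem~\ref{theo:pointedReconstruction} (or its Schauder variant Theorem~\ref{theo:schauder candidate} when $\gamma_{\tau_j} \le 0$, together with the candidate reconstruction of Lemma~\ref{lemma: pointed candidate exists}) delivers the dyadic decay $|D_\eta F_n| \lesssim 2^{-n|\tau_j|_\fs^{(N-j)}}(1+\|\bar Z^m\|_{\CT^{j-1};\ck})^k\|\eta\|_\CH$. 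Density of $\nice$ in $\CH$ extends the bound to arbitrary $\eta$; taking $p_k$-th moments, applying Hölder, and using the inductive hypothesis at a sufficiently large power closes the estimate. The Kolmogorov criterion then converts this into the model norm bound on $\CT^j$, with the infinitesimal regularity loss absorbed into the gap between $|\cdot|_\fs^{(N-j)}$ and $|\cdot|_\fs^{(N-j+1)}$, which is why $N$ was chosen large compared with $n_0$.

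The main obstacle is ensuring the various objects from Section~\ref{sec: Frechet} interact correctly with the induction: namely, that $H^{0,\eta}_{\tau_j;m}$ is genuinely supported in $\CT^{j-1}$ so that the inductive hypothesis is the right input, and, when $\gamma_{\tau_j} \le 0$, that the hand-picked candidate for its pointed reconstruction from Lemma~\ref{lemma: pointed candidate exists} really agrees with the Fréchet derivative $D_\eta F_n$, which is verified through the identification in Proposition~\ref{prop:MalDer}. A secondary nuisance is that $C_m$ enters polynomially in $\|\bar Z^m\|_{\CT^{j-1};\ck}$; this is absorbed by Hölder's inequality combined with the fact that the inductive bound is available for every $p_k = 2^k$, allowing us to feed a larger moment into the derivative estimate whenever needed.
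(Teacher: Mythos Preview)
Your proposal has a genuine gap in the treatment of the expectation term. The centring condition \eqref{eq: renorm recentering} only asserts $\bE[\bar\Pi_0^m\tau_j(\phi)] = 0$ for the test function $\phi = \phi_0^{0}$ at scale $1$; it does \emph{not} say that $\bE[F_n] = \bE[\bar\Pi_0^m\tau_j(\phi_0^{n})]$ vanishes for $n > 0$. Obtaining the bound $|\bE[F_n]| \lesssim 2^{-n|\tau_j|_\fs^{(N-j)}}$ is therefore a non-trivial step and is, in fact, the reason the $\bBPHZ$ centring was introduced instead of the usual BPHZ centring in the first place.

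The paper fills this gap by a telescoping argument: write $\phi_0^{n} = \phi_0^{0} + \sum_{k=0}^{n-1}(\phi_0^{k+1} - \phi_0^{k})$, use the centring to kill the scale-$1$ term, and then exploit stationarity together with the fact that $R^k$ integrates to zero to rewrite each increment as
\[
\bE\bigl[\bar\Pi_0^m\tau_j(\phi_0^{k+1}-\phi_0^{k})\bigr] = \int \bE\bigl[\bar\Pi_y^m(\bar\Gamma_{y0}^m\tau_j - \tau_j)(\phi_y^{k+2})\bigr]\,R^k(y)\,dy\;,
\]
which is bounded by $2^{-k|\tau_j|_\fs^{(N-j)}}$ times moments of $\|\bar\Pi^m\|_{V^{(j-1)}}\cdot\|\bar\Gamma^m\|_{\CT^j}$, and is then summable in $k$ since $|\tau_j|_\fs^{(N-j)} < 0$. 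For this step to work you must already have control on $\|\bar\Gamma^m\|_{\CT^j}$ (not just on $\CT^{j-1}$), which is why the paper invokes Lemmas~\ref{lemma: Pi Norm Control} and~\ref{lemma: Gamma Norm Control} \emph{before} applying the spectral gap inequality, reversing the order you describe. Apart from this, your treatment of the derivative term via Theorem~\ref{theo: Malliavin Derivative}, Proposition~\ref{prop: H control} and Theorem~\ref{theo:pointedReconstruction} matches the paper's argument.
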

	
	In order to separate the novel core of our proof from the technical details that use only ideas that are already more standard in the theory, we will break much of the induction step down into a variety of more simple preparatory results that will play a role in the inductive loop of the proof of Proposition~\ref{prop: uniform bounds}. In preparation for Section~\ref{sec: convergence}, each of these preparatory statements will also have a version for a difference of two models that will not be needed in the proof of Proposition~\ref{prop: uniform bounds}.
	
	A typical instance of our inductive step proceeds in 3 stages which we will outline below.
	
	In the first stage, we gain control on the norm of $\bar \Pi$ on $V^{(j-1)}$ when viewed as a sector of $\CT^j$. This is the point at which we will require a loss of regularity to apply the Kolmogorov criterion. 
	
	\begin{lemma}\label{lemma: Pi Norm Control}
		Suppose that for a sufficiently large $p \in [1, \infty)$ 
		\begin{equ}
			\bE \left [\sum_{1 \le l \le j - 1}  2^{np|\tau_l|_\fs^{(N-j + 1)}} \left |\bar{\Pi}_0^m \tau_l (\phi_0^{(n)}) \right |^p \right ] \lesssim 1.
		\end{equ}
		Then for each compact set $\ck \subseteq \bR^d$, we have that 
		\begin{equ}
			\bE \left [ \| \bar \Pi \|_{V^{(j-1)}; \ck}^p\right ] \lesssim 1
		\end{equ}
		where $V^{(j-1)}$ is viewed as a sector of $\CT^j$.
		
		Additionally, if there exists a $\theta > 0$ and $\varepsilon > 0$ sufficiently small such that for natural numbers $n_1 \ge n_2$
		\begin{equ}
			\bE \left [\sum_{1 \le l \le j - 1}  2^{np(|\tau_l|_\fs^{(N-j + 1)} - \varepsilon)} \left | \bar{\Pi}_0^{n_1} \tau_l (\phi_0^{(n)}) - \bar{\Pi}_0^{n_2} \tau_l (\phi_0^{(n)}) \right |^p \right ] \lesssim 2^{-pn_2 \theta}
		\end{equ}
		then for each compact set $\ck \subseteq \bR^d$, we have that 
		\begin{equ}
			\bE \left [ \| \bar \Pi^{n_1} - \bar \Pi^{n_2} \|_{V^{(j-1)}; \ck}^p\right ] \lesssim 2^{-p n_2 \theta}	
		\end{equ}
		where $V^{(j-1)}$ is again viewed as a sector of $\CT^j$.
	\end{lemma}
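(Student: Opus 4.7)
The plan is to invoke the Kolmogorov-type criterion for models (Theorem~\ref{theo: Kolmogorov Criterion}) after using stationarity to upgrade moment bounds at the origin to uniform moment bounds at every base point. The whole point of the degree assignments $|\cdot|_\fs^{(k)}$ from \eqref{eq: shifted hom assignments} is that the hypothesis is stated with $|\cdot|_\fs^{(N-j+1)}$ whereas the conclusion requires only $|\cdot|_\fs^{(N-j)}$; the gap of $\kappa$ per noise edge is precisely the ``wiggle room'' consumed by the Kolmogorov argument.

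First I would reduce to the non-polynomial basis elements. By Lemma~\ref{lemma: induction algebraic}, $V^{(j-1)}$ is spanned by $\{\tau_1,\ldots,\tau_{j-1}\}$ together with $\Tpoly$. Admissibility (Definition~\ref{def: admissibility}) forces $\bar\Pi_x^m X^k(\bar z) = (\bar z - x)^k$, so the polynomial contribution to $\|\bar\Pi^m\|_{V^{(j-1)};\ck}$ is deterministic and uniformly bounded. It remains to control the basis elements $\tau_l$ for $l\le j-1$. By the stationarity built into Definition~\ref{def:stationary}, the hypothesis transfers to
\begin{equ}
\bE\!\left[\sum_{1 \le l \le j-1}2^{np|\tau_l|_\fs^{(N-j+1)}}|\bar\Pi_x^m \tau_l(\phi_x^{(n)})|^p\right] \lesssim 1
\end{equ}
uniformly in $x\in\bR^d$ and $n\in\bN$. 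This is exactly the single-point hypothesis \eqref{eq: Kolmogorov single point ass} of the Kolmogorov criterion for models, with the test function $\phi$ from Definition~\ref{def: semigroup kernel} and target degree $|\tau_l|_\fs^{(N-j+1)} = |\tau_l|_\fs^{(N-j)} + n_{\tau_l}\kappa$. Applying Theorem~\ref{theo: Kolmogorov Criterion}, the surplus $n_{\tau_l}\kappa$ is absorbed by the Kolmogorov argument and yields the desired bound on $\bE[\|\bar\Pi^m\|_{V^{(j-1)};\ck}^p]$ when $V^{(j-1)}$ is equipped with the degree assignment $|\cdot|_\fs^{(N-j)}$ inherited from $\CT^j$.

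For the second claim, the same scheme applies to the difference $\bar\Pi^{n_1} - \bar\Pi^{n_2}$. Joint stationarity promotes the hypothesis to
\begin{equ}
\bE\!\left[\sum_{1 \le l \le j-1}2^{np(|\tau_l|_\fs^{(N-j+1)}-\varepsilon)}|\bar\Pi_x^{n_1} \tau_l(\phi_x^{(n)}) - \bar\Pi_x^{n_2} \tau_l(\phi_x^{(n)})|^p\right] \lesssim 2^{-pn_2\theta}
\end{equ}
uniformly in $x$. The prefactor $2^{-pn_2\theta}$ does not depend on the test-function scale $n$ and is therefore preserved through the Kolmogorov argument. Provided $\varepsilon > 0$ is chosen small enough that $n_{\tau_l}\kappa - \varepsilon > 0$ for every $l\le j-1$ (which is possible since $\kappa$ is fixed at the outset and $\varepsilon$ is at our disposal), the difference between $|\tau_l|_\fs^{(N-j+1)}-\varepsilon$ and $|\tau_l|_\fs^{(N-j)}$ is still strictly positive and absorbs the Kolmogorov loss, yielding $\bE[\|\bar\Pi^{n_1} - \bar\Pi^{n_2}\|^p_{V^{(j-1)};\ck}]\lesssim 2^{-pn_2\theta}$.

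I expect the only mildly subtle point to be the bookkeeping of the various degree assignments and the verification that the Kolmogorov criterion is applied simultaneously over the finite family of basis vectors $\tau_l$ with a uniform loss that is strictly less than $\kappa$ per noise edge. Both issues are essentially cosmetic once one remembers that $\bar\CF_-$ is finite and that $\kappa$ can always be replaced by a smaller positive constant without affecting the validity of Assumption~\ref{ass:alg} for the degree assignments $|\cdot|_\fs^{(k)}$.
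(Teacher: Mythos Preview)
Your proposal is correct and follows essentially the same route as the paper: reduce to the finite basis $\{\tau_1,\ldots,\tau_{j-1}\}\cup\Tpoly$ via Lemma~\ref{lemma: induction algebraic}, then feed the hypothesis into the Kolmogorov criterion of Theorem~\ref{theo: Kolmogorov Criterion}, using the identity $|\tau_l|_\fs^{(N-j+1)} = |\tau_l|_\fs^{(N-j)} + n_{\tau_l}\kappa$ to supply the required $\bar\kappa$. Your explicit invocation of stationarity is already absorbed into the proof of Theorem~\ref{theo: Kolmogorov Criterion} (whose hypothesis is stated at the origin), so that step is redundant but harmless.
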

	\begin{proof}
		By Lemma~\ref{lemma: induction algebraic}, $V^{(j-1)}$ is spanned by $\{ \tau_1, \dots, \tau_{j-1}\} \cup \CT_{\mathrm{poly}}$. The result then follows immediately from the Kolmogorov criterion contained in Theorem~\ref{theo: Kolmogorov Criterion} since $|\tau|_\fs^{(k+1)} = |\tau|_\fs^{(k)} + \kappa n_\tau$ so that the relevant assumption amongst \eqref{eq: Kolmogorov single point ass} and \eqref{eq: Kolmogorov multi point ass} is satisfied in each case.
	\end{proof}
	
	The second step in our inductive loop will be to show that control on $\bar{Z}$ on $V^{(j-1)} \subseteq \CT^j$ already implies control on $\|\bar\Gamma\|_{\CT^j}$. The proof of this fact is essentially part of the proof of \cite[Theorem 10.7]{Hai14}, however since our setting is slightly different and we must take more care as to which moments we assume bounds for, we provide a proof here.
	
	\begin{lemma}\label{lemma: Gamma Norm Control}
		Let $p_k = 2^k$ for $k \in \bZ_+$ sufficiently large.
		
		Suppose that for each compact set $\ck \subseteq \bR^d$, we have the bound
		\begin{equ}
			\bE \left [ \| \bar Z^n \|_{\CT^{j-1}; \ck}^{p_{k+1}} \right ]^{1/p_{k+1}} \lesssim 1.
		\end{equ}
		Then for each compact set $\ck$,, $\bE \left [ \| \bar \Gamma \|_{\CT^j; \ck}^{p_k} \right ]^{1/p_k} \lesssim 1$.
		
		If additionally we have that for each compact set $\ck \subseteq \bR^d$ and $n_1 \ge n_2$, there exists $\theta > 0$ such that
		\begin{equ}
			\bE \left [ \| \bar{Z}^{n_1} ; \bar{Z}^{n_2} \|_{\CT^{(j-1)}; \ck}^{p_{k+1}} \right ]^{1/p_{k+1}} \lesssim 2^{- n_2 \theta}
		\end{equ}
		then for each compact set $\ck \subseteq \bR^d$ we have that 
		\begin{equ}
			\bE \left [ \|\bar{\Gamma}^{n_1} - \bar{\Gamma}^{n_2} \|_{\CT^j ; \ck}^{p_k} \right ]^{1/p_k} \lesssim 2^{- n_2 \theta}.
		\end{equ}
	\end{lemma}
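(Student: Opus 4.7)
The plan is to imitate the argument in \cite[Theorem~10.7]{Hai14}, using the fact that the characters $f_x$ defining $\bar{F}_x$ (and hence $\bar{\Gamma}_{xy} = \bar{F}_x^{-1}\bar{F}_y$) are themselves determined by $\bar{\Pi}$ on strict subtrees through \eqref{e:deffx}. The additional input we have compared to a model norm on $\CT^{j-1}$ is only that we now have to treat $\bar{\Gamma}_{xy}\tau_j$, since by Lemma~\ref{lemma: induction algebraic} every $\tau \in \CT^j$ can be written uniquely as $\tau = \sigma + c\tau_j$ with $\sigma \in V^{(j-1)}$, and $\bar{\Gamma}_{xy}\sigma$ is controlled by $\|\bar{\Gamma}^n\|_{\CT^{j-1};\bar\ck}$ (modulo the harmless shift between the degree assignments $|\cdot|_\fs^{(N-j)}$ and $|\cdot|_\fs^{(N-j+1)}$, which differ by $n_\tau\kappa$).

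First I would write $\bar{\Gamma}_{xy}\tau_j = (\id\otimes\gamma_{xy})\Delta\tau_j$ with $\gamma_{xy}$ the character on $\CT_+$ corresponding to $\bar{F}_x^{-1}\bar{F}_y$. Since $\bar{\Gamma}_{xy}\tau_j - \tau_j \in V^{(j-1)}$ by the upper triangular property of $\CG$ together with the proof of Lemma~\ref{lemma: induction algebraic}, every basis coefficient of $\bar{\Gamma}_{xy}\tau_j$ in $V^{(j-1)}$ is a product of values of $\gamma_{xy}$ on factors of the form $\tilde{\CJ}_k^\ft\tilde\sigma$ where $\tilde\sigma$ is a strict subtree of $\tau_j$. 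In particular every such $\tilde\sigma$ has strictly fewer noise edges than $\tau_j$ (or is polynomial), so $\tilde\sigma \in V^{(j-1)}$ and the norm of $\bar{\Pi}$ on $\tilde\sigma$ is controlled by $\|\bar{Z}^n\|_{\CT^{j-1};\bar\ck}$. Unpacking, $\gamma_{xy}(\tilde{\CJ}_k^\ft\tilde\sigma)$ is essentially the difference $-(D^kK^\ft*\bar\Pi_y\tilde\sigma)(y) + \sum_{|l|_\fs < |\tilde{\CJ}_k^\ft\tilde\sigma|_\fs}\frac{(y-x)^l}{l!}(D^{k+l}K^\ft*\bar\Pi_x\tilde\sigma)(x)$, and the required Hölder-type bound $|\gamma_{xy}(\tilde{\CJ}_k^\ft\tilde\sigma)| \lesssim \|\bar\Pi\|_{V^{(j-1)};\bar\ck}\,|x-y|^{|\tilde{\CJ}_k^\ft\tilde\sigma|_\fs}$ follows from the reconstruction-type estimate of \cite[Lem.~5.19]{Hai14} applied to $\bar\Pi$ on $V^{(j-1)}$ (a standard Taylor remainder argument).

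With those pointwise estimates in hand for each pair $(x,y)\in\ck\times\ck$, the bound $\|\bar{\Gamma}_{xy}\tau_j\|_\beta \lesssim |x-y|^{|\tau_j|_\fs^{(N-j)}-\beta}$ is obtained by expanding $\gamma_{xy}$ on the coproduct and organising the resulting sum of monomials in the $f$-values; each monomial on $V^{(j-1)}$ factors as a product of \emph{at most two} $\gamma_{xy}(\cdot)$-factors (one is $\bar{F}_y$, the other comes from $\bar{F}_x^{-1}$ via the twisted antipode/recursive formula for $\tilde{\CJ}$), each of which is bounded by a polynomial in $\|\bar{\Pi}^n\|_{V^{(j-1)};\bar\ck}$. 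Cauchy--Schwarz in $\omega$ then turns a product of two random variables each in $L^{p_{k+1}}$ into an $L^{p_k}$ bound, which explains the drop from $p_{k+1}$ to $p_k = 2^k$ in the statement. Taking suprema over $x,y\in\ck$ and $\tau\in\CT^j_\alpha$ yields $\bE[\|\bar\Gamma\|_{\CT^j;\ck}^{p_k}]^{1/p_k} \lesssim 1$.

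For the two-model bound, I would run the same decomposition on the difference $\bar{\Gamma}^{n_1}_{xy} - \bar{\Gamma}^{n_2}_{xy}$, telescoping it as a sum of terms in which one factor is replaced by a difference $\bar{\Pi}^{n_1} - \bar{\Pi}^{n_2}$ (contributing $2^{-n_2\theta}$ by hypothesis) while the other factor is bounded by the uniform estimate from the first part; Cauchy--Schwarz again accounts for the loss from $p_{k+1}$ to $p_k$. The main technical obstacle is purely combinatorial: writing $\gamma_{xy}$ explicitly through the twisted antipode in a form transparent enough that the ``at most two $f$-factors'' structure is visible and Cauchy--Schwarz can be applied cleanly. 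Everything else is bookkeeping of the standard type in \cite[\S10]{Hai14}.
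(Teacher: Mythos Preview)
Your approach differs from the paper's and has a real gap. The paper does \emph{not} expand $\bar\Gamma_{xy}\tau_j$ directly through the characters $\gamma_{xy}$; instead it splits into three cases according to the form of $\tau_j$: either $\tau_j=\tau_k\cdot\tau_l$ with $k,l<j$, or $\tau_j=\CI^\ft\tau_k$ with $k<j$ and $\ft\in\Lab_+$, or $\tau_j=\Xi_\fl$ with $|\fl|_\fs>0$. In the product case one uses multiplicativity of $\bar\Gamma$ to write $\|\bar\Gamma_{xy}\tau_j\|_\zeta\le\sum_\beta\|\bar\Gamma_{xy}\tau_k\|_\beta\,\|\bar\Gamma_{xy}\tau_l\|_{\zeta-\beta}$, giving a product of exactly \emph{two} random variables, each controlled by $\|\bar\Gamma\|_{\CT^{j-1};\ck}$; a single Cauchy--Schwarz then yields the $p_{k+1}\to p_k$ drop. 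The planted case is handled by the extension theorem \cite[Theorem~5.14]{Hai14}, and the positive-degree noise case by Taylor's theorem together with the spectral gap bound $\bE[\|\xi_\fl\|_{\CC^{|\fl|_\fs}}^{p_{k+1}}]\lesssim 1$.

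Your ``at most two $\gamma_{xy}(\cdot)$-factors'' claim is not correct. If $\tau_j=\prod_{i=1}^m\CI^{\ft_i}\sigma_i$ is a product of $m$ planted trees, the coefficient of a polynomial basis element in $\bar\Gamma_{xy}\tau_j$ is (by multiplicativity of $\gamma_{xy}$ on $\CT_+$) a product of $m$ values $\gamma_{xy}(\tilde\CJ_{k_i}^{\ft_i}\sigma_i)$, not two. Each such value is itself bounded by a polynomial in $\|\bar\Pi\|$ and $\|\bar\Gamma\|$ (cf.\ \cite[Lemma~5.21]{Hai14}), so taking the expectation of their product would require iterated H\"older\slash Cauchy--Schwarz and a drop from $p_{k+m}$ (or worse) rather than $p_{k+1}$. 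The paper's trick is precisely to avoid this by peeling off one factor at a time via the recursive case split, so that only two $\|\bar\Gamma\|_{\CT^{j-1}}$-type factors ever appear. You also do not address the case $\tau_j=\Xi_\fl$ with $|\fl|_\fs>0$: here the relevant character value is $f_x(\tilde\CJ_k^\fl\one)=-D^k\xi_\fl(x)$ by \eqref{eq: f_x neg hom int}, which is not covered by the kernel-convolution estimate \cite[Lemma~5.19]{Hai14} (that lemma is for $\ft\in\Lab_+$) and needs the separate Taylor\slash spectral-gap argument the paper gives.
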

	\begin{proof}
		We begin with the case of a single model. 
		
		By Lemma~\ref{lemma: induction algebraic}, it suffices to appropriately control $\|\bar{\Gamma}_{xy}^n \tau_l\|_\zeta^{(j)}$ for $\zeta < |\tau_l|_\fs^{(N-j)}$ and $l \le j$ where $\|\cdot\|_\zeta^{(j)}$ denotes the norm on $\CT_\zeta^j$. 
		
		For $l < j$, we fix a tree $\bar{\tau}$ such that $\zeta = |\bar{\tau}|_\fs^{(N-j)}$. We note that by Assumption~\ref{ass:alg}, $n_{\bar{\tau}}$ is independent of our choice of $\bar{\tau}$ satisfying this condition and so the quantity $n_\zeta \eqdef n_{\bar{\tau}}$ is well-defined. We can then write
		\begin{equs}
			\bE  \Bigg [ \Bigg( \sup_{\substack{x \neq y \\ x, y \in \ck}} & \frac{\| \bar{\Gamma}_{xy}^n \tau_l \|_\zeta^{(j)}}{\|x-y\|^{|\tau_l|_\fs^{(N-j)} - \zeta}} \Bigg)^{p_k} \Bigg ]^{1/p_k}
			\\ &= \bE \left [ \left( \sup_{\substack{x \neq y \\ x, y \in \ck}} \frac{\| \bar{\Gamma}_{xy}^n \tau_l \|_{\zeta + n_\zeta \kappa}^{(j-1)}}{\|x-y\|^{|\tau_l|_\fs^{(N-j + 1)} - \zeta - n_\zeta \kappa}} \|x - y\|^{(n_{\tau_l} - n_\zeta) \kappa} \right)^{p_k} \right ]^{1/p_k}
			\\
			&\lesssim \operatorname{diam}\ck^{(n_{\tau_l} - n_\zeta) \kappa} \bE \left [ \|\bar{\Gamma}^n\|_{\CT^{j-1}; \ck}^{p_k} \right ]^{1/p_k} \lesssim 1
		\end{equs}
		by assumption.
		
		It then remains to consider the case $l = j$. In order to do this, we note that by construction, we are in one of three cases. Either there exist $k,l < j$ such that $\tau_j = \tau_k \cdot \tau_l$ or there exists $k < j$ and $\ft \in \Lab_+$ such that $\tau_j = \CJ^\ft \tau_k$ or $\tau_j = \Xi_\fl$ for $\fl \in \Lab_-$ with $|\fl|_\fs > 0$.
		
		In the latter of these cases, we note $\CQ_{X^k} \bar{\Gamma}_{xy}^n \Xi_\fl = D^k \xi_\fl (x) - (T_y^{|\fl|_\fs} D^k \xi_\fl)(y)$ where $T_y^\gamma f$ denotes the Taylor jet of $f$ at $y$ up to order $\gamma$. In particular, the desired bound in this case follows from Taylor's theorem since by an application of Proposition~\ref{prop: Kolmogorov} the spectral gap inequality implies that $\bE [ \|\xi_\fl\|_{\CC^{|\fl|_\fs}}^{p_{k+1}}]^{1/p_{k+1}} \lesssim 1$.
		
		In the former case, we can write
		\begin{equs}
			\|\bar{\Gamma}_{xy}^n \tau_j \|_\zeta^{(j)} \leq \sum_{\beta \le \zeta} \| \bar{\Gamma}_{xy}^n \tau_k \|_{\beta}^{(j)} \| \bar{\Gamma}_{xy}^n \tau_l \|_{\zeta - \beta}^{(j)}.
		\end{equs} 
		
		As a result, writing $$\Gamma(n, k,j, \zeta) \defeq \sup_{\substack{x \neq y \\ x, y \in \ck}} \frac{ \|\bar{\Gamma}_{xy}^n \tau_k \|_\zeta^{(j)}}{\|x-y\|^{|\tau_j|_\fs^{(N-j)} - \zeta}},$$ we obtain the bound
		\begin{equs}
			\bE  \left [  \Gamma(n, j, j, \zeta)^{p_k} \right ] 
			&\lesssim \sum_{\beta \le \zeta} \bE \left [ \Gamma(n, k, j, \beta)^{p_k} \Gamma(n, l, j, \zeta - \beta)^{p_k} \right ]
			\\
			&\lesssim \sum_{\beta \le \zeta} \bE \left [ \Gamma(n, k, j, \beta)^{p_{k+1}} \right ]^\frac12 \bE \left [ \Gamma(n, l, j, \zeta - \beta)^{p_{k+1}} \right ]^\frac12
			\\
			&\lesssim  \bE \left [ \| \bar{\Gamma}^n \|_{\CT^{j-1}: \ck}^{p_{k+1}} \right] \lesssim 1
		\end{equs}
		where we used a similar argument as in the $l < j$ case to change between choices of degree which creates only a multiplicative factor of $\operatorname{diam}\ck$ to a positive power.
		
		In the latter case where $\tau_j = \CJ^\ft \tau_k$ we are in the setting of the extension theorem \cite[Theorem 5.14]{Hai14} which immediately yields our desired result, up to accommodation of the change between degree assignments, which is done in the same way as in the previous case. 
		
		We now turn to obtaining the desired bounds on $\bar{\Gamma}^{n_1} \tau_l - \bar{\Gamma}^{n_2} \tau_l$ for $l \leq j$. In fact, the bounds for $l < j$ just require us to handle the shift in degree in the same was as in the case of a single model hence we move straight to the case $l = j$. 
		
		First, we note that if $\tau_j = \Xi_\fl$ for $|\fl|_\fs > 0$ with $\fl \in \Lab_-$ then the desired bounds follow in a similar way to the case of a single model since $\bE [ \| \rho^{n_1} \ast \xi_\fl - \rho^{n_2} \ast \xi_\fl \|_{\CC^{|\fl|_\fs}}^p] \lesssim 2^{- p n_2 |\fl|_\fs} \bE [ \|\xi_\fl\|_{\CC^{|\fl|_\fs}}^p]$.
		
		Therefore we can again write without loss of generality $\tau_j = \tau_l \cdot \tau_k$ or $\tau_j = \CI^\ft \tau_k$  for $l, k < j$. In the latter case, the desired bound again follows from the Extension Theorem \cite[Theorem 5.14]{Hai14} so we consider only the former case. Here, we write
		\begin{equs}
			\bE & \left [ \left ( \sup_{\substack{x \neq y \\ x, y \in \ck}} \frac{ \|\bar{\Gamma}^{n_1}_{xy} \tau_j - \bar{\Gamma}^{n_2}_{xy} \tau_j \|_\eta^{(j)}}{\|x-y\|^{|\tau_j|_\fs^{(N-j)} - \eta}} \right)^{p_k} \right ] 
			\lesssim T_1 + T_2
		\end{equs}
		where 
		\begin{equs}
			T_1 &= \sum_{\zeta \le \eta} \bE \left [ \Gamma(n_2, k, j, \zeta)^{p_{k+1}} \right ]^\frac12 \bE \left [ \left ( \sup_{\substack{x \neq y \\ x, y \in \ck}} \frac{ \|\bar{\Gamma}^{n_1}_{xy} \tau_l - \bar{\Gamma}^{n_2}_{xy} \tau_l \|_{\eta - \zeta}^{(j)}}{\|x-y\|^{|\tau_l|_\fs^{(N-j)} - \eta + \zeta}} \right )^{p_{k+1}} \right ]^\frac12
			\\
			T_2 &= \sum_{\zeta \le \eta} \bE \left [ \Gamma(n_1, l, j, \zeta)^{p_{k+1}} \right ]^\frac12 \bE \left [ \left ( \sup_{\substack{x \neq y \\ x, y \in \ck}} \frac{ \|\bar{\Gamma}^{n_1}_{xy} \tau_k - \bar{\Gamma}^{n_2}_{xy} \tau_k \|_{\eta - \zeta}^{(j)}}{\|x-y\|^{|\tau_l|_\fs^{(N-j)} - \eta + \zeta}} \right )^{p_{k+1}} \right ]^\frac12.
		\end{equs}
		
		Making similar adjustments as in the case of a single model to accommodate the change in degree, we obtain the bound
		\begin{equ}
			T_1 + T_2 \lesssim  \bE\left[ \left (\| \bar{\Gamma}^{n_2}\|_{\CT^{j-1}; \ck}+ \| \bar{\Gamma}^{n_1}\|_{\CT^{j-1}; \ck} \right )^{p_{k+1}} \right ]^\frac12 \bE \left [ \| \bar{\Gamma}^{n_1}- \bar{\Gamma}^{n_2} \|_{\CT^{j-1}; \ck}^{p_{k+1}} \right ]^\frac12
		\end{equ}
		which is a bound of the desired order since the first term in the product on the right hand side is bounded a constant independent of $n_1, n_2$ by the result in the case of a single model.
	\end{proof}
	
	The final step in a typical instance of the inductive loop is to convert this norm control into control on 
	\begin{equ}
		\bE \left [  2^{np|\tau_j|_\fs^{(N-j)}} \left |\bar{\Pi}_0^m \tau_j (\phi_0^{(n)}) \right |^p \right].
	\end{equ}
	Here, our techniques will differ depending on the sign of the degree of $\tau_j$. In the case where $\tau_j$ has negative degree, we will apply the spectral gap inequality. This case is the meat of the proof of Proposition~\ref{prop: uniform bounds}.
	If this degree is positive then control automatically follows from the rigid structure that is imposed on the action of the model on symbols of positive degree as a result of their analytic properties, as shown in the following lemma. 
	
	\begin{lemma}\label{lemma: pos hom control}
		Suppose that $|\tau_j|_\fs^{(N-j)} > 0$ and that for some $k \in \bZ_+$ and for each compact set $\ck \subseteq \bR^d$ we have that
		\begin{equ}
			\bE \left [ \|\bar \Pi^m\|_{V^{(j-1)}; \ck}^{p_{k+1}} + \|\bar{\Gamma}^m \|_{\CT^{j}; \ck}^{p_{k+1}} \right ]^{1/{p_{k+1}}} \lesssim 1.
		\end{equ}
		Then $\bE \left [ 2^{np_k|\tau_j|_\fs^{(N-j)}} \left |\bar{\Pi}_0^m \tau_j (\phi_0^{n}) \right |^{p_k}  \right ]^{1/p_k} \lesssim 1$.
		
		If additionally for each compact set $\ck \subseteq \bR^d$ and $n_1 \ge n_2$ we have that for some $\theta > 0$,
		\begin{equ}
			\bE \left [ \|\bar{\Pi}^{n_1} - \bar{\Pi}^{n_2} \|_{V^{(j-1)}; \ck}^{p_{k+1}} + \|\bar{\Gamma}^{n_1} - \bar{\Gamma}^{n_2}\|_{\CT^j; \ck}^{p_{k+1}} \right ]^{{p_{k+1}}} \lesssim 2^{-n_2 \theta}
		\end{equ}
		then we have that 
		\begin{equ}
			\bE \left [ 2^{-np_k|\tau_j|_\fs^{(N-j)}} \left |\bar{\Pi}_0^{n_1} \tau_j (\phi_0^{n}) - \bar{\Pi}_0^{n_2} \tau_j (\phi_0^{n}) \right |^{p_k} \right ] ^{1/p_k} \lesssim 2^{n_2 \theta}.
		\end{equ}
	\end{lemma}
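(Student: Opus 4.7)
The plan is to establish the claimed bound deterministically as a consequence of the assumed control on $\|\bar{\Pi}^m\|_{V^{(j-1)};\bar{\ck}}$ and $\|\bar{\Gamma}^m\|_{\CT^j;\bar{\ck}}$, and only then pass to $L^{p_k}$ moments via Cauchy--Schwarz. The key observation is that for $\tau_j$ with $|\tau_j|_\fs^{(N-j)} > 0$ (which, by Assumption~\ref{ass:alg}, also forces $|\tau_j|_\fs > 0$), the value of $\bar{\Pi}_0^m \tau_j$ is rigidly determined by the action of $\bar{\Pi}^m$ on lower-degree components (all lying in $V^{(j-1)}$) through admissibility (Definition~\ref{def: admissibility}) and the structure group, so no fresh probabilistic input is required at this step.

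I would prove the deterministic bound
\begin{equ}
|\bar{\Pi}_0^m \tau_j(\phi_0^n)| \lesssim 2^{-n|\tau_j|_\fs^{(N-j)}} \bigl(1+\|\bar{\Pi}^m\|_{V^{(j-1)};\bar{\ck}}\bigr)\bigl(1+\|\bar{\Gamma}^m\|_{\CT^j;\bar{\ck}}\bigr)
\end{equ}
by induction on the structure of $\tau_j$. If $\tau_j = X^k$, the identity $\bar{\Pi}_0^m X^k(y) = y^k$ gives the result immediately, noting that $n_{X^k} = 0$ so $|X^k|_\fs^{(N-j)} = |k|_\fs$. If $\tau_j = \Xi_\fl$ with $|\fl|_\fs > 0$, then $\bar{\Pi}_0^m \Xi_\fl$ is the Taylor remainder of $\xi_\fl^m$ at the origin, and the sharper exponent $|\Xi_\fl|_\fs^{(N-j)}$ follows from the uniform Hölder regularity of $\xi_\fl^m$ guaranteed by the spectral gap assumption via Theorem~\ref{theo: Kolmogorov Criterion}, provided $\kappa$ is small enough that the shift $(N-j)\kappa$ is absorbed by the regularity excess $\reg \fl - |\fs|/2 - |\fl|_\fs$. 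If $\tau_j = \CI^\ft \sigma$, admissibility yields an explicit Taylor-remainder expression for $\bar{\Pi}_0^m \CI^\ft \sigma$; combining the assumed bound on $\bar{\Pi}_0^m \sigma$ in the shifted degree $|\sigma|_\fs^{(N-j)}$ with the Schauder estimate for $K^\ft$ yields $|\bar{\Pi}_0^m \CI^\ft \sigma (y)| \lesssim |y|^{|\CI^\ft\sigma|_\fs^{(N-j)}}$, where we use that for $\kappa$ small enough no integer lies strictly between $|\CI^\ft\sigma|_\fs$ and $|\CI^\ft\sigma|_\fs^{(N-j)}$, so the admissibility-prescribed Taylor polynomial already subtracts enough terms to extract the sharper exponent. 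Finally, if $\tau_j = \sigma_1 \sigma_2$, multiplicativity of the $\CG_-$-renormalised model gives $\bar{\Pi}_0^m\tau_j(y) = \bar{\Pi}_0^m \sigma_1 (y)\, \bar{\Pi}_0^m \sigma_2(y)$ pointwise in the cases permitted by Assumption~\ref{ass:alg}, and pointwise bounds on each factor (both in $V^{(j-1)}$) yield the product estimate.

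The two-model version is obtained by running the same case analysis on the difference $\bar{\Pi}_0^{n_1}\tau_j - \bar{\Pi}_0^{n_2}\tau_j$. Each case decomposes the difference into sums of products where one factor is a difference (either of models restricted to $V^{(j-1)}$, controlled by the assumed $2^{-n_2\theta}$ bound, or of mollified noises $\xi_\fl^{n_1} - \xi_\fl^{n_2}$, controlled by standard mollifier estimates in Hölder spaces) and the other factor is bounded uniformly in $n$ by the single-model case. Hölder's inequality at the level $p_{k+1} = 2 p_k$ then converts the deterministic pointwise bound into the desired $L^{p_k}$-bound with the prescribed rate $2^{-n_2\theta}$.

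The main technical subtlety is the gap between the admissibility degree $|\tau|_\fs$ (which dictates the number of terms in the Taylor sum built into Definition~\ref{def: admissibility}) and the shifted degree $|\tau|_\fs^{(N-j)}$ appearing in the statement. This is resolved by the freedom — already used elsewhere in the paper — to take $\kappa$ sufficiently small so that both degrees share the same integer part; the Taylor and Schauder machinery then transports the sharper exponent automatically, provided the input bounds on $V^{(j-1)}$ are stated in the same shifted degree, which is precisely how the regularity structures $\CT^{(j)}$ are set up.
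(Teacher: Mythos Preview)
Your product step relies on a false premise. For $\tau_j = \sigma_1\sigma_2$ with both $\sigma_i$ non-polynomial, the pointwise identity $\bar\Pi_0^m\tau_j = \bar\Pi_0^m\sigma_1 \cdot \bar\Pi_0^m\sigma_2$ does \emph{not} hold for models in the $\CG_-$-orbit of a canonical lift: breaking this multiplicativity is precisely what renormalisation accomplishes. Admissibility (Definition~\ref{def: admissibility}) only enforces $\bar\Pi_z(X^k\sigma) = \bar\Pi_z X^k \cdot \bar\Pi_z\sigma$, i.e.\ multiplicativity against a polynomial factor, and nothing in Assumption~\ref{ass:alg} forces a positive-degree product in $\bar\CF_-$ to have one polynomial factor. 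So your case analysis does not close.

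The paper's proof avoids any structural decomposition of $\tau_j$. It simply invokes \cite[Proposition~3.31]{Hai14} and its proof, whose input is the algebraic relation $\bar\Pi_0^m = \bar\Pi_y^m\bar\Gamma_{y0}^m$ rather than multiplicativity: since $|\tau_j|_\fs^{(N-j)} > 0$, a multiscale expansion bounds $\bar\Pi_0^m\tau_j(\phi_0^n)$ in terms of $\|\bar\Gamma^m\|_{\CT^j}$ together with $\|\bar\Pi^m\|$ restricted to the range of $y \mapsto \bar\Gamma^m_{y0}\tau_j - \tau_j$. The only thing to verify is that this range lies in $V^{(j-1)}$, and this holds because the trees appearing in $\bar\Gamma^m_{y0}\tau_j - \tau_j$ are strict subtrees of $\tau_j$ of strictly smaller degree, hence precede $\tau_j$ in the lexicographic order on $(n_\sigma, |\sigma|_\fs)$ (cf.\ Lemma~\ref{lemma: induction algebraic}). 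No separate treatment of planted, product, or noise symbols is needed, and a single Cauchy--Schwarz at the $L^{p_{k+1}}$ level then gives the claimed $L^{p_k}$ bound.
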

	\begin{proof}
		By \cite[Proposition 3.31]{Hai14} and its proof, we only need to see that if $f_j^x(y) = \bar \Gamma^m_{yx} \tau_j - \tau_j$ then $f_j^x$ is valued in $V^{(j-1)}$. By definition of $V^{(j)}$, $f_j^x$ is certainly valued in $V^{(j)}$. In addition, it follows from the construction of the structure group that $f_j^x(y)$ is a linear combination of trees with strictly fewer edges than are in $\tau_j$. This implies the desired result.
		
		The proof in the case of two models is similar, using our uniform bounds on the individual models when needed.
	\end{proof}
	
	With these preparatory results in place, we will turn to the core of the proof of Proposition~\ref{prop: uniform bounds}.
	\begin{proof}[Proof of Proposition~\ref{prop: uniform bounds}]
		We fix $p_k$ as in the statement. We let $N_1$ be an upper bound on the number of applications of Lemmas~\ref{lemma: Pi Norm Control}, \ref{lemma: Gamma Norm Control} and~\ref{lemma: pos hom control} and of Cauchy-Schwarz that are required in any of the finitely many induction steps required to exhaust the structure in our proof. We choose not to write this value explicitly since it depends on the power $k$ appearing in Proposition~\ref{prop: H control} and it is always clear that its exact value is unimportant to us. Let $N_2 = n_0 N_1$.
		
		We will prove by induction in $j$ that if $k(j) = k + N_2 - N_1 j$ then
		\begin{equ}\label{eq: induction hyp 1 strong}
			\bE \left [2^{np_{k(j)} |\tau_l|_\fs^{(N-j)}} \left |\bar{\Pi}_0^m \tau_l (\phi_0^{n}) \right |^{p_{k(j)}} + \|\bar{\Gamma}^m \|_{\CT^j}^{p_{k(j)}} \right ] \le C.
		\end{equ}
		which is stronger than \eqref{eq: induction hyp 1} by our choice of $k(j)$.
		
		Since $\CT^0 = \Tpoly$, the case $j = 0$ is trivial so that it remains to check the induction step.
		
		We therefore assume the bound \eqref{eq: induction hyp 1 strong} is known up to $j-1$ and aim to prove that it holds for $j$. Since all of our bounds will be uniform in choice of $m \in \bN$, we will suppress the $m$-dependency in the notation in what follows. 
		
		By Lemmas~\ref{lemma: Pi Norm Control}, \ref{lemma: Gamma Norm Control} and~\ref{lemma: pos hom control}, we can assume that $|\tau_j|_\fs^{(N-j)} < 0$ and that for each compact set $\ck \subseteq \bR^d$ we have that
		\begin{equ}
			\bE \left [ \| \bar \Pi \|_{V^{(j-1)}; \ck}^{p_{k(j) - 3}}+ \|\bar \Gamma \|_{\CT^j; \ck}^{p_{k(j) - 3}} \right ] \lesssim 1.
		\end{equ}
		
		We will now apply our spectral gap assumption. This yields
		\begin{equs} \label{eq: Induction SGap}
			\bE \left[ \left | \bar{\Pi}_0 \tau_j (\phi_0^{n}) \right |^p \right ]^{1/p} \lesssim \bE \left [ \bar{\Pi}_0 \tau_j (\phi_0^{n}) \right ] + \bE \left [ \left \| \frac{\partial \left [ \bar{\Pi}_0 \tau_j (\phi_0^{n}) \right ]}{\partial \xi} \right \|_{H^{- \reg}(\Lab_-)}^p \right ]^{1/p}
		\end{equs}
		where we will choose $p = p_{k + N - 4j}$.
		
		We consider the two terms on the right hand side separately and begin with the first of these two terms.
		
		Here the idea is to write $\phi_0^{n} = \phi_0^{0} + \sum_{k=0}^{n-1} (\phi_0^{k+1} - \phi_0^{k})$. Since $\bE \left [ \bar{\Pi}_0 \tau_j (\phi_0^{0}) \right ] = 0$ by the definition of the $\bBPHZ$ model, it remains to consider $$\bE \left [ \bar{\Pi}_0 \tau_j (\phi_0^{k+1} - \phi_0^{k}) \right ].$$
		
		Here, we note that it follows from \eqref{eq: translation invariance} that  for each $m$, $\bE[\bar{\Pi}_z \tau (\phi_z^m)]$ is independent of $z$. In particular, since $R^k$ integrates to $0$ we can write
		\begin{equs}
			\bE \left [ \bar{\Pi}_0 \tau_j (\phi_0^{k+1} - \phi_0^{k}) \right ] = \int \bE \left [ \bar{\Pi}_y \left ( \bar{\Gamma}_{y0} \tau_j - \tau_j \right )(\phi_y^{k+2}) \right ] R^k(y) \,dy\;.
		\end{equs}
		where $R$ is as in the proof of Lemma~\ref{lemma: pointed candidate exists}.
		
		Since $|y|_\fs \lesssim 2^{-k}$ on the domain of integration, it then immediately follows that
		\begin{equ}
			\left | \bE \left [ \bar{\Pi}_0 \tau_j (\phi_0^{k+1} - \phi_0^{k}) \right ] \right | \lesssim 2^{-k |\tau_j|_\fs^{(N-j)}} \bE \left [ \| \bar{\Pi}\|_{V^{(j-1)}; \ck}  \cdot \|\bar{\Gamma}\|_{\CT^j; \ck} \right ] 
		\end{equ}
		which by Cauchy--Schwarz is bounded by a constant multiple of $2^{-k|\tau_j|_\fs^{(N-j)}}$. Since $|\tau_j|_\fs^{(N-j)} < 0$ by assumption, summing this bound over the regime $k \le n$ yields the bound
		\begin{equ}
			\bE \left [ \bar{\Pi}_0 \tau_j (\phi_0^{n}) \right ] \lesssim 2^{-n |\tau_j|_\fs^{(N-j)}}
		\end{equ}
		as required.
		
		Hence, we now turn to the derivative term. Here, we begin by applying duality to write 
		\begin{equ}
			\left \| \frac{\partial \left [ \bar{\Pi}_0 \tau_j (\phi_0^{n}) \right ]}{\partial \xi} \right \|_{H^{- \reg}(\Lab_-)} = \sup_{\|\eta\|_{\CH} \le 1} \left | D_{\eta} \bar{\Pi}_0 \tau_j (\phi_0^{n}) \right |.
		\end{equ}
		
		From the results of Section~\ref{sec: Frechet} and a density argument, the right-hand side can be written as 
		\begin{equs}
			\sup_{\substack{\| \eta \|_{\CH} \le 1 \\ \eta \in \nice}} \left | \bar{\CR} \bar{H}_{\tau_j}^{0, \eta} (\phi_0^{n}) \right |
		\end{equs}
		where in the case where $\gamma_{\tau_j} \le 0$, the reconstruction should be interpreted as our provided candidate for the reconstruction given in Lemmas~\ref{lemma: eta candidate} and~\ref{lemma: pointed candidate exists}.
		
		In the case $\gamma_{\tau_j} > 0$, by Theorem~\ref{theo:pointedReconstruction}, since $\deg_2 \tau_j - |\fs|/2 = |\tau_j|_\fs^{(N-j)}$, we obtain a bound of order
		\begin{equ}
			 2^{-n |\tau_j|_\fs^{(N-j)}} \|\bar{\Pi}\|_{V^{(j-1)}; \bar{\ck}} \sup_{\|\eta \|_{\CH} \le 1} \$\bar{H}_{\tau_j}^{0,\eta}\$_{2, \gamma_{\tau_j}, \deg_2 \tau_j; x}.
		\end{equ}
		It follows by a simple induction that $\pH{\tau}$ is valued in $V^{(j-1)}$ so that the desired bound follows from the previous bounds, Proposition~\ref{prop: H control} and Cauchy--Schwarz.
		
		In the case $\gamma_\tau \le 0$, the desired bound follows from the definition of a candidate for the pointed reconstruction operator together with Lemma~\ref{lemma: eta candidate}, Lemma~\ref{lemma: pointed candidate exists} and Proposition~\ref{prop: H control} where the last of these is used to control the corresponding constant in the application of Lemma~\ref{lemma: pointed candidate exists}.
		
		This completes the inductive proof that $\eqref{eq: induction hyp 1}$ holds for all $j$. The remaining statement then follows immediately from Theorem~\ref{theo: Kolmogorov Criterion}.
	\end{proof}
	
	\section{Convergence of the $\bBPHZ$ Model}\label{sec: convergence}
	
	In this section, we turn to establishing the convergence of the $\bBPHZ$ models as mollification is removed. By the results of the previous section, we will be able to assume uniform control on the norms of the models appearing in this section. Additionally many of the steps in our inductive loop will have no real difference to those of the previous section, only requiring that we apply the versions of our statements that were obtained for the differences of models.
	
	Despite this, the techniques of the previous section are not enough to establish convergence since our approach to estimating the Fr\'echet derivative of the model will fail. To illustrate the reason for this, we point out that whilst it is the case that $\sup_{\|\eta\|_\CH \le 1} \| \varrho^n \ast \eta_\ft \|_{\CB_{2,2}^{\reg \ft}(\ck)}$ is bounded uniformly in $n$, it is not the case that $\sup_{\|\eta\|_\CH \le 1} \|\varrho^n \ast \eta_\ft - \eta_\ft\|_{\CB_{2,2}^{\reg \ft}(\ck)}\to 0$ as $n\to\infty$. Consequently, we do not expect $\pHa{\tau} - \pHb{\tau}$ to have vanishing norm as $n_1, n_2 \to \infty$ uniformly over the necessary set of choices of $\eta$ in our argument even in the case where $\tau = \Xi_\ft$.
	
	In order to account for this, we observe that it is the case that for $|\ft|_\fs > -|\fs|/2$, $\sup_{\|\eta\|_\CH\le 1} \|D^k \varrho^{n_1} \ast \eta_\ft - D^k \varrho^{n_2} \ast \eta_\ft \|_{H^{- \kappa}} \to 0$ as $n_1, n_2 \to \infty$ for any choice of $\kappa > 0$ and $|k|_\fs < \reg \ft$. 
	
	Since $\pH{\Xi_\ft} = 0$ if $|\ft|_\fs \le - |\fs|/2$, for $\tau = \Xi_\ft$ we do then know that the coefficients of $\pHa{\tau} - \pHb{\tau}$ have vanishing $H^{- \kappa}$ norm as $n_1, n_2 \to \infty$ with the desired uniformity in $\eta$.
	
	To formalise this statement, we recall some of the constructions from Section~\ref{section: Modelled Distributions with Sobolev Coefficients} and fix corresponding notation. For each $\zeta \in \CA$, we will assume that $B_\zeta$ is the set of trees of degree $\zeta$ so that $B \eqdef \cup_{\zeta \in \CA} B_\zeta$ forms a basis of $\CT$ which we will assume to be orthonormal without any loss of generality. For $f \in \CD_2^\gamma$, we remind the reader that $\|f\|_{\gamma, - \kappa, 2; \ck}$ then denotes a negative regularity Sobolev norm on the coefficients of $f$ in the directions in $B$; see Definition~\ref{def: sob mod norm}.
	
	\begin{lemma}\label{lemma: sob H base case}
		For any $\kappa > 0$, there exists a $\theta > 0$ such that for each compact set $\ck \subseteq \bR^d$ and $\ft \in \Lab_-$,
		\begin{equ}
			\sup_{\|\eta\|_\CH\le 1} \$ \pHa{\Xi_\ft} - \pHb{\Xi_\ft} \$_{\gamma_\ft, - \kappa, 2; \ck} \lesssim 2^{-n_2 \theta}.
		\end{equ}
	\end{lemma}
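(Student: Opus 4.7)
The case $\gamma_\ft \le 0$ is immediate since then $\pH{\Xi_\ft} \equiv 0$ by definition, so assume $\gamma_\ft > 0$. The difference $\pHa{\Xi_\ft} - \pHb{\Xi_\ft}$ is then a $\Tpoly$-valued function of $y$, and for each multi-index $k$ with $|k|_\fs < \gamma_\ft$, its coefficient in direction $X^k/k!$ is the scalar function
\begin{equ}
F_k(y) = (D^k \delta\varrho \ast \eta_\ft)(y) - P_x^{|\ft|_\fs}\bigl[D^k \delta\varrho \ast \eta_\ft\bigr](y)\;,\qquad \delta\varrho \eqdef \varrho^{n_1} - \varrho^{n_2}\;.
\end{equ}
The plan is to show $\|F_k\|_{\CB_{2,2}^{-\kappa}(\ck)} \lesssim 2^{-n_2 \theta}\|\eta\|_\CH$ for some $\theta > 0$ uniform in $\eta$ and in $k$ over the finite admissible range, which is exactly what the norm $\$\cdot\$_{\gamma_\ft, -\kappa, 2;\ck}$ demands.

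The workhorse is the classical mollifier-difference estimate
\begin{equ}
\|\delta\varrho \ast h\|_{H^s} \lesssim 2^{-n_2 \theta}\|h\|_{H^{s+\theta}}\;,\qquad \theta \ge 0\;,
\end{equ}
which follows on the Fourier side from $|\hat\varrho(2^{-n_1 \fs}\xi) - \hat\varrho(2^{-n_2 \fs}\xi)| \lesssim \min\bigl(1,(2^{-n_2}|\xi|_\fs)^\theta\bigr)$, itself a consequence of the smoothness of $\hat\varrho$ at the origin and its boundedness elsewhere.

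For $|\ft|_\fs < 0$ the Taylor polynomial $P_x^{|\ft|_\fs}$ is vacuous, so one applies the estimate above directly to $h = D^k \eta_\ft \in H^{\reg \ft - |k|_\fs}$ with $s = -\kappa$ and $\theta = \reg \ft - |k|_\fs + \kappa$; Assumption~\ref{ass:alg} combined with $|k|_\fs < \gamma_\ft$ gives $\theta \ge \kappa + \bar\kappa > 0$, and the claimed bound follows since $\CB_{2,2}^{-\kappa}(\ck)$ is controlled by $H^{-\kappa}$. For $|\ft|_\fs > 0$, Assumption~\ref{ass:alg} forces $\reg \ft > |\ft|_\fs + |\fs|/2$, so $\eta_\ft \in \CB_{\infty,\infty}^{|\ft|_\fs}$ by Besov embedding and the Taylor polynomial is a priori well-defined. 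The convolution piece $D^k\delta\varrho \ast \eta_\ft$ is treated as above; the remaining polynomial piece $P_x^{|\ft|_\fs}[D^k \delta\varrho \ast \eta_\ft]$ is a polynomial of degree $\le |\ft|_\fs$ whose finitely many coefficients are the pointwise values $D^{k+j}(\delta\varrho) \ast \eta_\ft(x)$ for $|j|_\fs \le |\ft|_\fs$, each of which can be bounded by a positive power $2^{-n_2 \theta'}\|\eta\|_\CH$ by combining the Sobolev embedding $H^{|\fs|/2+\epsilon} \hookrightarrow L^\infty$ (for a suitably small $\epsilon > 0$) with the mollifier estimate. The $\CB_{2,2}^{-\kappa}(\ck)$-norm of the polynomial is then controlled by a $\ck$-dependent constant times the maximum of these coefficients.

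The main technical obstacle is securing a positive rate $\theta > 0$ uniformly over the finitely many admissible $(k,j)$; this is where the strict inequality $|\ft|_\fs < \reg \ft - |\fs|/2$ in Assumption~\ref{ass:alg} together with the $\bar\kappa$-buffer built into $\gamma_\ft = |\ft|_\fs + |\fs|/2 - \bar\kappa$ play the essential role, mirroring the way these same ingredients enter the proof of Lemma~\ref{lemma: H base check}. Taking the minimum of the positive rates obtained across the (finite) admissible range of $(k,j)$ produces the desired uniform $\theta > 0$.
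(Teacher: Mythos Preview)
Your proof is correct and follows essentially the same approach as the paper: both hinge on the mollifier-difference estimate in negative Sobolev norm applied to the polynomial coefficients of $\pH{\Xi_\ft}$. The paper's proof is extremely terse, just recording the estimate $\sup_{\|f\|_{L^2}=1}\|f - \varrho^n \ast f\|_{H^{-\kappa}} \lesssim 2^{-n\kappa/2}$ and declaring the rest ``immediate from the preceding discussion''; your version spells out the case split $|\ft|_\fs \lessgtr 0$ and, in particular, the handling of the Taylor-polynomial piece for $|\ft|_\fs > 0$ via Sobolev embedding into $L^\infty$, which the paper leaves implicit.
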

	\begin{proof}
		For $f \in L^p$, let $R_n f = \varrho^n \ast f$. Then for each $\kappa > 0$ and compact set $\ck \subseteq \bR^d$, a straightforward computation shows that
		\begin{equ}
			\sup_{\|f\|_{L^2} = 1} \|f - R_n f \|_{H^{- \kappa}} \lesssim 2^{-n \kappa /2}. 
		\end{equ}
		The result is then immediate from the preceding discussion.
	\end{proof}
	
	In practice, we will want to take $\kappa$ to be sufficiently small so that we can apply various duality results. Additionally, we will often have to replace $\theta$ with some smaller value when passing this bound onto bounds for the pointed modelled distributions corresponding to trees with more edges. As a result, we will specify a value for neither $\kappa$ nor $\theta$ here. Instead, we will assume that $\kappa > 0$ is fixed to be sufficiently small to satisfy the various implicit constraints throughout this section and will make no attempts to be optimal in our obtained rate of convergence. 
	
	The various reconstruction results of Section~\ref{section: Modelled Distributions with Sobolev Coefficients} then give us hope to obtain the desired estimates on $\bar{\CR}^{n_1} \pHa{\tau} - \bar{\CR}^{n_2} \pHb{\tau}$. The missing ingredient, and hence our next order of business in this section, is the propagation of our bounds on the negative Sobolev norms of coefficients to all levels of the regularity structure. 
	
	We begin our process of controlling these Sobolev type norms by considering their behaviour under multiplication. We have the following lemma.
	
	\begin{lemma}
		Fix $k \in \bZ_+$. Suppose that there exists a $\theta > 0$ such that for each $n_1 \ge n_2$ and compact set $\ck \subseteq \bR^d$ we have the bound
		\begin{equ}
			\bE \left [ \|\bar \Gamma^{n_1} - \bar \Gamma^{n_2} \|_{\CT; \ck}^{p_{k+1}} + \max_{\sigma \in \{\tau, \bar \tau\}} \sup_{\|\eta\|_\CH \le 1} \| \pHao{\sigma} - \pHbo{\sigma} \|_{\gamma_\sigma, - \kappa, 2; \ck}^{p_{k+1}}  \right ] \lesssim 2^{-{p_{k+1}} n_2 \theta}.
		\end{equ}
		Then it follows that for the same parameters we have that
		\begin{equ}
			\bE \left [ \sup_{\|\eta\|_\CH \le 1} \| \pHao{\tau \bar \tau} - \pHbo{\tau \bar \tau} \|_{ \gamma_{\tau \bar \tau}, - \kappa, 2; \ck}^{p_{k}} \right ] \lesssim 2^{-p_k n_2 \theta}.
		\end{equ}
	\end{lemma}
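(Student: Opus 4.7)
The plan is to use the definition \eqref{e:defProdH} to decompose the difference of products into a sum whose individual terms are handled by a combination of Besov multiplication, the inductive hypothesis, and the control on $\bar\Gamma^{n_1}-\bar\Gamma^{n_2}$. Concretely, writing $\Delta H_{\sigma} \eqdef H^{0,\eta}_{\sigma;n_1} - H^{0,\eta}_{\sigma;n_2}$ and $\Delta f^\sigma_0 \eqdef f^{\sigma,n_1}_0 - f^{\sigma,n_2}_0$ (with the understanding that $f^\sigma_0$ depends on the model only through its $\Gamma$-component), and noting that the multiplication is defined by $\CQ_{\gamma_{\tau\bar\tau}}$ of the $\star$-product, one gets the decomposition
\begin{equs}
\Delta H_{\tau\bar\tau} = (\Delta f^\tau_0)\star H^{0,\eta}_{\bar\tau;n_1} + f^{\tau,n_2}_0\star (\Delta H_{\bar\tau}) + (\Delta f^{\bar\tau}_0)\star H^{0,\eta}_{\tau;n_1} + f^{\bar\tau,n_2}_0\star(\Delta H_\tau)\;,
\end{equs}
all projected onto $\CT_{<\gamma_{\tau\bar\tau}}$. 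Each of the four terms is treated in the same way, so I will focus on the second.

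The key analytic input is a Besov multiplication inequality (see e.g.\ \cite[Theorem~3.11]{BL21}) of the form $\|uv\|_{\CB^{-\kappa}_{2,2}(\ck)}\lesssim \|u\|_{\CC^\alpha(\ck)}\|v\|_{\CB^{-\kappa}_{2,2}(\ck)}$, valid as soon as $\alpha>\kappa$. Projecting the product onto an arbitrary basis element $\sigma\in B_\zeta$ with $\zeta<\gamma_{\tau\bar\tau}$, we get a finite sum of contributions of the form $(f^{\tau,n_2}_0)_{\sigma_1}\cdot (\Delta H_{\bar\tau})_{\sigma_2}$. Since $(f^\tau_0)_{\sigma_1}(y) = \scal{\Gamma_{y0}\tau, \sigma_1^*}$ is a smooth function of $y$ whose $\CC^\alpha(\ck)$ norm (for $\alpha$ small enough depending on the gaps between degrees) is controlled by a polynomial in $\|\bar\Gamma^{n_2}\|_{\CT;\bar\ck}$, the multiplication inequality gives
\begin{equs}
\|(f^{\tau,n_2}_0\star \Delta H_{\bar\tau})_\sigma\|_{\CB^{-\kappa}_{2,2}(\ck)}\lesssim P(\|\bar\Gamma^{n_2}\|_{\CT;\bar\ck})\, \|\Delta H_{\bar\tau}\|_{\gamma_{\bar\tau},-\kappa,2;\bar\ck}\;,
\end{equs}
for some polynomial $P$. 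For the first term, we instead use that $(\Delta f^\tau_0)_{\sigma_1}$ is small in $\CC^\alpha(\ck)$ by the hypothesis on $\bar\Gamma^{n_1}-\bar\Gamma^{n_2}$, while the $\CB^{-\kappa}_{2,2}$-norm of $(H^{0,\eta}_{\bar\tau;n_1})_{\sigma_2}$ is controlled by its $L^2$-norm on $\bar\ck$, which in turn is bounded by $\$ H^{0,\eta}_{\bar\tau;n_1}\$_{2,\gamma_{\bar\tau},\deg_2\bar\tau;0}$ from the local bound in Definition~\ref{def: pointed modelled distribution} (at a fixed scale $\lambda$ of order $\mathrm{diam}(\bar\ck)$).

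Putting these pointwise (in $\omega$) bounds together, taking $p_k$-th powers, applying Cauchy--Schwarz to split the product of a ``small'' factor at exponent $p_{k+1}$ and a ``bounded'' factor at exponent $p_{k+1}$, and supping over $\|\eta\|_\CH\le 1$, one obtains the desired conclusion: the small factors contribute the factor $2^{-p_k n_2\theta}$ (with a possibly smaller $\theta$) by either the inductive hypothesis or the $\bar\Gamma$ hypothesis, while the bounded factors are handled by Proposition~\ref{prop: H control} (which controls $\sup_{\|\eta\|_\CH\le 1}\$ H^{0,\eta}_{\bar\tau;n_1}\$$ by a polynomial in $\|Z\|_{\CT;\bar\ck}$) together with the uniform moment bounds of Proposition~\ref{prop: uniform bounds}. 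The only mild subtlety is choosing $\kappa$ small enough (relative to the spectral gaps between degrees in $\CA_{\le \gamma_{\tau\bar\tau}}$) so that the required inequality $\alpha>\kappa$ in the Besov multiplication estimate can be realised with $\alpha$ strictly less than the gap; this is always possible by Assumption~\ref{ass:alg} and the standing convention that $\kappa$ is sufficiently small.
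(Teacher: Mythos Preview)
Your proposal is correct and follows essentially the same approach as the paper: the same telescoping decomposition of $\Delta H_{\tau\bar\tau}$ into four terms, the same Besov multiplication estimate for the $f^{\sigma,n_2}_0\star(\Delta H_{\sigma'})$ terms, and the same use of Cauchy--Schwarz together with Propositions~\ref{prop: H control} and~\ref{prop: uniform bounds} to close. The only minor deviation is in the $(\Delta f^\sigma_0)\star H^{0,\eta}_{\sigma';n_1}$ terms, where the paper avoids your $\CC^\alpha$-smallness argument for $\Delta f^\sigma_0$ by simply bounding $\|\cdot\|_{\CB^{-\kappa}_{2,2}}\le\|\cdot\|_{L^2}$ and then using $\|\CQ_c(\Delta f^\sigma_0)\|_{L^\infty}\lesssim\|\bar\Gamma^{n_1}-\bar\Gamma^{n_2}\|$ directly; this is slightly cleaner since it only needs $L^\infty$ control on the difference rather than H\"older control.
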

	\begin{proof}
		For $\zeta < \gamma_{\tau \bar{\tau}}$ and $a \in B_\zeta$, we write
		\begin{equs}\label{eq: Hsobprod equation}
			\CQ_a \Bigg ( \bar{H}_{\tau \bar \tau; n_1}^{0, \eta} & - \bar{H}_{\tau \bar \tau; n_2}^{0, \eta} \Bigg )  \\ & = \CQ_a \left [ \left ( \bar{H}_{\tau ; n_1}^{0, \eta} f_0^{\bar \tau; n_1} - \bar{H}_{\tau ; n_2}^{0, \eta} f_0^{\bar{\tau}; n_2} \right ) + \left ( \bar{H}_{\bar \tau ; n_1}^{0, \eta} f_0^{\tau; n_1} - \bar{H}_{\bar \tau ; n_2}^{0, \eta} f_0^{\tau; n_2}\right ) \right ],
		\end{equs}
		where $f_0^{\tau; n_i}(y) = \bar{\Gamma}_{y0}^{n_i} \tau$ and $\CQ_a$ is the projection onto the span of $a$.
		
		We then have that 
		\begin{equs}
			& \CQ_a \left ( \bar{H}_{\tau ; n_1}^{0, \eta} f_0^{\bar \tau; n_1} - \bar{H}_{\tau ; n_2}^{0, \eta} f_0^{\bar{\tau}; n_2} \right ) \\ & = \sum_{b, c} \CQ_b \left ( \bar{H}_{\tau; n_1}^{0, \eta} - \bar{H}_{\tau; n_2}^{0, \eta} \right ) \cdot \CQ_c f_0^{\bar \tau; n_2} + \CQ_b \bar{H}_{\tau; n_1}^{0, \eta} \cdot \CQ_c \left ( f_0^{\bar \tau; n_1} - f_0^{\bar \tau; n_2} \right )
		\end{equs}
		where the sum is over those trees $b,c \in \bigcup_{\eta \le \zeta} B_\eta$ such that $a = b \cdot c$.
		
		For the first term in this sum, we note that if $\kappa > 0$ is smaller than the smallest gap between consecutive homogeneities in our structure then $\CQ_c f_0^{\bar{\tau}; n_2}$ is in $C^{\kappa}(\ck)$ with a norm of order $\|\bar{\Gamma}\|_{\CT^j; \ck}^2$. Therefore, since multiplication is a bounded operator from $\CB_{2,2}^{-\kappa}(\ck) \times C^\kappa(\ck)$ to $\CB_{2,2}^{-\kappa}(\ck)$, we obtain a bound on the first term of order  $\|\bar{\Gamma}\|_{\CT^j; \ck}^2 \| \bar{H}_{\tau; n_1}^{0, \eta} - \bar{H}_{\tau; n_2}^{0, \eta}\|_{\gamma_\tau, - \kappa, 2; \ck}$. This is a bound of the correct order by an application of Cauchy--Schwarz and Proposition~\ref{prop: uniform bounds}.
		
		For the term $ \CQ_b \bar{H}_{\tau; n_2}^{0, \eta} \cdot \CQ_c \left ( f_0^{\bar \tau; n_1} - f_0^{\bar \tau; n_2} \right )$, we write
		\begin{equs}
			\| \CQ_c \left( f_0^{\bar \tau; n_1} - f_0^{\bar \tau; n_2} \right) \|_{L^{\infty}(\ck)} & = \sup_{z \in \ck}\left | \CQ_c \left ( \bar{\Gamma}_{z0}^{n_1} \bar \tau - \bar \Gamma_{z0}^{n_2} \bar \tau \right ) \right |  
			\\
			& \lesssim \|\bar{\Gamma}^{n_1} - \bar{\Gamma}^{n_2} \|_{\CT; \ck}
		\end{equs}
		and then note that our desired bound follows from the fact that the Sobolev norm is bounded by the $L^2$-norm. Indeed, we then have
		\begin{equs}
			\bE \Bigg [ \sup_{\|\eta\|_\CH \le 1} \big \|  \CQ_b \bar{H}_{\tau; n_2}^{0, \eta} & \cdot \CQ_c \left ( f_0^{\bar \tau; n_1} - f_0^{\bar \tau; n_2} \right ) \big \|_{\CB_{2,2}^{-\kappa}(\ck)}^{p_k} \Bigg ] \\ & \lesssim \bE \left [\sup_{\|\eta\|_\CH \le 1} \left \|  \CQ_b \bar{H}_{\tau; n_2}^{0, \eta} \cdot \CQ_c \left ( f_0^{\bar \tau; n_1} - f_0^{\bar \tau; n_2} \right ) \right \|_{L^2(\ck)}^{p_k} \right ]
			\\
			& \lesssim \bE \left [\sup_{\|\eta\|_\CH \le 1} \| \CQ_b H_{\tau; n_2}^{0, \eta} \|_{L^2(\ck)}^{p_{k+1}} \right ]^{1/2} \bE \left [ \|\bar{\Gamma}^{n_1} - \bar{\Gamma}^{n_2} \|_{\CT; \ck}^{p_{k+1}} \right ]^{1/2}
		\end{equs}
		which is a bound of the right order by Proposition~\ref{prop: uniform bounds}.
		
		The second term on the right hand side of \eqref{eq: Hsobprod equation} is bounded similarly by swapping the roles of $\tau$ and $\bar \tau$ in the above.
	\end{proof}
	
	It remains to consider the effect of the abstract integration operator. We again write $\bar{\CF}_- = \{\tau_1, \dots, \tau_n\}$ where the trees are as in the previous section. 
	
	In this setting, we break our consideration into three cases. We write $\tau_j = \CI^\fl \tau_l$ for some $l < j$. We then consider separately the case where $\gamma_{\tau_l} > 0$, where $\tau_l = \Xi_\ft$ for $\ft \in \Lab_-$ such that $|\ft|_\fs \le - |\fs|/2$ and finally the remaining symbols for which $\gamma_{\tau_l} \le 0$.
	
	In the first case, we will apply Theorem~\ref{theo: PointedSobSchauder} to obtain our desired result. In this result, we work on the regularity structure $\CT^j$. 
	
	\begin{lemma}\label{lemma: H sob integration 1}
		Fix $k \in \bZ_+$. Suppose that $\tau_j = \CI^\fl \tau_l$ for $l < j$ and that $\gamma_{\tau_l} > 0$. Suppose additionally that there exists a $\theta > 0$ such that for each compact $\ck \subseteq \bR^d$ we have the bound
		\begin{equs}
			\bE \Big [\sup_{\|\eta\|_\CH \le 1} \|\pHao{\tau_l} - \pHbo{\tau_l} \|_{\gamma_{\tau_l}, - \kappa, 2; \ck}^{p_{k+1}} + \|\bar \Pi^{n_1} - \bar{\Pi}^{n_2} \|_{V^{(j-1)}; \ck}^{p_k} & + \| \bar\Gamma^{n_1} - \bar \Gamma^{n_2} \|_{\CT^j ; \ck}^{p_{k+1}} \Big ] \\& \lesssim 2^{- p_{k+1} n_2 \theta}.
		\end{equs}
		Then there exists a $\bar \theta > 0$ such that for each compact $\ck \subseteq \bR^d$ we have that
		\begin{equ}
			\bE \left [ \sup_{\|\eta\|_\CH \le 1}\| \pHao{\tau_j} - \pHbo{\tau_j} \|_{\gamma_{\tau_j}, - \kappa, 2; \ck}^{p_k} \right ] \lesssim 2^{-p_k n_2 \bar \theta}.
		\end{equ}
	\end{lemma}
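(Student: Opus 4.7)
The plan is to apply the pointed Schauder estimate in the Sobolev-coefficient norm, Theorem~\ref{theo: PointedSobSchauder}, directly to the difference $\pHao{\tau_j} - \pHbo{\tau_j}$. By the inductive definition \eqref{e:int}, this difference equals $\CK^{0,2}_{\gamma_{\tau_l},\deg_2\tau_l;\fl}\pHao{\tau_l}$ computed for the model $\bar Z^{n_1}$ minus its counterpart for $\bar Z^{n_2}$. The hypothesis $\gamma_{\tau_l} > 0$, together with the choice $\gamma_{\tau_l} = \alpha_{\tau_l} + |\fs|/2 - n_{\tau_l}\bar\kappa$, ensures the regularity constraint $0 < \gamma_{\tau_l} < \bar\alpha + |\fs|/2$ required by Theorem~\ref{theo: PointedSobSchauder} on the sector $V^{(l-1)}$, while the identity $\gamma_{\tau_l} + |\fl|_\fs = \gamma_{\tau_j}$ means the theorem delivers precisely the norm we want on the left-hand side.

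Pathwise, the theorem yields
\begin{equs}
	\sup_{\|\eta\|_\CH \le 1}\|\pHao{\tau_j} - \pHbo{\tau_j}\|_{\gamma_{\tau_j},-\kappa,2;\ck}
	\lesssim A_{n_1,n_2} + B_{n_1,n_2}\;,
\end{equs}
where $A_{n_1,n_2}$ collects the three linear terms $\|\bar\Pi^{n_1} - \bar\Pi^{n_2}\|_{\gamma_{\tau_l};\bar\ck}$, $\|\bar\Gamma^{n_1} - \bar\Gamma^{n_2}\|_{\gamma_{\tau_l};\bar\ck}$ and $\sup_\eta\|\pHao{\tau_l} - \pHbo{\tau_l}\|_{\gamma_{\tau_l},-\kappa,2;\bar\ck}$, and $B_{n_1,n_2}$ is the interpolation term. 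Each summand of $A_{n_1,n_2}$ is supplied with a $p_{k+1}$-moment bound of order $2^{-p_{k+1} n_2\theta}$ either directly by the hypothesis of the lemma or, for the $\bar\Pi$-difference, after restricting to the sub-sector $V^{(l-1)} \subseteq V^{(j-1)}$.

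The implicit constant in Theorem~\ref{theo: PointedSobSchauder} is only uniform over bounded choices of the model and the pointed norms $\$\pHao{\tau_l}\$, \$\pHbo{\tau_l}\$$. I would handle this standard pathwise-uniformity issue by bounding the implicit constant by a power of $1 + \|\bar Z^{n_1}\|_{\CT^j;\bar\ck} + \|\bar Z^{n_2}\|_{\CT^j;\bar\ck} + \sup_\eta(\$\pHao{\tau_l}\$_{2,\gamma_{\tau_l},\deg_2\tau_l;0} + \$\pHbo{\tau_l}\$_{2,\gamma_{\tau_l},\deg_2\tau_l;0})$, and then applying Cauchy--Schwarz at a higher moment. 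The moments of this polynomial are bounded uniformly in $n_1,n_2$ via Proposition~\ref{prop: uniform bounds} (for the models) and Proposition~\ref{prop: H control} (which gives polynomial-in-model-norm bounds on the pointed norms of $\pH{\tau_l}$). This trade is exactly why the hypothesis is stated at level $p_{k+1}$ and the conclusion only at level $p_k$, and produces a new exponent $\bar\theta > 0$ which will in general be strictly smaller than $\theta$.

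The main obstacle is the interpolation term $B_{n_1,n_2}$, which is of the form $\|\pHao{\tau_l}-\pHbo{\tau_l}\|^\theta \cdot (\text{norms already uniformly bounded})^{1-\theta}$. The cleanest treatment is to first absorb the uniformly bounded factor into the polynomial envelope of the Schauder constant (again using Proposition~\ref{prop: H control}) and then apply Hölder's inequality with exponents adapted to the power $\theta$, converting a $2^{-p_{k+1}n_2\theta}$ decay of the small factor into a $2^{-p_k n_2 \theta \theta'}$ decay after Hölder. Taking $\bar\theta$ to be the minimum of the exponents produced by the $A$- and $B$-contributions, and noting that in all cases the uniformly bounded moments appearing can be absorbed via Proposition~\ref{prop: uniform bounds} and Proposition~\ref{prop: H control} at level $p_{k+1}$, then completes the proof.
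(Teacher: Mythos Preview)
Your proposal is correct and follows exactly the approach the paper takes: the paper's proof is the single sentence ``This result is an immediate corollary of Theorem~\ref{theo: PointedSobSchauder} and the uniform bounds of the previous section,'' and you have simply unpacked what that sentence means, including the Cauchy--Schwarz trade of $p_{k+1}$-moments for $p_k$-moments against the polynomial envelope controlled by Propositions~\ref{prop: uniform bounds} and~\ref{prop: H control}.
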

	\begin{proof}
		This result is an immediate corollary of Theorem~\ref{theo: PointedSobSchauder} and the uniform bounds of the previous section.
	\end{proof}
	
	We now turn to the remaining two cases. We begin with the case where $\tau_j = \CI^\fl \tau_l$ and $\gamma_{\tau_l} \le 0$. The key estimate that we will need is the contents of the following result.
	
	\begin{lemma}\label{lemma: base sob estimate}
		Suppose that $\reg \ft \le 0$ so that $|\ft|_\fs < - |\fs|/2$. Then for any $\varepsilon > 0$, $n_1 \ge n_2$ and $\ck \subseteq \bR^d$ we have that
		\begin{equ}	
			\sup_{\|\eta\|_\CH \le 1} \| \varrho^{n_1} \ast \eta_\ft - \varrho^{n_2} \ast \eta_\ft \|_{\CB_{2,\infty}^{\reg \ft - \varepsilon}(\ck)}  \lesssim 2^{- n_2 \varepsilon}.
		\end{equ} 
	\end{lemma}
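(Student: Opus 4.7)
The plan is to reduce the claim to two standard facts: (i) a quantitative approximation property of the mollifier $\varrho$ in the scale of Sobolev spaces on $\bR^d$, and (ii) a continuous embedding $H^{\reg\ft-\varepsilon}(\bR^d) \hookrightarrow \CB_{2,\infty}^{\reg\ft-\varepsilon}(\ck)$ in the negative regularity regime.

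First, note that by the very definition of the space $\CH = \prod_{\ft\in\Lab_-} H^{\reg\ft}$ and the $\ell^1$-structure of its norm, the constraint $\|\eta\|_{\CH}\le 1$ implies $\|\eta_\ft\|_{H^{\reg\ft}(\bR^d)} \le 1$. Next, working on the Fourier side with the $\fs$-anisotropic weight $(1+|\xi|_\fs^2)^s$, one verifies that for every $s\ge 0$,
\begin{equ}
\|\varrho^n \ast f - f\|_{H^{\reg\ft-s}(\bR^d)} \lesssim 2^{-n s}\,\|f\|_{H^{\reg\ft}(\bR^d)}\;,
\end{equ}
because the Fourier multiplier $(1-\hat\varrho(2^{-n\fs}\xi))(1+|\xi|_\fs^2)^{-s/2}$ is bounded by $C\min\bigl(1,(2^{-n}|\xi|_\fs)^{s}\bigr)(1+|\xi|_\fs^2)^{-s/2} \lesssim 2^{-n s}$. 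Applying this with $s=\varepsilon$ to $n=n_1$ and $n=n_2$ and using the triangle inequality gives
\begin{equ}
\sup_{\|\eta\|_\CH\le 1}\|\varrho^{n_1}\ast\eta_\ft - \varrho^{n_2}\ast\eta_\ft\|_{H^{\reg\ft-\varepsilon}(\bR^d)} \lesssim 2^{-n_1\varepsilon}+2^{-n_2\varepsilon} \lesssim 2^{-n_2\varepsilon}\;,
\end{equ}
using $n_1\ge n_2$.

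It remains to pass from this global $H^s$-bound to the local Besov norm with $s=\reg\ft-\varepsilon<0$. For any test function $\psi\in\CB^r$, the Plancherel identity in $x$ yields
\begin{equ}
\int_{\bR^d}|g(\psi_x^\lambda)|^2\,dx = (2\pi)^d\int_{\bR^d}|\hat g(\xi)|^2|\hat\psi(\lambda^\fs\xi)|^2\,d\xi\;,
\end{equ}
and splitting the integrand as $(1+|\xi|_\fs^2)^s|\hat g(\xi)|^2\cdot (1+|\xi|_\fs^2)^{-s}|\hat\psi(\lambda^\fs\xi)|^2$ and bounding the second factor pointwise by a constant times $\lambda^{2s}$ (using the rapid decay of $\hat\psi$ together with $\lambda\le 1$ and $s<0$), one obtains $\|g(\psi_\cdot^\lambda)\|_{L^2(\bR^d)} \lesssim \lambda^{s}\|g\|_{H^s}$ uniformly in $\psi\in\CB^r$ and $\lambda\in(0,1]$. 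Taking the supremum over $\lambda$ and restricting the $L^2$ integral to $\ck$ gives the embedding $\|g\|_{\CB_{2,\infty}^{s}(\ck)} \lesssim \|g\|_{H^s(\bR^d)}$, which combined with the Sobolev bound above yields the claim.

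The only subtle point is the embedding $H^s\hookrightarrow \CB_{2,\infty}^s(\ck)$ in the negative regularity regime, since the naive pointwise bound $|g(\psi_x^\lambda)|\lesssim \|g\|_{H^s}\|\psi_x^\lambda\|_{H^{-s}}$ is too weak by a power of $\lambda^{-|\fs|/2}$; the $L^2$-integration over $x$ is essential and is recovered via the Plancherel identity above. The handling of the supremum over $\psi\in\CB^r$ is standard (e.g.\ by approximating $\CB^r$ by a countable dense family in $\CC^r$) and is not a genuine difficulty.
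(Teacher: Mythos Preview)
Your argument is correct in outline and takes a genuinely different route from the paper's. The paper works directly with the local Besov seminorm: it estimates $\|\sup_{\psi}\scal{\varrho^{n}\ast\eta_\ft-\eta_\ft,\psi_x^\lambda}\|_{L^2(\ck)}$ by a scale dichotomy, using that for $\lambda\ge 2^{-n}$ the function $\lambda^\varepsilon 2^{n\varepsilon}(\psi^\lambda-\varrho^n\ast\psi^\lambda)$ is again a test function at scale $\sim\lambda$ (so one can transfer the difference onto the test function and invoke the $\CB_{2,\infty}^{\reg\ft}$ bound for $\eta_\ft$), while for $\lambda<2^{-n}$ one bounds each term separately via Jensen. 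Your approach instead factors through the global Sobolev estimate $\|\varrho^n\ast f-f\|_{H^{s-\varepsilon}}\lesssim 2^{-n\varepsilon}\|f\|_{H^s}$ and then embeds. The paper's route is entirely wavelet- and Fourier-free, which is consistent with its aim of allowing general anisotropic scalings; your route is shorter once the Fourier characterisation of the anisotropic $H^s$ is granted, but that identification with the paper's definition $H^s:=\CB_{2,2}^s(\bR^d)$ is an external (albeit standard) fact.

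Two remarks on your write-up. First, your Plancherel discussion for the embedding $H^s\hookrightarrow\CB_{2,\infty}^s(\ck)$ is unnecessary: since the paper \emph{defines} $H^s=\CB_{2,2}^s(\bR^d)$, the embedding into $\CB_{2,\infty}^s(\ck)$ is immediate from $\ell^2(n)\hookrightarrow\ell^\infty(n)$ in the discrete characterisation (see the paragraph before Definition~\ref{def: semigroup kernel}), and this already has the supremum over $\psi\in\CB^r$ built in. This also dissolves your worry about the $\sup_\psi$ inside the $L^2$ norm, which your ``countable dense family'' remark does not actually resolve (one cannot push a supremum out of an integral that way). Second, the multiplier bound $|1-\hat\varrho(2^{-n\fs}\xi)|\lesssim\min(1,(2^{-n}|\xi|_\fs)^\varepsilon)$ in the anisotropic setting needs $\varepsilon\le\min_i\fs_i$; this is harmless since the statement is for small $\varepsilon$, but it is worth noting.
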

	\begin{proof}
		It will suffice to show that $\sup_{\|\eta\|_\CH \le 1} \| \sup_{\psi \in \CB^r} \scal{\varrho^{n_1} \ast \eta_\ft - \eta_\ft, \psi_x^\lambda} \|_{L^2(\ck; dx)} \lesssim \lambda^{\reg \ft - \varepsilon} 2^{-n_1 \varepsilon}$.
		
		For $\lambda \ge 2^{-n_1}$, we write 
		\begin{equs}
			|\psi^\lambda(y) - \varrho^{n_1} \ast \psi^\lambda(y)| & \le \int |\psi^\lambda(y) - \psi^\lambda(y - z)| \varrho^{n_1}(z) dz
			\\
			& \lesssim \int \lambda^{- |\fs| - 1} |z| \rho^{n_1}(z) dz
			\lesssim \lambda^{- |\fs| - 1} 2^{-n_1}.
		\end{equs}
		As a result, for any $\varepsilon > 0$, $\lambda^{\varepsilon} 2^{n_1 \varepsilon} (\psi^\lambda - \varrho^{n_1} \ast \psi^\lambda)$ is a test function at scale $2\lambda$ up to a fixed multiplicative constant. Since we have that $\sup_{\|\eta\|_\CH \le 1} \|\eta\|_{\CB_{2, \infty}^{\reg \ft}(\ck)} \lesssim 1$, this then yields the bound
		\begin{equ}
			\sup_{\|\eta\|_\CH \le 1} \| \sup_{\psi \in \CB^r} \scal{\varrho^{n_1} \ast \eta_\ft - \eta_\ft, \psi_x^\lambda} \|_{L^2(\ck; dx)} \lesssim \lambda^{\reg \ft - \varepsilon} 2^{-n_1 \varepsilon}.
		\end{equ}
		
		It remains to consider the regime where $\lambda < 2^{-n_1}$. In this regime, we apply Jensen's inequality to obtain
		\begin{equ}
			\| \sup_{\psi \in \CB^r} \scal{ \eta_\ft, \varrho^{n_1} \ast \psi_x^\lambda} \|_{L^2(\ck; dx)}^2 \lesssim \iint \sup_{\psi \in \CB^r} |\scal{\eta_\ft, \psi_y^\lambda}|^2 \varrho_x^{n_1}(y) \, dx \, dy \lesssim \lambda^{ \reg \ft}
		\end{equ}
		which implies the desired bound. The term where no mollification is present is treated similarly. 
	\end{proof}
	
	With this estimate in hand, we are ready to obtain our integration result for the case where $\tau_l = \Xi_\ft$ and $\tau_j = \CI^\fl \tau_l$.
	
	\begin{lemma}\label{lemma: sob H base integral}
		Suppose that $\gamma_\ft \le 0$. Then there exists a $\theta > 0$ such that for each compact subset $\ck \subseteq \bR^d$ we have that
		\begin{equ}
			\sup_{\|\eta\|_\CH \le 1} \| \pHao{\CI^\fl \Xi_\ft} - \pHbo{\CI^\fl \Xi_\ft} \|_{\gamma_{\CI_\fl\tau}, - \kappa, 2; \ck} \lesssim 2^{-n_2 \theta}
		\end{equ}
	\end{lemma}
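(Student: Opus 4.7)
The plan is to exploit the fact that $\pH{\Xi_\ft}$ vanishes identically whenever $\gamma_\ft \le 0$, since in that regime the sum in the defining formula~\eqref{e:baseH1} is indexed by the empty set $\{k : |k|_\fs < \gamma_\ft\}$. The candidate reconstruction provided by Lemma~\ref{lemma: eta candidate}, namely $\eta_\ft^n = \varrho^n \ast \eta_\ft$, is however nontrivial. I would therefore unpack Definition~\ref{def: pointed integration} with $f \equiv 0$ and $\CR f = \eta_\ft^n$: the abstract integration term $\CI f$ and the $\CJ$-term drop out, and only the $\CN_{\gamma_\ft}$ and $T^0_{\deg_2 \Xi_\ft, 2}$ contributions survive. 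Writing $g_k^n \eqdef D^k K^\fl \ast \eta_\ft^n$, an elementary rearrangement of $(X+y)^k$ then shows that the coefficient of $X^l$ in $\pH{\CI^\fl \Xi_\ft}$ (for $|l|_\fs < \gamma_\ft + |\fl|_\fs$) is exactly $\frac{1}{l!}$ times the Taylor remainder at $0$ of $g_l^n$, truncated at order $|\ft|_\fs + |\fl|_\fs - |l|_\fs$. Taking the difference between mollification levels $n_1$ and $n_2$ thus reduces the lemma to controlling the $\CB_{2,2}^{-\kappa}$-norm of two sorts of quantities: the function $g_l^{n_1}-g_l^{n_2}$ on $\ck$, and polynomials in $y$ whose coefficients are pointwise values $(g_{l+m}^{n_1}-g_{l+m}^{n_2})(0)$.

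For the function part, I would start from Lemma~\ref{lemma: base sob estimate}, which yields $\sup_{\|\eta\|_\CH \le 1}\|\eta_\ft^{n_1}-\eta_\ft^{n_2}\|_{\CB_{2,\infty}^{\reg\ft - \varepsilon}(\bar\ck)} \lesssim 2^{-n_2 \varepsilon}$ for any small $\varepsilon>0$. Standard Schauder estimates for Besov spaces (obtained by decomposing $K^\fl$ into annular pieces and tracking the scaling) imply that convolution with $K^\fl$ gains $|\fl|_\fs$ degrees of regularity, and differentiation by $D^l$ loses $|l|_\fs$. This places $g_l^{n_1}-g_l^{n_2}$ in $\CB_{2,\infty}^{\reg\ft - \varepsilon + |\fl|_\fs - |l|_\fs}(\ck)$ with norm $\lesssim 2^{-n_2 \varepsilon}$. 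Since $|l|_\fs < \gamma_\ft + |\fl|_\fs$ together with Assumption~\ref{ass:alg} (giving $\reg\ft > |\ft|_\fs + |\fs|/2$) forces the effective regularity to be strictly positive (up to the small $\bar\kappa$ and $\varepsilon$ losses), the trivial Besov embedding $\CB_{2,\infty}^{s} \hookrightarrow \CB_{2,2}^{-\kappa}$ for $s > -\kappa$ concludes this part.

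For the polynomial part, I would similarly use Besov embedding $\CB_{2,\infty}^{\reg \ft - \varepsilon}(\bar\ck) \hookrightarrow \CB_{\infty,\infty}^{\reg \ft - \varepsilon - |\fs|/2}(\bar\ck)$ together with the same Schauder and differentiation gains to conclude that $g_{l+m}^{n_1} - g_{l+m}^{n_2}$ is continuous at $0$ (which is the condition $|l+m|_\fs < |\ft|_\fs + |\fl|_\fs < \reg\ft + |\fl|_\fs - |\fs|/2$, again provided by Assumption~\ref{ass:alg}) with uniform pointwise bound $\lesssim 2^{-n_2 \varepsilon}$. Since $\ck$ is compact, the monomials $y^m$ are uniformly bounded, and the $\CB_{2,2}^{-\kappa}(\ck)$-norm of the polynomial part is dominated by its $L^2(\ck)$-norm, which inherits the desired exponential decay. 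Combining these two contributions yields the claim with any $\theta < \varepsilon$. The only real obstacle is bookkeeping: one has to verify that the various regularity thresholds $|l|_\fs < \gamma_\ft + |\fl|_\fs$ and $|l+m|_\fs < |\ft|_\fs + |\fl|_\fs$ leave strictly positive room after the Besov-Schauder gains and the embedding into $\CB_{2,2}^{-\kappa}$, which is exactly what is secured by $\reg\ft > |\ft|_\fs + |\fs|/2$ from Assumption~\ref{ass:alg} with $\kappa, \bar\kappa, \varepsilon$ chosen small enough.
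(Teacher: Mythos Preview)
Your proposal is correct and follows essentially the same approach as the paper's proof. Both arguments observe that $\pH{\Xi_\ft}\equiv 0$ in this regime so that only the $\CN_{\gamma_\ft}$ and $T^0_{\deg_2\Xi_\ft,2}$ contributions survive, then control the first sum via Lemma~\ref{lemma: base sob estimate} combined with a Schauder-type decomposition of $K^\fl$, and the second (polynomial) sum via the embedding $\CB_{2,\infty}^{\reg\ft-\varepsilon}\hookrightarrow\CB_{\infty,\infty}^{|\ft|_\fs}$; your explicit rewriting of the $X^l$-coefficient as a Taylor remainder of $g_l^n$ is a convenient bookkeeping device but does not alter the substance of the argument.
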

	\begin{proof}
		Since $\pHao{\Xi_\ft} = 0$ in this setting, we have that 
		\begin{equs}
			\pHao{\CI^\fl \Xi_\ft}(y) & - \pHbo{\CI^\fl \Xi_\ft}(y) \\ & = \sum_{|k|_\fs < \gamma_\ft + |\fl|_\fs} \frac{X^k}{k!} \scal{\varrho^{n_1} \ast \eta_\ft - \varrho^{n_2} \ast \eta_\ft, \partial^k K(y - \cdot)} \\ & \quad - \sum_{|k|_\fs < |\ft|_\fs + |l|_\fs} \frac{(X + y - x)^k}{k!} \scal{\varrho^{n_1} \ast \eta_\ft - \varrho^{n_2} \ast \eta_\ft, \partial^k K(x- \cdot)}.
		\end{equs}
		For terms appearing in the first sum, Lemma~\ref{lemma: base sob estimate} imply a bound on the $L^2(\ck; dy)$ norm as in the treatment of the corresponding terms in the proof of Theorem~\ref{theo: PointedSobSchauder}. For the terms in the second sum, we assume that $\varepsilon$ is chosen to be sufficiently small so that $\gamma_\ft + \varepsilon \le \reg \ft$. Then, applying the embedding $\CB_{2, \infty}^{\reg \ft - \varepsilon} \hookrightarrow \CB_{\infty, \infty}^{|\ft|_\fs}$ and a similar treatment yields the desired bound.
	\end{proof}
	
	This leaves us to consider the case where $\gamma_{\tau_l} \le 0$ and $\tau_l \neq \Xi_\ft$. Here, the key probabilistic estimate is contained in the following Lemma.
	
	\begin{lemma}\label{lemma: sob H bad case}
		Suppose that $k \in \bZ_+$, $\gamma_{\tau_j} \le 0$ and that we can write $\tau_j = \Xi_\ft \cdot \tau_l$ where $\tau_l = \prod_{i=1}^k \CI^{\fl_i} \sigma_i$. Suppose additionally that there exists a $\theta > 0$ such that for each compact $\ck \subseteq \bR^d$ we have the bound
		\begin{equ}
			\bE \left [\|\bar \Pi^{n_1} - \bar{\Pi}^{n_2} \|_{V^{(j-1)}; \ck}^{p_{k+1}} + \| \bar\Gamma^{n_1} - \bar \Gamma^{n_2} \|_{\CT^j ; \ck}^{p_{k+1}} \right ] \lesssim 2^{- {p_{k+1}} n_2 \theta}.
		\end{equ}
		Then there exists a $\bar \theta > 0$ such that for each compact $\ck \subseteq \bR^d$ we have that
		\begin{equ}
			\bE \left [ \sup_{\|\eta\|_\CH \le 1} \| (\varrho^{n_1} \ast \eta_\ft) \cdot \bar\Pi_x^{n_1} \tau_l - (\varrho^{n_2} \ast \eta_\ft) \cdot \bar \Pi_x^{n_2} \tau_l \|_{\CB_{2,\infty}^{\reg \ft}|; \ck)}^{p_k} \right ] \lesssim 2^{- p_k n_2 \bar \theta}.
		\end{equ}
	\end{lemma}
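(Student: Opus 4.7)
The strategy is to decompose the difference additively as
\begin{equ}
\eta_\ft^{n_1}\,\bar\Pi_x^{n_1} \tau_l - \eta_\ft^{n_2}\,\bar\Pi_x^{n_2} \tau_l = (\eta_\ft^{n_1} - \eta_\ft^{n_2}) \cdot \bar\Pi_x^{n_1} \tau_l + \eta_\ft^{n_2} \cdot (\bar\Pi_x^{n_1} - \bar\Pi_x^{n_2}) \tau_l\;,
\end{equ}
with $\eta_\ft^{n}\eqdef \varrho^n \ast \eta_\ft$, and to estimate each summand by a deterministic Besov multiplication bound followed by a moment estimate using Cauchy--Schwarz together with the uniform bounds from Proposition~\ref{prop: uniform bounds}.

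For the first piece, Lemma~\ref{lemma: base sob estimate} gives $\sup_{\|\eta\|_\CH \le 1} \|\eta_\ft^{n_1} - \eta_\ft^{n_2}\|_{\CB_{2,\infty}^{\reg\ft - \varepsilon}(\ck)} \lesssim 2^{-n_2 \varepsilon}$ for any small $\varepsilon > 0$. Meanwhile, by Lemma~\ref{lemma: bad tree identification}, $\bar\Pi_x^{n_1} \tau_l$ is a genuine continuous function lying in $\CC^\beta$ with $\beta = \min_i(\alpha_{\sigma_i} + |\fl_i|_\fs) > 0$, and the same lemma together with Assumption~\ref{ass:alg} guarantees $\beta + \reg\ft > 0$. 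Hence the standard product bound $\CB_{2,\infty}^{\alpha} \times \CC^\beta \to \CB_{2,\infty}^{\alpha}$ applies with $\alpha = \reg \ft - \varepsilon$ for $\varepsilon$ small enough. Since the $\CC^\beta$-norm of $\bar\Pi_x^{n_1} \tau_l$ is controlled in all $L^q(\Omega)$ by $\|\bar\Pi^{n_1}\|_{V^{(j-1)}; \bar\ck}$, Cauchy--Schwarz in $\Omega$ at the level $p_{k+1}$ converts this into an $L^{p_k}(\Omega)$-bound of the required order $2^{-p_k n_2 \varepsilon}$.

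For the second piece, we similarly combine uniform control on $\|\eta_\ft^{n_2}\|_{\CB_{2,\infty}^{\reg\ft}(\ck)}$ (uniform in both $n_2$ and $\|\eta\|_\CH \le 1$) with the assumed bound $\bE[\|\bar\Pi^{n_1} - \bar\Pi^{n_2}\|_{V^{(j-1)}; \ck}^{p_{k+1}}] \lesssim 2^{-p_{k+1} n_2 \theta}$, which translates into a convergence rate for $(\bar\Pi_x^{n_1} - \bar\Pi_x^{n_2}) \tau_l$ in $\CC^\beta$ at the same rate. The same product bound and Cauchy--Schwarz then yield a moment estimate of the desired form. The main technical point is ensuring that the Besov multiplication arithmetic closes: this is precisely where Lemma~\ref{lemma: bad tree identification} (providing $\alpha_{\sigma_i} + |\fl_i|_\fs + |\ft|_\fs + |\fs|/2 > 0$) and the positive gap between $\reg \ft$ and $|\ft|_\fs + |\fs|/2$ supplied by Assumption~\ref{ass:alg} are crucial, since without them the pointwise product would not even be well-defined. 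A secondary concern is commuting the supremum over $\|\eta\|_\CH \le 1$ with the $L^{p_k}(\Omega)$-norm, which is straightforward since $\eta$ enters only through deterministic Besov norms that are uniformly controlled over the unit ball of $\CH$.
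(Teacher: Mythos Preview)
Your proposal is correct and follows essentially the same route as the paper: the same additive splitting, the same use of Lemma~\ref{lemma: base sob estimate} for the first summand combined with Besov multiplication (justified by Lemma~\ref{lemma: bad tree identification}) and Proposition~\ref{prop: uniform bounds}, and the same use of the assumed model-difference bound for the second summand. The paper's reference to Lemma~\ref{lemma: sob H base case} for the first term appears to be a typo for Lemma~\ref{lemma: base sob estimate}, which is what you invoke; note also that both your argument and the paper's actually land in $\CB_{2,\infty}^{\reg\ft-\varepsilon}$ rather than $\CB_{2,\infty}^{\reg\ft}$, but this infinitesimal loss is harmless for the downstream uses of the lemma.
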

	\begin{proof}
		We write
		\begin{equs}
			(\varrho^{n_1} \ast \eta_\ft) \cdot \bar\Pi_x^{n_1} \tau_l & - (\varrho^{n_2} \ast \eta_\ft) \cdot \bar \Pi_x^{n_2} \tau_l \\ & = (\varrho^{n_1} \ast \eta_\ft - \varrho^{n_2} \ast \eta_\ft) \cdot \bar \Pi_x^{n_1} \tau_l + (\varrho^{n_2} \ast \eta_\ft) \cdot \left ( \bar \Pi_x^{n_1} \tau_l - \bar \Pi_x^{n_2} \tau_l \right )
		\end{equs}
		and estimate each term on the right hand side separately. 
		
		The desired estimate on the first term follows from Lemma~\ref{lemma: sob H base case} since by Lemma~\ref{lemma: bad tree identification} we can apply the multiplication result contained in \cite[Theorem 3.11]{BL21} and by Proposition~\ref{prop: uniform bounds} the part of the resulting bound that comes from the model is uniformly controlled.
		
		This leaves us to consider the second term. Here we observe that $\bar \Pi_x^{n_1} \tau_l - \bar \Pi_x^{n_2} \tau_l$ has a $\CC^{\alpha_{\tau_l}}(\ck)$ norm of order $\|\bar \Pi^{n_1} - \bar \Pi^{n_2} \|_{V^{(j-1)}; \ck}$. Therefore, the same approach as above yields a bound of order $\bE \left [ \| \bar \Pi^{n_1} - \bar \Pi^{n_2}\|_{V^{(j-1)}}^{p_k} \right ]$ which is of the correct order by hypothesis.
	\end{proof}
	
	At this point, we are ready to obtain our final integration result.
	
	\begin{lemma}\label{lemma: sob H bad integral}
		Suppose that $k \in \bZ_+$, $\gamma_{\tau_j} \le 0$ and that we can write $\tau_j = \Xi_\ft \cdot \tau_l$ where $\tau_l = \prod_{i=1}^k \CI^{\fl_i} \sigma_i$. Suppose additionally that there exists a $\theta > 0$ such that for each compact $\ck \subseteq \bR^d$ we have the bound
		\begin{equ}
			\bE \left [\|\bar \Pi^{n_1} - \bar{\Pi}^{n_2} \|_{V^{(j-1)}; \ck}^{p_{k+1}} + \| \bar\Gamma^{n_1} - \bar \Gamma^{n_2} \|_{\CT^j ; \ck}^{p_{k+1}} \right ] \lesssim 2^{- p_{k+1} n_2 \theta}.
		\end{equ}
		Then there exists a $\bar \theta > 0$ such that for each compact $\ck \subseteq \bR^d$ we have that
		\begin{equ}
			\bE \left [\sup_{\|\eta\|_\CH \le 1} \| \pHao{\CI^\fl \tau_j} - \pHbo{\CI^\fl  \tau_j} \|_{\gamma_{\tau_j}, - \kappa, 2; \ck}^{p_k} \right ] \lesssim 2^{-p_k n_2 \theta}.
		\end{equ}
	\end{lemma}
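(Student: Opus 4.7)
The proof follows the reconstruction-plus-Schauder template used throughout this section, but must be carried out in the regime $\gamma_{\tau_j}\le 0$ where one works with the explicit candidate for the pointed reconstruction produced in Lemma~\ref{lemma: pointed candidate exists} in place of a unique value. The plan is first to establish a Sobolev-coefficient analog of Theorem~\ref{theo:schauder candidate}, bearing the same relation to that theorem as Theorem~\ref{theo: PointedSobSchauder} bears to Theorem~\ref{theo:schauder}. As in Theorem~\ref{theo: PointedSobSchauder}, this is obtained by interpolating Theorem~\ref{theo:schauder candidate} against a rough polynomial-order estimate on the coefficients, yielding a bound on $\|\pHao{\CI^\fl\tau_j}-\pHbo{\CI^\fl\tau_j}\|_{\gamma_{\CI^\fl\tau_j},-\kappa,2;\ck}$ in terms of three inputs: (i) a Sobolev-coefficient difference bound on $\pHao{\tau_j}-\pHbo{\tau_j}$, (ii) the model-difference norms on $V^{(j-1)}$ and $\CT^j$, and (iii) a negative-Besov bound on the difference of the two candidate pointed reconstructions of $\pH{\tau_j}$.

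Input (ii) is exactly the hypothesis. Input (iii) is where the main probabilistic content sits: by Lemma~\ref{lemma: pointed candidate exists}, the candidate has the form
\begin{equ}
\CR(f_x^{\Xi_\ft}\cdot\pH{\tau_l})+\eta_\ft^n\cdot\Pi_x\tau_l.
\end{equ}
The difference of the second summands between the two mollification levels is exactly the object bounded in $\CB_{2,\infty}^{\reg\ft}$ by Lemma~\ref{lemma: sob H bad case}. For the first summand, Lemma~\ref{lemma: pointed candidate exists} ensures that $f_x^{\Xi_\ft}\cdot\pH{\tau_l}$ lies in $\CD_2^{\gamma,\deg_2\tau_j;x}$ for some $\gamma>0$, so Theorem~\ref{theo: PointedSobReconstr} delivers the required negative-Besov bound on the reconstruction difference in terms of the model-difference norms and a propagated Sobolev-coefficient bound on $\pHao{\tau_l}-\pHbo{\tau_l}$; the latter is available from the preceding stage of the induction, since $\tau_l$ has strictly fewer noise edges than $\tau_j$. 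Input (i) is then handled similarly: using the decomposition $\pH{\tau_j}=f^{\Xi_\ft}_x\pH{\tau_l}+\pH{\Xi_\ft}f^{\tau_l}_x$ together with $\pH{\Xi_\ft}=0$ whenever $|\ft|_\fs\le-|\fs|/2$, the Sobolev-coefficient difference of $\pH{\tau_j}$ reduces to a combination of model differences and the Sobolev-coefficient difference of $\pH{\tau_l}$, all of which are under control.

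The principal obstacle is the bookkeeping of regularity indices: $\kappa$ must be chosen small enough compared with the minimum gap between consecutive homogeneities in $\CT$ and with the quantity $\reg\ft-|\ft|_\fs-|\fs|/2$, and each interpolation step costs an arbitrarily small amount of $\theta$. These losses are absorbed by running the induction with the shifted degree assignments $|\cdot|_\fs^{(k)}$ from \eqref{eq: shifted hom assignments} with $N$ sufficiently large, exactly as in Proposition~\ref{prop: uniform bounds}. The conclusion then follows by Cauchy--Schwarz from the hypotheses together with the uniform bounds of Proposition~\ref{prop: uniform bounds} and Proposition~\ref{prop: H control}.
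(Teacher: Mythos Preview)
Your proposal is correct and follows essentially the same approach as the paper, though you are considerably more explicit about the details. The paper's proof consists of three sentences: follow the proof of Theorem~\ref{theo: PointedSobSchauder}, replacing appeals to Theorem~\ref{theo: PointedSobReconstr} by Lemma~\ref{lemma: sob H bad case}, in the manner of Lemma~\ref{lemma: sob H base integral}. You unpack this into the three inputs (i)--(iii) and, in particular, you correctly observe that the candidate reconstruction from Lemma~\ref{lemma: pointed candidate exists} splits as $\CR(f_x^{\Xi_\ft}\cdot\pH{\tau_l})+\eta_\ft^n\cdot\Pi_x\tau_l$, with only the second summand directly controlled by Lemma~\ref{lemma: sob H bad case}; the first summand lives at positive $\gamma$ and is handled via Theorem~\ref{theo: PointedSobReconstr} together with Sobolev-coefficient control on $\pH{\tau_l}$ from the inductive hypothesis. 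This last point is worth flagging: the Sobolev-coefficient bound on $\pH{\tau_l}$ is not among the displayed hypotheses of the lemma (unlike in Lemma~\ref{lemma: H sob integration 1}), but is genuinely needed for the non-polynomial coefficients of $\CI\pH{\tau_j}=\CI(\Xi_\ft\cdot\pH{\tau_l})$ and is available within the induction of Proposition~\ref{prop: sob H control}. Your treatment makes this dependence explicit where the paper leaves it implicit.
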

	\begin{proof}
		The proof is much the same as the proof of Theorem~\ref{theo: PointedSobSchauder}. The only real modification is that at points in the proof of that result where we applied Theorem~\ref{theo: PointedSobReconstr} we will instead use the bound provided in Lemma~\ref{lemma: sob H bad case}. This is done in much the same way as in the proof of Lemma~\ref{lemma: sob H base integral} and so we omit the details.
	\end{proof}
	Combining the results of the previous lemmas with Theorem~\ref{theo: PointedSobSchauder}, a simple induction then yields the following result.
	
	\begin{proposition}\label{prop: sob H control}
		Fix $k > j \in \bZ_+$. Suppose that there exists a $\theta > 0$ such that for each compact set $\ck \subseteq \bR^d$ and each $n_1 \ge n_2$ we have that
		\begin{equ}
			\bE \left [ \|\bar \Pi^{n_1} - \bar \Pi^{n_2} \|_{V^{(j-1)}; \ck}^{p_k} + \|\bar \Gamma^{n_1} - \bar \Gamma^{n_2} \|_{\CT^j; \ck}^{p_k} \right ] \lesssim 2^{-{p_k}n_2 \theta}.
		\end{equ}
		Then there exists a $\bar \theta$ such that for the same set of parameters and for $l \le j$
		\begin{equ}
			\bE \left [ \sup_{\|\eta\|_\CH \le 1} \| \pHao{\tau_l} - \pHbo{\tau_l} \|_{\gamma_{\tau_l}, - \kappa, 2; \ck}^{p_{k-j}} \right ] \lesssim 2^{-p_{k-j} n_2 \bar \theta}.
		\end{equ}
	\end{proposition}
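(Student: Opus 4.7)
The plan is to proceed by induction on $l \in \{1, \dots, j\}$, ordered as in Section~\ref{sec: uniform bounds}, in such a way that each tree $\tau_l$ is constructed from trees with strictly fewer edges via either multiplication or abstract integration. The invariant I will propagate is a statement of the form
\begin{equ}
\bE \bigl[ \sup_{\|\eta\|_\CH \le 1} \| \pHao{\tau_l} - \pHbo{\tau_l} \|_{\gamma_{\tau_l}, - \kappa, 2; \ck}^{p_{k-l}} \bigr] \lesssim 2^{-p_{k-l} n_2 \theta_l}
\end{equ}
for some $\theta_l > 0$. Since $l \le j < k$ and $p_{k-l} \ge p_{k-j}$, H\"older's inequality immediately upgrades these bounds to the one claimed in the statement.

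The base case $\tau_l = \Xi_\ft$ is handled by Lemma~\ref{lemma: sob H base case}, which is deterministic and requires no moment estimates. For the induction step, I would split into cases according to how $\tau_l$ is built. If $\tau_l = \tau_a \cdot \tau_b$ with $a, b < l$, the multiplication result (the first lemma of Section~\ref{sec: convergence}) combined with the induction hypothesis for $\tau_a$ and $\tau_b$ and the uniform bounds from Proposition~\ref{prop: uniform bounds} applied via Cauchy--Schwarz yield the required estimate, at the cost of one power of $2$ in the moment. If $\tau_l = \CI^\fl \tau_m$ with $m < l$ and $\gamma_{\tau_m} > 0$, I would apply Lemma~\ref{lemma: H sob integration 1}, feeding in the induction hypothesis on $\tau_m$ together with the assumed model bounds. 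If $\tau_l = \CI^\fl \Xi_\ft$ with $\gamma_\ft \le 0$, Lemma~\ref{lemma: sob H base integral} directly delivers the bound. Finally, if $\gamma_{\tau_m} \le 0$ and $\tau_m \ne \Xi_\ft$, Lemma~\ref{lemma: sob H bad integral} provides the required Schauder-type estimate, exploiting the ad-hoc candidate for the pointed reconstruction from Lemma~\ref{lemma: pointed candidate exists} via the probabilistic input of Lemma~\ref{lemma: sob H bad case}.

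The bookkeeping of moments is the only genuinely systematic part: at each stage the Cauchy--Schwarz steps force a descent from $p_{k-(l-1)+1} = p_{k-l+2}$ to $p_{k-l+1}$, and since every $\tau_l$ in $\bar{\CF}_-$ is built in at most $l$ such steps starting from a noise base case with moment $p_k$, after $l$ generations we have control at moment $p_{k-l}$, which is compatible with the assumption $k > j$. Since the collection $\bar{\CF}_-$ is finite and the labelling was chosen compatibly with the construction order, the induction closes in finitely many steps.

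I do not expect a serious obstacle beyond careful case-splitting: the conceptual work has been front-loaded into the individual lemmas, in particular the distinction between $\gamma_{\tau_m} > 0$ and $\gamma_{\tau_m} \le 0$ in the integration step, which already encodes where the candidate for the reconstruction must be substituted for the actual reconstruction operator. The one place to be alert is ensuring that in the multiplication step the estimate on $\bar{\Gamma}^{n_1} - \bar{\Gamma}^{n_2}$ enters only through the coefficient $\CQ_c(f_0^{\bar\tau;n_1} - f_0^{\bar\tau;n_2})$ being bounded in $L^\infty$, so that the resulting contribution lies in $L^2$ and hence in $\CB^{-\kappa}_{2,2}$; this is exactly what was exhibited in the first lemma of this section, and the bound propagates cleanly.
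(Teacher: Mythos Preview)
Your proposal is correct and matches the paper's approach exactly: the paper's proof is the single sentence ``Combining the results of the previous lemmas with Theorem~\ref{theo: PointedSobSchauder}, a simple induction then yields the following result,'' and you have spelled out precisely that induction. The only quibble is a harmless off-by-one in your moment bookkeeping (you write $p_{k-(l-1)+1} = p_{k-l+2}$ where the induction hypothesis actually sits at $p_{k-(l-1)} = p_{k-l+1}$), but your conclusion that after $l$ steps one lands at $p_{k-l} \ge p_{k-j}$ is correct.
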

	
	With this result in hand, we are finally in a position to prove the convergence result that was promised in this section.
	
	\begin{theorem}\label{theo: bar BPHZ}
		Fix $k \in \bZ_+$. There exists $\theta, \varepsilon > 0$ such that for each compact set $\ck \subseteq \bR^d$, $1 \le j \le n_0$ and $n_1 \ge n_2$, we have the bound 
		\begin{equs}
			\bE \Bigg [\sum_{1 \le l \le j}  2^{-np_k(|\tau_l|_\fs^{(N-j)} - \varepsilon)}  \left |\bar{\Pi}_0^{n_1} \tau_l (\phi_0^{n}) - \bar{\Pi}_0^{n_2} \tau_l (\phi_0^{n}) \right |^{p_k} & + \|\bar{\Gamma}^{n_1} -  \bar{\Gamma}^{n_2} \|_{\CT^j}^{p_k} \Bigg ] \\ & \lesssim 2^{- p_k n_2 \theta} \label{eq: induction hyp 2}.
		\end{equs}
	\end{theorem}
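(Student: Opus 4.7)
The proof proceeds by induction on $j$, closely mirroring the structure of the proof of Proposition~\ref{prop: uniform bounds}. The base case $j=0$ is trivial since $\CT^0 = \Tpoly$ carries no noise content. For the induction step, I fix $j$ and assume the bound has been established for $l < j$, losing a little in the power $p_k$ at each stage (so one should really induct on a strengthened statement with $k$ replaced by some $k(j)$ that decreases linearly in $j$, as was done in the uniform bounds proof; I will suppress this bookkeeping). All bounds will be uniform in the mollification parameters beyond the explicit $2^{-n_2 \theta}$ factor, so I likewise suppress the dependence on the models where convenient.

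The first three moves of the inductive loop are the ``difference'' counterparts of the corresponding moves in Section~\ref{sec: uniform bounds}. First, I apply Lemma~\ref{lemma: Pi Norm Control} (difference version) to convert the induction hypothesis on the $\bar{\Pi}^{n_i}_0 \tau_l(\phi_0^n)$ for $l < j$ into a bound
\begin{equ}
\bE\bigl[\|\bar{\Pi}^{n_1}-\bar{\Pi}^{n_2}\|_{V^{(j-1)};\ck}^{p}\bigr] \lesssim 2^{-p n_2 \theta'}
\end{equ}
at the price of a tiny loss of regularity absorbed into the $|\cdot|_\fs^{(N-j)}$ scale. Second, I feed this into Lemma~\ref{lemma: Gamma Norm Control} (difference version) to obtain the corresponding bound on $\|\bar{\Gamma}^{n_1}-\bar{\Gamma}^{n_2}\|_{\CT^j;\ck}^{p}$. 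Third, Proposition~\ref{prop: sob H control} then yields a bound of order $2^{-p n_2 \bar\theta}$ for
$
\bE\bigl[\sup_{\|\eta\|_\CH \le 1}\|\bar H^{0,\eta}_{\tau_l;n_1} - \bar H^{0,\eta}_{\tau_l;n_2}\|_{\gamma_{\tau_l},-\kappa,2;\ck}^{p}\bigr]
$
for every $l \le j$, which is the key new ingredient needed to control the derivative term below.

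If $|\tau_j|_\fs^{(N-j)} > 0$, Lemma~\ref{lemma: pos hom control} (difference version) closes the induction directly. The interesting case is $|\tau_j|_\fs^{(N-j)} < 0$, where I apply the $p$-th order spectral gap inequality \eqref{eq: pSGap} to the cylindrical function $F(\xi) = \bar{\Pi}_0^{n_1}\tau_j(\phi_0^n) - \bar{\Pi}_0^{n_2}\tau_j(\phi_0^n)$. The mean term is handled as in Proposition~\ref{prop: uniform bounds}: by stationarity and the $\bBPHZ$ centring condition both expectations vanish on $\phi_0^{0}$, and writing $\phi_0^n = \phi_0^0 + \sum_{k=0}^{n-1}(\phi_0^{k+1}-\phi_0^k)$ one reexpresses each difference, using $\int R^k = 0$, as
\begin{equ}
\bE\bigl[\bar{\Pi}_y^{n_i}(\bar{\Gamma}^{n_i}_{y0}\tau_j - \tau_j)(\phi_y^{k+2})\bigr]
\end{equ}
integrated against $R^k(y)$. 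Splitting this as a telescope between $n_1$ and $n_2$ and estimating one factor by the uniform bounds of Proposition~\ref{prop: uniform bounds} and the other by the differences of $\bar\Pi$ and $\bar\Gamma$ established above (using Cauchy--Schwarz for moments), one obtains the required $2^{-n|\tau_j|_\fs^{(N-j)}} 2^{-n_2\theta}$ after summing in $k$, using $|\tau_j|_\fs^{(N-j)} < 0$ for summability.

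The main obstacle, and the whole reason the negative-Sobolev machinery of Section~\ref{section: Modelled Distributions with Sobolev Coefficients} was introduced, is the derivative term. By duality and density,
\begin{equ}
\Big\| \frac{\partial F}{\partial \xi}\Big\|_{H^{-\reg}(\Lab_-)} = \sup_{\|\eta\|_\CH \le 1,\,\eta \in \nice}\bigl| \bar \CR^{n_1} \bar H^{0,\eta}_{\tau_j;n_1}(\phi_0^n) - \bar \CR^{n_2} \bar H^{0,\eta}_{\tau_j;n_2}(\phi_0^n) \bigr|\;,
\end{equ}
where in the case $\gamma_{\tau_j} \le 0$ the reconstruction is replaced by the candidate from Lemmas~\ref{lemma: eta candidate} and~\ref{lemma: pointed candidate exists}. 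A naive application of Theorem~\ref{theo:pointedReconstruction} would not give any decay in $n_2$, since $\sup_{\|\eta\|_\CH \le 1}\$\bar H^{0,\eta}_{\tau_j;n_1} - \bar H^{0,\eta}_{\tau_j;n_2}\$_{2,\gamma_{\tau_j},\deg_2\tau_j;0}$ need not tend to zero. Instead, I invoke the Sobolev-pointed reconstruction bound of Theorem~\ref{theo: PointedSobReconstr} (for $\gamma_{\tau_j} > 0$), or its candidate-based analogue obtained in exactly the same way as Lemma~\ref{lemma: pointed candidate exists} via the interpolation argument of Theorem~\ref{theo: PointedSobReconstr} (for $\gamma_{\tau_j} \le 0$, handling the problematic product $\eta_\ft^{n_i}\cdot \bar\Pi^{n_i}_0 \bar\tau$ by the estimate of Lemma~\ref{lemma: sob H bad case}). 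This yields
\begin{equ}
\bigl|\bar\CR^{n_1} \bar H^{0,\eta}_{\tau_j;n_1}(\phi_0^n) - \bar\CR^{n_2} \bar H^{0,\eta}_{\tau_j;n_2}(\phi_0^n)\bigr| \lesssim 2^{-n(|\tau_j|_\fs^{(N-j)} - \varepsilon)}\,X_\eta^{n_1,n_2}\;,
\end{equ}
with $X_\eta^{n_1,n_2}$ a product of norms of the models and a small power $\theta$ of
\begin{equ}
\|\bar H^{0,\eta}_{\tau_j;n_1} - \bar H^{0,\eta}_{\tau_j;n_2}\|_{\gamma_{\tau_j},-\kappa,2;\bar\ck}^{\theta} + \|\bar\Pi^{n_1}-\bar\Pi^{n_2}\|_{V^{(j-1)};\bar\ck}^\theta + \|\bar\Gamma^{n_1}-\bar\Gamma^{n_2}\|_{\CT^j;\bar\ck}^\theta.
\end{equ}
Taking $p$-th moments, applying Cauchy--Schwarz to separate the uniformly bounded factors (Proposition~\ref{prop: uniform bounds} and Proposition~\ref{prop: H control}) from the small differences (bounded by $2^{-p n_2 \bar\theta}$ thanks to the third preparatory move and Proposition~\ref{prop: sob H control}), and combining with the mean estimate closes the induction step for $\tau_j$. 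The bound on $\|\bar\Gamma^{n_1} - \bar\Gamma^{n_2}\|_{\CT^j;\ck}^{p_k}$ appearing on the left-hand side of \eqref{eq: induction hyp 2} has already been obtained as the second preparatory move, completing the induction.
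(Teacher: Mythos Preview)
Your proposal is correct and follows essentially the same route as the paper: induction in $j$ with the difference versions of Lemmas~\ref{lemma: Pi Norm Control}--\ref{lemma: pos hom control}, the spectral gap inequality for negative-degree $\tau_j$, the same telescoping argument for the mean term, and the negative-Sobolev reconstruction machinery (Theorem~\ref{theo: PointedSobReconstr} together with Proposition~\ref{prop: sob H control}) for the derivative term. One small imprecision worth flagging: in the $\gamma_{\tau_j}\le 0$ case the paper does not appeal to a ``candidate-based analogue'' of Theorem~\ref{theo: PointedSobReconstr}, but rather splits the candidate reconstruction explicitly into the $\CR(f_0^{\Xi_\ft}\cdot \bar H^{0,\eta}_{\bar\tau})$ piece (which has positive $\gamma$ and is handled directly by Theorem~\ref{theo: PointedSobReconstr}) and the $\eta_\ft^{n_i}\cdot\bar\Pi^{n_i}_0\bar\tau$ piece, for which it interpolates between the bound of Lemma~\ref{lemma: sob H bad case} (decay in $n_2$ but scaling $2^{-n|\ft|_\fs}$) and that of Lemma~\ref{lemma: pointed candidate exists} (correct scaling $2^{-n|\tau_j|_\fs}$ but no decay); the separate subcase $\tau_j=\Xi_\ft$ is dispatched via Lemma~\ref{lemma: base sob estimate} and a Besov embedding.
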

	\begin{proof}
		We choose $N, N_1, N_2, k(j)$ as in the proof of Proposition~\ref{prop: uniform bounds} and we aim to prove by induction in $j$ that 
		\begin{equs}
			\bE \Bigg [\sum_{1 \le l \le j}  2^{-np_{k(j)}(|\tau_l|_\fs^{(N-j)} - \varepsilon)}  \left |\bar{\Pi}_0^{n_1} \tau_l (\phi_0^{n}) - \bar{\Pi}_0^{n_2} \tau_l (\phi_0^{n}) \right |^{p_{k(j)}} & + \|\bar{\Gamma}^{n_1} -  \bar{\Gamma}^{n_2} \|_{\CT^j}^{p_{k(j)}} \Bigg ] \\ & \lesssim 2^{- p_{k(j)} n_2 \theta} \label{eq: induction hyp 2 strong}
		\end{equs}
		The base case $j = 0$ is again trivial. 
		
		So long as $\varepsilon > 0$ is sufficiently small, by Lemmas~\ref{lemma: Pi Norm Control}, \ref{lemma: Gamma Norm Control} and~\ref{lemma: pos hom control} we can assume at the j-th inductive step that there exists a $\theta > 0$ such that for each compact set $\ck$ and $n_1 \ge n_2$
		\begin{equ}
			\bE \left [ \| \bar \Pi^{n_1} - \bar \Pi^{n_2} \|_{V^{(j-1)}; \ck}^{p_{k(j) - 3}} + \| \bar \Gamma^{n_1} - \bar \Gamma^{n_2} \|_{\CT^j; \ck}^{p_{k(j) - 3}} \right ] \lesssim 2^{-{p_{k(j) - 3}}n_2\theta}
		\end{equ}
		and that $|\tau_j|_{\fs}^{(N-j)} < 0$ without any loss of generality.
		
		We again apply the spectral gap inequality to write
		\begin{equs}
			& \bE \left [ \left |\bar{\Pi}_0^{n_1} \tau_j(\phi_0^n) - \bar{\Pi}_0^{n_2}(\phi_0^n) \right |^p \right ]^{1/p} \\ & \lesssim \bE \left[ \bar{\Pi}_0^{n_1} \tau_j(\phi_0^n) - \bar{\Pi}_0^{n_2}(\phi_0^n)  \right ] + \bE \left [ \left \| \frac{\partial \left [ \bar{\Pi}_0^{n_1} \tau_j (\phi_0^n) - \bar{\Pi}_0^{n_2} \tau_j (\phi_0^n) \right ]}{\partial \xi} \right \|_{H^{- \reg}(\Lab_-)}^p \right ]^{1/p}
		\end{equs}
		for $p = {p_{k(j+1)}}$. 
		
		We consider first the first term on the right hand side. As in the single model case, we write $\phi_0^{n} = \phi_0^{0} + \sum_{k=0}^{n-1} (\phi_0^{k+1} - \phi_0^{k})$. Since $$\bE \left [ \bar{\Pi}_0^{n_i} \tau_j (\phi_0^{0}) \right ] = 0$$ by the definition of the $\bBPHZ$ model, it remains to consider $$\bE \left [ \left (\bar{\Pi}_0^{n_1} \tau_j - \bar{\Pi}_0^{n_2} \tau_j \right ) (\phi_0^{k+1} - \phi_0^{k}) \right ] = \int \bE \left [ \left (\bar{\Pi}_0^{n_1} \tau_j - \bar{\Pi}_0^{n_2} \tau_j \right ) (\phi_y^{k+2}) \right ]R^k(y) dy.$$
		
		Using translation invariance as in the proof of Proposition~\ref{prop: uniform bounds}, this is nothing but
		\begin{equs}
			 \int \bE & \left [  \bar \Pi_y^{n_1} [ \bar \Gamma_{y0}^{n_1} \tau_j -\bar \Gamma_{y0}^{n_2} \tau_j] (\phi_y^{k+2}) \right ] R^k(y)
			\\ & + \bE \left [ \left( \bar{\Pi}_y^{n_1} - \bar{\Pi}_y^{n_2} \right) (\Gamma_{y0}^{n_2} \tau_j - \tau_j)(\phi_y^{k+2}) \right ]R^k(y) dy.
		\end{equs}
		Therefore, we obtain
		\begin{equs}
			& \left | \bE \left [ \left ( \bar{\Pi}_0^{n_1} \tau_j - \bar{\Pi}_0^{n_2} \tau_j \right ) (\phi_0^{k+1} - \phi_0^{k}) \right ]\right | \\ &  \lesssim 2^{-k |\tau_j|_\fs^{(N-j)}} \bE \left [ \|\bar{\Pi}^{n_1} \|_{V^{(j-1)}; \ck} \| \bar{\Gamma}^{n_1} -  \bar{\Gamma}^{n_2} \|_{\CT^{j}; \ck} + \| \bar \Gamma^{n_2} \|_{\cT^j; \ck} \| \bar{\Pi}^{n_1} - \bar{\Pi}^{n_2} \|_{V^{(j-1)}; \ck} \right ].
		\end{equs}
		By our assumption that $|\tau_j|_\fs^{(N-j)} < 0$ and an application of Cauchy-Schwarz and the uniform boundedness given by Proposition~\ref{prop: uniform bounds}, the right hand side has a sum over $k \le n$ of order $2^{-n |\tau_j|_\fs^{(N-j)}} 2^{-n_2 \theta}$ as desired.
		
		It remains to consider the second term on the right hand side of the spectral gap inequality. In the same way as in the proof of Proposition~\ref{prop: uniform bounds}, it suffices to bound
		\begin{equs}
			\bE \left [\sup_{\substack{\| \eta \|_{\CH} \le 1 \\ \eta \in \nice}} \left | \bar{\CR}^{n_1} \pHao{\tau_j}(\phi_0^{n}) - \bar \CR^{n_2} \pHbo{\tau_j} (\phi_0^n) \right |^p \right ]
		\end{equs}
		where here $\bar\CR^n$ is the reconstruction operator corresponding to the model $\bar \Pi^n$ (or in the case where $\gamma_{\tau_j} \le 0$ is our candidate for that reconstruction).
		
		In the case where $\gamma_{\tau_j} > 0$, the desired bound is immediate from Theorem~\ref{theo: PointedSobReconstr} and Proposition~\ref{prop: sob H control}. When $\gamma_{\tau_j} \le 0$ however, we need to do slightly more.
		
		In the case where $\tau_j = \Xi_\ft$ for $|\ft|_\fs \le - |\fs|/2$, the desired bound follows from Lemma~\ref{lemma: base sob estimate} and the embedding $\CB_{2, \infty}^{\reg \ft - \varepsilon} \hookrightarrow \CB_{\infty, \infty}^{|\ft|_\fs}$ for sufficiently small $\varepsilon > 0$.
		
		In the remaining case where $\tau_j = \Xi_\ft \cdot \tau_l$ for $\tau_l = \prod_{i = 1}^k \CI^{\fl_i} \sigma_i$, it is sufficient to consider only the term $f_0^{\tau_l} \pH{\Xi_\ft}$ since the other term in the definition is covered by Theorem~\ref{theo: PointedSobReconstr} by the first part of Lemma~\ref{lemma: pointed candidate exists}. 
		
		On the one hand, we have the bound
		\begin{equ}
			\bE \left [ \sup_{\|\eta\|_\CH \le 1} | \scal{(\varrho^{n_1} \ast \eta_\ft) \cdot \bar\Pi_x^{n_1} \tau_l - (\varrho^{n_2} \ast \eta_\ft) \cdot \bar \Pi_x^{n_2} \tau_l , \phi_0^n}|^p \right ] \lesssim 2^{-pn_2 \bar \theta}2^{-p n |\ft|_\fs}
		\end{equ}
		provided by Lemma~\ref{lemma: sob H bad case} and the usual embedding between Besov spaces.
		
		On the other hand, we have the bound
		\begin{equ}
			\bE \left [ \sup_{\|\eta\|_\CH \le 1} | \scal{(\varrho^{n_1} \ast \eta_\ft) \cdot \bar\Pi_x^{n_1} \tau_l - (\varrho^{n_2} \ast \eta_\ft) \cdot \bar \Pi_x^{n_2} \tau_l , \phi_0^n}|^p \right ] \lesssim 2^{-p n |\tau_j|_\fs}
		\end{equ}
		from Lemma~\ref{lemma: pointed candidate exists}.
		
		Interpolation between these bounds yields the desired result and completes the proof.
	\end{proof}
	
	\section{From the $\bBPHZ$ model to the BPHZ Model}\label{sec: moving between models}
	
	In this section, we will deduce the converge of the BPHZ renormalised models as in Definition~\ref{def: BPHZ} from the convergence of the $\bBPHZ$ renormalised models, which in turn follows from Theorem~\ref{theo: bar BPHZ} in combination with Theorem~\ref{theo: Kolmogorov Criterion}. The key observation is that by \cite[Remark 6.20]{BHZ}, the BPHZ renormalised model constructed from the canonical lift of $\xi_n$ coincides with the BPHZ renormalisation of the $\bBPHZ$ model constructed from $\xi_n$. 
	
	We recall that from \cite{BHZ}, we can write $\hat{\Pi}_x^n \tau = (g^n \otimes \bar \Pi_x^n) \Delta_\mathrm{ex}^- \tau$ where $g^n(\tau) = \bE[ \bar{\PPi}^n \tilde{\CA}_\mathrm{ex}^- \tau (0)]$ and the critical property of the map $\Delta_\mathrm{ex}^-$ is that for each $\tau$, if we write in Sweedler notation $\Delta_\mathrm{ex}^- \tau = \tau^{(1)} \otimes \tau^{(2)}$ then each $\tau^{(1)}$ is of negative degree and we have for $\tau^{(2)}$ that $|\tau^{(2)}|_\fs \ge |\tau|_\fs$. Here $\tilde{\CA}_{\text{ex}}^-$ is the negative twisted antipode on the extended structure introduced in \cite[Proposition 6.6]{BHZ}. This leads to the following observation.
	
	\begin{lemma}
		Suppose that $g^n$ defined above forms a pointwise convergent sequence of characters. Then the sequence $\hat Z^n = (\hat \Pi^n, \hat \Gamma^n)$ of BPHZ renormalised models is convergent in $L^p$ in $\bar{\CM}_\text{rand}(\CT)$ for each $p \in [2, \infty)$. 
	\end{lemma}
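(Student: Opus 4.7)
The plan is to express the BPHZ model $\hat Z^n$ explicitly in terms of the $\bBPHZ$ model $\bar Z^n$ and the character $g^n$, and then to combine the assumed pointwise convergence of $g^n$ with the already-established $L^p$ convergence of $\bar Z^n$. The starting point is the identity $\hat{\Pi}_x^n \tau = (g^n \otimes \bar{\Pi}_x^n)\Delta_\mathrm{ex}^- \tau$ quoted just above the lemma, together with the analogous formula for $\hat\Gamma^n$ that arises from the $\CG_-$-action on models recalled in Section~\ref{section: Renormalisation Group}. Since the regularity structure has been truncated to be finite-dimensional (Remark~\ref{rem:topology}), for any fixed $\tau$ the sum $\Delta_\mathrm{ex}^- \tau = \sum_i \tau_i^{(1)}\otimes \tau_i^{(2)}$ is finite, and $\hat{\Pi}_x^n \tau = \sum_i g^n(\tau_i^{(1)})\,\bar{\Pi}_x^n \tau_i^{(2)}$.

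First, combining Theorem~\ref{theo: bar BPHZ} with the Kolmogorov criterion of Theorem~\ref{theo: Kolmogorov Criterion} yields that $\bar Z^n$ is Cauchy, hence convergent, in $L^p(\Omega; \bar{\CM}_\text{rand}(\CT))$ for every $p \in [2,\infty)$. In particular, for each fixed $\tau^{(2)}$ the random variables $\bar{\Pi}_x^n \tau^{(2)}$ (tested against rescaled test functions, uniformly on compacts) converge in $L^p$. Similarly, one has convergence of $\bar \Gamma^n$ in the corresponding $L^p$-sense.

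Second, the scalars $g^n(\tau^{(1)})$ are deterministic and, by hypothesis, converge in $\bR$ for every $\tau^{(1)}$; hence they are bounded uniformly in $n$ (a fact which is also consistent with Proposition~\ref{prop: uniform bounds}, since $g^n(\tau) = \bE[\bar{\PPi}^n \tilde{\CA}_\mathrm{ex}^-\tau(0)]$ is controlled by moments of $\|\bar Z^n\|$). Writing the telescoping identity
\begin{equs}
\hat{\Pi}_x^{n}\tau - \hat{\Pi}_x^{m}\tau &= \sum_i \bigl(g^n(\tau_i^{(1)}) - g^m(\tau_i^{(1)})\bigr)\, \bar{\Pi}_x^n \tau_i^{(2)} \\
&\quad + \sum_i g^m(\tau_i^{(1)})\bigl(\bar{\Pi}_x^n \tau_i^{(2)} - \bar{\Pi}_x^m \tau_i^{(2)}\bigr)\;,
\end{equs}
the first sum tends to zero deterministically (times an $L^p$-bounded quantity, by Proposition~\ref{prop: uniform bounds}), while the second is bounded by a constant times $\|\bar Z^n - \bar Z^m\|$ in $L^p$. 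The same bilinear decomposition applies to $\hat \Gamma^n$.

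The only mildly delicate point is translating these tree-by-tree convergences into convergence of the model \emph{norms} $\|\cdot\|_{\gamma;\ck}$ appearing in Definition~\ref{def: model}. Because the structure is finite-dimensional, each such norm reduces to a supremum over a finite basis of trees, so the supremum over $\tau$ can be pulled outside the $L^p$-estimate at the cost of a finite multiplicative constant; the supremum over $x$, $\lambda$ and $\psi$ is already encoded in the norm $\|\bar Z^n\|_{\gamma;\ck}$ which converges in $L^p$. Putting everything together gives $L^p$ convergence of $\hat Z^n$ in $\bar{\CM}_\text{rand}(\CT)$, completing the proof. The limit is stationary because stationarity of $\bar Z^n$ is preserved under the $\CG_-$-action and passes to the $L^p$-limit.
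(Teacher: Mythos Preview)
Your proposal is correct and follows essentially the same route as the paper: write $\hat\Pi_x^n\tau=(g^n\otimes\bar\Pi_x^n)\Delta_\mathrm{ex}^-\tau$, telescope in the obvious way, and use pointwise convergence of $g^n$ together with the $L^p$ convergence of $\bar Z^n$ (Theorem~\ref{theo: bar BPHZ} plus Theorem~\ref{theo: Kolmogorov Criterion}) and the uniform bounds of Proposition~\ref{prop: uniform bounds}. The one place where you are slightly vague is the ``mildly delicate point'' about the supremum over $\lambda$: what makes it work is precisely the inequality $|\tau^{(2)}|_\fs\ge|\tau|_\fs$ stated just above the lemma, so that $\lambda^{-|\tau|_\fs}|\bar\Pi_x^n\tau^{(2)}(\psi_x^\lambda)|\lesssim \lambda^{|\tau^{(2)}|_\fs-|\tau|_\fs}\|\bar\Pi^n\|_{\CT;\ck}\le \|\bar\Pi^n\|_{\CT;\ck}$ uniformly in $\lambda\in(0,1]$; the paper invokes this explicitly.
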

	\begin{proof}
		We write
		\begin{equ}
			\bE \left [ \| \hat{\Pi}^{n_1} - \hat{\Pi}^{n_2} \|_{\CT; \ck}^p \right ] = \bE \left [ \sup_{(\psi, \lambda, x, \tau)} \lambda^{-p |\tau|_\fs} \left |\left (\hat{\Pi}_x^{n_1} \tau - \hat{\Pi}_x^{n_2} \tau \right )(\psi_x^\lambda) \right |^p \right ]
		\end{equ}
		where the supremum is over $\psi \in \CB^r$, $\lambda \in [0,1)$, $x \in \ck$ and homogeneous $\tau \in \CT$ such that $\|\tau\| = 1$. This can be written as
		\begin{equs}
			\bE \left [ \sup_{(\psi, \lambda, x, \tau)} \lambda^{- p |\tau|_\fs}\left |\left (g^{n_1}(\tau^{(1)}) \bar{\Pi}_x^{n_1} \tau^{(2)} - g^{n_2}(\tau^{(2)})\bar{\Pi}_x^{n_2} \tau^{(2)} \right )(\psi_x^\lambda) \right |^p \right ]
		\end{equs}
		where $\Delta^- \tau = \tau^{(1)} \otimes \tau^{(2)}$. Since $|\tau^{(2)}|_\fs \ge |\tau|_\fs$, this expression then admits a bound by a constant multiple of the following term:
		\begin{equs}
			\sup_\tau \left | g^{n_1}(\tau^{(2)}) - g^{n_2}(\tau^{(1)}) \right |^{p} & \bE \left [ \| \bar{\Pi}^{n_1}\|_{\CT^j; \ck}^p \right ] \\ & + \sup_\tau |g^{n_2}(\tau^{(2)})|^p \bE \left [ \| \bar{\Pi}^{n_1} - \bar{\Pi}^{n_2} \|_{\CT^j ; \ck}^p \right ].
		\end{equs}
		Since our regularity structure is finite dimensional, pointwise convergence of $g^n$ implies that the supremum over $\tau$ in the first of these terms vanishes as $n_1, n_2 \to \infty$. Therefore, by Proposition~\ref{prop: uniform bounds} the first of these terms vanishes as $n_1, n_2 \to \infty$. For the second term, we note that the pointwise convergence of $g^n$ implies that the supremum is uniformly bounded so that this term vanishes also as $n_1, n_2 \to \infty$ by Theorem~\ref{theo: bar BPHZ}.
	\end{proof}
	As a result the remainder of this section is dedicated to establishing that we do indeed have pointwise convergence of this sequence of characters.
	
	We fix an arbitrary smooth and compactly supported function $\psi$ such that $\int \psi(y) dy = 1$. We recall from \cite[Section 6.3]{BHZ} that we have two natural ways in which translation by an element of $\bR^d$ acts on our space of models. The first is given by setting $T_h(\PPi)\tau(z) \eqdef \PPi \tau(z-h)$ and the second is given by setting $\tilde{T}_h(\PPi) \tau(z) \eqdef (\PPi \otimes g_h) \Delta_{\text{ex}}^+ \tau (z)$ where $\Delta_\text{ex}^+$ is defined in \cite[Corollary 5.32]{BHZ} and $g_h$ is defined by setting $g_h(X_i) = - h_i$ and $g_h(\CJ_k^\fl \tau) = 0$. In particular, the exact form of the operator $\Delta_{\text{ex}}^+$ will not be important to us since $g_h$ will vanish on all non-polynomial trees it extracts to the right so that the tree on the left will agree with $\tau$ except in its decorations and will have degree no more than the degree of $\tau$.
	
	For $|\tau|_\fs< 0$, we can then exploit the stationarity assumption in the form of \cite[Definition 6.17]{BHZ} (which is equivalent to our stationarity assumption) to write
	\begin{equs}
		g^n(\tau) & = \bE \left [ \bar \PPi^n \tilde{\CA}_\mathrm{ex}^- \tau (0) \right ]
		\\
		& = \int \bE \left [ T_y \bar \PPi^n \tilde{\CA}_\mathrm{ex}^- \tau(y) \right ] \psi(y) dy 
		\\
		& = \bE \left [ \bar \PPi^n \tau^{(1)} ( g_\cdot(\tau^{(2)}) \psi) \right ]
	\end{equs}
	where $\tilde{\CA}_\mathrm{ex}^-$ is the negative twisted antipode introduced in \cite[Section 6.1]{BHZ} and $\Delta_{\mathrm{ex}}^+ \tilde{\CA}_{\mathrm{ex}}^- \tau = \tau^{(1)} \otimes \tau^{(2)}$ in Sweedler notation.
	
	The desired result then follows from the following lemma since in our setting if $\ft \in \Lab_-$ and $|\ft|_\fs - |k|_\fs > 0$ then $f_x^n(\CJ_k^\tau 1)$ is a linear combination of terms of the form $D^j \rho^n \ast \xi_\ft(x)$ which gives a Cauchy sequence in the relevant $L^p$ topology by the spectral gap assumption.
	
			\begin{lemma}\label{lemma: f_x continuity}
		Suppose that $Z^n = (\Pi^n, \Gamma^n)$ is a sequence of models such that for each $\ft \in \Lab_-$ with $|\ft|_\fs \ge 0$, each $k \in \bN^d$ and each $x \in \bR^d$, $f_x^n(\CJ_k^\ft 1)$ is a Cauchy sequence (where $f_x^n$ is the character on $\CT_+$ corresponding to the model $Z^n$).
		
		Then for each $\tau \in \CT_+$ and $x \in \bR^d$, $f_x^n(\tau)$ is a convergent sequence. In particular, it follows that for each $\tau \in \CT$ and $\psi \in \CB^r$, the map $\PPi^n \tau(\psi)$ is a convergent sequence.
	\end{lemma}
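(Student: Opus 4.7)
The proof is by strong induction on the number of edges of $\tau$, establishing simultaneously that (i) $f_x^n(\tau)$ converges for every $\tau\in\CT_+$ and $x\in\bR^d$, and (ii) $\PPi^n \tau(\psi)$ converges for every $\tau\in\CT$ and $\psi \in \CB^r$. Since characters are multiplicative, it suffices to verify (i) on the polynomial generators $X_i$ and on the planted trees $\CJ_k^\ft \sigma$ with $|\CJ_k^\ft \sigma|_\fs > 0$; furthermore the triangular identity $\tilde\CJ_k^\ft\sigma = \sum_{|m|_\fs < |\CJ_k^\ft\sigma|_\fs}\frac{(-X)^m}{m!}\CJ_{k+m}^\ft\sigma$ (a finite sum) reduces this to checking convergence on the $\tilde\CJ$'s, which have explicit expressions in terms of the model through the definition of $f_x$.

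The base cases are immediate. For $\tau = X^k$, both statements are polynomial in $x$. For the noise leaves $\tau = \Xi_\ft^k = \CI_k^\ft 1$, admissibility forces $\PPi^n\Xi_\ft^k = D^k\xi_\ft^n$, which converges by construction of the mollified noises; meanwhile $f_x^n(\tilde\CJ_k^\ft 1) = -D^k\xi_\ft^n(x)$ is only defined when $|\ft|_\fs - |k|_\fs > 0$, which forces $|\ft|_\fs \ge |k|_\fs \ge 0$, and in this regime convergence is exactly the hypothesis of the lemma. For the induction step, when $\tau = \CI^\fl\sigma$ with $\fl\in\Lab_+$, admissibility yields $\PPi^n\tau = K^\fl \ast \PPi^n\sigma$, so (ii) is inherited from $\sigma$ to $\tau$. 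The character $f_x^n(\tilde\CJ_k^\fl\sigma) = -(D^k K^\fl \ast \Pi_x^n\sigma)(x)$ is handled by expanding $\Pi_x^n\sigma = \sum \PPi^n\sigma^{(1)}f_x^n(\sigma^{(2)})$ from $\Delta\sigma$, where each $\sigma^{(i)}$ has strictly fewer non-polynomial edges than $\sigma$, and invoking both parts of the inductive hypothesis. Products $\tau = \sigma\bar\sigma$ are reduced to their factors by multiplicativity of $\PPi^n = \Pi_x^n (F_x^n)^{-1}$, which follows from the multiplicativity of both $\Delta$ and $f_x^n$.

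The main obstacle lies in passing from the distributional convergence of $\Pi_x^n\sigma$ to convergence of the singular point-evaluation $(D^k K^\fl \ast \Pi_x^n\sigma)(x)$ which defines the character. This is handled by the standard Schauder argument (cf.~\cite[Lem.~5.19]{Hai14}): the very condition $|\CJ_k^\fl\sigma|_\fs > 0$ that renders the character well-defined guarantees that this evaluation depends continuously on the model, so the required convergence follows once the inductive hypothesis is combined with the uniform-in-$n$ model bounds of Proposition~\ref{prop: uniform bounds}. The ``in particular'' claim about $\PPi^n\tau(\psi)$ is then delivered by part~(ii) of the induction.
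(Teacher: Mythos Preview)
There is a genuine gap in your argument. Your claim that $\PPi^n = \Pi_x^n (F_x^n)^{-1}$ is multiplicative is false for renormalised models: the very purpose of the renormalisation map $M_g \in \CG_-$ is to deform products, so that for instance $\bar\PPi^n(\Xi_\ft \CI^\fl\sigma)$ differs from $\bar\PPi^n(\Xi_\ft)\cdot\bar\PPi^n(\CI^\fl\sigma)$ by the relevant counterterm. Since the lemma is applied precisely to the $\bBPHZ$ models, this breaks your inductive treatment of part~(ii) on product trees $\tau = \sigma\bar\sigma$, and part~(ii) is what feeds back into part~(i) via the term $\sigma\otimes 1$ in the coproduct expansion $\Pi_x^n\sigma = \sum\PPi^n\sigma^{(1)}f_x^n(\sigma^{(2)})$. (A related slip: that expansion always contains $\sigma^{(1)} = \sigma$, so ``each $\sigma^{(i)}$ has strictly fewer non-polynomial edges than $\sigma$'' is not literally true; the induction must be anchored at $\CI^\fl\sigma$ rather than $\sigma$.)

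The paper's argument sidesteps all of this. It does not attempt to prove the two statements by simultaneous induction; instead it uses directly that the sequence $Z^n$ is Cauchy in the model topology, which is already known from Theorem~\ref{theo: bar BPHZ}. For $\fl\in\Lab_+$ one simply writes
\[
|f_x(\tilde\CJ_k^\fl\sigma) - \bar f_x(\tilde\CJ_k^\fl\sigma)| \lesssim \sum_{n\ge 0}\sum_{|l|_\fs < |\CJ_k^\fl\sigma|_\fs} |(\Pi_x\sigma - \bar\Pi_x\sigma)(D^{k+l}K_n^\fl(\cdot - x))| \lesssim \|\Pi - \bar\Pi\|_{\CT;B_x}\;,
\]
the sum in $n$ being geometric because $|\sigma|_\fs + |\fl|_\fs - |k+l|_\fs > 0$. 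The noise-type generators $\CJ_k^\ft 1$ with $\ft\in\Lab_-$ and $|\ft|_\fs \ge 0$ are covered by hypothesis, and multiplicativity of $f_x$ handles the rest of $\CT_+$. The ``in particular'' claim then follows from $\PPi^n = \Pi_x^n(F_x^n)^{-1}$, with $\Pi_x^n$ converging by Theorem~\ref{theo: bar BPHZ} and $(F_x^n)^{-1}$ converging by the first part; no multiplicativity of $\PPi^n$ is invoked.
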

	\begin{proof}
		We fix models $(\Pi, \Gamma), (\bar\Pi, \bar\Gamma)$.
		
		The result for $f_x$ is trivially true when $\tau$ is a monomial and is true by assumption for $\tau = \CJ_k^\ft \sigma$ for $\ft \in \Lab_-$. By multiplicativity, it then suffices to consider the case where $\tau = \CI_k^\fl \sigma$ for $l \in \Lab_+$ and $k \in \bN^d$ such that $|\sigma|_\fs + |\fl|_\fs - |k|_\fs > 0$. We have that
		\begin{equs}
			|f_x(\tau) - \bar{f}_x(\tau)| & \lesssim \sum_{n \ge 0} \sum_{|l|_\fs < |\CJ_k^\fl \sigma|_\fs} \left | (\Pi_x \sigma - \bar{\Pi}_x \sigma) (D^{k+l} K^\fl_n (\cdot - x)) \right | \\&  \lesssim  \sum_{n \ge 0} \|\Pi - \bar{\Pi} \|_{\CT; B_x} \cdot 2^{-n( |\sigma|_\fs + |\fl|_\fs - |k + l|_\fs)}.
		\end{equs}
		where $\bar{f}_x$ is the character corresponding to the model $(\bar\Pi, \bar\Gamma)$.
		
		Therefore since $|\sigma|_\fs + |\fl|_\fs - |k +l|_\fs > 0$, we can perform the sum in $n$ to conclude.
	\end{proof}
	\begin{remark}
		In the case where $\Lab_-$ contains only labels of negative degree, Lemma~\ref{lemma: f_x continuity} should be interpreted as saying that the map $(\Pi, \Gamma) \mapsto f_x(\tau)$ is automatically continuous for each fixed $x \in \bR^d$ and $\tau \in \CT_+$ which is essentially the contents of \cite[Proposition 6.31]{BHZ}. However that result does not correctly handle the case of noises of positive degree and so we provide a separate statement here.
	\end{remark}

	\appendix
	
	\section{Local Besov Spaces}\label{appendix: Besov Spaces}
	
	In this appendix, we collect some results regarding the local Besov spaces defined in Section~\ref{s: notation}. The most important of these will be the characterisation of these spaces via a distinguished kernel with a semigroup type property.
	
	The other properties ought not surprise any reader familiar with the theory of Besov spaces defined by global behaviour. We include these results only because the Fourier analytic tools often used in the treatment of such spaces don't adapt well to our local setting and so it is necessary to affirm that this does not pose difficulties in obtaining the usual results.
	
	We will begin by introducing our alternative characterisation of the spaces $\CB_{p,q}^\alpha$. As an intermediary step, we note that the $L_\lambda^q$ norm in the definition can be replaced by an $\ell^q$ type norm. 
	
	To see this, given $\lambda \in (0,1]$, let $n(\lambda) \eqdef \max\{n: 2^{-n} \ge \lambda\}$. Then for $\lambda \in (0,1]$, $\eta \in \CB^r$ there exists a constant $C > 0$ (which is uniformly bounded in the choice of $\lambda$ and $\eta$) and a test function $\psi \in \CB^r$ such that $\eta^\lambda = C \psi^{2^{-n(\lambda)}}$. Consequently, in the case where $\alpha < 0$, an equivalent family of seminorms on $\CB_{p,q}^\alpha$ is given by 
	\begin{equs}
		\$\xi\$_{\CB_{p,q}^\alpha; \ck} \eqdef \left \| \left \| \sup_{\eta \in \CB^r} \frac{|\scal{\xi, \eta_x^{2^{-n}}}|}{2^{-n \alpha}}\right \|_{L^p(\ck; dx)} \right \|_{\ell^q(n)}.
	\end{equs}
	
	The analogous result is true in the case $\alpha \ge 0$ by replacing $\CB^r$ with $\CB_{\lfloor \alpha \rfloor}^r$ in the above discussion.
	
	With this step in hand, we introduce the convolution kernel we will use to characterise these spaces. Our definition is taken from \cite{Book}, though we refer the reader also to \cite[Section 2]{MW18a} for a similar construction.
	
	\begin{definition}\label{def: semigroup kernel}
		Fix an even, smooth function $\rho: \bR^d \to \bR$ that is supported in the $\fs$-scaled ball of radius $\frac{1}{2}$ such that $\int \rho(x) x^k dx = \delta_{k, 0}$ for $0 \le |k|_\fs \le r$. We let $\rho^{n}(x) \eqdef 2^{n |\fs|} \rho(2^{n \fs} x)$ and set for $m > n$
		\begin{equ}
			\rho^{(n,m)} = \rho^{n} \ast \rho^{n+1} \ast \dots \ast \rho^{m-1} \ast \rho^{m}.
		\end{equ}
		Define $\varphi^{n} = \lim_{m \to \infty} \rho^{(n,m)} \in \CC_c^\infty$ where the limit converges in $\CC^a$ for all $a > 0$.
	\end{definition}
	
	It is straightforward to verify that $\varphi^{n} (x) = 2^{n |\fs|} \varphi^{0}(2^{n \fs} x)$ by verifying the same relation for $\rho^{(n,m)}$ and passing to the limit. In particular, the superscript notation here coincides with the use of that notation for rescaling a test function. We also note that this choice of kernel has a convolution semigroup type property; namely that 
	\begin{equ}[e:convolProp]
	\phi^{n} = \rho^n \ast \phi^{n+1}\;.
	\end{equ}
	
	For $\zeta \in \CD'(\bR^d)$, we define $\zeta_n \eqdef \zeta \ast \varphi^{n}$. 
 	We then define the following collections of seminorms on $\CB_{p,q}^\alpha$.
	
	\begin{definition}
		 For $\alpha < 0$, compact $\ck \subseteq \bR^d$ and $p,q \in [1,\infty]$, we define 
		\begin{equ}
			|\zeta|_{\CB_{p,q}^\alpha ; \ck} \eqdef \left \| 2^{n \alpha} \left \| \zeta_n \right \|_{L^p(\ck)} \right \|_{\ell^q(n)} < \infty.
		\end{equ}
		
		For $\alpha \ge 0$, we let $|\zeta|_{\CB_{p,q}^\alpha; \ck}$ denote the quantity
		\begin{equ}
			 \|\zeta_0\|_{L^p(\ck)} + \left \| 2^{n \alpha} \left \| \sup_{\eta \in \CB_{\lfloor \alpha \rfloor}^r} |\scal{\zeta_n, \eta_x^n}| \right \|_{L^p(\ck)} + 2^{n \alpha} \| \zeta_{n+1} - \zeta_n \|_{L^p(\ck)}\right \|_{\ell^q(n)}.
		\end{equ} 
	\end{definition}

	We then have the following characterisation of $\CB_{p,q}^\alpha$.
	
	\begin{theorem}\label{theo: kernel swapping}
		Fix $p, q \in [1, \infty]$ and $\alpha \in \bR$. For $\zeta \in \CD'(\bR^d)$ and for compact $\ck \subseteq \bR^d$ we have that
		\begin{equ}
			\| \zeta\|_{\CB_{p,q}^{\alpha}(\ck)} \lesssim |\zeta|_{\CB_{p,q}^\alpha; \bar{\ck}}
		\end{equ}
		where $\bar{\ck}$ denotes the 1-fattening of $\ck$.
		
		In particular, $\CB_{p,q}^\alpha$ is the set of distributions $\zeta$ such that for each compact set $\ck$, $|\zeta|_{\CB_{p,q}^\alpha; \ck} < \infty$ and the family of seminorms $|\cdot|_{\CB_{p,q}^\alpha; \ck}$ induces the same topology.
	\end{theorem}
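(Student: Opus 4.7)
The plan is to prove both inequalities, thereby obtaining the equivalence of the two families of seminorms. The direction $|\zeta|_{\CB_{p,q}^\alpha; \ck} \lesssim \|\zeta\|_{\CB_{p,q}^\alpha; \bar\ck}$ is essentially free: since $\varphi$ is even, compactly supported and smooth, it belongs to $C\cdot \CB^r$ for some constant $C$, so the identity $\zeta_n(x) = \scal{\zeta, \varphi_x^n}$ immediately controls $\|\zeta_n\|_{L^p(\ck)}$ by the standard Besov norm. When $\alpha \ge 0$, a short computation using the moment-vanishing conditions on $\rho$ shows that $\varphi^{n+1}-\varphi^n$ has vanishing moments up to order $r$, so that (after an appropriate rescaling) it lies in $C\cdot \CB_{\lfloor\alpha\rfloor}^r$ and the discrete-difference term in $|\zeta|_{\CB_{p,q}^\alpha;\ck}$ is handled in the same way.

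For the reverse (and harder) inequality $\|\zeta\|_{\CB_{p,q}^\alpha;\ck} \lesssim |\zeta|_{\CB_{p,q}^\alpha;\bar\ck}$, we focus on $\alpha < 0$; the $\alpha \ge 0$ case will follow by analogous reasoning combined with a Taylor expansion of $\zeta_0$ around $x$ exploiting the vanishing moments of $\CB_{\lfloor\alpha\rfloor}^r$. Fix $\eta \in \CB^r$ and write
\begin{equ}
\eta_x^n = \varphi^n \ast \eta_x^n + \sum_{k \ge n} \bigl(\varphi^{k+1} - \varphi^k\bigr) \ast \eta_x^n\;,
\end{equ}
which converges distributionally since $\varphi^m \to \delta$ as $m \to \infty$. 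The semigroup identity $\varphi^k = \rho^k \ast \varphi^{k+1}$ rewrites each summand as $\varphi^{k+1} \ast (\eta_x^n - \rho^k \ast \eta_x^n)$; pairing with $\zeta$ and using evenness of $\varphi^k$ yields
\begin{equ}
\scal{\zeta, \eta_x^n} = \int \zeta_n(y) \eta_x^n(y) \, dy + \sum_{k \ge n} \int \zeta_{k+1}(y) \bigl(\eta_x^n(y) - (\rho^k \ast \eta_x^n)(y)\bigr) \, dy\;.
\end{equ}

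The first term equals the convolution $(\zeta_n \ast \tilde\eta^n)(x)$ with $\tilde\eta(y)=\eta(-y)$, so Young's inequality bounds its $L^p(\ck;dx)$-norm by $\|\eta\|_{L^1}\|\zeta_n\|_{L^p(\bar\ck)} \lesssim \|\zeta_n\|_{L^p(\bar\ck)}$, contributing a term of the right order $2^{-n\alpha}$ to the $\ell^q(n)$-sum. For each summand with $k \ge n$, the full vanishing moments of $\rho$ up to order $r$ together with the bound $\|D^r\eta_x^n\|_\infty \lesssim 2^{n(|\fs|+r)}$ and a Taylor expansion of $\eta_x^n$ of degree $r$ yield $\|\eta_x^n - \rho^k \ast \eta_x^n\|_{L^1} \lesssim 2^{(n-k)r}$ (using also that both functions are supported in a ball of volume $\sim 2^{-n|\fs|}$ when $k \ge n$). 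A second application of Young's inequality then bounds the $L^p(\ck;dx)$-norm of this summand by $2^{(n-k)r}\|\zeta_{k+1}\|_{L^p(\bar\ck)}$; after multiplying by $2^{n\alpha}$ and writing $u_j = 2^{j\alpha}\|\zeta_j\|_{L^p(\bar\ck)} \in \ell^q(j)$, the total contribution is controlled by a constant multiple of $\sum_{j \ge n+1} 2^{(n-j)(r+\alpha)} u_j$. Since $r > |\alpha|$ guarantees $r+\alpha > 0$, this has $\ell^1$ weights in $n-j$ and is bounded in $\ell^q(n)$ by Young's inequality for sequences. The main obstacle is precisely this clean propagation of the $\ell^q$-structure through the sum in $k$: the bound on $\|\zeta_{k+1}\|_{L^p}$ is not available pointwise in $k$ but only in an averaged $\ell^q$ sense, and the whole argument closes only because the vanishing moments of $\rho$ produce the decisive exponent $r + \alpha > 0$.
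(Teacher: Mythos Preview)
Your argument is correct and follows the standard line of proof. The paper itself does not give a direct argument here: it simply refers the $\alpha<0$ case to \cite[Proposition~A.5]{BL21} and asserts that $\alpha\ge 0$ is analogous. Your telescoping decomposition $\eta_x^n = \varphi^n\ast\eta_x^n + \sum_{k\ge n}(\varphi^{k+1}-\varphi^k)\ast\eta_x^n$, the use of the semigroup relation \eqref{e:convolProp} to extract $\zeta_{k+1}$, the Taylor bound on $\eta_x^n-\rho^k\ast\eta_x^n$, and the closing $\ell^q$-Young step are precisely the ingredients behind that cited result, so you have essentially reconstructed the omitted proof.

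One small technical point worth tightening: in the anisotropic setting the Taylor remainder of order ``$r$'' in the sense of \cite[Proposition~A.1]{Hai14} involves derivatives $D^j$ with $|j|_\fs$ lying on the boundary $\partial A_r$, which can be strictly larger than $r$ (by up to $\max_i\fs_i$). Since $\eta\in\CB^r$ only controls $D^j\eta$ for $|j|_\fs\le r$ uniformly, your bound $\|\eta_x^n-\rho^k\ast\eta_x^n\|_{L^1}\lesssim 2^{(n-k)r}$ as written uses one derivative too many. The fix is immediate: either take $r$ slightly larger than $|\alpha|+\max_i\fs_i$ from the outset (different $r$ give equivalent seminorms), or Taylor-expand to a lower order $r'\in(|\alpha|,r]$ whose boundary multi-indices satisfy $|j|_\fs\le r$; either way $r'+\alpha>0$ and the $\ell^q$-convolution argument closes unchanged.
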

	\begin{proof}
		The case $\alpha < 0$ is a special case of \cite[Proposition A.5]{BL21}. The case $\alpha \ge 0$ follows by essentially the same ideas so we omit the details.
	\end{proof}
	
	\section{A Kolmogorov Criterion for Models}\label{appendix: Kolmogorov}
	
	In order to obtain boundedness or convergence of sequences of models on structures of decorated trees, the standard tool in the literature is the Kolmogorov criterion provided in \cite[Theorem 10.7]{Hai14}. In this section of the appendix, we will slightly modify the proof of that result. 
	
	We do this for two reasons. Firstly, the proof appearing in \cite{Hai14} uses in a crucial way techniques of wavelet analysis that we would like to remove in order to permit a more general choice of scaling. Secondly, the statement and proof in \cite{Hai14} do not allow the presence of noises of positive degree and so we must make the necessary adaptations to allow for those.
	
	The main result of this section is then the following.
	
	\begin{theorem}\label{theo: Kolmogorov Criterion}
		Let $V$ be a sector of $\CT$ and suppose there is a $\bar\kappa > 0$ such that for every tree $\tau$ which is either of negative degree or is a noise of positive degree, for every $n \geq 0$ and for every $p \in [1, \infty)$ that the bound 
		
		\begin{equ}\label{eq: Kolmogorov single point ass}
			\bE[|\Pi_0 \tau (\phi_0^{n})|^p] \lesssim 2^{-pn|\tau|_\fs - p \bar\kappa n}
		\end{equ}
		holds. Then for compact sets $\ck \subseteq \bR^d$, $\bE[\|Z\|_{V; \ck}^p] \lesssim 1$.
		
		If additionally the bound 
		\begin{equ}\label{eq: Kolmogorov multi point ass}
			\bE[|\Pi_0 \tau (\phi_0^{n}) - \bar{\Pi}_0 \tau (\phi_0^{n})|^p] \lesssim \varepsilon^{p \theta} 2^{-pn|\tau|_\fs - p \bar\kappa n}
		\end{equ}
		holds then $\bE[\|Z; \bar{Z}\|_{V; \ck}^p] \lesssim \varepsilon^{p \theta}$.
	\end{theorem}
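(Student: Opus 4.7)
The plan is to follow the outline of the proof of \cite[Theorem~10.7]{Hai14}, but to replace its wavelet machinery by the kernel characterisation of Besov spaces given in Appendix~\ref{appendix: Besov Spaces}, and to bootstrap across positive-degree noises by hand. First I would use Theorem~\ref{theo: kernel swapping} to replace the supremum over $\varphi \in \CB^r$ in the definition of $\|\Pi\|_{\gamma;\ck}$ by evaluations of the form $\Pi_x\tau(\phi_x^n)$ against the convolution kernel $\phi^n$ of Definition~\ref{def: semigroup kernel}, at the cost of an arbitrarily small loss of regularity which is absorbed into $\bar\kappa$. By stationarity (Definition~\ref{def:stationary}), the random variable $\Pi_x\tau(\phi_x^n)$ has the same law as $\Pi_0\tau(\phi_0^n)$, so the hypothesis \eqref{eq: Kolmogorov single point ass} extends uniformly in the base point. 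Next I would inductively extend this bound to all basis trees $\tau$ of the sector $V$: polynomials are deterministic; for $\tau = \sigma_1\sigma_2$ with $|\tau|_\fs > 0$, admissibility gives $\Pi_x\tau = \Pi_x\sigma_1 \cdot \Pi_x\sigma_2$ and Cauchy--Schwarz (at the cost of doubling $p$) closes the induction; for $\tau = \CI^\ft\sigma$, admissibility expresses $\Pi_x\tau(\phi_x^n)$ as a sum of convolutions of $K^\ft$ with $\Pi_x\sigma$ against Taylor-truncated test functions, to which classical Schauder-type estimates apply.

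Once moment bounds are available for all basis vectors of $V$, all base points $x \in \ck$ and all dyadic scales $n$, the bound on $\bE\|\Pi\|_{V;\ck}^p$ follows from a standard Kolmogorov chaining argument: integrating in $x$ gives $\bE \int_\ck |\Pi_x\tau(\phi_x^n)|^p\,dx \lesssim |\ck|\, 2^{-np|\tau|_\fs - p\bar\kappa n}$, and a union bound over a dyadic net of sample points in $\ck$, combined with H\"older continuity estimates in $x$ obtained from the same moment bounds applied to increments $\Pi_x\tau(\phi_x^n) - \Pi_{x'}\tau(\phi_{x'}^n)$, converts this into a supremum bound provided $p$ is chosen large enough that $p\bar\kappa > |\fs|$. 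Control on $\|\Gamma\|_{V;\ck}$ is then deduced recursively from control on $\Pi$: writing $\Gamma_{xy} = F_x^{-1}F_y$ with $F_x = (1\otimes f_x)\Delta$, the differences $f_x(\sigma) - f_y(\sigma)$ for basis vectors $\sigma \in \CT_+$ are expressed inductively via \eqref{e:deffx} and \eqref{eq: f_x neg hom int} as linear combinations of Taylor-remainder expressions in $K^\ft \ast \Pi_\cdot \sigma'$ and in $\xi_\ft$, each of which is of order $|x-y|^{|\sigma|_\fs}$ by the $\Pi$-bounds already obtained, together with standard kernel estimates.

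I expect the main obstacle to lie in this last step, for two reasons. First, the recursive structure of $\Gamma$ interacts non-trivially with noise labels of positive degree: the Taylor remainders coming from $f_x(\tilde\CJ^\ft_k 1) = -D^k\xi_\ft(x)$ must be estimated directly using the hypothesised moment bound \eqref{eq: Kolmogorov single point ass} applied to $\Xi_\ft$, which plays the role of a scalar Kolmogorov criterion for the driving noise. Second, the bookkeeping of Taylor remainders and dyadic sums at each level of the recursion is delicate, and one must track that the small regularity losses introduced by the switch $\CB^r \to \phi^n$ and by each chaining step are all absorbed uniformly by the single slack parameter $\bar\kappa$. Finally, the bound on the difference $\|Z;\bar Z\|_{V;\ck}$ follows by repeating the same scheme under the stronger hypothesis \eqref{eq: Kolmogorov multi point ass}, tracking the factor $\varepsilon^{p\theta}$ through the recursion and invoking multilinearity of admissibility and of the character formulas at each step.
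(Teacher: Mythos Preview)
Your product step contains a genuine error: admissibility (Definition~\ref{def: admissibility}) only asserts $\Pi_x X^k\tau = \Pi_x X^k \cdot \Pi_x\tau$, i.e.\ multiplicativity with respect to monomials. It does \emph{not} give $\Pi_x(\sigma_1\sigma_2) = \Pi_x\sigma_1\cdot\Pi_x\sigma_2$ for general products; indeed this fails for every renormalised model, and those are the only models to which the theorem is ever applied. So the Cauchy--Schwarz argument on factors cannot close the induction for positive-degree product trees.

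This is symptomatic of a structural issue: your plan treats $\Pi$ and $\Gamma$ sequentially, but they are genuinely coupled. Proposition~\ref{prop: model norm characterisation} controls $\|\Pi\|_V$ only up to a factor $(1+\|\Gamma\|_V)$, while conversely the correct route to $\Pi$-bounds on trees of positive degree is \cite[Proposition~3.31]{Hai14}, which expresses $\Pi_x\tau$ for $|\tau|_\fs>0$ in terms of $\Pi$ on the strictly smaller sector $\mathring V_\tau$ together with $\Gamma$ on $V_\tau$. The paper therefore follows the nested-sector induction of \cite[Theorem~10.7]{Hai14}: one works on an increasing chain $T_n$ equipped with shifted degree assignments $|\cdot|_\fs^{(N-n)}$, at each step combining Proposition~\ref{prop: model norm characterisation} and Corollary~\ref{cor: Kolmogorov} to reproduce \cite[Eq.~(10.4)]{Hai14} without wavelets, and then invoking the (wavelet-free) remainder of that proof, in which $\Gamma$-bounds on $T_{n+1}$ come from multiplicativity of $\Gamma$ (which \emph{is} valid) for products and from \cite[Theorem~5.14]{Hai14} for planted trees. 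Positive-degree noises $\Xi_\fl$ are dealt with as a separate base case: since $\mathring V_{\Xi_\fl}\subset\Tpoly$, the hypothesis \eqref{eq: Kolmogorov single point ass} together with Proposition~\ref{prop: Kolmogorov} yields H\"older seminorm control on $D^k f$ for $f=\PPi\Xi_\fl$, from which $\Gamma_{xy}\Xi_\fl$ is read off via Taylor's theorem.
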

	
	Our intention is to follow the structure of the inductive proof provided in \cite[Theorem 10.7]{Hai14} and make amendments only where necessary. In particular, in the interest of brevity, we freely borrow notation from that proof.
	
	The most significant difficulty not present in the setting of \cite{Hai14} is that since we do not use wavelet techniques, we no longer have a characterisation of the norm of a model in terms of its values at countably many base points. Instead, the best we can hope for is the following result which should be compared with \cite[Proposition 3.32]{Hai14} and whose proof is a corollary of the proof of the Reconstruction Theorem given in \cite{BL21} in the same way that \cite[Proposition 3.32]{Hai14} is a corollary of the proof of the Reconstruction Theorem given there.
	
	\begin{proposition}\label{prop: model norm characterisation}
		For every compact set $\ck \subseteq \bR^d$ and every sector $V$, the following bounds hold.
		\begin{equs}
			\|\Pi\|_{V; \ck} &\lesssim  (1 + \|\Gamma\|_{V; \ck}) \sup_{\alpha \in A_V} \sup_{a \in V_\alpha} \sup_{n\geq 0} \sup_{x \in \bar{\ck}} 2^{\alpha n} \frac{|\Pi_x a (\phi_x^{n})|}{\|a\|}
			\\ 
			\|\Pi - \bar{\Pi}\|_{V; \ck} &\lesssim  (1 + \|\Gamma\|_{V; \ck}) \sup_{\alpha \in A_V} \sup_{a \in V_\alpha} \sup_{n\geq 0} \sup_{x \in \bar{\ck}} 2^{\alpha n} \frac{|(\Pi_x a - \bar{\Pi}_x a) (\phi_x^{n}) |}{\|a\|}
		\end{equs}
	\end{proposition}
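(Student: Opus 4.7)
My plan is to recognise $\Pi_x a$ as the reconstruction of a particularly well-behaved modelled distribution, and then trace through the proof of the reconstruction theorem of \cite[Theorem~3.2]{BL21} to check that every pairing appearing in it can be controlled by the restricted supremum $M$ on the right-hand side of the proposition (rather than by the full norm $\|\Pi\|_{V;\ck}$).

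Concretely, fix $a \in V_\alpha$ and $x \in \ck$ and set $f(y) \defeq \Gamma_{yx} a$. The identity $\Gamma_{y+h,y}\Gamma_{yx} = \Gamma_{y+h,x}$ forces $f(y+h) - \Gamma_{y+h,y}f(y) = 0$, so the translation bound in Definition~\ref{def: Dgamma} is trivially satisfied for any $\gamma$, while $\|f(y)\|_\beta \lesssim \|\Gamma\|_{V;\bar\ck}|y-x|^{\alpha-\beta}\|a\|$ for $\beta < \alpha$ follows directly from the structure-group axiom in Definition~\ref{def: regularity structure}. Hence $f \in \CD^\gamma$ for every $\gamma > 0$, with its $\CD^\gamma$-norm bounded by a multiple of $(1+\|\Gamma\|_{V;\bar\ck})\|a\|$. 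Since $\Pi_y f(y) = \Pi_y \Gamma_{yx}a = \Pi_x a$ is independent of $y$, uniqueness in Theorem~\ref{theo: Reconstruction} (applied with $\gamma$ larger than the singular behaviour of $\Pi_x a$) forces $\CR f = \Pi_x a$.

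Next, I would revisit the proof of the reconstruction theorem in \cite[Theorem~3.2]{BL21}, which expresses $\scal{\CR f, \eta_x^\lambda}$ for arbitrary $\eta \in \CB_r$ and $\lambda \sim 2^{-n_0}$ as an absolutely convergent sum of integrals against $\eta_x^\lambda$ and $R^k = \rho^{k+1} - \rho^k * \rho^{k+1}$ of pairings of the form $\scal{\Pi_y f(y), \phi_y^{n_0+2}}$ and $\scal{\Pi_y f(y), \phi_z^{k+2}}$ for $k \ge n_0+2$, exactly as in the formula displayed inside the proof of Proposition~\ref{prop: firstSobReconstr}. The key observation specific to our $f$ is that because $\Pi_y f(y) = \Pi_x a$ is constant in $y$, every such pairing equals $\scal{\Pi_x a, \phi_w^m}$ for some $(w,m)$, which by the identity $\Pi_x = \Pi_w \Gamma_{wx}$ is identical to the matched pairing $\scal{\Pi_w \Gamma_{wx}a, \phi_w^m}$. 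Decomposing $\Gamma_{wx}a = \sum_\beta (\Gamma_{wx}a)_\beta$ into homogeneous components and invoking the defining bound of $M$ gives
\begin{equs}
|\scal{\Pi_w \Gamma_{wx}a, \phi_w^m}| \le \sum_\beta 2^{-m\beta}\|(\Gamma_{wx}a)_\beta\|\,M
\lesssim \|a\|\, M \sum_\beta 2^{-m\beta}|w-x|^{\alpha-\beta}\|\Gamma\|_{V;\bar\ck}\;,
\end{equs}
(with the $\beta = \alpha$ term $\le 2^{-m\alpha}\|a\|M$ not carrying the $\|\Gamma\|$ factor). Substituting into the BL21 decomposition and carrying out the dyadic summation exactly as in their proof yields $|\scal{\Pi_x a, \eta_x^\lambda}| \lesssim \lambda^\alpha(1+\|\Gamma\|_{V;\bar\ck})\|a\|M$, whence the first bound of the proposition follows from Definition~\ref{def: model}.

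The second inequality is obtained in parallel by considering the pair $f(y) = \Gamma_{yx}a$ and $\bar f(y) = \bar \Gamma_{yx}a$ side by side and applying the two-model version of the reconstruction estimates of \cite{BL21}, which now also introduce error terms proportional to $\|\Gamma - \bar \Gamma\|_{V;\bar \ck}$ coming from the mismatched transport operators. I expect the main technical obstacle to lie in the dyadic summation when $\alpha < 0$: naively, each term in the $k$-sum is bounded by $2^{-(k-n_0)\beta}2^{-n_0\alpha}\|\Gamma\|\|a\|M$ which diverges for the negative $\beta$'s appearing in the $\Gamma$-expansion, so one really needs to exploit the vanishing moments of $R^k$ (up to order $r$) in conjunction with the Hölder factor $|w-x|^{\alpha-\beta}$ to convert the bound into a convergent geometric series. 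This is exactly the point at which one invokes the method of proof, rather than merely the statement, of \cite[Theorem~3.2]{BL21}.
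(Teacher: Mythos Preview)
Your approach is correct and is precisely what the paper has in mind: its own proof consists of a single sentence deferring to the method of \cite[Theorem~3.2]{BL21}, in analogy with how \cite[Proposition~3.32]{Hai14} follows from the wavelet-based reconstruction there. Your key observation that $f(y)=\Gamma_{yx}a$ has identically vanishing translation increment is exactly what makes the argument clean: in the BL21 decomposition it kills the $\Pi_z(f(z)-\Gamma_{zy}f(y))$ term outright, leaving only the term carrying $R^k\ast\eta_x^n$, whose $L^1$ norm of order $2^{(n-k)r}$ (from the vanishing moments) absorbs the $2^{(k-n)|\beta|}$ blow-up you were worried about, since $r>|\alpha_{\min}|$ by construction of $\CB_r$.
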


	This means that pulling the resulting suprema over base points $x$ outside of expectations will take more work. To achieve this, we will apply the classical Kolmogorov criterion. Whilst the result and its proof are essentially standard and found in many textbooks on stochastic analysis, often the explicit bounds we require are not part of the statement (though they are a corollary of a careful reading of the proofs given) and so we choose to provide an explicit statement here for the reader's convenience.

	\begin{proposition}\label{prop: Kolmogorov}
		Let $\ck \subset \bR^d$ be a compact, let $\varepsilon \in (0,1]$, $K \ge 1$, $p \ge 1$, and $\delta > 0$
		be such that $\delta p > |\fs|$, and let $\{F_x \}_{x \in \ck}$ be a collection of random
		variables such that
		\begin{equ}[e:assKolmogorov]
			\sup_{x \in \ck} \bE [|F_x|^p] \lesssim 1\;,\qquad
			\sup_{x,y \in \ck \atop |x-y|_\fs \le \varepsilon} \|x-y\|_\fs^{-\delta} \bE \left [ |F_x - F_y|^p \right]^{1/p} \lesssim K^\delta\;.
		\end{equ}
		Then, there exists a $\CC(\ck)$-valued random variable $F$ such that, for every $x \in \ck$, 
		$F(x) = F_x$ almost surely, and such that 
		\begin{equ}
			\bE \left [\sup_{x\in \ck}|F(x)|^p \right ]^{1/p}\le C \big(\varepsilon^{-|\fs|/p} + K^\delta \varepsilon^{\delta - |\fs|/p}\big)\;,
		\end{equ}
		for some fixed constant $C$ which depends only on the diameter of $\ck$ and the implicit constants in \eqref{e:assKolmogorov}.
	\end{proposition}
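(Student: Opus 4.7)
The plan is to run a standard chaining argument adapted to the $\fs$-scaled metric on $\bR^d$. I would first set $n_0 \eqdef \lceil \log_2(1/\eps)\rceil + O(1)$ and, for each integer $n \ge n_0$, introduce a finite set $D_n \subseteq \ck$ of points at pairwise $\fs$-scaled separation $\gtrsim 2^{-n}$ covering $\ck$ at $\fs$-scale $2^{-n}$; the anisotropic lattice $\prod_i (2^{-n \fs_i} \bZ)$ intersected with $\ck$ does the job and, since $|\fs| = \sum_i \fs_i$, yields the volume bound $|D_n| \lesssim 2^{n|\fs|}$ with implicit constant controlled by $\operatorname{diam}_\fs \ck$. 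I would then fix nearest-point projections $\pi_n : \ck \to D_n$ with $|x - \pi_n(x)|_\fs \le 2^{-n}$ and, in particular, $|\pi_n(x) - \pi_{n-1}(x)|_\fs \lesssim 2^{-n}$.

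The bulk of the work is to control $\sup_x |F_{\pi_N(x)}|$ for large $N$ via the telescoping decomposition $F_{\pi_N(x)} = F_{\pi_{n_0}(x)} + \sum_{n=n_0+1}^N (F_{\pi_n(x)} - F_{\pi_{n-1}(x)})$ using elementary union bounds at each scale. The first assumption in \eqref{e:assKolmogorov} gives $\bE[\sup_{y \in D_{n_0}}|F_y|^p]^{1/p} \lesssim |D_{n_0}|^{1/p} \lesssim \eps^{-|\fs|/p}$. At each chaining level the pair $(\pi_n(x), \pi_{n-1}(x))$ lies at $\fs$-distance $\lesssim 2^{-n} \le \eps$, so the second assumption applies and, together with a further union bound, yields
\[
\bE\Big[\sup_{x \in \ck}|F_{\pi_n(x)} - F_{\pi_{n-1}(x)}|^p\Big]^{1/p} \lesssim |D_n|^{1/p} K^\delta 2^{-n\delta} \lesssim K^\delta \, 2^{n(|\fs|/p - \delta)}\;.
\]
The hypothesis $\delta p > |\fs|$ renders this exponent strictly negative, so summation over $n \ge n_0 + 1$ produces a geometric series dominated by its first term $\asymp K^\delta \eps^{\delta - |\fs|/p}$. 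Combining with the base bound yields the announced estimate.

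Finally, I would construct the continuous version $F$ by taking the almost sure limit of $F_{\pi_N(x)}$ along a subsequence: the chaining bound shows that the telescoping series converges absolutely and uniformly in $x$ in $L^p$, hence also a.s.\ along a subsequence, and a routine argument identifies the limit as a modification that agrees with $F_x$ a.s.\ at each point. A variant of the same chaining estimate applied to pairs $(x,y)$ at $\fs$-distance $\le 2^{-n}$ then upgrades this to almost sure $\fs$-Hölder continuity of some positive exponent, in particular to continuity. The only point requiring attention is the anisotropic geometry underlying the dyadic covering; once one works with the rectangles $\prod_i [-2^{-n \fs_i}, 2^{-n\fs_i}]$ in place of Euclidean balls, everything proceeds as in the classical Kolmogorov continuity theorem and there is no real conceptual obstacle.
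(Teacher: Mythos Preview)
The paper does not actually supply a proof of this proposition; it merely states the result and remarks that it is ``essentially standard and found in many textbooks on stochastic analysis,'' including the statement only because the explicit quantitative bound is not always part of textbook formulations. Your chaining argument is precisely the standard proof the authors allude to, carried out correctly for the anisotropic $\fs$-scaled metric: the base bound at scale $\eps$ produces the $\eps^{-|\fs|/p}$ term, and the geometric sum of increments (summable since $\delta p > |\fs|$) produces the $K^\delta \eps^{\delta - |\fs|/p}$ term. There is nothing to compare and no gap to flag.
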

	
	In order to verify the hypotheses of this criterion, it will be useful to be able to apply bounds of the type given in \eqref{eq: Kolmogorov single point ass} and \eqref{eq: Kolmogorov multi point ass} with the test function $\phi$ replaced with an arbitrary $\psi \in \CB^r$. To this end, defining $V_\tau$ to be the smallest sector containing $\tau$ and $\mathring{V}_\tau$ to be the largest subsector of $V_\tau$ that does not contain $\tau$, we have the following result.
	
	\begin{lemma}\label{lemma: Kolmogorov Ass Upgrade}
		Let  $\tau \in \CT$ with $\|\tau\| = 1$ and suppose that for $p \in [1, \infty)$ and every $n \ge 0$ we have that
		\begin{equ}
		\bE[ |\Pi_0 \tau (\phi_0^n) |^p]^{1/p} \lesssim 2^{-n |\tau|_\fs}
		\end{equ}
		and that $$\bE[ \|\Gamma\|_{B_0; V_\tau}^{2p}]^{1/2p} + \bE [ \|\Pi\|_{B_0; \mathring{V}_\tau}^{2p}]^{1/2p} \lesssim 1.$$ Then $\sup_{\psi \in \CB^r} \bE[|\Pi_0 \tau (\psi_0^{n})|^p]^{1/p} \lesssim 2^{-n|\tau|_\fs}.$
		
		If additionally we suppose that for some $\theta > 0$
		\begin{equ}
			\bE[ |\Pi_0\tau(\phi_0^n) - \bar{\Pi}_0 \tau (\phi_0^n) |^p]^{1/p} \lesssim \varepsilon^{ \theta}2^{-n |\tau|_\fs}
		\end{equ} and that 
		\begin{equ}
			\bE [ \| \Gamma - \bar{\Gamma} \|_{B_0; V_\tau}^{2p}]^{1/2p} + \bE[ \|\Pi - \bar\Pi\|_{B_0; \mathring{V}_\tau}^{2p}]^{1/2p} \lesssim \varepsilon^{\theta}
		\end{equ} then we have that
		\begin{equ}
			\sup_{\psi \in \CB^r} \bE[|\Pi_0 \tau (\psi_0^{n}) - \bar{\Pi}_0 \tau (\psi_0^{n})|^p] \lesssim \varepsilon^{p \theta} 2^{-pn|\tau|_\fs}.
		\end{equ}
	\end{lemma}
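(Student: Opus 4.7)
The idea is to express the general test function $\psi_0^n$ as a (continuous) superposition of the distinguished test functions $\phi_y^m$ for various $y$ near the origin and $m \ge n$, and then exploit the model identity $\Pi_0 \tau = \Pi_y \Gamma_{y0}\tau$ to transfer the base point of the model before applying the hypothesis. Concretely, using the convolution semigroup property \eqref{e:convolProp} and the fact that $\phi^k \to \delta$ as $k \to \infty$, one has the telescoping identity
\begin{equs}
\psi_0^n = \psi_0^n \ast \phi^n + \sum_{k \ge n} g_k \ast \phi^{k+1}\;, \qquad g_k \eqdef \psi_0^n \ast (\delta - \rho^k)\;.
\end{equs}
Because $\rho$ has moments matching those of $\delta$ up to $\fs$-degree $r$ (which we may take large by increasing the parameter $r$ entering $\CB^r$), a standard Taylor-expansion argument gives $\|g_k\|_{L^1} \lesssim 2^{-(k-n)r'}$ for some $r'$ that can be taken arbitrarily large. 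Each convolution $g_k \ast \phi^{k+1}$ is then an integral of translated rescaled test functions $\phi_y^{k+1}$ concentrated on the ball $B_\fs(0, C 2^{-n})$.

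After this decomposition, pairing with $\Pi_0\tau$ turns the problem into bounding terms of the form $\langle \Pi_0\tau, \phi_y^m\rangle$ with $|y|_\fs \lesssim 2^{-n}$ and $m \ge n$. I would split each such pairing as
\begin{equs}
\langle \Pi_0\tau, \phi_y^m\rangle = \langle \Pi_y\tau, \phi_y^m\rangle + \langle \Pi_y(\Gamma_{y0}\tau - \tau), \phi_y^m\rangle\;.
\end{equs}
The first term is controlled in $L^p$ by the standing hypothesis combined with the stationarity built into our class of random models (Definition~\ref{def:stationary}), yielding $\bE[|\langle \Pi_y\tau, \phi_y^m\rangle|^p]^{1/p} \lesssim 2^{-m|\tau|_\fs}$. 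For the second term, $\Gamma_{y0}\tau - \tau$ lies in $\mathring{V}_\tau$, so the standard model bound gives a pointwise estimate of the form $\sum_{\beta \in A_{\mathring{V}_\tau}} 2^{-m\beta} |y|_\fs^{|\tau|_\fs - \beta}\|\Gamma\|_{V_\tau;B_0}\|\Pi\|_{\mathring{V}_\tau;B_0}$. An application of Cauchy--Schwarz, using the assumed $2p$-th moment bounds on $\|\Gamma\|_{V_\tau;B_0}$ and $\|\Pi\|_{\mathring{V}_\tau;B_0}$, reduces this to $\sum_\beta 2^{-m\beta} 2^{-n(|\tau|_\fs-\beta)}$. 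This explains the role of the factor of $2$ in the $2p$-th moment hypothesis.

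Collecting the pieces, the bound for the contribution at scale $k \ge n$ is $\|g_k\|_{L^1} \cdot 2^{-(k+1)|\tau|_\fs}$ from the main part, plus $\|g_k\|_{L^1}$ times a sum of geometric type in $\beta$, all of which are bounded by $2^{-(k-n)r'} \cdot 2^{-n|\tau|_\fs}$ up to geometric factors summable in $k$ provided $r'$ exceeds $|\beta| \vee |\tau|_\fs$ (hence the need to take $r$ large). The sum over $k \ge n$ and the leading $\psi_0^n \ast \phi^n$ piece each yield bounds of order $2^{-n|\tau|_\fs}$, proving the first claim. The two-model version follows by identical bookkeeping, distributing $\Pi_0\tau - \bar\Pi_0\tau$ according to the same splitting and applying the hypotheses on differences together with the uniform moment bounds on a single model to control cross terms; this produces the factor $\eps^\theta$. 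The main technical obstacle is purely bookkeeping: one must ensure that the geometric series in $k$ converges uniformly in $n$, which forces the implicit choice of $r$ large enough that $\rho$ has more vanishing moments than $-|\tau|_\fs$ and $-\min A_{V_\tau}$.
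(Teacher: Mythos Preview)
Your proposal is correct and follows essentially the same approach as the paper's proof: both use the telescoping decomposition $\psi_0^n = \psi_0^n \ast \phi^n + \sum_k \psi_0^n \ast (\phi^{k+1} - \phi^k)$ coming from the semigroup property, bound the difference $\psi_0^n - \rho^k \ast \psi_0^n$ via the moment-matching of $\rho$, relocate the base point through $\Pi_0 = \Pi_y \Gamma_{y0}$, and then sum the resulting geometric series in $k$. Your write-up is in fact more explicit than the paper's at the key step, spelling out the split $\Gamma_{y0}\tau = \tau + (\Gamma_{y0}\tau - \tau)$, invoking stationarity to control the first piece, and using Cauchy--Schwarz with the $2p$-th moment hypotheses on $\|\Gamma\|_{V_\tau}$ and $\|\Pi\|_{\mathring V_\tau}$ for the second; the paper compresses all of this into the sentence ``we obtain a bound of order $2^{-Nk-(n+k)|\tau|_\fs}$''.
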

	\begin{proof}
		As usual, we consider only the case of a single model here for brevity.
		
		We can write
		\begin{equs}
			\left \| \Pi_x \tau(\psi_x^{n}) \right \|_{L^p} &= \left \| \Pi_x \tau (\psi_x^{n} \ast \phi^{n}) + \sum_{k \geq 0} \Pi_x \tau \left ( \psi_x^{n} \ast (\phi^{n+k+1} - \phi^{n+k}) \right ) \right \|_{L^p}
			\\
			& \leq \left \| \Pi_x \tau (\psi_x^{n} \ast \phi^{n}) \right \|_{L^p} + \sum_{k \geq 0} \left \| \Pi_x \tau \left ( \psi_x^{n} \ast (\phi^{n+k+1} - \phi^{n+k}) \right ) \right \|_{L^p} \\ \label{eq: Kolmogorov 1}
		\end{equs}
		where the $L^p$-norms are with respect to the underlying probability measure.
		
		We consider first an arbitrary term in the sum on the right hand side. We can write the $k$-th term in this sum as
		\begin{equs}\label{eq: Kolmogorov 2}
			\left \| \int \bigl(\Pi_x \tau\bigr)(\phi_y^{n+k}) \left (\psi^{n}_x - \rho^{n+k} \ast \psi^{n}_x \right )(y) \,dy \right \|_{L^p}.
		\end{equs}
		Taylor expanding $\psi^{n}$ to order $N$ and arguing as in the paragraph following \cite[equation (13.20)]{Book}, we have that 
		\begin{equ}
			\left |\left (\psi^{n}_x - \rho^{n+k} \ast \psi^{n}_x \right) (y) \right | \lesssim 2^{n|\fs| - k N}.
		\end{equ}
		Additionally, we note that the integral in $y$ has support in a ball of radius of order $2^{-n\fs}$. 
		
		Therefore, inserting this bound and applying Jensen's inequality, we find that the term \eqref{eq: Kolmogorov 2} can be bounded by
		\begin{equs}
			2^{-kN} \left ( \int_{B_\fs(x, c2^{-n})} 2^{n|\fs|}\bE\left [ \left |\Pi_x \tau \left (\phi_y^{(n+k)}\right)\right|^p \right] dy \right)^{1/p}.
		\end{equs}
		
		\sloppy By writing $\Pi_x \tau \left ( \phi_y^{(n+k)} \right ) = \Pi_y \Gamma_{yx}\tau \left (\phi_y^{(n+k)} \right )$, we obtain a bound of order $2^{-Nk - (n+k) |\tau|_\fs}$ which has a sum in $k$ of the correct order so long as we select $N$ such that $N > - |\tau|_\fs$.
		
		Control on the first term of \eqref{eq: Kolmogorov 1} then follows similarly.
	\end{proof}

	With this result in hand, we obtain the following corollary of Proposition~\ref{prop: Kolmogorov}, which is a specialisation of that result to the case where $F_x = 2^{n |\tau|_\fs} \Pi_x \tau(\phi_x^n)$.
	
	\begin{corollary}\label{cor: Kolmogorov}
		Under the hypotheses of Lemma~\ref{lemma: Kolmogorov Ass Upgrade} we have that
		$$\bE \Big[ \sup_{x \in \ck} |\Pi_x \tau(\phi_x^n)|^p\Big]^{1/p} \lesssim 2^{-n |\tau|_\fs}$$ and that $$\bE\Big[\sup_{x \in \ck} |\Pi_x \tau(\phi_x^n) - \bar\Pi_x \tau (\phi_x^n)|^p\Big]^{1/p} \lesssim \varepsilon^\theta 2^{-n |\tau|_\fs}$$
	\end{corollary}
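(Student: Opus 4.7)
The plan is to apply the Kolmogorov continuity criterion (Proposition~\ref{prop: Kolmogorov}) to the rescaled family $F_x \defeq 2^{n|\tau|_\fs} \Pi_x \tau(\phi_x^n)$ on $\ck$. Stationarity of the model together with the single-point moment hypothesis gives the required uniform bound $\sup_{x\in\ck}\bE[|F_x|^p]^{1/p} \lesssim 1$, so the main task is to establish a scale-sensitive H\"older estimate: for $|x-y|_\fs \le 2^{-n}$,
\begin{equ}
\bE[|F_x - F_y|^p]^{1/p} \lesssim (2^n|x-y|_\fs)^\delta\;,
\end{equ}
for some $\delta > 0$.

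Using $\Pi_x = \Pi_y\Gamma_{yx}$, decompose
\begin{equ}
\Pi_x\tau(\phi_x^n) - \Pi_y\tau(\phi_y^n) = \Pi_y(\Gamma_{yx}\tau - \tau)(\phi_x^n) + \Pi_y\tau(\phi_x^n - \phi_y^n)\;.
\end{equ}
For the first summand, $\Gamma_{yx}\tau - \tau$ has components in $\mathring{V}_\tau$ at homogeneities $\beta < |\tau|_\fs$ with $\|(\Gamma_{yx}\tau-\tau)_\beta\| \lesssim \|\Gamma\|_{V_\tau;B}|x-y|_\fs^{|\tau|_\fs-\beta}$; since $|x-y|_\fs \le 2^{-n}$, $\phi_x^n$ is, up to a universal constant, a test function in $\CB^r$ centred at $y$ at scale $\sim 2^{-n}$, so pairing against $\Pi_y$ on $\mathring{V}_\tau$ produces a factor $\|\Pi\|_{\mathring{V}_\tau;B}\cdot 2^{-n\beta}$. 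Combining these bounds and applying Cauchy--Schwarz with the moment hypotheses $\bE[\|\Gamma\|_{V_\tau;B}^{2p}]^{1/2p} + \bE[\|\Pi\|_{\mathring{V}_\tau;B}^{2p}]^{1/2p} \lesssim 1$ controls this summand by $2^{-n|\tau|_\fs}(2^n|x-y|_\fs)^{\delta_0}$ with $\delta_0 \defeq \min_{\beta \in A_{\mathring{V}_\tau}}(|\tau|_\fs - \beta) > 0$. For the second summand, a first-order Taylor expansion in the shift $y-x$ writes $\phi_x^n - \phi_y^n$ as a linear combination of rescaled test functions at scale $\sim 2^{-n}$ centred near $y$, with coefficients of size $(2^n|x-y|_\fs)^{\fs_i}$; since Lemma~\ref{lemma: Kolmogorov Ass Upgrade} already controls pairings of $\Pi_y\tau$ with arbitrary test functions in $\CB^r$, this yields $\bE[|\Pi_y\tau(\phi_x^n - \phi_y^n)|^p]^{1/p} \lesssim 2^{-n|\tau|_\fs}(2^n|x-y|_\fs)^\kappa$ for $\kappa \defeq \min_i \fs_i$, completing the H\"older bound with $\delta = \delta_0 \wedge \kappa$.

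Applying Proposition~\ref{prop: Kolmogorov} with $\varepsilon = 2^{-n}$, $K = 2^n$, and $p$ large enough that $\delta p > |\fs|$, we obtain $\bE[\sup_\ck |F_x|^p]^{1/p} \lesssim 2^{n|\fs|/p}$, so that $\bE[\sup_\ck |\Pi_x\tau(\phi_x^n)|^p]^{1/p} \lesssim 2^{-n|\tau|_\fs+n|\fs|/p}$; the residual $2^{n|\fs|/p}$ factor is removed either by taking $p$ arbitrarily large and invoking Jensen's inequality (admissible since the hypotheses hold for every $p$), or equivalently by the $\bar\kappa$-wiggle room of Theorem~\ref{theo: Kolmogorov Criterion} within which the Corollary is applied. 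The two-model claim follows from the identical argument applied to the family $F_x^{\Pi} - F_x^{\bar\Pi}$, with every estimate carrying the additional factor $\varepsilon^\theta$ from the two-model hypothesis. The main technical obstacle is the first-summand contribution to the H\"older bound, which hinges on both the deterministic observation that $\phi_x^n$ can be regarded as a rescaled test function at $y$ whenever $|x-y|_\fs \le 2^{-n}$, and the moment hypotheses on $\|\Gamma\|_{V_\tau}$ and $\|\Pi\|_{\mathring{V}_\tau}$, which are built into Lemma~\ref{lemma: Kolmogorov Ass Upgrade} precisely so that Cauchy--Schwarz converts the deterministic estimate into the required probabilistic bound.
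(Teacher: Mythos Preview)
Your proof is correct and follows essentially the same route as the paper's: apply Proposition~\ref{prop: Kolmogorov} to $F_x = 2^{n|\tau|_\fs}\Pi_x\tau(\phi_x^n)$ with $K=2^n$, $\varepsilon=2^{-n}$, using the decomposition $\Pi_x\tau(\phi_x^n)-\Pi_y\tau(\phi_y^n) = \Pi_y(\Gamma_{yx}\tau-\tau)(\phi_x^n) + \Pi_y\tau(\phi_x^n-\phi_y^n)$ and handling the second summand via Lemma~\ref{lemma: Kolmogorov Ass Upgrade}. You are in fact more careful than the paper on three points it leaves implicit: the use of stationarity to pass from base point $0$ to arbitrary $x\in\ck$, the anisotropic-scaling bookkeeping for $\phi_x^n-\phi_y^n$ (the paper simply writes ``$|y-x|^{-1}(\phi_x^n-\phi_y^n)$ is a test function''), and the residual $2^{n|\fs|/p}$ factor from Proposition~\ref{prop: Kolmogorov}, which indeed gets absorbed into the $\bar\kappa$-wiggle room when the corollary is invoked inside the proof of Theorem~\ref{theo: Kolmogorov Criterion}.
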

	\begin{proof}
		As usual, we illustrate only the case of a single model.
		
		The result will follow if we verify the hypotheses of Proposition~\ref{prop: Kolmogorov} for $F_x$ as above with $K = 2^n$ and $\varepsilon = 2^{-n}$. Since the first bound in \eqref{e:assKolmogorov} is automatic from our assumptions, we consider only the second required bound.
		
		For this, we write $$|\Pi_x \tau(\phi_x^n) - \Pi_y \tau(\phi_y^n)| \le |\Pi_x [\tau - \Gamma_{xy}\tau] (\phi_x^n)| + |\Pi_y \tau (\phi_x^n - \phi_y^n)|.$$ Control on the terms resulting from the first part of the right hand side are straightforward by assumption and for the second term on the right hand side control follows from Lemma~\ref{lemma: Kolmogorov Ass Upgrade} since for $|x-y| \le 2^{-n}$ we have that $|y-x|^{-1}(\phi_x^n - \phi_y^n)$ is a test function at scale $2^{-n}$ centred at $y$.
	\end{proof}
	With these preparatory results in place, the proof of Theorem~\ref{theo: Kolmogorov Criterion} is reduced to a fiddly set-up involving a sequence of shifted degree assignments. Since this holds no new conceptual ideas, we provide only a sketch proof.
	
	\begin{proof}[Sketch of Proof of Theorem~\ref{theo: Kolmogorov Criterion}]
		As usual, the proof in the case of two models is similar to the case of one model so we will deal only with the latter of these cases.
		
		We define $T_n, T_n^+$ and $T_n^-$ as in the proof of \cite[Theorem 10.7]{Hai14}. We let $N$ be such that $T_n = \CT$ and we equip $T_n$ (and hence $T_n^\pm$) with the degree assignment $|\tau|_\fs^{(N-n)}$ where $|\cdot|_\fs^{(j)}$ is as in \eqref{eq: shifted hom assignments} and $\kappa > 0$ appearing there is additionally chosen to be small enough so that for every $\tau \in \CT$, $|\tau|_\fs + \bar \kappa >  |\tau|_\fs^{(N)}$.
	
		We note that by Proposition~\ref{prop: model norm characterisation}
		\begin{equs}
			\bE & \Bigg [ \| \Pi \|_{T_{n+1}^-; \ck}^p \Bigg ]^{1/p} \\ & \lesssim \bE \Bigg [ ( 1+ \|\Gamma\|_{T_{n+1}^-; \ck})^{2p}\Bigg]^{1/2p} \sum_{n \ge 0} \sum_{i = 1}^k 2^{n |\tau_i|_\fs^{(N-n-1)}} \bE \Bigg [ \sup_{x \in \ck} |\Pi_x\tau_i(\phi_x^n)|^{2p} \Bigg ]^{1/2p}
		\end{equs}
		where $\{\tau_i : i = 1, \dots, k\}$ is a basis of $T_{n+1}^-$.
		
		Applying Corollary~\ref{cor: Kolmogorov} to control the right hand side yields the bound given in \cite[Equation (10.4)]{Hai14} without the use of wavelets. Since the rest of the proof given there is wavelet free, the only remaining detail is to show that the presence of noises of positive degree does not cause any issues.
		
		The reason that these require extra accommodation is that if $|\fl|_\fs > 0$ then $\Gamma_{xy} \Xi_\fl$ is not automatically controlled by the action of the model on symbols earlier in the induction via an application of \cite[Theorem 5.14]{Hai14}.
		
		However, since for $\tau = \Xi_\fl$ we have that $\mathring{V}_\tau \subset \Tpoly$, the bound \eqref{eq: Kolmogorov single point ass} can be written as
		\begin{equs}
			\bE \Bigg [ |\scal{ f - P_x^{|\fl|_\fs} f, \phi_x^n}|^p \Bigg]^{1/p} \lesssim 2^{-n |\tau|_\fs - n \bar \kappa}
		\end{equs}
		where $P_x^\gamma f$ is the Taylor jet of $f$ around $x$ to order $\gamma$ and here $f$ is any function such that $\PPi \Xi_\fl = f$ induces the model $(\Pi, \Gamma)$ on $V_\tau$. 
		
		We then note that if $F_x \eqdef 2^{n |\fl|_\fs} \scal{f - P_x^{|\fl|_\fs} f, \phi_x^n}$, we have the bound
		\begin{equs}
			& 2^{-n |\fl|_\fs}\| F_x - F_y \|_{L^p} \\ & \le \| \scal{f - P_x^{|\fl|_\fs}, \phi_x^n - \phi_y^n} \|_{L^p} + \| \scal{f - P_x^{|\fl|_\fs}, \phi_y^j} \|_{L^p} + \| \scal{f - P_x^{|\fl|_\fs}, \phi_y^j}\|_{L^p}
		\end{equs}
		where $|x-y| \sim 2^{-j}$ and we have used the fact that convolution with $\phi^k$ fixes polynomials of appropriate degree to change the scale of the test function in the latter two terms.
		
		It then follows from an application of Proposition~\ref{prop: Kolmogorov} that $$\bE \Bigg [ \sup_{n \ge 0} \sup_{x \in \ck}  2^{-n |\fl|_\fs}|\scal{ f - P_x^{|\fl|_\fs} f, \phi_x^n}|^p \Bigg ]^{1/p} \lesssim 1$$
		so long as $p$ is sufficiently large. 
		
		In turn, this implies that $\bE [ [D^k f]_{|\fl|_\fs - |k|_\fs; \ck}^p ]^{1/p} \lesssim 1$ where $k$ is such that for any non-zero multi-index $j$, $|j + k|_\fs \ge |\fl|_\fs$ and $[f]_{\alpha; \ck}$ denotes the usual $\alpha$-H\"older seminorm on $\ck$.
		
		Since the $X^j$ component of $\Gamma_{xy} \Xi_\fl$ is nothing but $D^j f(x) - (P_y^{|\fl|_\fs} D^j f)(x)$, the desired control on $\Gamma_{xy} \Xi_\fl$ then follows from an application of Taylor's theorem.
	\end{proof}

	\section{Properties of $\Delta^M$}
	
	We now gather some elementary properties of the group $\fR$ that will be useful in the sequel. For notational convenience, in these results and in their proofs we will suppress the dependence of integration maps on the decorations $\ft \in \Lab$.
		
		\begin{lemma}\label{Lemma: Integration Coproduct}
				Let $\Delta^M \tau = \tau^{(1)} \otimes \tau^{(2)}$ (adopting Sweedler's notation for coproducts). Then we have that
				$$\Delta^M \CI \tau = (\CI \otimes 1) \Delta^M \tau - \sum_{|k| > |\tau| + \beta} \frac{X^k}{k!} \otimes \CJ_{k}(\tau^{(1)}) \tau^{(2)}.$$
			\end{lemma}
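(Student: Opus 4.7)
The plan is to verify the identity by pairing both sides against an arbitrary admissible model $(\Pi, \Gamma)$ with characters $(f_x)_{x \in \bR^d}$ and then to conclude from the fact that varying over admissible models yields enough pairings to separate points in the finite-dimensional graded components of $\CT \otimes \CT_+$. The key structural input is that, by construction of $\Delta^M$ in \cite[Proposition~8.36]{Hai14}, the pair $\Pi^M_x \eqdef (\Pi_x \otimes f_x) \Delta^M$ together with $\Gamma^M$ is again an admissible model whenever $(\Pi, \Gamma)$ is; since $M \in \fR$ commutes with $\CI$, this yields two ways of evaluating $\Pi^M_x \CI \tau$ which must coincide.

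Concretely, I would compute $\Pi^M_x \CI \tau(\bar z)$ in two ways. First, admissibility of $\Pi^M$ gives
\begin{equs}
\Pi^M_x \CI \tau(\bar z) = (K \ast \Pi^M_x \tau)(\bar z) - \sum_{|l|_\fs < |\CI\tau|_\fs}\frac{(\bar z - x)^l}{l!}(D^l K \ast \Pi^M_x \tau)(x)\;.
\end{equs}
Second, writing $\Delta^M \tau = \tau^{(1)} \otimes \tau^{(2)}$ in Sweedler notation and applying $(\Pi_x \otimes f_x)$ to the proposed right-hand side, each summand $\CI \tau^{(1)} \otimes \tau^{(2)}$ expands via admissibility of $\Pi$ into $(K\ast \Pi_x\tau^{(1)})(\bar z)\,f_x(\tau^{(2)})$ minus a Taylor jet of order $|\CI\tau^{(1)}|_\fs$, while the explicit correction contributes $-\tfrac{(\bar z-x)^l}{l!}f_x(\bar\CJ_l\tau^{(1)})f_x(\tau^{(2)})$ summed over $|l|_\fs > |\tau|_\fs + \beta$.

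Subtracting the two expressions, the leading convolution terms match since $(\Pi_x \otimes f_x)(\CI\tau^{(1)} \otimes \tau^{(2)}) = \Pi_x \CI\tau^{(1)}\,f_x(\tau^{(2)})$ sums to $K \ast \Pi^M_x \tau$. What remains is a combinatorial reorganisation of Taylor jets: the second expression contains strictly more Taylor terms than the first because upper triangularity of $\Delta^M$ gives $|\tau^{(1)}|_\fs \ge |\tau|_\fs$. The excess contributions, indexed by $|\CI\tau|_\fs \le |l|_\fs < |\CI\tau^{(1)}|_\fs$, are precisely absorbed by the explicit polynomial correction, using the canonical identification $f_x(\bar\CJ_l \sigma) = -(D^l K \ast \Pi_x \sigma)(x)$ valid for $|l|_\fs < |\CI\sigma|_\fs$ together with the truncation $\bar\CJ_l \sigma = 0$ when $|l|_\fs \ge |\CI\sigma|_\fs$ (which makes the nominally infinite sum in the statement effectively finite).

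The main obstacle is precisely this bookkeeping step: one has to partition the Taylor sum $\sum_{|l|_\fs < |\CI\tau^{(1)}|_\fs}$ according to whether $|l|_\fs < |\CI\tau|_\fs$ (the "matching" part, which combines with the LHS's Taylor jet after summing over $(\tau^{(1)},\tau^{(2)})$ using $\Pi^M_x \tau = (\Pi_x \otimes f_x)\Delta^M \tau$) or $|l|_\fs \ge |\CI\tau|_\fs$ (the "excess" part, cancelled by the correction). A secondary point to handle is the justification that the pairing $(\Pi_x \otimes f_x)$ separates points: as the relevant graded components of $\CT \otimes \CT_+$ are finite-dimensional and the family of admissible models is rich enough (e.g.~canonical lifts of arbitrary smooth noises, with polynomial shifts in $f_x$), the agreement of both sides under all such pairings forces the algebraic identity in $\CT \otimes \CT_+$.
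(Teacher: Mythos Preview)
Your strategy is genuinely different from the paper's, which is purely algebraic: it uses the explicit factorisation $\Delta^M = D^{-1}(M\otimes\hat M)\Delta$ with $D = (1\otimes\CM)(\Delta\otimes 1)$, computes $(M\otimes\hat M)\Delta\CI\tau$ directly from the recursive formula for $\Delta\CI\tau$, then applies $D$ to the claimed right-hand side and checks that the two expressions coincide. Invertibility of $D$ (which is unipotent) then yields the identity without ever touching a model.

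The genuine gap in your approach is the final ``separation of points'' step, which you assert but do not prove. For \emph{admissible} models the character $f_x$ is rigidly determined by $\Pi_x$ via relations of the type $f_x(\tilde\CJ_k\sigma) = -(D^kK\ast\Pi_x\sigma)(x)$; the family of pairings $(\Pi_x\otimes f_x)$ is therefore far from generic on $\CT\otimes\CT_+$, and it is not at all clear that its joint kernel (as $x$ and the model vary) is trivial on the relevant finite-dimensional subspace. Your remark about ``polynomial shifts in $f_x$'' only perturbs $f_x$ on $X_i$ and on $\CJ_k^\ft 1$ for noise types of positive degree, not on the generators $\CJ_k\tau^{(1)}$ that actually appear in the correction term, so it does not buy the freedom you need. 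A secondary issue is that in this paper's conventions the relation $f_x(\bar\CJ_l\sigma) = -(D^lK\ast\Pi_x\sigma)(x)$ you invoke holds for $\tilde\CJ_l$ rather than $\CJ_l$ (see \eqref{e:deffx}), so the cancellation in your bookkeeping step would have to be redone carefully. The algebraic route via $D^{-1}$ sidesteps both problems entirely.
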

		
		\begin{proof}
				Let $\CM$ denote the multiplication operator on $\CT_+$ and let $\Delta: \CT \to \CT \otimes \CT_+$ be as in \cite[Section 8.1]{Hai14}. Define $D = (1 \otimes \CM)(\Delta \otimes 1)$ and recall that by the proof of \cite[Proposition 8.36]{Hai14}, we have that 
				$$\Delta^M \tau = D^{-1} (M \otimes \hat{M}) \Delta \tau$$
				where we observed that $D$ is invertible since it can be written as $D = 1 - \bar{D}$ for a nilpotent map $\bar{D}$.
				
				By the definition of $\Delta$, we can write
				\begin{equs}
						(M \otimes \hat{M}) \Delta \CI\tau & = (\CI M \otimes \hat{M}) \Delta \tau + \sum_{|k+l|_\fs < |\tau|_\fs + \beta} \frac{X^k}{k!} \otimes \frac{X^l}{l!} \hat{M} \CJ_{k+l} \tau
						\\ & = (\CI \otimes 1) D \Delta^M \tau + \sum_{|k+l|_\fs < |\tau|_\fs + \beta} \frac{X^k}{k!} \otimes \frac{X^l}{l!} \CJ_{k+l}(\tau^{(1)}) \tau^{(2)}.
					\end{equs}
				
				On the other hand,
				\begin{equs}
						& D (\CI \otimes 1)\Delta^M \tau - \sum_{|k| > |\tau|+\beta} D\left (  \frac{X^k}{k!} \otimes \CJ_k(\tau^{(1)}) \tau^{(2)} \right ) \\ & = (\CI \otimes 1) D \Delta^M \tau + \sum_{|k+l|< |\tau^{(1)}| + \beta} \frac{X^k}{k!} \otimes \frac{X^l}{l!} \CJ_{k+l}(\tau^{(1)}) \tau^{(2)} \\
						& \quad - \sum_{|k| > |\tau|+\beta} D\left (  \frac{X^k}{k!} \otimes \CJ_k(\tau^{(1)}) \tau^{(2)} \right )	
						\\ & = 	(\CI \otimes 1) D \Delta^M \tau + \sum_{|k+l|< |\tau^{(1)}| + \beta} \frac{X^k}{k!} \otimes \frac{X^l}{l!} \CJ_{k+l}(\tau^{(1)}) \tau^{(2)} \\
						& - \sum_{|k+l|> |\tau| + \beta} \frac{X^k}{k!} \otimes \frac{X^l}{l!} \CJ_{k+l}(\tau^{(1)}) \tau^{(2)}
					\end{equs}
				where the last equality follows by definition of $\Delta$ on polynomial terms.
				
				Since $|\tau^{(1)}|_\fs > |\tau|_\fs$ by construction of $\Delta^M$, this shows that the expressions for $D$ applied to both the left and right hand side of our desired inequality coincide. Since $D$ is invertible this concludes the proof.
			\end{proof}
		
		\begin{lemma}\label{Lemma: Multiplication Coproduct}
				Suppose that whenever $\tau, \tau_1, \tau_2 \in \CT$ with $\tau = \tau_1 \tau_2$ we have that $M \tau = M \tau_1 M \tau_2$. Then $\Delta^M \tau = \Delta^M \tau_1 \Delta^M \tau_2$.
			\end{lemma}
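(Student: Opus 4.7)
The natural plan is to use the formula $\Delta^M = D^{-1}(M \otimes \hat M)\Delta$ from the proof of \cite[Proposition 8.36]{Hai14} (with $D = (1 \otimes \CM)(\Delta \otimes 1)$ as recalled in the proof of Lemma~\ref{Lemma: Integration Coproduct}) and show that each of the four factors $\Delta$, $M$, $\hat M$, and $D^{-1}$ is multiplicative, from which the conclusion follows by composition.

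First I would note that $\Delta : \CT \to \CT \otimes \CT_+$ is multiplicative by construction (this is built into \eqref{eq: Delta def base}--\eqref{eq: Delta def int}), that $M$ is multiplicative on the relevant product by the hypothesis of the lemma, and that $\hat M$ is multiplicative on $\CT_+$ by \cite[Proposition~8.36]{Hai14}. Hence $(M \otimes \hat M)\Delta(\tau_1 \tau_2) = (M \otimes \hat M)\Delta \tau_1 \cdot (M \otimes \hat M)\Delta \tau_2$.

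The main (and only non-tautological) step is to verify that $D$ is multiplicative on $\CT \otimes \CT_+$, from which one concludes that $D^{-1}$ is too. For this, writing $\Delta a = a^{(1)} \otimes a^{(2)}$ in Sweedler notation and using the multiplicativity of $\Delta$ on $\CT$, one computes
\begin{equs}
D\bigl((a \otimes b)(c \otimes d)\bigr) &= D(ac \otimes bd) = \sum (ac)^{(1)} \otimes (ac)^{(2)} b d \\
&= \sum a^{(1)} c^{(1)} \otimes a^{(2)} c^{(2)} b d = \bigl(\sum a^{(1)} \otimes a^{(2)} b\bigr)\bigl(\sum c^{(1)} \otimes c^{(2)} d\bigr)\\
&= D(a \otimes b) \cdot D(c \otimes d)\;,
\end{equs}
where the product on $\CT \otimes \CT_+$ is the one induced by the (partial) product on $\CT$ and the product on the algebra $\CT_+$. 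Since $D = 1 - \bar D$ with $\bar D$ nilpotent, $D^{-1} = \sum_{k \ge 0} \bar D^k$ preserves multiplicativity.

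Combining these observations gives
\begin{equ}
\Delta^M (\tau_1 \tau_2) = D^{-1}(M \otimes \hat M)\Delta(\tau_1 \tau_2) = D^{-1}\bigl((M \otimes \hat M)\Delta \tau_1\bigr)\cdot D^{-1}\bigl((M \otimes \hat M)\Delta \tau_2\bigr) = \Delta^M \tau_1 \cdot \Delta^M \tau_2\;,
\end{equ}
as required. The only subtlety to be careful about is that the product on $\CT$ is partial, so one should really interpret all identities above as equalities in the natural quotient where undefined products are set to zero; this is compatible with all maps involved (in particular with $M$, by the hypothesis) and does not affect the argument.
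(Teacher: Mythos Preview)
Your proof is correct and follows exactly the paper's approach: the paper's proof is the single sentence ``This is immediate from the fact that $\Delta^M = D^{-1} (M \otimes \hat{M}) \Delta$ where $D, \hat{M}$ and $\Delta$ are all multiplicative.'' Your version simply spells out the verification that $D$ (and hence $D^{-1}$) is multiplicative, which the paper takes for granted.
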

		\begin{proof}
				This is immediate from the fact that $\Delta^M = D^{-1} (M \otimes \hat{M}) \Delta$ where $D, \hat{M}$ and $\Delta$ are all multiplicative.
			\end{proof}
		
		\begin{lemma}\label{Lemma: Commutativity Coproduct}
				Suppose that $M_1, M_2 \in \mathfrak{R}$ with $M_1 M_2 = M_2 M_1$. Then $(\Delta^{M_1} \otimes \hat{M}_1) \Delta^{M_2} = (\Delta^{M_2} \otimes \hat{M}_2) \Delta^{M_1}$.
			\end{lemma}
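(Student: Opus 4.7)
The strategy is to promote this algebraic identity to a consequence of the fact that $M \mapsto [M]$ defines a genuine (left) group action of $\fR$ on the space $\CM_\infty(\CT)$ of smooth admissible models, combined with a separation argument to pass from identities of pairings back to identities of tensors.

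First, I would pick an arbitrary smooth admissible model $(\Pi,\Gamma)$ with associated characters $f_x$ and unpack what it means to apply $M_1$ after $M_2$ (or vice versa). The defining formula $F_x^M = (1 \otimes f_x \hat{M})\Delta$ shows immediately that the character attached to $[M](\Pi,\Gamma)$ is $f_x \circ \hat{M}$, which is the only non-trivial input. Using this and the multiplicativity of $f_x$, a direct computation that merely unfolds the definitions gives
\begin{equs}
\Pi_x^{M_2 M_1} \tau &= (\Pi_x \otimes f_x \otimes f_x)\bigl((\Delta^{M_1} \otimes \hat M_1) \Delta^{M_2}\bigr)\tau\;,\\
\Pi_x^{M_1 M_2} \tau &= (\Pi_x \otimes f_x \otimes f_x)\bigl((\Delta^{M_2} \otimes \hat M_2) \Delta^{M_1}\bigr)\tau\;.
\end{equs}
The hypothesis $M_1 M_2 = M_2 M_1$ therefore forces these two pairings to coincide for every smooth admissible model $(\Pi,\Gamma)$ and every $\tau \in \CT$.

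The remaining task is to promote this identity of pairings to the desired identity of tensors in $\CT \otimes \CT_+ \otimes \CT_+$. I would argue this by choosing $(\Pi,\Gamma)$ to be the canonical lift $\CL(\xi)$ of a sufficiently generic smooth noise $\xi$: the components $\Pi_x \tau$ then faithfully distinguish basis vectors of $\CT$ (as follows by induction on the number of edges, using the explicit inductive definition of $\CL$ and the non-degeneracy of the integration kernels), while the characters $f_x$, being given in terms of evaluations at $x$ of derivatives of the model convolved against kernels, provide matching separation on the two $\CT_+$ factors when $x$ is also varied. The main potential obstacle is this final separation step, but it is essentially a standard observation in this framework and is already implicit in several constructions in \cite{Hai14,BHZ}; here one can exploit in particular that the tensor components in $\CT_+$ appearing in both $\Delta^{M_i}$ and $\hat M_i$ are products of expressions of the form $\CJ_k^\ft \sigma$, whose evaluations under $f_x$ are determined by independent scalar parameters as $\xi$ and $x$ vary.
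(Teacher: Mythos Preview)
Your first step is correct: unfolding the definitions does give, for any smooth admissible model with characters $f_x$,
\[
(\Pi_x \otimes f_x \otimes f_x)\bigl((\Delta^{M_1} \otimes \hat M_1)\Delta^{M_2}\bigr)\tau
=(\Pi_x \otimes f_x \otimes f_x)\bigl((\Delta^{M_2} \otimes \hat M_2)\Delta^{M_1}\bigr)\tau\;.
\]
The gap is in your separation step. The linear functional $f_x \otimes f_x$ on $\CT_+ \otimes \CT_+$ is nothing but $f_x \circ \CM$, where $\CM$ is the multiplication on $\CT_+$, because $f_x$ is a character. Hence $(\Pi_x \otimes f_x \otimes f_x) = (\Pi_x \otimes f_x)(1 \otimes \CM)$, and the most your pairing identity can ever yield, no matter how generic $\xi$ and $x$ are, is
\[
(1 \otimes \CM)(\Delta^{M_1} \otimes \hat M_1)\Delta^{M_2}
=(1 \otimes \CM)(\Delta^{M_2} \otimes \hat M_2)\Delta^{M_1}\;,
\]
which is strictly weaker than the statement of the lemma. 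In particular your remark that ``varying $x$'' separates the two $\CT_+$ factors cannot work: the same $x$ appears in both copies of $f_x$, so you are always testing the diagonal character and you can never distinguish $a \otimes b$ from any element of $a \otimes b + \ker \CM$.

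The paper avoids this issue entirely by working algebraically: the proof of \cite[Lemma~8.43]{Hai14} establishes a purely algebraic identity expressing $(\Delta^{M_i} \otimes \hat M_i)\Delta^{M_j}$ directly in terms of $\Delta^{M_i M_j}$, and since $M_1 M_2 = M_2 M_1$ the two sides coincide. No separation argument through models is needed, and the identity holds in $\CT \otimes \CT_+ \otimes \CT_+$ rather than only after applying $1 \otimes \CM$. If you want to salvage your approach, you would need an independent algebraic argument that the difference of the two sides lies in the image of a section of $1 \otimes \CM$, which essentially amounts to redoing the algebraic proof.
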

		\begin{proof}
				By the proof of \cite[Lemma 8.43]{Hai14} both sides coincide with $\Delta^M$ for $M = M_1 M_2 = M_2 M_1$.
			\end{proof}
\endappendix 

	\bibliographystyle{Martin}
	\bibliography{bib}

\end{document}